\numberwithin{equation}{section}
\newtheorem{theorem}{Theorem}[section]
\newtheorem{corollary}[theorem]{Corollary}
\newtheorem{definition}[theorem]{Definition}
\newtheorem{lemma}[theorem]{Lemma}
\newtheorem{proposition}[theorem]{Proposition}
\newtheorem{remark}[theorem]{Remark}
\newenvironment{proof}[1][Proof]{\noindent\textbf{#1.} }{\ \rule{0.5em}{0.5em}}
\begin{document}

\title{$L^{p}$ and Schauder estimates for nonvariational operators structured on
H\"{o}rmander vector fields with drift\thanks{\textbf{2000
AMS\ Classification}: Primary 35H20. Secondary: 35B45, 42B20, 53C17.
\textbf{Keywords}: H\"{o}rmander's vector fields, Schauder estimates, $L^{p}$
estimates, drift}}
\author{Marco Bramanti and Maochun Zhu}
\maketitle

\begin{abstract}
Let%
\[
\mathcal{L}=\sum_{i,j=1}^{q}a_{ij}(x)X_{i}X_{j}+a_{0}\left(  x\right)  X_{0},
\]
where $X_{0},X_{1},...,X_{q}$ are real smooth vector fields satisfying
H\"{o}rmander's condition in some bounded domain $\Omega\subset\mathbb{R}^{n}$
($n>q+1$), the coefficients $a_{ij}=a_{ji},a_{0}$ are real valued, bounded
measurable functions defined in $\Omega$, satisfying the uniform positivity
conditions:
\[
\mu|\xi|^{2}\leq\sum_{i,j=1}^{q}a_{ij}(x)\xi_{i}\xi_{j}\leq\mu^{-1}|\xi
|^{2};\mu\leq a_{0}\left(  x\right)  \leq\mu^{-1}%
\]
for a.e.$\,\ x\in\Omega,$ every $\xi\in\mathbb{R}^{q}$, some constant $\mu>0$.

We prove that if the coefficients $a_{ij},a_{0}$ belong to the H\"{o}lder
space $C_{X}^{\alpha}\left(  \Omega\right)  $ with respect to the distance
induced by the vector fields, then local Schauder estimates of the following
kind hold:%
\[
\left\Vert X_{i}X_{j}u\right\Vert _{C_{X}^{\alpha}\left(  \Omega^{\prime
}\right)  }+\left\Vert X_{0}u\right\Vert _{C_{X}^{\alpha}\left(
\Omega^{\prime}\right)  }\leq c\left\{  \left\Vert Lu\right\Vert
_{C_{X}^{\alpha}\left(  \Omega\right)  }+\left\Vert u\right\Vert _{L^{\infty
}\left(  \Omega\right)  }\right\}
\]
for any $\Omega^{\prime}\Subset\Omega;$

if the coefficients $a_{ij},a_{0}$ belong to the space $VMO_{X,loc}\left(
\Omega\right)  $ with respect to the distance induced by the vector fields,
then local $L^{p}$ estimates of the following kind hold, for every
$p\in\left(  1,\infty\right)  $:%
\[
\left\Vert X_{i}X_{j}u\right\Vert _{L^{p}\left(  \Omega^{\prime}\right)
}+\left\Vert X_{0}u\right\Vert _{L^{p}\left(  \Omega^{\prime}\right)  }\leq
c\left\{  \left\Vert Lu\right\Vert _{L^{p}\left(  \Omega\right)  }+\left\Vert
u\right\Vert _{L^{p}\left(  \Omega\right)  }\right\}  .
\]

\pagebreak

\end{abstract}
\tableofcontents

\section{Introduction}

Let us consider a family of real smooth vector fields%
\[
X_{i}=\sum_{j=1}^{n}b_{ij}\left(  x\right)  \partial_{x_{j}},\text{
\ }i=0,1,2,...,q
\]
$\left(  q+1<n\right)  $ defined in some bounded domain $\Omega$ of
$\mathbb{R}^{n}$ and satisfying H\"{o}rmander's condition: the Lie algebra
generated by the $X_{i}$'s at any point of $\Omega$ span $\mathbb{R}^{n}.$
Under these assumptions, H\"{o}rmander's operators%
\[
\mathcal{L}=\sum_{i=1}^{q}X_{i}^{2}+X_{0}%
\]
have been studied since the late 1960's. H\"{o}rmander \cite{h} proved that
$\mathcal{L}$ is hypoelliptic, while Rothschild-Stein \cite{rs} proved that
for these operators \textit{a priori }estimates of $L^{p}$ type for second
order derivatives with respect to the vector fields hold, namely:%
\begin{equation}
\sum_{i,j=1}^{q}\left\Vert X_{i}X_{j}u\right\Vert _{L^{p}\left(
\Omega^{\prime}\right)  }+\left\Vert X_{0}u\right\Vert _{L^{p}\left(
\Omega^{\prime}\right)  }\leq c\left\{  \left\Vert \mathcal{L}u\right\Vert
_{L^{p}\left(  \Omega\right)  }+\left\Vert u\right\Vert _{L^{p}\left(
\Omega\right)  }+\sum_{i=1}^{q}\left\Vert X_{i}u\right\Vert _{L^{p}\left(
\Omega\right)  }\right\}  \label{a priori RS}%
\end{equation}
for any $p\in\left(  1,\infty\right)  ,\Omega^{\prime}\Subset\Omega.$

Note that the \textquotedblleft drift\textquotedblright vector field $X_{0}$
has weight two, compared with the vector fields $X_{i}$ for $i=1,2,...,q.$ For
operators without the drift term (\textquotedblleft sum of
squares\textquotedblright\ of H\"{o}rmander type) many more results have been
proved in the literature than for complete H\"{o}rmander's operators. On the
other hand, complete operators owe their interest, for instance, to the class
of Kolmogorov-Fokker-Planck operators, naturally arising in many fields of
physics, natural sciences and finance, as the transport-diffusion equations
satisfied by the transition probability density of stochastic systems of
O.D.E.s which describe some real system led to a basically deterministic law
perturbed by some kind of white noise. The study of Kolmogorov-Fokker-Planck
operators in the framework of H\"{o}rmander's operators received a strong
impulse by the paper \cite{lp} by Lanconelli-Polidoro, which started a lively
line of research. We refer to \cite{LPP} for a good survey on this field, with
further motivations for the study of these equations and related references.

Let us also note that the study of H\"{o}rmander's operators is considerably
easier when $\mathcal{L}$ is left invariant with respect to a suitable Lie
group of translations and homogeneous of degree two with respect to a suitable
family of dilations (which are group automorphisms of the corresponding group
of translations). In this case we say that $\mathcal{L}$ has an underlying
structure of homogeneous group and, by a famous result due to Folland
\cite{fo}, $\mathcal{L}$ possesses a homogeneous left invariant global
fundamental solution, which turns out to be a precious tool in proving
\textit{a priori} estimates.

In the last ten years, more general classes of nonvariational operators
structured on H\"{o}rmander's vector fields have been studied, namely%
\begin{align}
\mathcal{L}  &  =\sum_{i,j=1}^{q}a_{ij}\left(  x\right)  X_{i}X_{j}%
\label{nonvar 1}\\
\mathcal{L}  &  =\sum_{i,j=1}^{q}a_{ij}\left(  x\right)  X_{i}X_{j}%
-\partial_{t}\label{nonvar 2}\\
\mathcal{L}  &  =\sum_{i,j=1}^{q}a_{ij}\left(  x\right)  X_{i}X_{j}%
+a_{0}\left(  x\right)  X_{0} \label{nonvar 3}%
\end{align}
where the matrix $\left\{  a_{ij}\left(  x\right)  \right\}  _{i,j=1}^{q}$ is
symmetric positive definite, the coefficients are bounded ($a_{0}$ is bounded
away from zero) and satisfy suitable mild regularity assumptions, for instance
they belong to H\"{o}lder or $VMO$ spaces defined with respect to the distance
induced by the vector fields. Since the $a_{ij}$'s are not $C^{\infty},$ these
operators are no longer hypoelliptic. Nevertheless, \textit{a priori}
estimates on second order derivatives with respect to the vector fields are a
natural result which does not in principle require smoothness of the
coefficients. Namely, \textit{a priori} estimates in $L^{p}$ (with
coefficients $a_{ij}$ in $VMO\cap L^{\infty}$) have been proved in \cite{bb2}
for operators (\ref{nonvar 1}) and in \cite{bb1} for operators (\ref{nonvar 3}%
) but in homogeneous groups; \textit{a priori} estimates in $C^{\alpha}$
spaces (with coefficients $a_{ij}$ in $C^{\alpha}$) have been proved in
\cite{bb4} for operators (\ref{nonvar 2}) and in \cite{GL} for operators
(\ref{nonvar 3}) but in homogeneous groups.

In the particular case of Kolmogorov-Fokker-Planck operators, which can be
written as
\[
\mathcal{L}=\sum_{i,j=1}^{q}a_{ij}\left(  x\right)  \partial_{x_{i}x_{j}}%
^{2}+X_{0}%
\]
for a suitable drift $X_{0}$, $L^{p}$ estimates (when $a_{ij}$ are $VMO$) have
been proved in \cite{bcm} in homogeneous groups, while Schauder estimates
(when $a_{ij}$ are H\"{o}lder continuous) have been proved in \cite{DP}, under
more general assumptions (namely, assuming the existence of translations but
not necessarily dilations, adapted to the operator). We recall that the idea
of proving $L^{p}$ estimates for nonvariational operators with leading
coefficients in $VMO\cap L^{\infty}$ (instead of assuming their uniform
continuity) appeared for the first time in the papers \cite{cfl1}, \cite{cfl2}
by Chiarenza-Frasca-Longo, in the uniformly elliptic case.

The aim of the present paper is to prove both $L^{p}$ and $C^{\alpha}$ local
estimates for general operators (\ref{nonvar 3}) structured on H\"{o}rmander's
vector fields \textquotedblleft with drift\textquotedblright, without assuming
the existence of any group structure, under the appropriate assumptions on the
coefficients $a_{ij},a_{0}$. Namely, our basic estimates read as follows:%
\begin{equation}
\left\Vert u\right\Vert _{S_{X}^{2,p}\left(  \Omega^{\prime}\right)  }\leq
c\left\{  \left\Vert \mathcal{L}u\right\Vert _{L^{p}\left(  \Omega\right)
}+\left\Vert u\right\Vert _{L^{p}\left(  \Omega\right)  }\right\}
\label{a priori p}%
\end{equation}
for $p\in\left(  1,\infty\right)  $ and any $\Omega^{\prime}\Subset\Omega$ if
the coefficients are $VMO_{X,loc}\left(  \Omega\right)  ,$ and%
\begin{equation}
\left\Vert u\right\Vert _{C_{X}^{2,\alpha}\left(  \Omega^{\prime}\right)
}\leq c\left\{  \left\Vert \mathcal{L}u\right\Vert _{C_{X}^{\alpha}\left(
\Omega\right)  }+\left\Vert u\right\Vert _{L^{\infty}\left(  \Omega\right)
}\right\}  \label{a priori alpha}%
\end{equation}
for $\alpha\in\left(  0,1\right)  $ and $\Omega^{\prime}\Subset\Omega$ if the
coefficients are $C_{X}^{\alpha}\left(  \Omega\right)  .$ The related Sobolev
and H\"{o}lder spaces $S_{X}^{2,p},C_{X}^{2,\alpha},$ are those induced by the
vector fields $X_{i}$'s, and will be precisely defined in
\S \ref{subsec function spaces}. Clearly, these estimates are more general
than those contained in all the aforementioned papers.

At first sight, this kind of result could seem a straightforward
generalization of existing theories. However, several difficulties exist,
sometimes hidden in subtle details. First of all, we have to remark that in
the paper \cite{rs}, although the results are stated for both sum of squares
and complete H\"{o}rmander's operators, proofs are given only in the first
case. While some adaptations are quite straightforward, this is not always the
case. Therefore, some results proved in the present paper can be seen also as
a detailed proof of results stated in \cite{rs}, in the drift case. We will
justify this statement later, when dealing with these details. For the moment
we just point out that these difficulties are mainly related to the proof of
suitable representation formulas for second order derivatives $X_{i}X_{j}u$ of
a test function, in terms of $u$ and $\mathcal{L}u$, via singular integrals
and commutators of singular integrals. In turn, the reason why these
representation formulas are harder to be proved in presence of a drift relies
on the fact that a technical result which allows to exchange, in a suitable
sense, the action of $X_{i}$-derivatives with that of suitable integral
operators, assumes a more involved form when the drift is present.

Once the suitable representation formulas are established, a real variable
machinery similar to that used in \cite{bb2} and \cite{bb4} can be applied,
and this is the reason why we have chosen to give here a unified treatment to
$L^{p}$ and $C^{\alpha}$ estimates. More specifically, one considers a bounded
domain $\Omega$ endowed with the control distance induced by the vector fields
$X_{i}$'s, which has been defined, in the drift case, by Nagel-Stein-Wainger
in \cite{nsw2}, and the Lebesgue measure, which is locally doubling with
respect to these metric balls, as proved in \cite{nsw2}. However, a problem
arises when trying to apply to this context known results about singular
integrals~in metric doubling spaces (or \textquotedblleft spaces of
homogeneous type\textquotedblright, after \cite{cw}). Namely, what we should
know to apply this theory is a doubling property as%
\begin{equation}
\mu\left(  B\left(  x,2r\right)  \cap\Omega^{\prime}\right)  \leq c\mu\left(
B\left(  x,r\right)  \cap\Omega^{\prime}\right)  \text{ for any }x\in
\Omega^{\prime}\Subset\Omega,r>0\label{doubling 1}%
\end{equation}
while what we actually know in view of \cite{nsw2} is%
\begin{equation}
\mu\left(  B\left(  x,2r\right)  \right)  \leq c\mu\left(  B\left(
x,r\right)  \right)  \text{ for any }x\in\Omega^{\prime}\Subset\Omega
,0<r<r_{0}.\label{doubling 2}%
\end{equation}
Now, it has been known since \cite{FL} that, when $\Omega^{\prime}$ is for
instance a metric ball, condition (\ref{doubling 1}) follows from
(\ref{doubling 2}) as soon as the distance satisfies a kind of \emph{segment
property} which reads as follows: for any couple of points $x_{1},x_{2}$ at
distance $r$ and for any number $\delta<r$ and $\varepsilon>0$ there exists a
point $x_{0}$ having distance $\leq\delta$ from $x_{1}$ and $\leq
r-\delta+\varepsilon$ from $x_{2}$. However, while when the drift term is
lacking the distance induced by the $X_{i}$'s is easily seen to satisfy this
property, this is no longer the case when the field $X_{0}$ with weight two
enters the definition of distance and, as far as we know, a condition of kind
(\ref{doubling 1}) has never been proved in this context for $\Omega^{\prime}$
a metric ball, or for any other special kind of bounded domain $\Omega$. Thus
we are forced to apply a theory of singular integrals which does not require
the full strength of the global doubling condition (\ref{doubling 1}). A first
possibility is to consider the context of \emph{nondoubling spaces}, as
studied by Tolsa, Nazarov-Treil-Volberg, and other authors (see for instance
\cite{T2}, \cite{NTV3}, and references therein). Results of $L^{p}$ and
$C^{\alpha}$ continuity for singular integrals of this kind, applicable to our
context, have been proved in \cite{b2}. However, to prove our $L^{p}$
estimates (\ref{a priori p}), we also need some \emph{commutator estimates},
of the kind of the well-known result proved by \cite{crw}, that, as far as we
know, are not presently available in the framework of general nondoubling
quasimetric (or metric)\ spaces. For this reason, we have recently developed
in \cite{bz} a theory of \emph{locally homogeneous spaces }which is a quite
natural framework where all the results we need about singular integrals and
their commutators with BMO functions can be proved. To give a unified
treatment to both $L^{p}$ and $C^{\alpha}$ estimates, here we have decided to
prove both exploiting the results in \cite{bz}. We note that our Schauder
estimates could also be obtained applying the results in \cite{b2}, while
$L^{p}$ estimates cannot.

The necessity of avoiding the use of a doubling property of type
(\ref{doubling 1}), as well as some modifications required by the presence of
the drift $X_{0}$, also reflects in the way we have studied several properties
of the function spaces $C_{X}^{\alpha}$ and $BMO$, as we will see in
\S \ \ref{subsec function spaces}.

Once the basic estimates on second order derivatives are established, a
natural, but nontrivial, extension consists in proving similar estimates for
derivatives of (weighted) order $k+2,$ in terms of $k$ derivatives of
$\mathcal{L}u$ (assuming, of course, that the coefficients of the operator
possess the corresponding further regularity). In presence of a drift, it is
reasonable to restrict this study to the case of $k$ even, as already appears
from the analog result proved in homogeneous groups in \cite{bb1}. Even in
this case, a proof of this extensions seems to be a difficult task, and we
have preferred not to address this problem in the present paper, in order not
to further increase its length.

\bigskip

\textbf{Acknowledgements.} This research was mainly carried out while Maochun
Zhu was visiting the Department of Mathematics of Politecnico di Milano, which
we wish to thank for the hospitality. The project was supported by the
National Natural Science Foundation of China (Grant No. 10871157), Specialized
Research Fund for the Doctoral Program of Higher Education (No. 200806990032).

\section{Assumptions and main results\label{main result}}

We now state precisely our assumptions and main results. All the function
spaces involved in the statements below will be defined precisely in
\S \ \ref{Section Known results}. Our basic assumption is:

\bigskip

\textbf{Assumption (H). }Let%
\[
\mathcal{L}=\sum_{i,j=1}^{q}a_{ij}(x)X_{i}X_{j}+a_{0}\left(  x\right)  X_{0},
\]
where the $X_{0},X_{1},...,X_{q}$ are real smooth vector fields satisfying
H\"{o}rmander's condition in some bounded domain $\Omega\subset\mathbb{R}^{n}%
$, the coefficients $a_{ij}=a_{ji},a_{0}$ are real valued, bounded measurable
functions defined in $\Omega$, satisfying the uniform positivity conditions:
\begin{align*}
\mu|\xi|^{2}  &  \leq\sum_{i,j=1}^{q}a_{ij}(x)\xi_{i}\xi_{j}\leq\mu^{-1}%
|\xi|^{2};\\
\mu &  \leq a_{0}\left(  x\right)  \leq\mu^{-1}%
\end{align*}
for a.e.$\,\ x\in\Omega,$ every $\xi\in\mathbb{R}^{q}$, some constant $\mu>0$.

\bigskip

Our main results are the following:

\begin{theorem}
\label{schauder estimate}In addition to assumption (H), assume that the
coefficients $a_{ij},a_{0}$ belong to $C_{X}^{\alpha}\left(  \Omega\right)  $
for some $\alpha\in(0,1)$. Then for every domain $\Omega^{\prime}\Subset
\Omega$, there exists a constant $c>0$ depending on $\Omega^{\prime}%
,\Omega,X_{i},\alpha,\mu,\left\Vert a_{ij}\right\Vert _{C_{X}^{\alpha}%
(\Omega)}$ and $\left\Vert a_{0}\right\Vert _{C_{X}^{\alpha}(\Omega)}$ such
that, for every $u\in C_{X}^{2,\alpha}(\Omega)$, one has
\[
\left\Vert u\right\Vert _{C_{X}^{2,\alpha}(\Omega^{\prime})}\leq c\left\{
\left\Vert \mathcal{L}u\right\Vert _{C_{X}^{\alpha}(\Omega)}+\left\Vert
u\right\Vert _{L^{\infty}(\Omega)}\right\}  .
\]

\end{theorem}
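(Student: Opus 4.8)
The plan is to follow the classical Chiarenza–Frasca–Longo strategy, adapted to the Hörmander-with-drift setting, with Schauder estimates obtained from the same machinery used for the $L^p$ bounds. Since the vector fields are smooth and satisfy Hörmander's condition (with drift $X_0$ of weight two), one first works in a small coordinate patch where a "lifting and approximation" in the spirit of Rothschild–Stein is available: locally the operator with frozen coefficients $\mathcal{L}_{x_0}=\sum_{i,j}a_{ij}(x_0)X_iX_j+a_0(x_0)X_0$ can be compared with a left-invariant homogeneous operator on a suitable homogeneous group $G$, whose fundamental solution $\Gamma_{x_0}$ is $(Q-2)$-homogeneous (here $Q$ is the homogeneous dimension counting the drift twice). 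Differentiating $\Gamma_{x_0}$ twice along the $X_i$'s, and once along $X_0$, produces convolution kernels that are singular integrals of Calderón–Zygmund type with respect to the control distance; these are bounded on $C^\alpha_X$ (and on $L^p$) by the general theory of singular integrals on the locally homogeneous spaces developed in the cited companion work, precisely because only the local doubling property \eqref{doubling 2} is needed there.

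The key step is to establish the \textbf{representation formula}: for $u\in C^{2,\alpha}_X$ supported in a small ball,
\[
X_iX_ju(x)=\mathrm{PV}\!\int K_{ij}(x,y)\,\bigl[\mathcal{L}u(y)+\text{(coefficient-difference terms)}\bigr]dy+c_{ij}(x)\,\mathcal{L}u(x)+(\text{lower order}),
\]
and the analogous formula for $X_0u$, where $K_{ij}$ are the frozen singular kernels and the coefficient-difference terms involve $(a_{hk}(y)-a_{hk}(x))X_hX_ku(y)$ and $(a_0(y)-a_0(x))X_0u(y)$. As the introduction warns, the genuinely delicate point here is the commutation identity allowing one to move an $X_i$-derivative inside the integral against $\partial_{x}\Gamma_{x_0}$: in the driftless case this is routine, but with the weight-two field $X_0$ present the identity acquires extra terms (coming from the interaction of first-order differentiation with the anisotropic homogeneity), and getting this right is where most of the technical work goes. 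I would prove it first at the level of the lifted, free operator on $G$, then transfer it back via the Rothschild–Stein approximation, controlling the error terms by the local smoothness of the $b_{ij}$'s.

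Granting the representation formulas, the proof proceeds by the standard interpolation-and-freezing bootstrap. Taking $C^\alpha_X(B_r)$ norms of both sides, the frozen singular integrals and their "commutators" $[a_{hk},K]$ are bounded on $C^\alpha_X$ with operator norm controlled by $\|a_{hk}\|_{C^\alpha_X}$ times a factor that is \emph{small} when the radius $r$ is small (this smallness is exactly what $C^\alpha_X$ regularity of the coefficients buys, and is the analogue of the $VMO$-smallness used in the $L^p$ case). Choosing $r$ small enough, the coefficient-difference terms are absorbed into the left-hand side, yielding the local estimate
\[
\|X_iX_ju\|_{C^\alpha_X(B_{r})}+\|X_0u\|_{C^\alpha_X(B_{r})}\le c\bigl\{\|\mathcal{L}u\|_{C^\alpha_X(B_{2r})}+\|u\|_{C^\alpha_X(B_{2r})}\bigr\}
\]
for test functions, and then for general $u\in C^{2,\alpha}_X$ by a cutoff argument. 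A further interpolation inequality of the form $\|u\|_{C^\alpha_X}\le \varepsilon\|u\|_{C^{2,\alpha}_X}+c_\varepsilon\|u\|_{L^\infty}$ upgrades the right-hand side to $\|\mathcal{L}u\|_{C^\alpha_X}+\|u\|_{L^\infty}$, and finally a covering argument (chaining the small balls across $\Omega'\Subset\Omega$, using that finitely many such balls cover $\overline{\Omega'}$) passes from the local estimate to the stated global one on $\Omega'$. The main obstacle, to reiterate, is the drift-corrected commutation/representation step; the real-variable part is, by contrast, a careful but essentially known adaptation of the arguments behind the $L^p$ theory and the Schauder theory for operators \eqref{nonvar 1}–\eqref{nonvar 2}.
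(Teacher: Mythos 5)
Your proposal follows essentially the same route as the paper: lift to free vector fields via Rothschild--Stein, build a parametrix from the homogeneous fundamental solution of the frozen approximating operator, derive representation formulas for $X_iX_ju$ and $X_0u$ (with the $X_i$-derivative/operator commutation as the delicate drift-corrected step), prove $C^\alpha$ bounds for the resulting singular integrals in the locally homogeneous setting, absorb the coefficient-difference terms on small balls using Hölder smallness of $a_{ij}(\xi_0)-a_{ij}(\cdot)$, and finish with interpolation, cutoffs, and a covering argument after transferring back to the original variables. One small remark: you phrase the coefficient-difference control in terms of commutators $[a_{hk},K]$, which is the mechanism the paper uses for the $L^p$/VMO case; in the Schauder proof the paper actually keeps the point frozen and estimates $T(\xi_0)\bigl([\,\widetilde a_{ij}(\xi_0)-\widetilde a_{ij}(\cdot)\,]\widetilde X_i\widetilde X_j u\bigr)$ directly via the product estimate $|fg|_{C^\alpha(\widetilde B_r)}\le cr^\alpha|f|_{C^\alpha}|g|_{C^\alpha}$, so no commutator theorem is invoked there --- but the smallness-and-absorption idea is the same.
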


\begin{theorem}
\label{lp estimate}In addition to assumption (H), assume that the coefficients
$a_{ij},a_{0}$ belong to the space $VMO_{X,loc}\left(  \Omega\right)  $. Then
for every $p\in\left(  1,\infty\right)  $, any $\Omega^{\prime}\Subset\Omega$,
there exists a constant $c$ depending on $X_{i},n,q,p,\mu$\thinspace
$,\Omega^{\prime},\Omega$ and the $VMO$ moduli of $a_{ij}$ and $a_{0}$, such
that for every $u$ $\in$ $S_{X}^{2,p}\left(  \Omega\right)  $,
\[
\left\Vert u\right\Vert _{S_{X}^{2,p}\left(  \Omega^{\prime}\right)  }\leq
c\left\{  \left\Vert \mathcal{L}u\right\Vert _{L^{p}(\Omega)}+\left\Vert
u\right\Vert _{L^{p}(\Omega)}\right\}  .
\]

\end{theorem}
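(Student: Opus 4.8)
The plan is to follow the Chiarenza--Frasca--Longo scheme adapted to H\"ormander's vector fields (as in \cite{bb2}), but carried out inside the framework of locally homogeneous spaces of \cite{bz}, which is forced upon us by the failure of the global doubling condition (\ref{doubling 1}) and by the need for commutator estimates. The first step is a \emph{frozen coefficients} construction: fixing a point $x_{0}$ in a small metric ball, one replaces $\mathcal{L}$ by the operator $\mathcal{L}_{0}=\sum_{i,j=1}^{q}a_{ij}(x_{0})X_{i}X_{j}+a_{0}(x_{0})X_{0}$, which is still a genuine H\"ormander operator with drift. Using the Rothschild--Stein lifting and approximation technique one lifts the $X_{i}$'s to free vector fields and approximates $\mathcal{L}_{0}$ by a left invariant, $2$-homogeneous operator on a homogeneous group, whose fundamental solution $\Gamma$ (existing by Folland's theorem \cite{fo}) provides a parametrix. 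In the homogeneity count the weight-two drift $X_{0}$ plays the same role as a second order term $X_{i}X_{j}$, so the parametrix controls $X_{0}u$ together with the $X_{i}X_{j}u$; alternatively, once the $X_{i}X_{j}u$ are estimated, $X_{0}u$ is recovered from the equation using $a_{0}\geq\mu>0$.

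The core step is to establish, for $u$ a test function supported in a small ball, a \emph{representation formula} of the type
\[
X_{i}X_{j}u(x)=\mathrm{P.V.}\int k_{ij}(x,z)\,\mathcal{L}u(z)\,dz+\sum_{h,k}\int k_{ijhk}(x,z)\big(a_{hk}(x)-a_{hk}(z)\big)X_{h}X_{k}u(z)\,dz+(\text{lower order terms}),
\]
together with an analogous formula for $X_{0}u$, where the $k_{ij}$ and $k_{ijhk}$ are variable-kernel singular integrals of the \cite{bz} type built from derivatives of $\Gamma$, and the errors coming from the approximation of $\mathcal{L}_{0}$ by its model operator have lower weighted order (first order in $u$, plus order zero). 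As stressed in the introduction, this is the main obstacle: in the presence of the drift the technical lemma that allows one to interchange $X_{i}$-differentiation with the integral operator defined by $\Gamma$ (and its derivatives) takes a more delicate form than in the sum-of-squares case, so the derivation of these formulas --- including the correct principal value and commutator structure --- is where the real work lies, and part of it doubles as a detailed proof of the drift case of \cite{rs}.

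Granted the representation formula, I would invoke the $L^{p}$ continuity of singular integrals and, crucially, the $L^{p}$ continuity of their commutators with $BMO$ functions, both proved in \cite{bz} in the locally homogeneous setting. Since $a_{hk}\in VMO_{X,loc}$, the local $BMO$ modulus of $a_{hk}$ on $B(x_{0},r)$ tends to $0$ as $r\to0$; hence, for $r$ small enough, the commutator terms contribute $\varepsilon\sum_{h,k}\|X_{h}X_{k}u\|_{L^{p}(B(x_{0},r))}$ with $\varepsilon$ as small as we wish, which is absorbed into the left-hand side. This yields the basic local estimate
\[
\|u\|_{S_{X}^{2,p}(B(x_{0},r/2))}\leq c\big\{\|\mathcal{L}u\|_{L^{p}(B(x_{0},r))}+\|u\|_{L^{p}(B(x_{0},r))}+\sum_{i}\|X_{i}u\|_{L^{p}(B(x_{0},r))}\big\}
\]
for $u$ supported in $B(x_{0},r)$, the last sum coming from the lower order error terms.

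Finally I would remove the support restriction and reach an arbitrary $\Omega^{\prime}\Subset\Omega$ by a standard localization: insert smooth cutoff functions $\varphi$ with controlled $X$-derivatives, use $\mathcal{L}(\varphi u)=\varphi\mathcal{L}u+(\text{first order in }u)$, sum the local estimates over a finite cover of $\Omega^{\prime}$ by small balls, and dispose of the first order terms $\|X_{i}u\|_{L^{p}}$ by interpolation inequalities for the spaces $S_{X}^{2,p}$, of the form $\|X_{i}u\|_{L^{p}}\leq\varepsilon\|u\|_{S_{X}^{2,p}}+c_{\varepsilon}\|u\|_{L^{p}}$, again absorbing the $\varepsilon$-term. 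An approximation argument (mollification adapted to the metric, or density of smooth functions in $S_{X}^{2,p}$) extends the representation formula, hence the final estimate, from test functions to all $u\in S_{X}^{2,p}(\Omega)$. The only doubling property used throughout is the local one (\ref{doubling 2}) from \cite{nsw2}, which is exactly what the locally homogeneous framework requires.
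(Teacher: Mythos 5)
Your proposal follows essentially the same route as the paper: lift by Rothschild--Stein to free vector fields, build a parametrix from Folland's homogeneous fundamental solution, derive a representation formula for $X_iX_ju$ whose variable-kernel singular integral and commutator parts are controlled by the locally homogeneous space theory of \cite{bz}, absorb the commutator term using smallness of the local $VMO$ modulus, recover $X_0u$ from the equation, localize with cutoffs and interpolation, and finally transfer back from lifted to original coordinates. The only points the paper must (and does) spell out that you leave implicit are the spherical-harmonics expansion needed to reduce \emph{variable}-kernel operators to a summable series of frozen-kernel ones, and the function-space comparison lemmas between $B$ and $\widetilde B$ (volume of lifted balls, $S^{2,p}$ and $VMO$ comparison) that make the lifted estimates descend to the original variables.
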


\begin{remark}
\label{a0=1}Under the assumptions of the previous theorems, it is not
restrictive to assume $a_{0}\left(  x\right)  $ to be equal to $1$, for we can
always rewrite the equation%
\[
\sum_{i,j=1}^{q}a_{ij}X_{i}X_{j}+a_{0}X_{0}=f
\]
in the form%
\[
\sum_{i,j=1}^{q}\frac{a_{ij}}{a_{0}}X_{i}X_{j}+X_{0}=\frac{f}{a_{0}}%
\]
and apply the a-priori estimates to this equation, controlling $C^{\alpha}$ or
$VMO$ moduli of the new coefficients $\frac{a_{ij}}{a_{0}}$ in terms of the
analogous moduli of $a_{ij},a_{0}$ and the constant $\mu.$ Therefore
throughout the following we will always take $a_{0}\equiv1$.
\end{remark}

\section{Known results and preparatory material from real analysis and
geometry of vector fields\label{Section Known results}}

\subsection{Some known facts about H\"{o}rmander's vector fields, lifting and
approximation\label{subsection lifting}}

Let $X_{0},X_{1},\ldots,X_{q}$ be a system of real smooth vector fields,%
\[
X_{i}=\sum_{j=1}^{n}b_{ij}\left(  x\right)  \partial_{x_{j}},\text{
\ }i=0,1,2,...,q
\]
$\left(  q+1<n\right)  $ defined in some bounded, open and connected subset
$\Omega$ of $\mathbb{R}^{n}$. Let us assign to each $X_{i}$ a weight $p_{i}$,
saying that%

\[
p_{0}=2\text{ and }p_{i}=1\text{ for }i=1,2,...q.
\]
For any multiindex%

\[
I=(i_{1},i_{2},...,i_{k}),
\]
we define the weight of $I$ as%

\[
|I|=\underset{j=1}{\overset{k}{\sum}}p_{i_{j}}.
\]

For any couple of vector fields $X,Y$, let $\left[  X,Y\right]  =XY-YX$ be
their commutator. Now, for any multiindex $I=\left(  i_{1},i_{2}%
,...,i_{k}\right)  $ for $0\leq i_{k}\leq q$ we set:%

\[
X_{I}=X_{i_{1}}X_{i_{2}}...X_{i_{k}}%
\]
and%

\[
X_{[I]}=\left[  X_{i_{1}},\left[  X_{i_{2}},\ldots\left[  X_{i_{k-1}}%
,X_{i_{k}}\right]  \ldots\right]  \right]  .
\]
If $I=\left(  i_{1}\right)  $, then%

\[
X_{[I]}=X_{i_{1}}=X_{I}.
\]
We will say that $X_{[I]}$ is a \emph{commutator of weight }$|I|.$ As usual,
$X_{[I]}$ can be seen either as a differential operator or as a vector field.
We will write%

\[
X_{[I]}f
\]
to denote the differential operator $X_{[I]}$ acting on a function $f$, and%

\[
\left(  X_{[I]}\right)  _{x}%
\]
to denote the vector field $X_{[I]}$ evaluated at the point $x\in\Omega$.

We shall say that $X=\left\{  X_{0},X_{1},\ldots,X_{q}\right\}  $ satisfy
\emph{H\"{o}rmander's condition of weight} $s$ if these vector fields,
together with their commutators of weight $\leq s$, span the tangent space at
every point $x\in\Omega$.

Let $\mathcal{\ell}$ be the free Lie algebra of weight $s$ on $q+1$
generators, that is the quotient of the free Lie algebra with $q+1$ generators
by the ideal generated by the commutators of weight at least $s+1$. We say
that the vector fields $X_{0},\ldots,X_{q}$, which satisfy H\"{o}rmander's
condition of weight $s$ at some point $x_{0}\in\mathbb{R}^{n}$, are \emph{free
up to order }$s$\emph{ at} $x_{0}$ if $n=$dim$\,\mathcal{\ell}$, as a vector
space (note that inequality $\leq$ always holds). The famous Lifting Theorem
proved by Rothschild-Stein in \cite[p. 272]{rs} reads as follows:

\begin{theorem}
\label{Thm lifting}Let $X$ $=$ $(X_{0},X_{1},\ldots,X_{q})$ be $C^{\infty}$
real vector fields on a domain $\Omega\subset\mathbb{R}^{n}$ satisfying
H\"{o}rmander's condition of weight $s$ in $\Omega$. Then, for any
$\overline{x}\in\Omega$, in terms of new variables, $h_{n+1},\ldots,h_{N}$,
there exist smooth functions $\lambda_{il}(x,h)$ ($0\leq i\leq q,$ $n+1\leq
l\leq N$) defined in a neighborhood $\widetilde{U}$ of $\overline{\xi}=\left(
\overline{x},0\right)  \in\mathbb{R}^{N}$ such that the vector fields
$\widetilde{X}_{i}$ given by%
\[
\widetilde{X}_{i}=X_{i}+\underset{l=n+1}{\overset{N}{\sum}}\lambda
_{il}(x,h)\frac{\partial}{\partial h_{l}},\ \ i=0,\ldots,q
\]
satisfy H\"{o}rmander's condition of weight $s$ and are free up to weight $s$
at every point in $\widetilde{U}$.
\end{theorem}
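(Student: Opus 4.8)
Since this statement is quoted verbatim from \cite{rs}, I only outline the strategy. Write $N$ for the dimension of the (weighted) free nilpotent Lie algebra of weight $s$ on $q+1$ generators, computed with the weights $p_{0}=2,\,p_{i}=1$; this is a combinatorial quantity, independent of $n$, and the inequality $n\le N$ always holds, with equality exactly when $X$ is already free up to weight $s$ at $\overline{x}$. I would first record the elementary but crucial remark that, for a system of $q+1$ smooth vector fields on $\mathbb{R}^{m}$, \emph{being free up to weight $s$ at a point is equivalent to satisfying H\"{o}rmander's condition of weight $s$ at that point together with the numerical identity $m=N$}: indeed, $N$ Hall-basis commutators that span $\mathbb{R}^{N}$ must be linearly independent, so no relation beyond those of the free Lie algebra can survive, and by the Jacobi identity every commutator is then a combination of basis ones. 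Since linear independence of finitely many smooth vector fields is an open condition and $m\le N$ is automatic, freeness up to weight $s$ is an open property; hence it suffices to \emph{produce} lifted fields satisfying H\"{o}rmander's condition of weight $s$ at the single point $\overline{\xi}=\left(\overline{x},0\right)$ and then shrink $\widetilde{U}$.

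This reduces everything to a single-variable lifting step, iterated $N-n$ times. Given $X=(X_{0},\ldots,X_{q})$ on $\mathbb{R}^{m}$ with $m<N$, satisfying H\"{o}rmander's condition of weight $s$ near a point $p$, the goal of one step is to adjoin a new variable $t$ and smooth functions $\mu_{0},\ldots,\mu_{q}$ on a neighbourhood of $(p,0)$ in $\mathbb{R}^{m+1}$ so that $\widehat{X}_{i}=X_{i}+\mu_{i}(x)\,\partial_{t}$ still satisfies H\"{o}rmander's condition of weight $s$ near $(p,0)$; since then $m+1\le N$, after $N-n$ such steps one reaches $\mathbb{R}^{N}$, where H\"{o}rmander's condition of weight $s$ plus $\dim=N$ forces freeness by the previous paragraph, and the composition of the steps produces fields of the stated form $\widetilde{X}_{i}=X_{i}+\sum_{l=n+1}^{N}\lambda_{il}(x,h)\,\partial_{h_{l}}$ (each new coefficient being constructed before the later variables appear, hence depending only on the variables already present). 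To carry out one step I would fix a Hall basis $\xi_{[J_{1}]},\ldots,\xi_{[J_{N}]}$ of the weighted free Lie algebra ordered by non-decreasing weight, select greedily, in this order, a maximal linearly independent subset of $(X_{[J_{1}]})_{p},(X_{[J_{2}]})_{p},\ldots$ in $\mathbb{R}^{m}$, and let $m_{0}$ be the smallest index left out; then the $\left(X_{[J_{l}]}\right)_{p}$ for $l<m_{0}$ are linearly independent and $(X_{[J_{m_{0}}]})_{p}=\sum_{l<m_{0}}\gamma_{l}(X_{[J_{l}]})_{p}$. A direct computation gives $\widehat{X}_{[J]}=X_{[J]}+\left(\sum_{i=0}^{q}L_{i}^{J}\mu_{i}\right)\partial_{t}$, where $L_{i}^{J}$ is a differential operator built from $X_{0},\ldots,X_{q}$ (morally, ``$X_{[J]}$ with the $i$-th slot removed''). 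The plan is to choose the jet of $(\mu_{0},\ldots,\mu_{q})$ at $p$ so that $\sum_{i}L_{i}^{J_{l}}\mu_{i}(p)=0$ for every $l<m_{0}$ while $\sum_{i}L_{i}^{J_{m_{0}}}\mu_{i}(p)\ne 0$; then $\widehat{X}_{[J_{m_{0}}]}(p,0)-\sum_{l<m_{0}}\gamma_{l}\widehat{X}_{[J_{l}]}(p,0)$ is a nonzero multiple of $\partial_{t}$ lying in the span of commutators of weight $\le s$, and, combined with the selected commutators (whose $\mathbb{R}^{m}$-projections already span $\mathbb{R}^{m}$), it yields H\"{o}rmander's condition of weight $s$ at $(p,0)$, which then persists on a neighbourhood by openness.

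The main obstacle is the solvability of this last linear system, i.e. showing that the functional $\mu\mapsto\sum_{i}L_{i}^{J_{m_{0}}}\mu_{i}(p)$ is \emph{not} a linear combination of the functionals $\mu\mapsto\sum_{i}L_{i}^{J_{l}}\mu_{i}(p)$, $l<m_{0}$. This is where I expect to need the universality of the weighted free nilpotent Lie algebra: such a dependence, holding for all jets of $\mu$ at $p$, should translate into a Jacobi-type (hence universal) relation, which would force $\xi_{[J_{m_{0}}]}$ to be a combination of the earlier $\xi_{[J_{l}]}$, contradicting that the $\xi_{[J_{k}]}$ form a basis. Making this rigorous, and in particular keeping careful track of weights when the weight-two drift $X_{0}$ is present — so that the new variable $t$ is consistently assigned the weight $|J_{m_{0}}|$, the Hall-basis enumeration respects the weight grading, and no commutator of weight $\le s$ disappears upstairs — is the delicate point; it is precisely the kind of ``subtle detail'' alluded to in the introduction, where the drift case of \cite{rs} is only stated, not proved. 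The remaining items (smoothness of the constructed coefficients, the local nature of the construction, the final shrinking of $\widetilde{U}$) are routine.
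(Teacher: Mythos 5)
The paper does not itself prove Theorem~\ref{Thm lifting}: it is quoted from Rothschild--Stein \cite{rs}, and the Remark following Theorem~\ref{metric} refers the reader to \cite{BBP2} for a proof that explicitly covers the drift case. Your outline faithfully reproduces the classical inductive strategy of those references -- characterize freeness as H\"ormander's condition together with $m=N$, use openness of linear independence, and lift one variable at a time, choosing the jet of the new coefficients so that the first broken Hall-basis relation is repaired -- so there is no conflict of route, and the structural parts (triangular dependence of the new coefficients on the previously added variables, the dimension count ensuring one stops exactly at $\mathbb{R}^N$) are correctly stated.

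The one place where the sketch needs more than a heuristic is the solvability of the jet system, and the appeal to ``universality of the free Lie algebra'' is not yet an argument: a dependence $\sum_i L_i^{J_{m_0}}\mu_i(p)=\sum_{l<m_0}\lambda_l\sum_i L_i^{J_l}\mu_i(p)$, valid for all jets of $\mu$, is a relation among differential operators built from the \emph{particular} fields $X_0,\dots,X_q$, which by hypothesis already satisfy accidental relations below weight $s$ at $p$; such a relation does not automatically lift to a Jacobi-type identity in the free algebra. The published proofs replace this step by an explicit rank (or explicit polynomial-jet) computation showing that suitably chosen jets of $(\mu_0,\dots,\mu_q)$ act injectively on the relevant functionals, graded by weight. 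You correctly flag this, together with the weight bookkeeping imposed by $p_0=2$, as the genuinely delicate part -- an assessment consistent with the paper's own remark that \cite{rs} only states the drift case while \cite{BBP2} proves it.
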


Let $\widetilde{X}=\left(  \widetilde{X}_{0},\widetilde{X}_{1},\ldots
,\widetilde{X}_{q}\right)  $ be the lifted vector fields which are free up to
weight $s$ at some point $\xi\in\mathbb{R}^{N}$ and $\ell$ be the free Lie
algebra generated by $\widetilde{X}$. For each $j$, $1\leq j\leq s$, we can
select a family $\left\{  \widetilde{X}_{j,k}\right\}  _{k}$ of commutators of
weight $j$, with $\widetilde{X}_{1,k}=\widetilde{X}_{k},\widetilde{X}%
_{2,1}=\widetilde{X}_{0},k=1,2,\ldots,q$, such that $\left\{  \widetilde
{X}_{j,k}\right\}  _{jk}$ is a basis of $\ell$, that is to say, there exists a
set $A$ of double-indices $\alpha$\ such that $\left\{  \widetilde{X}_{\alpha
}\right\}  _{\alpha\in A}$ is a basis of $\ell$. Note that Card$A=N$, which
allows us to identify $\ell$ with $\mathbb{R}^{N}$.

Now, in $\mathbb{R}^{N}$ we can consider the group structure of $N(q+1,s)$,
which is the simply connected Lie group associated to $\mathcal{\ell}$. We
will write $\circ$ for the Lie group operation (which we think as a
\emph{translation}) and will assume that the group identity is the origin. It
is also possible to assume that $u^{-1}=-u$ (the group inverse is the
Euclidean opposite). We can naturally define \emph{dilations} in $N(q+1,s)$
by
\begin{equation}
D(\lambda)\left(  \left(  u_{\alpha}\right)  _{\alpha\in A}\right)  =\left(
\lambda^{\mid\alpha\mid}u_{\alpha}\right)  _{\alpha\in A}. \label{i1}%
\end{equation}
These are group automorphisms, hence $N(q+1,s)$ is a \emph{homogeneous group},
in the sense of Stein (see \cite[p. 618-622]{st}). We will call it
$\mathbb{G}$, leaving the numbers $q,s$ implicitly understood.

We can define in $\mathbb{G}$ a \emph{homogeneous norm} $\left\Vert
\cdot\right\Vert $ as follows. For any $u\in\mathbb{G}$, $u\neq0$, set
\[
\left\Vert u\right\Vert =r\text{ \ \ }\Leftrightarrow\text{ \ \ }\left\vert
D\left(  \frac{1}{r}\right)  u\right\vert =1\text{,}%
\]
where $\left\vert \cdot\right\vert $ denotes the Euclidean norm.

The function
\[
d_{\mathbb{G}}\left(  u,v\right)  =\left\Vert v^{-1}\circ u\right\Vert
\]
is a \emph{quasidistance}, that is:%

\begin{align}
d_{\mathbb{G}}\left(  u,v\right)   &  \geq0\text{ and }d_{\mathbb{G}}\left(
u,v\right)  =0\text{ if and only if }u=v;\nonumber\\
d_{\mathbb{G}}\left(  u,v\right)   &  =d_{\mathbb{G}}\left(  v,u\right)
;\label{quasid}\\
d_{\mathbb{G}}\left(  u,v\right)   &  \leq c\left(  d_{\mathbb{G}}\left(
u,z\right)  +d_{\mathbb{G}}\left(  z,v\right)  \right)  ,\nonumber
\end{align}
for every $u,v,z\in\mathbb{G}$ and some positive constant $c\left(
\mathbb{G}\right)  \geq1$. We define the balls with respect to $d_{\mathbb{G}%
}$ as
\[
B\left(  u,r\right)  \equiv\left\{  v\in\mathbb{R}^{N}:d_{\mathbb{G}}\left(
u,v\right)  <r\right\}  .
\]

It can be proved (see \cite[p.619]{st}) that the Lebesgue measure in
$\mathbb{R}^{N}$ is the Haar measure of $\mathbb{G}$. Therefore, by
(\ref{i1}),%
\[
\left\vert B\left(  u,r\right)  \right\vert =\left\vert B\left(  u,1\right)
\right\vert r^{Q},
\]
for every $u\in\mathbb{G}$ and $r>0$, where $Q=\sum_{\alpha\in A}\left\vert
\alpha\right\vert $. We will call $Q$ the \emph{homogeneous dimension} of
$\mathbb{G}$.

Next, we define the \emph{convolution} of two functions in $\mathbb{G}$ as%

\[
\left(  f\ast g\right)  \left(  u\right)  =\int_{\mathbb{R}^{N}}f\left(
u\circ v^{-1}\right)  g\left(  v\right)  dv=\int_{\mathbb{R}^{N}}g\left(
v^{-1}\circ u\right)  f\left(  v\right)  dv,
\]
for every couple of functions for which the above integrals make sense.

Let $\tau_{u}$ be the left translation operator acting on functions: $\left(
\tau_{u}f\right)  \left(  v\right)  =f\left(  u\circ v\right)  $. We say that
a differential operator $P$ on $\mathbb{G}$ is \emph{left invariant} if
$P\left(  \tau_{u}f\right)  =\tau_{u}\left(  Pf\right)  $ for every smooth
function $f$. From the above definition of convolution we read that if $P$ is
any left invariant differential operator,%

\begin{equation}
P\left(  f\ast g\right)  =f\ast Pg \label{left convolution}%
\end{equation}
(provided the integrals converge).

We say that \emph{a differential operator} $P$ on $\mathbb{G}$ is
\emph{homogeneous of degree} $\delta>0$ if
\[
P\left(  f\left(  D\left(  \lambda\right)  u\right)  \right)  =\lambda
^{\delta}\left(  Pf\right)  \left(  D\left(  \lambda\right)  u\right)
\]
for every test function $f$ and $\lambda>0,u\in\mathbb{G}$. Also, we say that
\emph{a function} $f$ is \emph{homogeneous of degree} $\delta\in\mathbb{R}$
if
\[
f\left(  D\left(  \lambda\right)  u\right)  =\lambda^{\delta}f\left(
u\right)  \text{ for every }\lambda>0,u\in\mathbb{G}.
\]

Clearly, if $P$ is a differential operator homogeneous of degree $\delta_{1}$
and $f$ is a homogeneous function of degree $\delta_{2}$, then $Pf$ is a
homogeneous function of degree $\delta_{2}-\delta_{1}$, while $fP$ is a
differential operator, homogeneous of degree $\delta_{1}-\delta_{2}$.

Let $Y_{\alpha}$ be the left invariant vector field which agrees with
$\frac{\partial}{\partial u_{\alpha}}$ at $0$ and set $Y_{1,k}=Y_{k}%
,k=1,\cdots,q,Y_{2,1}=Y_{0}$. $\ $The differential operator $Y_{i,k}$ is
homogeneous of degree $i$, and $\left\{  Y_{\alpha}\right\}  _{\alpha\in A}$
is a basis of the free Lie algebra $\ell$.

A differential operator on $\mathbb{G}$ is said to have \emph{local degree
less than or equal to }$\lambda$ if, after taking the Taylor expansion at $0$
of its coefficients, each term obtained is a differential operator homogeneous
of degree $\leq\lambda$.

Also, a function on $\mathbb{G}$ is said to have \emph{local degree greater
than or equal to }$\lambda$ if, after taking the Taylor expansion at $0$ of
its coefficients, each term obtained is a homogeneous function of degree
$\geq\lambda$.

For $\xi,\eta\in\widetilde{U}$, define the map
\[
\Theta_{\eta}(\xi)=(u_{\alpha})_{\alpha\in A}%
\]
with $\xi=\exp\left(  \sum_{\alpha\in A}u_{\alpha}\,\widetilde{X}_{\alpha
}\right)  \eta.$ We will also write $\Theta\left(  \eta,\xi\right)
=\Theta_{\eta}(\xi).$

We can \ now state Rothschild-Stein's approximation theorem (see \cite[p.
273]{rs}).

\begin{theorem}
\label{Rothschild-Stein's approximation Theorem}In the coordinates given by
$\Theta\left(  \eta,\cdot\right)  $ we can write $\widetilde{X}_{i}%
=Y_{i}+R_{i}^{\eta}$ on an open neighborhoods of $0$, where $R_{i}^{\eta}$ is
a vector field of local degree $\leq0$ for $i=1,\ldots,q\left(  \leq1\text{
for }i=0\right)  $ depending smoothly on $\eta$. Explicitly, this means that
for every $f\in C_{0}^{\infty}(\mathbb{G})$:%
\begin{equation}
\widetilde{X}_{i}\left[  f\left(  \Theta\left(  \eta,\cdot\right)  \right)
\right]  \left(  \xi\right)  =\left(  Y_{i}f+R_{i}^{\eta}f\right)  \left(
\Theta\left(  \eta,\xi\right)  \right)  . \label{approximation}%
\end{equation}
More generally, for every double-index $\left(  i,k\right)  \in A$, we can
write%
\begin{equation}
\widetilde{X}_{i,k}\left[  f\left(  \Theta\left(  \eta,\cdot\right)  \right)
\right]  \left(  \xi\right)  =\left(  Y_{i,k}f+R_{i,k}^{\eta}f\right)  \left(
\Theta\left(  \eta,\xi\right)  \right)  , \label{approximation format}%
\end{equation}
where $R_{i,k}^{\eta}$ is a vector field of local degree $\leq i-1$\ depending
smoothly on $\eta$.
\end{theorem}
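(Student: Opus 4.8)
The plan is to read $\Theta_{\eta}$ as the inverse of the exponential chart $E_{\eta}(u)=\exp(u\cdot\widetilde{X})\,\eta$, where $u\cdot\widetilde{X}:=\sum_{\alpha\in A}u_{\alpha}\widetilde{X}_{\alpha}$, and then to compute the transported vector field $(\Theta_{\eta})_{\ast}\widetilde{X}_{i}$ by flowing. First I would check that $E_{\eta}$ is a diffeomorphism of a neighbourhood of $0\in\mathbb{R}^{N}$ onto a neighbourhood of $\eta$: its differential at $0$ sends the $\alpha$-th coordinate vector to $(\widetilde{X}_{\alpha})_{\eta}$, and these span the tangent space at $\eta$ because $\widetilde{X}$ is free up to weight $s$ and $\mathrm{Card}\,A=N$. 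Hence $\Theta_{\eta}:=E_{\eta}^{-1}$ is a well-defined smooth chart, depending smoothly on $\eta$ since it is built from flows of vector fields that themselves depend smoothly on $\eta$. With this, the push-forward $V_{i}^{\eta}:=(\Theta_{\eta})_{\ast}\widetilde{X}_{i}$ (so that (\ref{approximation}) is exactly the assertion $V_{i}^{\eta}=Y_{i}+R_{i}^{\eta}$) has flow $u\mapsto\Theta_{\eta}(\exp(t\widetilde{X}_{i})\exp(u\cdot\widetilde{X})\,\eta)$, whence
\[
V_{i}^{\eta}(u)=\left.\frac{d}{dt}\right|_{t=0}\Theta_{\eta}\bigl(\exp(t\widetilde{X}_{i})\exp(u\cdot\widetilde{X})\,\eta\bigr),
\]
and the whole statement reduces to identifying this derivative with $Y_{i}$ plus a remainder of the prescribed local degree.

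The heart of the argument is a comparison with the free nilpotent model carried by $\mathbb{G}$ itself. On $\mathbb{G}$, the flow of a left-invariant field through a point is right translation by its group exponential, so the analogue of $E_{\eta}$ is $u\mapsto\eta\circ u$ and the analogue of $\Theta_{\eta}$ is the left translation $\xi\mapsto\eta^{-1}\circ\xi$; since the $Y_{i}$ are left invariant, the model computation produces exactly $Y_{i}$, with zero remainder. Moreover, in the global coordinates of $\mathbb{G}$, which are canonical coordinates of the first kind, $Y_{i}=\partial/\partial u_{i}+\sum_{|\beta|>p_{i}}c_{i\beta}(u)\,\partial/\partial u_{\beta}$ with each $c_{i\beta}$ a polynomial homogeneous of degree $|\beta|-p_{i}$ under $D(\lambda)$, so that $Y_{i}$ is homogeneous of degree $p_{i}$, consistently with what is recorded just before the theorem. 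Thus, writing $V_{i}^{\eta}=\sum_{\beta}v_{\beta}^{\eta}(u)\,\partial/\partial u_{\beta}$ and Taylor-expanding the coefficients $v_{\beta}^{\eta}$ at $0$, it suffices to prove two facts: (i) no term of the resulting expansion is a differential operator homogeneous of degree $>p_{i}$; and (ii) its part homogeneous of degree exactly $p_{i}$ equals $Y_{i}$. Once (i) and (ii) hold, $R_{i}^{\eta}:=V_{i}^{\eta}-Y_{i}$ is by construction a vector field of local degree $\le p_{i}-1$, which is $\le0$ for $i=1,\dots,q$ and $\le1$ for $i=0$, as claimed; smoothness of $R_{i}^{\eta}$ in $\eta$ is then automatic.

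Both (i) and (ii) I would extract from a single Campbell--Hausdorff / Taylor expansion of the composed map $\exp(t\widetilde{X}_{i})\exp(u\cdot\widetilde{X})\,\eta$ read in the $\Theta_{\eta}$-coordinates. The decisive input is freeness: because no linear relation is imposed among the iterated brackets of the $\widetilde{X}_{i}$ of weight $\le s$, the jet at $0$ of the change of coordinates $\Theta_{\eta}$ is rigidly prescribed and must reproduce the structure of the free nilpotent Lie algebra through all terms of homogeneous degree $\le p_{i}$; this forces (ii) and rules out, in that part of the expansion, any term of degree $>p_{i}$, giving (i). The only mechanisms by which the actual fields can deviate from the model are the iterated brackets of the $\widetilde{X}_{\alpha}$ of weight $\ge s+1$, which are absent in the free step-$s$ algebra but genuine here, and the non-termination of the Campbell--Hausdorff series; re-expressing each such excess term through the basis $\{\widetilde{X}_{\alpha}\}_{\alpha\in A}$ and transporting it via $\Theta_{\eta}$, one checks that it feeds into $V_{i}^{\eta}$ only through coefficients vanishing to sufficiently high weighted order at $0$ that the associated operator has local degree $\le p_{i}-1$. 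Finally, the general double-index statement (\ref{approximation format}) follows by repeating this analysis with the weight-$i$ commutator $\widetilde{X}_{i,k}$ in place of $\widetilde{X}_{i}$; equivalently, since push-forward commutes with brackets, $(\Theta_{\eta})_{\ast}\widetilde{X}_{i,k}$ is the corresponding iterated bracket of the fields $Y_{j}+R_{j}^{\eta}$, which expands to $Y_{i,k}$ plus terms of local degree $\le i-1$.

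I expect the main obstacle to be precisely the bookkeeping behind (i)--(ii): making quantitative the claim that the weight-$\ge s+1$ brackets and the tail of the Campbell--Hausdorff series lower the local degree of the remainder by exactly one unit and no more. This is the technical core of the Rothschild--Stein argument, and it is somewhat more delicate in the present setting \textquotedblleft with drift\textquotedblright: because $X_{0}$ carries weight $2$, the remainder $R_{0}^{\eta}$ is only required to have local degree $\le1$ rather than $\le0$, and one must verify that the weight count --- in particular the way the weight-two field interacts with the weight-one fields inside the Campbell--Hausdorff expansion --- is consistent with that looser bound. All the ancillary calculus needed here (homogeneity of operators and of functions, local degree, behaviour under the dilations $D(\lambda)$) has already been set up in the paragraphs preceding the theorem.
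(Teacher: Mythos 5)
The paper does not prove this theorem; it is stated as a known result, cited from Rothschild–Stein \cite[p.\ 273]{rs} with the Remark immediately after pointing to \cite{BBP2} for a complete proof that covers the drift case. So there is no internal proof to compare against, and your proposal should be judged against the known argument.

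Your outline correctly identifies the strategy of the Rothschild–Stein proof: interpret $\Theta_\eta$ as the inverse of the exponential chart $u\mapsto\exp(u\cdot\widetilde X)\eta$, push the $\widetilde X_i$ forward, compare with the left-invariant model $Y_i$ on $\mathbb G$, and invoke freeness to pin down the low-order jets. The final paragraph's reduction of \eqref{approximation format} to \eqref{approximation} via the fact that push-forward commutes with brackets, together with additivity of local degree under brackets, is also a sound shortcut. However, the proposal openly defers the technical core and, more importantly, the one mechanism it does name --- a Campbell--Hausdorff expansion of $\exp(t\widetilde X_i)\exp(u\cdot\widetilde X)\eta$ --- does not work as stated: the $\widetilde X_j$ do not generate a nilpotent (or even finite-dimensional) Lie algebra, so the formal BCH series has no direct meaning. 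The actual argument in \cite{rs} and \cite{BBP2} computes the differential of the composed flow map via the standard variation formula (an integral of $\operatorname{ad}$-exponentials), truncates in the free nilpotent Lie algebra of step $s$, and must then show that everything discarded --- both brackets of weight $>s$ and the higher-order Taylor remainder of the flow --- contributes only terms of local degree $\le p_i-1$. That separation, and the verification that the weight-$2$ field $X_0$ respects the looser bound $\le 1$, is precisely the content of the theorem; your sketch names the difficulty but does not supply the idea that resolves it.
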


This theorem says that the lifted vector fields $\widetilde{X}_{i}$ can be
locally approximated by the homogeneous, left invariant vector fields $Y_{i}$
on the group $\mathbb{G}$. Some other important properties of the map $\Theta$
are stated in the next theorem (see \cite[p. 284-287]{rs}):

\begin{theorem}
\label{metric}Let $\overline{\xi}\in\mathbb{R}^{N}$ and $\widetilde{U}$ be a
neighborhood of $\overline{\xi}$ such that for any $\eta\in\widetilde{U}$ the
map $\Theta\left(  \eta,\cdot\right)  $ is well defined in $\widetilde{U}%
$.\ For $\xi,\eta\in\widetilde{U}$, define%
\begin{equation}
\rho(\eta,\xi)=||\Theta\left(  \eta,\xi\right)  || \label{quasidistance}%
\end{equation}
where $||\cdot||$ is the homogeneous norm defined above. Then:

(a) \ \ \ $\Theta\left(  \eta,\xi\right)  =\Theta\left(  \xi,\eta\right)
^{-1}=-\Theta\left(  \xi,\eta\right)  $ for every $\xi,\eta\in\widetilde{U}$;

(b) $\ \ \rho$ is a quasidistance in $\widetilde{U}$ (that is satisfies the
three properties (\ref{quasid}));

(c) \ \ under the change of coordinates $u=$ $\Theta_{\xi}\left(  \eta\right)
$, the measure element becomes:%
\begin{equation}
d\eta=c(\xi)\cdot\left(  1+\omega\left(  \xi,u\right)  \right)  du,
\label{change of variables}%
\end{equation}
where $c(\xi)$ is a smooth function, bounded and bounded away from zero in
$\widetilde{U}$, $\omega\left(  \xi,u\right)  $ is a smooth function in both
variables, with
\[
\left\vert \omega\left(  \xi,u\right)  \right\vert \leq c\left\Vert
u\right\Vert ,
\]
and an analogous statement is true for the change of coordinates $u=$
$\Theta_{\eta}\left(  \xi\right)  $.
\end{theorem}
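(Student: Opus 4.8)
The statement to be proved is Theorem \ref{metric}, which collects three properties of the Rothschild--Stein map $\Theta$: the antisymmetry relation (a), the quasidistance property (b), and the change-of-variables formula (c). Here is how I would organize the proof.

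\begin{proof}[Proof proposal]
The plan is to deduce all three properties from the defining relation $\xi=\exp\bigl(\sum_{\alpha\in A}(\Theta_{\eta}(\xi))_{\alpha}\,\widetilde{X}_{\alpha}\bigr)\eta$ together with the Baker--Campbell--Hausdorff formula and the fact that the lifted fields $\widetilde{X}_{\alpha}$ are free up to weight $s$, so that the exponential coordinate map at each fixed base point $\eta$ is a diffeomorphism onto a neighborhood of $\eta$ agreeing to high order with the group exponential on $\mathbb{G}$.

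For part (a), I would first establish $\Theta_{\xi}(\eta)=-\Theta_{\eta}(\xi)$: since $\widetilde{X}$ are free up to weight $s$, locally they behave like the left-invariant generators $Y_{\alpha}$ of $\mathbb{G}$ up to the relevant order, and for the exponential of a nilpotent Lie algebra one has $\exp(-V)\exp(V)=\mathrm{id}$; running the flow from $\xi$ back to $\eta$ must therefore use the opposite coordinate vector, modulo the local-degree error terms, which a careful bookkeeping (using that $N(q+1,s)$ is the \emph{free} nilpotent group of the right step, so that the identity is exact, not just approximate) shows to vanish. Then $\Theta(\eta,\xi)^{-1}=-\Theta(\eta,\xi)$ is just the stated normalization $u^{-1}=-u$ of the group inverse, giving the full chain of equalities in (a).

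For part (b): symmetry of $\rho$ is immediate from (a), because $\rho(\eta,\xi)=\|\Theta(\eta,\xi)\|=\|-\Theta(\xi,\eta)\|=\|\Theta(\xi,\eta)\|$ using that the homogeneous norm satisfies $\|u\|=\|-u\|=\|u^{-1}\|$. Positivity and nondegeneracy follow since $\Theta_{\eta}(\xi)=0$ iff $\xi=\eta$ (the exponential map is a local diffeomorphism). The quasi-triangle inequality is the substantive point: I would compare $\Theta_{\eta}(\xi)$ with the group product $\Theta_{\zeta}(\xi)\circ\Theta_{\eta}(\zeta)^{-1}$ (or the appropriate ordering), showing via the BCH expansion and Theorem \ref{Rothschild-Stein's approximation Theorem} that the two differ by controlled lower-order terms, so that $\rho(\eta,\xi)\le c(\rho(\eta,\zeta)+\rho(\zeta,\xi))$ with $c$ absorbing both the quasi-triangle constant of $d_{\mathbb{G}}$ and the error from the non-exactness of the group law in these coordinates; one must shrink $\widetilde{U}$ so that all points involved stay in the chart. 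For part (c), I would compute the Jacobian of $\eta\mapsto u=\Theta_{\xi}(\eta)$ at fixed $\xi$: at $u=0$ this Jacobian equals a nonzero constant $c(\xi)$ (the determinant relating the frame $\{(\widetilde{X}_{\alpha})_{\xi}\}$ to $\partial/\partial u_{\alpha}$, i.e.\ essentially the density of the pushed-forward Lebesgue measure), smooth and bounded away from zero on $\widetilde{U}$ by freeness; and since $\Theta_{\xi}$ is smooth, the Jacobian at general $u$ is $c(\xi)(1+\omega(\xi,u))$ with $\omega$ smooth, $\omega(\xi,0)=0$, hence $|\omega(\xi,u)|\le c\|u\|$ after noting that $\|u\|\gtrsim|u|$ near $0$ so the first-order Taylor remainder in $u$ is dominated by $\|u\|$; the same argument with the roles of $\xi,\eta$ exchanged, using (a), gives the analogous statement for $u=\Theta_{\eta}(\xi)$.

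The main obstacle is the quasi-triangle inequality in (b): unlike on a homogeneous group, the coordinate map $\Theta$ does not turn the underlying local "multiplication" into the exact group law of $\mathbb{G}$, so one cannot simply invoke $d_{\mathbb{G}}(u,v)\le c(d_{\mathbb{G}}(u,w)+d_{\mathbb{G}}(w,v))$. The work is in quantifying the discrepancy between $\Theta_{\eta}(\xi)$ and the relevant $\mathbb{G}$-product of $\Theta_{\zeta}(\xi)$ and $\Theta_{\eta}(\zeta)$ uniformly for $\xi,\eta,\zeta$ in a fixed small neighborhood, and in checking that this discrepancy is small enough (a factor comparable to the distances themselves) that it can be absorbed into a single structural constant. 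This is exactly the kind of estimate where, as the introduction warns, the presence of the weight-two drift field $X_{0}$ makes the local-degree bookkeeping in Theorem \ref{Rothschild-Stein's approximation Theorem} more delicate than in the driftless case, so I would be careful to track the $R_{i,k}^{\eta}$ error terms with their sharp local degrees rather than treating all of them uniformly.
\end{proof}
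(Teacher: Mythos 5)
The paper does not prove this theorem; it quotes it from Rothschild--Stein \cite{rs}, pp.~284--287, with an explicit citation and no in-paper argument, so there is no internal proof to compare yours against. That said, your sketch follows the spirit of the source, with two caveats. Part (a) is overthought: if $V=\sum_{\alpha}u_{\alpha}\widetilde{X}_{\alpha}$ and $\xi=\exp(V)\eta$, then $\eta=\exp(-V)\xi$ holds \emph{exactly} by time-reversal of the flow of the fixed vector field $V$ (ODE uniqueness), so $\Theta_{\xi}(\eta)=-\Theta_{\eta}(\xi)$ with no BCH, no freeness, and no local-degree bookkeeping; the only remaining input is the normalization $u^{-1}=-u$ on $\mathbb{G}$, which the paper has already arranged. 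Your appeal to cancellation of error terms here is a red herring and would mislead a reader into thinking (a) is delicate when it is elementary. On (b), you correctly single out the quasi-triangle inequality as the substantive claim and correctly locate the needed comparison, between $\Theta_{\eta}(\xi)$ and a suitable $\mathbb{G}$-product of $\Theta_{\eta}(\zeta)$ and $\Theta_{\zeta}(\xi)$, but you do not actually carry out that estimate; as written this is a plan rather than a proof, and that estimate is precisely where the work in Rothschild--Stein lies. Part (c) is correct, including the observation that $|u|\le C\|u\|$ near $0$ (each $|u_{\alpha}|\le\|u\|^{|\alpha|}\le\|u\|$ when $\|u\|\le1$), which converts the Euclidean Taylor bound on $\omega$ into the stated $|\omega(\xi,u)|\le c\|u\|$.
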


\begin{remark}
As we have recalled in the introduction, in the paper \cite{rs} detailed
proofs are given only when the drift term $X_{0}$ is lacking. A proof of the
lifting and approximation results explicitly covering the drift case can be
found in \cite{BBP2}, where the theory is also extended to the case of
nonsmooth H\"{o}rmander's vector fields. We refer to the introduction of
\cite{BBP2} for further bibliographic remarks about existing alternative
proofs of the lifting and approximation theorems.
\end{remark}

\subsection{Metric induced by vector fields\label{subsec metric}}

Let us start recalling the definition of control distance given by
Nagel-Stein-Wainger in \cite{nsw2} for H\"{o}rmander's vector fields with drift:

\begin{definition}
For any $\delta>0$, let $C\left(  \delta\right)  $ be the class of absolutely
continuous mappings $\varphi$: $[0,1]\rightarrow\Omega$ which satisfy%
\begin{equation}
\varphi^{\prime}(t)=\sum_{\left\vert I\right\vert \leq s}\lambda_{I}(t)\left(
X_{\left[  I\right]  }\right)  _{\varphi(t)}\text{ a.e. }t\in\left(
0,1\right)  \label{curve}%
\end{equation}
with $\left\vert \lambda_{I}(t)\right\vert \leq\delta^{\left\vert I\right\vert
}.$ We define%
\[
d(x,y)=\inf\left\{  \delta:\exists\varphi\in C\left(  \delta\right)  \text{
with }\varphi\left(  0\right)  =x,\varphi\left(  1\right)  =y\right\}  .
\]

\end{definition}

The finiteness of $d$ immediately follows by H\"{o}rmander's condition: since
the vector fields $\left\{  X_{\left[  I\right]  }\right\}  _{\left\vert
I\right\vert \leq s}$ span $\mathbb{R}^{n},$ we can always join any two points
$x,y$ with a curve $\varphi$ of the kind (\ref{curve}); moreover, $d$ turns
out to be a distance. Analogously to what Nagel-Stein-Wainger do in
\cite{nsw2} when $X_{0}$ is lacking, in \cite{BBP1} the following notion is introduced:

\begin{definition}
\label{c-c-distance}For any $\delta>0$, let $C_{1}\left(  \delta\right)  $ be
the class of absolutely continuous mappings $\varphi:[0,1]\rightarrow\Omega$
which satisfy%
\[
\varphi^{\prime}(t)=\overset{q}{\underset{i=0}{\sum}}\lambda_{i}(t)\left(
X_{i}\right)  _{\varphi\left(  t\right)  }\text{ a.e. }t\in\left(  0,1\right)
\]
with $\left\vert \lambda_{0}(t)\right\vert \leq\delta^{2}$ and $\left\vert
\lambda_{j}(t)\right\vert \leq\delta$ for $j=1,\cdots q.$

We define%
\[
d_{X}(x,y)=\inf\left\{  \delta:\exists\varphi\in C_{1}\left(  \delta\right)
\text{ with }\varphi\left(  0\right)  =x,\varphi\left(  1\right)  =y\right\}
.
\]

\end{definition}

Note that the finiteness of $d_{X}\left(  x,y\right)  $ for any two points
$x,y\in\Omega$ is not a trivial fact, but depends on a connectivity result
(\textquotedblleft Chow's theorem\textquotedblright); moreover, it can be
proved that $d$ and $d_{X}$ are equivalent, and that $d_{X}$ is still a
distance (see \cite{BBP1}, where these results are proved in the more general
setting of nonsmooth vector fields). From now on we will always refer to
$d_{X}$ as to the control distance, induced by the system of H\"{o}rmander's
vector fields $X$. It is well-known that this distance is topologically
equivalent to the Euclidean one. For any\ $x\in\Omega$, we set%

\[
B_{r}\left(  x\right)  =\left\{  y\in\Omega:d_{X}\left(  x,y\right)
<r\right\}  .
\]

The basic result about the measure of metric balls is the famous local
doubling condition proved by Nagel-Stein-Wainger \cite{nsw2}:

\begin{theorem}
\label{Thm NSW}For every $\Omega^{\prime}\Subset\Omega$ there exist positive
constants $c,r_{0}$ such that for any $x\in\Omega^{\prime},r\leq r_{0},$%
\[
\left\vert B\left(  x,2r\right)  \right\vert \leq c\left\vert B\left(
x,r\right)  \right\vert .
\]

\end{theorem}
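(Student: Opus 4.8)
The plan is to reduce everything to the lifted, free situation, where the metric balls can be compared with boxes of an explicit form, and then transport the volume comparison back downstairs via the measure-change formula. First I would fix $\Omega^{\prime}\Subset\Omega$ and cover $\overline{\Omega^{\prime}}$ by finitely many of the neighborhoods $\widetilde U$ furnished by the Lifting Theorem (Theorem \ref{Thm lifting}); it suffices to prove the doubling estimate for $x$ in a fixed small neighborhood and $r\le r_0$, with $r_0$ and the constant $c$ depending only on that neighborhood and the vector fields, since a standard chaining/compactness argument then glues the local statements together. So from now on work near a single $\overline x$, pass to the lifted vector fields $\widetilde X=(\widetilde X_0,\dots,\widetilde X_q)$ on $\widetilde U\subset\mathbb R^N$, which are free up to weight $s$, and recall that the lifted control distance $\widetilde d_X$ projects onto $d_X$ and that the lifted balls fiber over the ground balls in such a way that $|B_r(x)|$ is (up to bounded factors) the $\mathbb R^N$-measure of a lifted ball, integrated over the added variables; this reduction — that it is enough to prove doubling for the \emph{free} lifted fields — is exactly the point of the lifting construction.

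Next, for the free lifted fields I would use the approximation of $\widetilde X_i$ by the homogeneous left-invariant fields $Y_i$ on $\mathbb G=N(q+1,s)$ provided by Theorem \ref{Rothschild-Stein's approximation Theorem}. The key geometric input is the "ball-box" description: in the $\Theta(\eta,\cdot)$-coordinates centered at $\eta$, the lifted metric ball $\widetilde B_r(\eta)$ is comparable (two-sided, with constants uniform for $\eta$ near $\overline\xi$ and $r$ small) to the image under $\Theta_\eta^{-1}$ of the group box $\{\,\|u\|<r\,\}$ — equivalently $\{\,|u_\alpha|<r^{|\alpha|}\,\}$ up to equivalence of homogeneous norms. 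This comes from writing a control curve for $\widetilde X$ as a control curve for $Y$ plus a lower-order perturbation (the $R_i^\eta$ terms of local degree $\le 0$, resp. $\le 1$ for the drift), and running a Gronwall-type estimate to show the perturbation does not change the size of the reachable set by more than a bounded factor, provided $r$ is small. I would then invoke the measure-change formula \eqref{change of variables}: under $u=\Theta_\xi(\eta)$ one has $d\eta=c(\xi)(1+\omega(\xi,u))\,du$ with $c(\xi)$ bounded above and below and $|\omega(\xi,u)|\le c\|u\|$, so for $r$ small the factor $1+\omega$ is pinched between $1/2$ and $2$ on $\widetilde B_r(\xi)$. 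Hence
\[
|\widetilde B_r(\xi)|\;\approx\;\bigl|\{u:\|u\|<r\}\bigr|\;=\;|B(0,1)|\,r^{Q},
\]
with constants uniform in $\xi$ and $r\le r_0$. The doubling inequality $|\widetilde B_{2r}(\xi)|\le 2^{Q}\cdot(\text{const})\cdot|\widetilde B_r(\xi)|$ is then immediate from comparing $r^Q$ with $(2r)^Q$; projecting back down (integrating over the lift variables and using that the fibers also scale polynomially) gives $|B_{2r}(x)|\le c\,|B_r(x)|$ for $x\in\Omega^{\prime}$, $r\le r_0$.

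I expect the main obstacle to be the ball-box comparison for the \emph{lifted} fields in presence of the drift $X_0$ of weight two: one must be careful that the perturbation fields $R_0^\eta$ have local degree $\le 1$ (not $\le 0$), so the naive Gronwall bound must be organized by weight, controlling the $h_l$-components according to $|\alpha|$ rather than uniformly. A secondary technical point is making all constants uniform as $\eta$ ranges over a neighborhood and keeping $r_0$ independent of $\eta$ — this is handled by the smooth dependence of $\lambda_{il}$, $R_i^\eta$, $c(\xi)$, $\omega(\xi,u)$ on their parameters together with compactness. Once the uniform two-sided bound $|\widetilde B_r(\xi)|\approx r^Q$ is in hand, the doubling estimate and its descent to $\Omega^{\prime}$ are routine.
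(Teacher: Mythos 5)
The paper does not supply a proof of this theorem: Theorem \ref{Thm NSW} is stated as a recalled result of Nagel--Stein--Wainger and simply cited to \cite{nsw2}, so there is no internal argument to compare yours against. Your outline is in the spirit of that reference (lift to the free case, compare $d_{\widetilde{X}}$-balls with group boxes via $\Theta$, use the change-of-variables formula \eqref{change of variables} to get $|\widetilde{B}_r(\xi)|\approx r^{Q}$, then descend) and is plausible as a sketch. One remark on the descent step: you appeal to ``the fibers also scale polynomially,'' but this is not needed and would itself require proof. Since $\{h^{\prime}:(z,h^{\prime})\in\widetilde{B}_{r}(x,h)\}\subset\{h^{\prime}:(z,h^{\prime})\in\widetilde{B}_{2r}(x,h)\}$ by inclusion of balls, the fiber ratio is automatically $\le 1$; combined with Theorem \ref{the volume of the ball} (applied with a fixed $z\in B_{\delta_{0}r}(x)\subset B_{\delta_{0}\cdot 2r}(x)$) and $|\widetilde{B}_{2r}(\xi)|\approx 2^{Q}|\widetilde{B}_{r}(\xi)|$, this already yields $|B_{2r}(x)|\le c\,|B_{r}(x)|$. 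You should also flag that Theorem \ref{the volume of the ball} is itself a nontrivial ingredient proved in the same paper \cite{nsw2}; a genuinely self-contained argument along your lines would need to verify that its proof does not already presuppose the doubling estimate being established, as otherwise the reduction is circular.
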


As already pointed out in the introduction, the distance $d_{X}$ does
\emph{not} satisfy the segment property: given two points at distance $r$, it
is generally impossible to find a third point at distance $r/2$ from both. A
weaker property which this distance actually satisfies is contained in the
next lemma, and will be useful when dealing with the properties of H\"{o}lder
spaces $C^{\alpha}$:

\begin{lemma}
\label{non segment property}For any $x,y\in\Omega$ and any positive integer
$n,$ we can join $x$ to $y$ with a curve $\gamma$ and find $n+1$ points
$p_{0}=x,p_{1},p_{2},...,p_{n}=y$ on $\gamma,$ such that%
\[
d_{X}\left(  p_{j},p_{j+1}\right)  \leq\frac{d_{X}\left(  x,y\right)  }%
{\sqrt{n}}\text{ for }j=0,2,...,n-1.
\]

\end{lemma}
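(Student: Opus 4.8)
The plan is to unwind the definition of the control distance $d_X$ and to use in an essential way that the drift $X_{0}$ has weight two, so that its coefficient along an admissible curve is controlled by $\delta^{2}$ rather than by $\delta$. Set $r=d_X(x,y)$ and choose an admissible curve $\varphi\colon[0,1]\to\Omega$ with $\varphi(0)=x$, $\varphi(1)=y$, belonging to $C_{1}(\delta)$ with $\delta=r$ (using that the infimum in Definition \ref{c-c-distance} is attained --- see below), or, if one prefers, with $\delta\le r+\varepsilon$ for an arbitrarily small $\varepsilon>0$. Thus $\varphi'(t)=\sum_{i=0}^{q}\lambda_{i}(t)(X_{i})_{\varphi(t)}$ a.e., with $|\lambda_{0}(t)|\le\delta^{2}$ and $|\lambda_{j}(t)|\le\delta$ for $j=1,\dots,q$. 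Take $\gamma=\varphi$ and $p_{j}=\varphi(j/n)$ for $j=0,1,\dots,n$; these points lie on $\gamma$, with $p_{0}=x$ and $p_{n}=y$.

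The core of the proof is an affine rescaling of each arc $\varphi|_{[j/n,\,(j+1)/n]}$. For fixed $j$ define $\psi_{j}(s)=\varphi\!\bigl(\tfrac{j}{n}+\tfrac{s}{n}\bigr)$, $s\in[0,1]$; this is again absolutely continuous with values in $\Omega$, with $\psi_{j}(0)=p_{j}$, $\psi_{j}(1)=p_{j+1}$, and by the chain rule $\psi_{j}'(s)=\sum_{i=0}^{q}\widetilde{\lambda}_{i}(s)(X_{i})_{\psi_{j}(s)}$ with $\widetilde{\lambda}_{i}(s)=\tfrac{1}{n}\lambda_{i}\bigl(\tfrac{j}{n}+\tfrac{s}{n}\bigr)$. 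Hence $|\widetilde{\lambda}_{0}(s)|\le\delta^{2}/n$ and $|\widetilde{\lambda}_{j}(s)|\le\delta/n$ for $j=1,\dots,q$. To place $\psi_{j}$ in a class $C_{1}(\delta')$ we need simultaneously $(\delta')^{2}\ge\delta^{2}/n$ and $\delta'\ge\delta/n$; since $n\ge1$ the first is the binding constraint, and $\delta'=\delta/\sqrt{n}$ works. Therefore $\psi_{j}\in C_{1}(\delta/\sqrt{n})$, which yields $d_X(p_{j},p_{j+1})\le\delta/\sqrt{n}$, and taking $\delta=r$ gives the claimed inequality $d_X(p_{j},p_{j+1})\le d_X(x,y)/\sqrt{n}$ for every $j$.

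The only point that needs a comment is the choice $\delta=r$, i.e. the existence of a length-minimizing admissible curve joining $x$ and $y$. This is the standard compactness statement for control distances of this type (equi-Lipschitz continuity of near-minimizers, Ascoli--Arzel\`{a}, together with weak-$\ast$ compactness of the coefficients $\lambda_{i}$ in $L^{\infty}$), which in the present drift setting is covered by the analysis of $d_X$ carried out in \cite{BBP1}; alternatively, running the argument with $\delta\le r+\varepsilon$ produces a chain of points with $d_X(p_{j},p_{j+1})\le(d_X(x,y)+\varepsilon)/\sqrt{n}$, which is all that is actually used afterwards in the study of the spaces $C_{X}^{\alpha}$. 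I do not anticipate a genuine difficulty here. The one substantive observation --- and the reason the gain is $1/\sqrt{n}$ rather than the $1/n$ that the naive equal subdivision would give for sum-of-squares fields --- is precisely the asymmetric, weight-two rescaling of the drift coefficient, which halves the exponent gained; this is also the quantitative counterpart of the failure of the segment property noted just before the statement.
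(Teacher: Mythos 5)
Your proof is correct and is essentially the same argument as the paper's: fix an admissible curve in $C_{1}(r+\varepsilon)$, take the equal subdivision $p_{j}=\varphi(j/n)$, and observe that the affine reparametrization of each arc scales $\lambda_{0}$ by $1/n$ and $\lambda_{1},\dots,\lambda_{q}$ by $1/n$, so the binding constraint $(\delta')^{2}\geq\delta^{2}/n$ gives $\delta'=\delta/\sqrt{n}$. Your aside about whether the infimum in $d_{X}$ is attained is a fair observation --- the paper simply lets $\varepsilon\to0$ at the end (which, read strictly, gives the $\varepsilon$-perturbed bound you describe, and that is indeed all that the subsequent application in Lemma \ref{holder norm and Campanato} actually needs); nothing further is required.
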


\begin{proof}
For any $x,y\in\Omega$ with $d_{X}\left(  x,y\right)  =R$, any $\varepsilon
>0$, by Definition \ref{c-c-distance} we can join $x$ and $y$ with a curve
$\gamma\left(  t\right)  $ satisfying
\[
\gamma\left(  0\right)  =y,\gamma\left(  1\right)  =x
\]
and
\[
\gamma^{\prime}\left(  t\right)  =\underset{i=0}{\overset{q}{\sum}}\lambda
_{i}\left(  t\right)  \left(  X_{i}\right)  _{\gamma\left(  t\right)  },
\]
with $\left\vert \lambda_{i}\left(  t\right)  \right\vert \leq R+\varepsilon$,
for $i=1,\ldots,q$ and $\left\vert \lambda_{0}\left(  t\right)  \right\vert
\leq\left(  R+\varepsilon\right)  ^{2}$.

Let $\gamma_{j}\left(  t\right)  =\gamma\left(  \frac{t+j}{n}\right)  $, for
$j=0,1,2,...,n-1$. Then $\gamma_{j}\left(  t\right)  $ satisfies%
\[
\gamma_{j}\left(  0\right)  =\gamma\left(  \frac{j}{n}\right)  \equiv
p_{j},\gamma_{j}\left(  1\right)  =\gamma\left(  \frac{j+1}{n}\right)
=p_{j+1};
\]
in particular, $p_{0}=x$ and $p_{n}=y;$ moreover,%
\[
\gamma_{j}^{\prime}\left(  t\right)  =\frac{1}{n}\underset{i=0}{\overset
{q}{\sum}}\lambda_{i}\left(  \frac{t+j}{n}\right)  \left(  X_{i}\right)
_{\gamma_{j}\left(  t\right)  },
\]
with
\[
\left\vert \frac{1}{n}\lambda_{0}\left(  \frac{t+j}{n}\right)  \right\vert
\leq\left(  \frac{R+\varepsilon}{\sqrt{n}}\right)  ^{2},\left\vert \frac{1}%
{n}\lambda_{i}\left(  \frac{t+j}{n}\right)  \right\vert <\frac{R+\varepsilon
}{\sqrt{n}}%
\]
for $i=1,\ldots,q,$ $j=0,2,...,n-1$. Thus%
\[
d_{X}\left(  p_{j},p_{j+1}\right)  \leq\frac{R+\varepsilon}{\sqrt{n}},
\]
for $j=0,2,...,n-1$ and any $\varepsilon>0,$ so we are done.
\end{proof}

The free lifted vector fields $\widetilde{X}_{i}$ induce, in the neighborhood
where they are defined, a control distance $d_{\widetilde{X}}$; we will denote
by $\widetilde{B}\left(  \xi,r\right)  $ the corresponding metric balls. In
this lifted setting we can also consider the quasidistance $\rho$ defined in
(\ref{quasidistance}). The two functions turn out to be equivalent:

\begin{lemma}
\label{equivalence of distance}Let $\overline{\xi},\widetilde{U}$ be as in
Thm. \ref{metric}. There exists $\widetilde{B}\left(  \overline{\xi},R\right)
\subset\widetilde{U}$ such that the distance $d_{\widetilde{X}}$ is equivalent
to the quasidistance $\rho$ in (\ref{quasidistance}) in $\widetilde{B}\left(
\overline{\xi},R\right)  ,$ and both are greater than the Euclidean distance;
namely there exist positive constants $c_{1},c_{2},c_{3}$ such that%
\[
c_{1}\left\vert \xi-\eta\right\vert \leq c_{2}\rho(\eta,\xi)\leq
d_{\widetilde{X}}(\eta,\xi)\leq c_{3}\rho(\eta,\xi)\text{ for every }\xi
,\eta\in\widetilde{B}\left(  \overline{\xi},R\right)  .
\]

\end{lemma}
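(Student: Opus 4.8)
The plan is to prove the three inequalities separately, the first being essentially trivial, the middle one following from Theorem \ref{metric}(a) together with a comparison of the group norm with the Euclidean norm, and the last one — the genuine content — being obtained by exhibiting, for points $\xi,\eta$ close together, an explicit $d_{\widetilde{X}}$-admissible curve joining them whose length is controlled by $\rho(\eta,\xi)$. Throughout one works inside a small ball $\widetilde{B}(\overline{\xi},R)\subset\widetilde{U}$, so that all maps $\Theta(\eta,\cdot)$ are defined and the approximation formula \eqref{approximation} is available.

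First I would handle $c_{1}|\xi-\eta|\le c_{2}\rho(\eta,\xi)$: since $\Theta(\eta,\eta)=0$ and $\Theta$ is smooth with $\partial_{\xi}\Theta(\eta,\xi)$ invertible at $\xi=\eta$ (it is the identity up to the chosen coordinates), shrinking $R$ gives $|\xi-\eta|\le C|\Theta(\eta,\xi)|$; and $|\cdot|\le C\|\cdot\|$ on the bounded set $\Theta(\widetilde{B}\times\widetilde{B})$ because the homogeneous norm dominates the Euclidean norm near the origin in $\mathbb{G}$ (the dilations $D(\lambda)$ contract the coordinates of weight $\ge 2$ faster). That gives the leftmost inequality with $\rho(\eta,\xi)=\|\Theta(\eta,\xi)\|$. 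For the lower bound $c_{2}\rho(\eta,\xi)\le d_{\widetilde{X}}(\eta,\xi)$, I would argue that any curve $\varphi$ admissible for $d_{\widetilde{X}}$ in the sense of Definition \ref{c-c-distance} (with parameter $\delta$, so $|\lambda_{0}|\le\delta^{2}$, $|\lambda_{j}|\le\delta$) stays, when $\delta$ is small, inside $\widetilde{U}$; transporting it via $\Theta(\eta,\cdot)$ and using \eqref{approximation} shows that $u(t)=\Theta(\eta,\varphi(t))$ solves an ODE of the form $u'=\sum_{i}\lambda_{i}(t)\bigl(Y_{i}+R_{i}^{\eta}\bigr)_{u(t)}$, whose coefficients (after the homogeneity bookkeeping: $Y_{0}$ has degree $2$, the remainders $R_i^\eta$ have local degree $\le i-1$) are controlled by powers of $\delta$ in the way compatible with the dilation structure of $\mathbb{G}$. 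Hence $\|u(1)\|\le C\delta$ by a standard Gronwall-type estimate in homogeneous coordinates, i.e. $\rho(\eta,\xi)=\|u(1)\|\le Cd_{\widetilde{X}}(\eta,\xi)$.

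The upper bound $d_{\widetilde{X}}(\eta,\xi)\le c_{3}\rho(\eta,\xi)$ is the main obstacle and is where the drift must be treated carefully. Given $\xi,\eta$ with $\rho(\eta,\xi)=r$, write $\Theta(\eta,\xi)=(u_{\alpha})_{\alpha\in A}$ with $\|(u_{\alpha})\|\le r$, so $|u_{\alpha}|\le Cr^{|\alpha|}$. On the group $\mathbb{G}$ one can connect $0$ to $(u_{\alpha})$ by a concatenation of integral curves of the left-invariant fields $Y_{1},\dots,Y_{q},Y_{0}$: moving along $Y_{i}$ ($i\ge 1$) for times of order $r$ and along $Y_{0}$ for a time of order $r^{2}$ suffices to realize any group element of norm $r$ — this is the standard "ball-box" fact for homogeneous groups, and the allocation of $r$ versus $r^{2}$ to the weight-one versus weight-two fields matches exactly Definition \ref{c-c-distance}. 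Pulling this polygonal curve back through $\Theta(\eta,\cdot)^{-1}$ and invoking \eqref{approximation} once more, the fields $Y_{i}$ turn into $\widetilde{X}_{i}-R_{i}^{\eta}$; the remainder terms $R_{i}^{\eta}$, having local degree $\le i-1$, contribute perturbations that are lower-order in $r$ and can be absorbed by slightly enlarging $\delta$ (taking $\delta$ a fixed multiple of $r$, which is legitimate once $R$ is small). Since the lifted fields $\widetilde{X}_{i}$ are among the ones allowed in Definition \ref{c-c-distance} and $\widetilde{X}_{2,1}=\widetilde{X}_{0}$ carries weight two, the resulting curve is admissible for $d_{\widetilde{X}}$ with parameter $C r$, giving $d_{\widetilde{X}}(\eta,\xi)\le Cr=C\rho(\eta,\xi)$.

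I expect the delicate point throughout to be the bookkeeping of homogeneity: one must keep the weight-one and weight-two directions separated at every stage (in the Gronwall estimate, in the ball-box construction, and in estimating the remainders $R_{i,k}^{\eta}$), because with the drift present an error that would be harmless in the driftless case — say a weight-one perturbation appearing in a weight-two slot, or vice versa — would break the scaling. Once that accounting is done consistently, and $R$ is chosen small enough that all curves involved stay in $\widetilde{U}$ and the smooth error functions $\omega$, $\lambda_{il}$ are bounded, the three inequalities combine to give the claim with constants depending only on $\widetilde{X}$, $\overline{\xi}$, and $R$.
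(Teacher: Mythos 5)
The paper does not actually prove this lemma: its "proof" is the one-line remark that the statement is established in \cite{nsw2} and \cite[Proposition~22]{BBP2}, so there is no in-paper argument to compare against. Your proposal is therefore a reconstruction, and its overall three-inequality decomposition and its identification of where the drift causes trouble (the weight bookkeeping) are on the right track. Your handling of the Euclidean bound and your outline of $\rho\lesssim d_{\widetilde X}$ via the approximation formula and a scale-adapted Gronwall estimate are plausible sketches, though the "standard Gronwall" step is genuinely nontrivial: what is needed is a differential inequality for the weighted quantity $\max_\alpha |u_\alpha(t)|^{1/|\alpha|}$, not an ordinary Gronwall bound, and you should say so.

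The real gap is in your argument for $d_{\widetilde X}\le c_3\rho$. You construct a polygonal path in $\mathbb{G}$ out of $Y_1,\dots,Y_q,Y_0$-flows and claim that pulling it back through $\Theta(\eta,\cdot)^{-1}$ gives a $d_{\widetilde X}$-admissible curve, because "the fields $Y_i$ turn into $\widetilde X_i - R_i^\eta$" and the remainders can be absorbed. But Definition~\ref{c-c-distance} only permits curves whose velocity is a combination of the generators $\widetilde X_0,\dots,\widetilde X_q$ with coefficients bounded by $\delta^{p_i}$. The pullback of $Y_i$ equals $\widetilde X_i$ minus the pullback of $R_i^\eta$, and that remainder is an arbitrary smooth vector field of the stated local degree; it is \emph{not} a priori a combination of $\widetilde X_0,\dots,\widetilde X_q$ alone. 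Expressing it in the Hörmander frame brings in the higher commutators $\widetilde X_{j,k}$ with $j\ge 2$, which are not allowed in Definition~\ref{c-c-distance}. At best your construction bounds the Nagel–Stein–Wainger distance $d$ of Definition~3.4 (which does allow commutators), and then one still needs the separate, nontrivial fact that $d$ and $d_{\widetilde X}$ are equivalent — which this paper also only cites. The argument that actually closes the gap runs in the opposite direction: one builds a candidate curve in $\widetilde U$ as a concatenation of $\widetilde X_i$-flows with unknown small times, pushes it forward to $\mathbb G$ via $\Theta(\eta,\cdot)$, uses the approximation theorem to compare with the exact group flows, and then solves for the flow times by a scaled implicit-function-theorem or iteration argument so as to land exactly at $\Theta(\eta,\xi)$. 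That is the content of \cite[Proposition~22]{BBP2}, and it is what your sketch would need to supply.
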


This fact is proved in \cite{nsw2}, see also \cite[Proposition 22]{BBP2}.

\subsection{Some known results about locally homogeneous
spaces\label{subsec locally hom space}}

We are now going to recall the notion of \emph{locally homogeneous space},
introduced in \cite{bz}. This is the abstract setting which will allow us to
apply suitable results about singular integrals. Roughly speaking, a locally
homogeneous space is a set $\Omega$ endowed with a function $d$ which is a
quasidistance on any compact subset, and a measure $\mu$ which is locally
doubling, in a sense which will be made precise here below. In our concrete
situation, our set is endowed with a function $d$ which is a \emph{distance
}in\emph{ }$\Omega$, and a locally doubling measure. We can therefore give the
following definition, which is simpler than that given in \cite{bz}:

\begin{definition}
\label{Def loc hom space}Let $\Omega$ be a set, endowed with a distance $d$.
Let us denote by $B\left(  x,r\right)  $ the metric ball of center $x$ and
radius $r$. We will endow $\Omega$ with the topology induced by the metric.

Let $\mu$ be a positive regular Borel measure in $\Omega.$

Assume there exists an increasing sequence $\left\{  \Omega_{n}\right\}
_{n=1}^{\infty}$ of bounded measurable subsets of $\Omega,$ such that:%
\begin{equation}%
{\displaystyle\bigcup_{n=1}^{\infty}}
\Omega_{n}=\Omega\label{Hp 0}%
\end{equation}
and such for, any $n=1,2,3,...$:

(i) the closure of $\Omega_{n}$ in $\Omega$ is compact;

(ii) there exists $\varepsilon_{n}>0$ such that%
\begin{equation}
\left\{  x\in\Omega:d\left(  x,y\right)  <2\varepsilon_{n}\text{ for some
}y\in\Omega_{n}\right\}  \subset\Omega_{n+1}; \label{Hp 1}%
\end{equation}

(iii) there exists $C_{n}>1$ such that for any $x\in\Omega_{n},0<r\leq
\varepsilon_{n}$ we have%
\begin{equation}
\text{ }0<\mu\left(  B\left(  x,2r\right)  \right)  \leq C_{n}\mu\left(
B\left(  x,r\right)  \right)  <\infty. \label{Hp 3}%
\end{equation}
(Note that for $x\in\Omega_{n}$ and $r\leq\varepsilon_{n}$ we also have
$B\left(  x,2r\right)  \subset\Omega_{n+1}$).

We will say that $\left(  \Omega,\left\{  \Omega_{n}\right\}  _{n=1}^{\infty
},d,\mu\right)  $ is a \emph{(metric) locally homogeneous space }if the above
assumptions hold.
\end{definition}

Any space satisfying the above definition \emph{a fortiori }satisfies the
definition of locally homogeneous space given in \cite{bz}. In the following,
we will recall the statements of several results proved in \cite{bz}.

Next, we introduce the notion of local singular kernel.

\bigskip

\textbf{Assumption (K). }For fixed $\Omega_{n},\Omega_{n+1},$ and a fixed ball
$B\left(  \overline{x},R_{0}\right)  ,$ with $\overline{x}\in\Omega_{n}$ and
$R_{0}<2\varepsilon_{n}$ (hence $B\left(  \overline{x},R_{0}\right)
\subset\Omega_{n+1}$), let $K\left(  x,y\right)  $ be a measurable function
defined for $x,y\in B\left(  \overline{x},R_{0}\right)  $, $x\neq y$. Let
$R>0$ be any number satisfying%
\begin{equation}
cR\leq R_{0} \label{R R_0}%
\end{equation}
for some $c>1$; let $a,b\in C_{0}^{\alpha}\left(  \Omega_{n+1}\right)  ,$
$B\left(  \overline{x},c_{1}R\right)  \prec a\prec B\left(  \overline{x}%
,c_{2}R\right)  ,$ $B\left(  \overline{x},c_{3}R\right)  \prec b\prec B\left(
\overline{x},c_{4}R\right)  $ for some fixed constants $c_{i}\in\left(
0,1\right)  ,$ $i=1,...,4$ (the symbol $B_{1}\prec f\prec B_{2}$ means that
$f=1$ in $B_{1}$, vanishes outside $B_{2}$, and takes values in $\left[
0,1\right]  $). The new kernel%
\begin{equation}
\widetilde{K}\left(  x,y\right)  =a\left(  x\right)  K\left(  x,y\right)
b\left(  y\right)  \label{k tilde}%
\end{equation}
can be considered defined in the whole $\Omega_{n+1}\times\Omega
_{n+1}\setminus\left\{  x=y\right\}  $.

We now list a series of possible assumptions on the kernel $K$ which will be
recalled in the following theorems.

(i) We say that $K$ satisfies the \emph{standard estimates }for some $\nu
\in\lbrack0,1)$ if the following hold:%
\begin{equation}
\left\vert K\left(  x,y\right)  \right\vert \leq\frac{Ad\left(  x,y\right)
^{\nu}}{\mu\left(  B\left(  x,d\left(  x,y\right)  \right)  \right)  }
\label{standard 1}%
\end{equation}
for $x,y\in B\left(  \overline{x},R_{0}\right)  ,$ $x\neq y,$ and%
\begin{equation}
\left\vert K(x_{0},y)-K(x,y)\right\vert +\left\vert K(y,x_{0}%
)-K(y,x)\right\vert \leq\frac{Bd\left(  x_{0},y\right)  ^{\nu}}{\mu
(B(x_{0},d(x_{0},y)))}\left(  \frac{d(x_{0},x)}{d(x_{0},y)}\right)  ^{\beta}
\label{standard 2}%
\end{equation}
for any $x_{0},x,y\in B\left(  \overline{x},R_{0}\right)  $ with
$d(x_{0},y)>2d(x_{0},x)$, some $\beta>0$.

(ii) We say that $K$ satisfies the \emph{cancellation property }if the
following holds:

there exists $C>0$ such that for a.e. $x\in B\left(  \overline{x}%
,R_{0}\right)  $ and every $\varepsilon_{1},\varepsilon_{2}$ such that
$0<\varepsilon_{1}<\varepsilon_{2}$ and $B_{\rho}\left(  x,\varepsilon
_{2}\right)  \subset\Omega_{n+1}$%
\begin{equation}
\left\vert \int_{\Omega_{n+1},\varepsilon_{1}<\rho(x,y)<\varepsilon_{2}%
}K(x,y)\,d\mu(y)\right\vert +\left\vert \int_{\Omega_{n+1},\varepsilon
_{1}<\rho(x,z)<\varepsilon_{2}}K(z,x)\,d\mu(z)\right\vert \leq C,
\label{standard 3}%
\end{equation}
where $\rho$ is any \emph{quasidistance, }equivalent to $d$ in $\Omega_{n+1}$
and $B_{\rho}$ denotes $\rho$-balls. This means that $\rho$ satisfies the
axioms of distance, except for the triangle inequality, which is replaced by
the weaker%
\[
\rho\left(  x,y\right)  \leq c\left[  \rho\left(  x,z\right)  +\rho\left(
z,y\right)  \right]
\]
for any $x,y,z\in\Omega_{n+1}$ and some constant $c\geq1;$ moreover,%
\[
c_{1}d\left(  x,y\right)  \leq\rho\left(  x,y\right)  \leq c_{2}d\left(
x,y\right)
\]
for any $x,y$ and some positive constants $c_{1},c_{2}.$

(iii) We say that $K$ satisfies the \emph{convergence condition} if the
following holds: for a.e. $x\in B\left(  \overline{x},R_{0}\right)  $ such
that $B_{\rho}\left(  x,R\right)  \subset\Omega_{n+1}$ there exists%
\begin{equation}
h_{R}\left(  x\right)  \equiv\lim_{\varepsilon\rightarrow0}\int_{\Omega
_{n+1},\varepsilon<\rho(x,y)<R}K(x,y)d\mu(y), \label{convergence}%
\end{equation}
where $\rho$ is any quasidistance equivalent to $d$ in $\Omega_{n+1}$.

All the following results in this section have been proved in \cite{bz}. In
some statements we have introduced some slight simplifications (with respect
to \cite{bz}) due to the fact that our space is assumed to be metric.

\begin{theorem}
[$L^{p}$ and $C^{\eta}$ estimates for singular integrals]%
\label{Theorem L^p C^eta}Let $K,\widetilde{K}$ be as in Assumption (K), with
$K$ satisfying the standard estimates (i) with $\nu=0,$ the cancellation
property (ii) and the convergence condition (iii) stated above. If%
\[
Tf\left(  x\right)  =\lim_{\varepsilon\rightarrow0}\int_{B\left(  \overline
{x},R\right)  ,\rho(x,y)>\varepsilon}\widetilde{K}(x,y)f\left(  y\right)
d\mu(y),
\]
then for any $p\in\left(  1,\infty\right)  $%
\[
\left\Vert Tf\right\Vert _{L^{p}\left(  B\left(  \overline{x},R\right)
\right)  }\leq c\left\Vert f\right\Vert _{L^{p}\left(  B\left(  \overline
{x},R\right)  \right)  }.
\]
The constant $c$ depends on $p,n$ and the constants of $K$ involved in the
assumptions (but not on $R$).

Moreover, $T$ satisfies a weak 1-1 estimate:%
\[
\mu\left(  \left\{  x\in B\left(  \overline{x},R\right)  :\left\vert Tf\left(
x\right)  \right\vert >t\right\}  \right)  \leq\frac{c}{t}\left\Vert
f\right\Vert _{L^{1}\left(  B\left(  \overline{x},R\right)  \right)  }\text{
for any }t>0.
\]

Assume that, in addition, the kernel $K$ satisfies the condition
\begin{equation}
\widetilde{h}\left(  x\right)  \equiv\lim_{\varepsilon\rightarrow0}\int
_{\rho(x,y)>\varepsilon}\widetilde{K}(x,y)d\mu(y)\in C^{\gamma}\left(
\Omega_{n+1}\right)  \label{h tilde C^gamma}%
\end{equation}
for some $\gamma>0$ (where $\rho$ is the same appearing in the assumed
convergence condition (iii)). Then%
\begin{equation}
\left\Vert Tf\right\Vert _{C^{\eta}\left(  B\left(  \overline{x},R\right)
\right)  }\leq c\left\Vert f\right\Vert _{C^{\eta}\left(  B\left(
\overline{x},HR\right)  \right)  } \label{C_eta}%
\end{equation}
for any positive $\eta<\min\left(  \alpha,\beta,\gamma\right)  $ and some
constant $H>1$ independent of $R$. (Recall that $\alpha$ is the H\"{o}lder
exponent related to the cutoff functions defining $\widetilde{K},\beta$
appears in the standard estimates (i) and $\gamma$ is the number in
(\ref{h tilde C^gamma})).

The constant $c$ depends on $\eta,n,R,$ the constants involved in the
assumptions on $K,$ and the $C^{\gamma}$ norm of $\widetilde{h}.$
\end{theorem}

\begin{remark}
[Estimates for $C_{0}^{\eta}$ functions]\label{remark local holder}Applying
the H\"{o}lder continuity result to functions $f\in C_{0}^{\eta}\left(
B\left(  \overline{x},r\right)  \right)  $ with $r<R$ we can get a a bound%
\[
\left\Vert Tf\right\Vert _{C^{\eta}\left(  B\left(  \overline{x},r\right)
\right)  }\leq c\left\Vert f\right\Vert _{C^{\eta}\left(  B\left(
\overline{x},r\right)  \right)  }%
\]
with $c$ depending on $R$ but not on $r$.
\end{remark}

\begin{theorem}
[$L^{p}-L^{q}$ estimate for fractional integrals]\label{frac lp-lq}Let
$K,\widetilde{K}$ be as in Assumption (K), with $K$ satisfying the growth
condition
\begin{equation}
0\leq K\left(  x,y\right)  \leq\frac{c}{\mu\left(  B\left(  x,d\left(
x,y\right)  \right)  \right)  ^{1-\nu}} \label{Standard 1'}%
\end{equation}
for some $\nu\in\left(  0,1\right)  ,$ $c>0,$ any $x,y\in B\left(
\overline{x},R_{0}\right)  ,$ $x\neq y.$ If%
\[
I_{\nu}f\left(  x\right)  =\int_{B\left(  \overline{x},R\right)  }%
\widetilde{K}(x,y)f\left(  y\right)  d\mu(y)
\]
then, for any $p\in\left(  1,\frac{1}{\nu}\right)  ,\frac{1}{q}=\frac{1}%
{p}-\nu$ there exists $c\ $such that%
\[
\left\Vert I_{\nu}f\right\Vert _{L^{q}\left(  B\left(  \overline{x},R\right)
\right)  }\leq c\left\Vert f\right\Vert _{L^{p}\left(  B\left(  \overline
{x},R\right)  \right)  }%
\]
for any $f\in L^{p}\left(  B\left(  \overline{x},R\right)  \right)  $. The
constant $c$ depends on $p,n,$ and the constants of $K$ involved in the
assumptions (but not on $R$).
\end{theorem}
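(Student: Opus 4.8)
The plan is to prove a Hardy--Littlewood--Sobolev inequality adapted to the locally homogeneous setting, following the classical ``truncate, estimate pointwise, optimize'' scheme. Throughout I write $m(x,r)=\mu(B(x,r))$. Because of the cutoff functions $a,b$ entering $\widetilde{K}$, both arguments of $\widetilde{K}$ effectively range over $B(\overline{x},R_{0})\subset\Omega_{n+1}$, so only the \emph{local} doubling property is ever used: for the centers $x\in B(\overline{x},R_{0})$ and the radii $r\lesssim R_{0}$ that occur one has $m(x,2r)\le C\,m(x,r)$ (passing to $\Omega_{n+2}$ if needed). Since $0\le a,b\le1$, the growth condition (\ref{Standard 1'}) survives the truncation, so it suffices to bound $\int_{B(\overline{x},R)}m(x,d(x,y))^{-(1-\nu)}|f(y)|\,d\mu(y)$.

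For the pointwise estimate I fix $x$ and a parameter $\delta>0$ and split the integral into the region $d(x,y)<\delta$ and the region $d(x,y)\ge\delta$. On the first region, decompose into dyadic annuli $A_{k}=\{2^{-k-1}\delta\le d(x,y)<2^{-k}\delta\}$, $k\ge0$; on $A_{k}$ local doubling gives $m(x,d(x,y))\ge c^{-1}m(x,2^{-k}\delta)$, so the contribution of $A_{k}$ is at most $c\,m(x,2^{-k}\delta)^{\nu}\,Mf(x)$, where $Mf$ is the Hardy--Littlewood maximal function over metric balls. Summing and using that $\sum_{k\ge0}m(x,2^{-k}\delta)^{\nu}$ is dominated by its first term $c\,m(x,\delta)^{\nu}$ (geometric decay of $m(x,2^{-k}\delta)$, which holds in our concrete setting since $m(x,r)\asymp r^{Q}$ on the relevant scales by Theorem \ref{Thm NSW} and Lemma \ref{equivalence of distance}, and in general follows from a lower/reverse-doubling bound on $\mu$) we get $c\,m(x,\delta)^{\nu}Mf(x)$. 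On the second region, Hölder's inequality with exponent $p$ and the analogous dyadic decomposition (annuli $2^{k}\delta\le d(x,y)<2^{k+1}\delta$, each of measure $\le c\,m(x,2^{k}\delta)$) give
\[
\Big(\int_{d(x,y)\ge\delta}m(x,d(x,y))^{-(1-\nu)p'}\,d\mu(y)\Big)^{1/p'}\le c\Big(\sum_{k\ge0}m(x,2^{k}\delta)^{1-(1-\nu)p'}\Big)^{1/p'}\le c\,m(x,\delta)^{\nu-1/p},
\]
the sum being dominated by its first term precisely because $1-(1-\nu)p'<0\iff p<1/\nu$. Altogether $|I_{\nu}f(x)|\le c\big(m(x,\delta)^{\nu}Mf(x)+\|f\|_{L^{p}}\,m(x,\delta)^{\nu-1/p}\big)$.

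Now I optimize in $\delta$: choosing $\delta$ so that $m(x,\delta)\asymp(\|f\|_{L^{p}}/Mf(x))^{p}$ balances the two terms (if the required value of $m(x,\delta)$ exceeds the available range, estimate trivially), which yields the pointwise bound $|I_{\nu}f(x)|\le c\,Mf(x)^{1-\nu p}\|f\|_{L^{p}}^{\nu p}$. Raising to the power $q$, integrating, using $(1-\nu p)q=p$ and the boundedness of $M$ on $L^{p}(B(\overline{x},R))$ for $p>1$ (a companion result in the locally homogeneous framework of \cite{bz}), we obtain $\|I_{\nu}f\|_{L^{q}}^{q}\le c\,\|f\|_{L^{p}}^{\nu p q}\int(Mf)^{p}\le c\,\|f\|_{L^{p}}^{\nu p q+p}=c\,\|f\|_{L^{p}}^{q}$, since $\nu p q+p=q$. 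The constant depends only on $p,n$ and the constants of $K$, and is independent of $R$ because every constant above (the local doubling constant, the $L^{p}$ bound for $M$, the geometric-series factors) is. An alternative to the maximal-function step is to prove the weak-type endpoint estimates at the two ends of the admissible range of $p$ and interpolate via Marcinkiewicz's theorem.

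The hard part is not the Hardy--Littlewood--Sobolev computation itself but the bookkeeping hidden in the first two paragraphs: one must check that all balls appearing in the growth bound have centers in $\Omega_{n+1}$ and radii controlled by a fixed multiple of $\varepsilon_{n}$, so that only the local doubling property of a locally homogeneous space is invoked, and one must isolate (or verify) the lower regularity of $\mu$ that makes $\sum_{k}m(x,2^{-k}\delta)^{\nu}\asymp m(x,\delta)^{\nu}$ — automatic here because $m(x,r)\asymp r^{Q}$ on the scales in play, but a point that needs to be stated cleanly in the abstract setting. Everything else is routine.
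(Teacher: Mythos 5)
The paper does not actually prove Theorem \ref{frac lp-lq}: it is one of the results recalled in \S \ref{subsec locally hom space} and attributed to \cite{bz} (``All the following results in this section have been proved in \cite{bz}''). So there is no in-paper argument to compare against, and your proof has to be judged on its own terms.

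The scheme is the standard Hedberg argument and in outline it is right. Truncating by $a,b$ preserves the growth bound since $0\le a,b\le1$; the near piece is estimated by $c\,\mu(B(x,\delta))^{\nu}Mf(x)$; the far piece by $c\,\|f\|_{L^{p}}\mu(B(x,\delta))^{\nu-1/p}$ via H\"older, convergence of the tail sum using precisely $p<1/\nu$; the exponent identities $(1-\nu p)q=p$ and $\nu pq+p=q$ close the argument through Theorem \ref{Thm maximal}(c). The ``estimate trivially'' fallback when $\mu(B(x,\delta))$ cannot reach $(\|f\|_{L^{p}}/Mf(x))^{p}$ also works out, since in that case the near estimate alone already gives the pointwise bound $c\,Mf(x)^{1-\nu p}\|f\|_{L^{p}}^{\nu p}$.

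The caveat you flag is, however, a genuine gap and not merely ``a point that needs to be stated cleanly''. In the generality of Definition \ref{Def loc hom space} one has only doubling, i.e.\ an \emph{upper} bound on the growth of $r\mapsto\mu(B(x,r))$, and no reverse-doubling lower bound; nothing forces $\mu(B(x,2^{-k}\delta))^{\nu}$ to decay geometrically in $k$ (if $\mu$ has an atom at $x$ the sequence does not even tend to $0$), so $\sum_{k}\mu(B(x,2^{-k}\delta))^{\nu}$ need not be comparable to its first term. The same issue affects the far-region sum $\sum_{k}\mu(B(x,2^{k}\delta))^{1-(1-\nu)p'}$: the negative exponent alone does not supply a geometric series. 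The clean fix, which also explains why the stated theorem needs no extra hypothesis, is to decompose by level sets of the \emph{measure} $\mu(B(x,d(x,y)))$ rather than of the \emph{distance}: putting
\[
F_{j}=\bigl\{y\in B(x,\delta):\mu(B(x,d(x,y)))\le 2^{-j}\mu(B(x,\delta))\bigr\},
\]
one gets $\mu(F_{j})\le c\,2^{-j}\mu(B(x,\delta))$ and each $F_{j}$ is (up to a sphere, absorbed by doubling) a ball centered at $x$, so the contribution of $F_{j}\setminus F_{j+1}$ is at most $c\,\bigl(2^{-j}\mu(B(x,\delta))\bigr)^{\nu}Mf(x)$, and the factor $2^{-j\nu}$ already produces a convergent geometric series with no regularity assumption on $\mu$. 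The analogous decomposition with $\mu(B(x,d(x,y)))\ge 2^{j}\mu(B(x,\delta))$, together with $1-(1-\nu)p'<0$, handles the far region the same way. With that substitution your argument is complete at the stated level of generality; in the concrete setting where the theorem is applied in this paper (lifted balls with Lebesgue measure, $\mu(B(x,r))\asymp r^{Q}$ by Lemma \ref{equivalence of distance}) your original distance-shell version is already correct.
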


\begin{theorem}
[$C^{\eta}$ estimate for fractional integrals]\label{Thm frac C^eta}Let
$K,\widetilde{K}$ be as in Assumption (K), with $K$ satisfying
(\ref{standard 1}) and (\ref{standard 2}) for some $\nu\in\left(  0,1\right)
,\beta>0$. If%
\[
I_{\nu}f\left(  x\right)  =\int_{B\left(  \overline{x},R\right)  }%
\widetilde{K}(x,y)f\left(  y\right)  d\mu(y),
\]
then, for any $\eta<\min\left(  \alpha,\beta\,,\nu\right)  $%
\[
\left\Vert I_{\nu}f\right\Vert _{C^{\eta}\left(  B\left(  \overline
{x},R\right)  \right)  }\leq c\left\Vert f\right\Vert _{C^{\eta}\left(
B\left(  \overline{x},HR\right)  \right)  }.
\]
The constant $c$ depends on $\eta,n,R$ and the constants of $K$ involved in
the assumptions; the number $H$ only depends on $n$.
\end{theorem}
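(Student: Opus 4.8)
The plan is to bound separately the sup-norm and the $\eta$-H\"older seminorm of $I_{\nu}f$ on $B(\overline{x},R)$, reducing both to one elementary lemma: for every $x\in B(\overline{x},R)$, every $\sigma\neq0$, and every $0\le r_{1}<r_{2}$,
\[
\int_{\{y\in B(\overline{x},R)\,:\,r_{1}<d(x,y)\le r_{2}\}}\frac{d(x,y)^{\sigma}}{\mu(B(x,d(x,y)))}\,d\mu(y)\le c\bigl(r_{1}^{\sigma}+r_{2}^{\sigma}\bigr),
\]
which follows by decomposing the region of integration into dyadic shells $\{2^{j}r_{1}<d(x,y)\le 2^{j+1}r_{1}\}$ and applying the local doubling property (\ref{Hp 3}) on each: for $\sigma>0$ the geometric series is dominated by its top term $\sim r_{2}^{\sigma}$, for $\sigma<0$ by its bottom term $\sim r_{1}^{\sigma}$, and the case $r_{1}=0$, $\sigma>0$ is the limiting one. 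All balls and shells that occur below stay inside $\Omega_{n+1}$ because $cR\le R_{0}<2\varepsilon_{n}$, so (\ref{Hp 3}) is available there with a uniform constant, and we are allowed to let the final constant depend on $R$. Since $|\widetilde{K}|\le|K|$ and $K$ satisfies the growth bound (\ref{standard 1}) with $\nu>0$, the lemma with $\sigma=\nu$, $r_{1}=0$, $r_{2}=2R$ immediately yields $\|I_{\nu}f\|_{L^{\infty}(B(\overline{x},R))}\le cR^{\nu}\|f\|_{L^{\infty}(B(\overline{x},R))}$.

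Next I would estimate the seminorm. Fix $x_{0},x\in B(\overline{x},R)$ and put $\delta=d(x_{0},x)$ (so $\delta<2R$). If $\delta\ge\varepsilon R$ for a fixed small $\varepsilon$, then $|I_{\nu}f(x_{0})-I_{\nu}f(x)|\le2\|I_{\nu}f\|_{\infty}\le c\delta^{\eta}\|f\|_{\infty}$, so I may assume $\delta$ small. Write $I_{\nu}f(x_{0})-I_{\nu}f(x)=\int_{B(\overline{x},R)}[\widetilde{K}(x_{0},y)-\widetilde{K}(x,y)]f(y)\,d\mu(y)$ and split the domain into the near part $\{d(x_{0},y)\le M\delta\}$ and the far part $\{d(x_{0},y)>M\delta\}$, with $M>2$ a fixed constant large enough that (\ref{standard 2}) is applicable on the far part. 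On the near part the triangle inequality gives $d(x,y)\le(M+1)\delta$, so by $|\widetilde{K}|\le|K|$, (\ref{standard 1}) and the lemma (with $\sigma=\nu$), both $\int|\widetilde{K}(x_{0},y)||f(y)|$ and $\int|\widetilde{K}(x,y)||f(y)|$ over that set are at most $c\|f\|_{\infty}\delta^{\nu}$.

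On the far part I would use the identity $\widetilde{K}(x_{0},y)-\widetilde{K}(x,y)=[a(x_{0})-a(x)]K(x_{0},y)b(y)+a(x)[K(x_{0},y)-K(x,y)]b(y)$. The first summand is handled by $|a(x_{0})-a(x)|\le\|a\|_{C^{\alpha}}\delta^{\alpha}$ together with (\ref{standard 1}) and the lemma ($\sigma=\nu$, $r_{1}=M\delta$, $r_{2}=2R$), giving a contribution $\le c\delta^{\alpha}R^{\nu}\|f\|_{\infty}$; this is where the H\"older exponent $\alpha$ of the cutoffs enters the hypothesis $\eta<\min(\alpha,\beta,\nu)$. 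For the second summand, $d(x_{0},y)>M\delta>2d(x_{0},x)$ allows (\ref{standard 2}), so $|K(x_{0},y)-K(x,y)|\le B\delta^{\beta}d(x_{0},y)^{\nu-\beta}/\mu(B(x_{0},d(x_{0},y)))$; integrating over $M\delta<d(x_{0},y)<2R$ by the lemma with $\sigma=\nu-\beta$ gives $\le c\delta^{\beta}\bigl((M\delta)^{\nu-\beta}+R^{\nu-\beta}\bigr)\|f\|_{\infty}$ when $\nu\neq\beta$, and (since the $\sigma=0$ dyadic sum degenerates to $\sim\log(R/\delta)$ comparable terms) $\le c\delta^{\beta}\log(R/\delta)\|f\|_{\infty}$ in the borderline case $\nu=\beta$. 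Collecting the three pieces, $|I_{\nu}f(x_{0})-I_{\nu}f(x)|\le c\|f\|_{\infty}\bigl(\delta^{\nu}+\delta^{\alpha}+\delta^{\min(\nu,\beta)}+\delta^{\beta}\log(R/\delta)\bigr)$, the logarithmic term present only when $\nu=\beta$; since $\eta<\min(\alpha,\beta,\nu)$ and $\delta<2R$, every one of these is $\le c_{R,\eta}\,\delta^{\eta}$ (for the last term because $t^{\beta-\eta}\log(1/t)$ is bounded on $(0,1]$). Hence $[I_{\nu}f]_{C^{\eta}(B(\overline{x},R))}\le c\|f\|_{L^{\infty}(B(\overline{x},R))}$, and since $\|f\|_{L^{\infty}(B(\overline{x},R))}\le\|f\|_{C^{\eta}(B(\overline{x},HR))}$ for any $H\ge1$, the estimate follows ($H$ is essentially arbitrary here, its role being only uniformity with the singular-integral statement).

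The argument is largely routine, and I expect the single point requiring genuine care to be the bookkeeping imposed by working in a merely \emph{locally} homogeneous space: one must check that every ball $B(x,d(x,y))$ and every dyadic shell appearing above really lies in $\Omega_{n+1}$, so that (\ref{Hp 3}) can be invoked with one uniform constant, and one must keep the three regimes $\nu\gtrless\beta$ separate in the geometric summation, the borderline case being saved only by the strict inequality $\eta<\beta$. Note that, in contrast with the singular-integral Theorem \ref{Theorem L^p C^eta}, no cancellation property of $K$ is needed here: a fractional integral of strictly positive order has a locally integrable kernel, so $I_{\nu}f$ is already H\"older continuous without subtracting off $f(x_{0})$.
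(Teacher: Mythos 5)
The paper does not prove Theorem \ref{Thm frac C^eta} at all: it is one of the results recalled from \cite{bz} (the paragraph preceding Theorem \ref{Theorem L^p C^eta} says explicitly that all the results of that subsection are taken from \cite{bz}), so there is no internal proof to compare against. Your argument, however, is a correct and self-contained proof by the standard method. The dyadic-shell lemma is right (and the three regimes $\sigma>0$, $\sigma<0$, $\sigma=0$ are handled correctly, with the logarithmic borderline $\nu=\beta$ absorbed because $\eta<\beta$ strictly); the near/far splitting at scale $M\delta$ and the decomposition of $\widetilde K(x_0,\cdot)-\widetilde K(x,\cdot)$ into the cutoff increment and the kernel increment correctly isolate where each of $\alpha$, $\nu$, $\beta$ enters the threshold $\eta<\min(\alpha,\beta,\nu)$. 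You also observe, correctly, that no cancellation property is invoked — a locally integrable positive-order kernel already maps $L^\infty$ into $C^\eta$ — and in fact your argument establishes the stronger bound $\|I_\nu f\|_{C^\eta(B(\overline x,R))}\le c\|f\|_{L^\infty(B(\overline x,R))}$, which implies the stated estimate with $H$ irrelevant. The one point you defer, the local-doubling bookkeeping (all balls $B(x,d(x,y))$ and dyadic annuli must sit inside a fixed $\Omega_{m}$ with $d(x,y)$ below the corresponding $\varepsilon_m$, using (\ref{R R_0}) and (\ref{Hp 1}) to guarantee this), is indeed where the locally homogeneous framework requires care, but it is a genuine nuisance rather than a gap, and flagging it rather than glossing over it was the right call.
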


Reasoning as in Remark \ref{remark local holder}, we can also say that for
functions $f\in C_{0}^{\eta}\left(  B\left(  \overline{x},r\right)  \right)  $
with $r<R$ the following bound holds%
\[
\left\Vert I_{\nu}f\right\Vert _{C^{\eta}\left(  B\left(  \overline
{x},r\right)  \right)  }\leq c\left\Vert f\right\Vert _{C^{\eta}\left(
B\left(  \overline{x},r\right)  \right)  }%
\]
with $c$ depending on $R$ but not on $r$.

To state the commutator theorems that we will need, we have first to recall
the following

\begin{definition}
[Local $BMO$ and $VMO$ spaces]\label{definition local VMO}Let $\left(
\Omega,\left\{  \Omega_{n}\right\}  _{n=1}^{\infty},d,\mu\right)  $ be a
locally homogeneous space. For any function $u\in L^{1}\left(  \Omega
_{n+1}\right)  $, and $r>0$, with $r\leq\varepsilon_{n},$ set%
\[
\eta_{u,\Omega_{n},\Omega_{n+1}}^{\ast}(r)=\sup_{t\leq r}\sup_{x_{0}\in
\Omega_{n}}\frac{1}{\mu\left(  B\left(  x_{0},t\right)  \right)  }%
\int_{B\left(  x_{0},t\right)  }|u(x)-u_{B}|\,d\mu\left(  x\right)  ,
\]
where $u_{B}=\mu(B\left(  x_{0},t\right)  )^{-1}\int_{B\left(  x_{0},t\right)
}u$. We say that $u\in BMO_{loc}\left(  \Omega_{n},\Omega_{n+1}\right)  $ if%
\[
\left\Vert u\right\Vert _{BMO_{loc}\left(  \Omega_{n},\Omega_{n+1}\right)
}=\sup_{r\leq\varepsilon_{n}}\eta_{u,\Omega_{n},\Omega_{n+1}}^{\ast}\left(
r\right)  <\infty.
\]
We say that $u\in VMO_{loc}\left(  \Omega_{n},\Omega_{n+1}\right)  $ if $u\in
BMO_{loc}\left(  \Omega_{n},\Omega_{n+1}\right)  $ and
\[
\eta_{u,\Omega_{n},\Omega_{n+1}}^{\ast}(r)\rightarrow0\text{ as }%
r\rightarrow0.
\]
The function $\eta_{u,\Omega_{n},\Omega_{n+1}}^{\ast}$ will be called $VMO$
local modulus of $u$ in $\left(  \Omega_{n},\Omega_{n+1}\right)  $.
\end{definition}

Note that in the previous definition we integrate $u$ over balls centered at
points of $\Omega_{n}$ and enclosed in $\Omega_{n+1}$. This is a fairly
natural definition if we want to avoid integrating over the \emph{intersection
}$B\left(  x_{0},t\right)  \cap\Omega_{n}.$

\begin{theorem}
[Commutators of local singular integrals]\label{Thm commutator}Let
$K,\widetilde{K}$ be as in Assumption (K), with $K$ satisfying the standard
estimates (i) with $\nu=0,$ the cancellation property (ii) and the convergence
condition (iii). If%
\[
Tf\left(  x\right)  =\lim_{\varepsilon\rightarrow0}\int_{B\left(  \overline
{x},R\right)  ,\rho(x,y)>\varepsilon}\widetilde{K}(x,y)f\left(  y\right)
d\mu(y)
\]
and, for $a\in BMO_{loc}\left(  \Omega_{n+2},\Omega_{n+3}\right)  ,$ we set%
\[
C_{a}f\left(  x\right)  =T\left(  af\right)  \left(  x\right)  -a\left(
x\right)  Tf\left(  x\right)  ,
\]
then for any $p\in\left(  1,\infty\right)  $ there exists $c>0$ such that%
\[
\left\Vert C_{a}f\right\Vert _{L^{p}\left(  B\left(  \overline{x},R\right)
\right)  }\leq c\left\Vert a\right\Vert _{BMO_{loc}\left(  \Omega_{n+2}%
,\Omega_{n+3}\right)  }\left\Vert f\right\Vert _{L^{p}\left(  B\left(
\overline{x},R\right)  \right)  }.
\]

Moreover, if $a\in VMO_{loc}\left(  \Omega_{n+2},\Omega_{n+3}\right)  $ for
any $\varepsilon>0$ there exists $r>0$ such that for any $f\in L^{p}\left(
B\left(  \overline{x},r\right)  \right)  $ we have%
\[
\left\Vert C_{a}f\right\Vert _{L^{p}\left(  B\left(  \overline{x},r\right)
\right)  }\leq\varepsilon\left\Vert f\right\Vert _{L^{p}\left(  B\left(
\overline{x},r\right)  \right)  }.
\]
The constant $c$ depends on $p,n$ and the constants of $K$ involved in the
assumptions (but not on $R$); the constant $r$ also depends on the
$VMO_{loc}\left(  \Omega_{n+2},\Omega_{n+3}\right)  $ modulus of $a$.
\end{theorem}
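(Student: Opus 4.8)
The idea is to reduce the first (i.e.\ $BMO$) estimate to a Coifman--Rochberg--Weiss type pointwise bound on the sharp maximal function of $C_{a}f$ (as in \cite{crw}, adapted to the locally homogeneous setting of \cite{bz}), and then to obtain the $VMO$ refinement by the mollification argument of Chiarenza--Frasca--Longo \cite{cfl1}. A preliminary remark is that the cut-off functions entering the kernel $\widetilde{K}$ force $C_{a}f$ to be supported in the fixed ball $B\left(  \overline{x},c_{2}R\right)  \subset\Omega_{n+1}$, so that all the real-variable tools --- the local Hardy--Littlewood maximal theorem, the local John--Nirenberg inequality, the local Fefferman--Stein inequality of \cite{bz} --- may be used on balls contained in $\Omega_{n+1}$; the need to measure oscillations of $a$ on balls slightly larger than those carrying $Tf$ is what forces the appearance of the further domains $\Omega_{n+2},\Omega_{n+3}$ in the statement.

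For the first assertion I would fix $p_{0}\in\left(  1,p\right)  $ and prove, for $x$ in the relevant ball,
\[
\left(  C_{a}f\right)  ^{\#}\left(  x\right)  \leq c\left\Vert a\right\Vert _{BMO_{loc}\left(  \Omega_{n+2},\Omega_{n+3}\right)  }\left(  M_{p_{0}}\left(  Tf\right)  \left(  x\right)  +M_{p_{0}}f\left(  x\right)  \right)  ,
\]
where $M_{p_{0}}g=\left(  M\left(  \left\vert g\right\vert ^{p_{0}}\right)  \right)  ^{1/p_{0}}$ and $M$ is the local maximal operator on $\Omega_{n+1}$. Following the classical decomposition, for a ball $B\ni x$ one writes, with $a_{B}$ the $\mu$-average of $a$ over $B$,
\[
C_{a}f=\left(  a-a_{B}\right)  Tf-T\left(  \left(  a-a_{B}\right)  f\chi_{2B}\right)  -T\left(  \left(  a-a_{B}\right)  f\chi_{\Omega_{n+1}\setminus 2B}\right)  ,
\]
and estimates the mean oscillation over $B$ of each term: the first by H\"{o}lder's inequality and John--Nirenberg; the second by the weak $\left(  1,1\right)  $ bound for $T$ in Theorem \ref{Theorem L^p C^eta} together with Kolmogorov's inequality, H\"{o}lder and John--Nirenberg; the third by the kernel regularity \eqref{standard 2}, which produces a geometrically summable series $\sum_{k}2^{-k\beta}$ times $\left\Vert a\right\Vert _{BMO_{loc}}Mf\left(  x\right)  $. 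After a routine truncation of $a$ making $C_{a}f$ a priori $p$-integrable, combining this pointwise bound with the local Fefferman--Stein inequality $\left\Vert C_{a}f\right\Vert _{L^{p}}\leq c\left\Vert \left(  C_{a}f\right)  ^{\#}\right\Vert _{L^{p}}$ (valid since $C_{a}f$ is supported in a fixed ball), the $L^{p}$-boundedness of $M_{p_{0}}$ for $p>p_{0}$, and the $L^{p}$-boundedness of $T$ from Theorem \ref{Theorem L^p C^eta}, gives the desired bound; the constants are independent of $R$ because those in Theorem \ref{Theorem L^p C^eta} are.

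For the $VMO$ part, given $\varepsilon>0$ I would first choose $\delta>0$ with $c\delta<\varepsilon/2$ ($c$ the constant just obtained), then take a Lipschitz approximant $a_{\delta}$ of $a$ on $\Omega_{n+3}$ with $\left\Vert a-a_{\delta}\right\Vert _{BMO_{loc}\left(  \Omega_{n+2},\Omega_{n+3}\right)  }\leq\delta$; this is possible because $\eta_{a,\Omega_{n+2},\Omega_{n+3}}^{\ast}\left(  r\right)  \rightarrow0$ as $r\rightarrow0$ (a $VMO$-approximation lemma of \cite{bz}). Writing $C_{a}f=C_{a-a_{\delta}}f+C_{a_{\delta}}f$, the first term has $L^{p}$ norm at most $c\delta\left\Vert f\right\Vert _{L^{p}}<\left(  \varepsilon/2\right)  \left\Vert f\right\Vert _{L^{p}}$ by the first assertion. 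For the second, on a ball $B\left(  \overline{x},r\right)  $ one has $\left\vert a_{\delta}\left(  x\right)  -a_{\delta}\left(  y\right)  \right\vert \leq L_{\delta}\,d\left(  x,y\right)  $ with $L_{\delta}$ the (now fixed) Lipschitz constant, so by the standard estimate \eqref{standard 1} the kernel of $C_{a_{\delta}}$ on that ball is dominated by $L_{\delta}\,d\left(  x,y\right)  \cdot A/\mu\left(  B\left(  x,d\left(  x,y\right)  \right)  \right)  $; a dyadic decomposition of $B\left(  \overline{x},r\right)  $ in the variable $d\left(  x,y\right)  $ then yields $\left\vert C_{a_{\delta}}f\left(  x\right)  \right\vert \leq cL_{\delta}r\,Mf\left(  x\right)  $, whence $\left\Vert C_{a_{\delta}}f\right\Vert _{L^{p}\left(  B\left(  \overline{x},r\right)  \right)  }\leq cL_{\delta}r\left\Vert f\right\Vert _{L^{p}}$. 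Choosing $r$ so small that $cL_{\delta}r<\varepsilon/2$ finishes the proof.

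\textbf{Main obstacle.} No single estimate above is new --- each is classical on spaces of homogeneous type --- so the real difficulty is to run the entire Fefferman--Stein / John--Nirenberg / Calder\'{o}n--Zygmund machinery, with constants uniform in $R$, on a space that is only \emph{locally} doubling: at every step one must restrict to the exhausting sequence $\left\{  \Omega_{n}\right\}  $ and control how the ambient set enlarges ($\Omega_{n+1}\subset\Omega_{n+2}\subset\Omega_{n+3}$) with each application of a maximal or oscillation inequality. Developing the local maximal and sharp maximal operators and their inequalities in this framework --- precisely the contribution of \cite{bz} --- is where the substantive work lies.
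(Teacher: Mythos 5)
This theorem is one of several results in \S\ \ref{subsec locally hom space} that the paper explicitly does \emph{not} prove: the text preceding these statements says ``All the following results in this section have been proved in \cite{bz}.'' So there is no in-paper proof to compare your outline against; the paper's ``proof'' is a citation to the companion work on locally homogeneous spaces.

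Taken on its own merits, your sketch is the expected one and looks sound at the level of detail given: a Coifman--Rochberg--Weiss sharp-maximal pointwise bound (splitting $C_{a}f$ into $(a-a_{B})Tf$, a $2B$-local piece controlled via the weak-$(1,1)$ estimate and Kolmogorov's inequality, and a far piece controlled via the H\"{o}rmander condition \eqref{standard 2}), followed by a local Fefferman--Stein inequality, and then the Chiarenza--Frasca--Longo mollification $a=a_{\delta}+(a-a_{\delta})$ for the $VMO$ refinement. Two points deserve emphasis. First, your entire argument rests on local versions of John--Nirenberg, Fefferman--Stein, and the Hardy--Littlewood maximal theorem that must themselves be proved with care in a space that is only locally doubling --- you flag this correctly, but it means your text is a roadmap pointing back at \cite{bz} rather than an independent proof, which is exactly the situation the paper is already in. Second, the claimed independence of the constant from $R$ is asserted but not really verified: the reason it holds is that the local doubling, John--Nirenberg, and Fefferman--Stein constants in \cite{bz} are indexed by the exhausting sets $\Omega_{n},\Omega_{n+1},\ldots$ and not by $R$, and the cutoffs in $\widetilde{K}$ confine everything to a fixed compact subset; a complete write-up would have to trace this dependence explicitly, and also handle the standard a priori integrability issue before invoking Fefferman--Stein (your ``routine truncation of $a$'' needs to be spelled out, since the commutator is not obviously in $L^{p}$ to begin with).

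In short: no gap in the strategy, but your proposal is an outline of the classical argument in a setting where all the real-variable black boxes must be re-established locally, which is the same burden the paper discharges by citing \cite{bz}.
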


\begin{theorem}
[Positive commutators of local fractional integrals]\label{Thm comm frac}Let
$K,\widetilde{K}$ be as in Assumption (K), with $K$ satisfying the growth
condition (\ref{Standard 1'}) for some $\nu>0.$ If%
\[
I_{\nu}f\left(  x\right)  =\int_{B\left(  \overline{x},R\right)  }%
\widetilde{K}(x,y)f\left(  y\right)  d\mu(y)
\]
and, for $a\in BMO_{loc}\left(  \Omega_{n+2},\Omega_{n+3}\right)  $, we set%
\begin{equation}
C_{\nu,a}f\left(  x\right)  =\int_{B\left(  \overline{x},R\right)  }%
\widetilde{K}(x,y)\left\vert a\left(  x\right)  -a\left(  y\right)
\right\vert f\left(  y\right)  d\mu(y) \label{positive comm}%
\end{equation}
then, for any $p\in\left(  1,\frac{1}{\nu}\right)  ,\frac{1}{q}=\frac{1}%
{p}-\nu$ there exists $c\ $such that%
\[
\left\Vert C_{\nu,a}f\right\Vert _{L^{q}\left(  B\left(  \overline
{x},R\right)  \right)  }\leq c\left\Vert a\right\Vert _{BMO_{loc}\left(
\Omega_{n+2},\Omega_{n+3}\right)  }\left\Vert f\right\Vert _{L^{p}\left(
B\left(  \overline{x},R\right)  \right)  }%
\]
for any $f\in L^{p}\left(  B\left(  \overline{x},R\right)  \right)  .$

Moreover, if $a\in VMO_{loc}\left(  \Omega_{n+2},\Omega_{n+3}\right)  $ for
any $\varepsilon>0$ there exists $r>0$ such that for any $f\in L^{p}\left(
B\left(  \overline{x},r\right)  \right)  $ we have%
\[
\left\Vert C_{\nu,a}f\right\Vert _{L^{q}\left(  B\left(  \overline
{x},r\right)  \right)  }\leq\varepsilon\left\Vert f\right\Vert _{L^{p}\left(
B\left(  \overline{x},r\right)  \right)  }.
\]
The constant $c$ depends on $p,\nu,n$ and the constants involved in the
assumptions on $K$ (but not on $R$); the constant $r$ also depends on the
$VMO_{loc}\left(  \Omega_{n+2},\Omega_{n+3}\right)  $ modulus of $a$.
\end{theorem}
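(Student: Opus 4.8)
We only sketch the argument. The plan is to reduce the estimate to the boundedness of a genuine fractional integral, which is already available as Theorem \ref{frac lp-lq}, by absorbing the oscillation of $a$ into its $BMO_{loc}$ norm through the (local) John--Nirenberg inequality.

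\textbf{Step 1: reduction to a positive model operator.} Since the cutoff factors in $\widetilde{K}$ take values in $[0,1]$ and $K$ satisfies the growth condition (\ref{Standard 1'}), for every $f$ one has
\[
|C_{\nu,a}f(x)|\leq c\int_{B(\overline{x},R)}\frac{|a(x)-a(y)|}{\mu(B(x,d(x,y)))^{1-\nu}}\,|f(y)|\,d\mu(y)=:c\,\widetilde{C}_{\nu,a}|f|(x),
\]
so it suffices to bound the positive operator $\widetilde{C}_{\nu,a}$ (acting on $g\geq 0$) from $L^{p}$ to $L^{q}$ by $c\,\Vert a\Vert_{BMO_{loc}(\Omega_{n+2},\Omega_{n+3})}$, with $c$ independent of $R$. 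For $x\in B(\overline{x},R)$ I would split the integral over the dyadic annuli $D_{k}(x)=\{y:2^{-k-1}r_{0}<d(x,y)\leq 2^{-k}r_{0}\}$, $k\geq 0$, $r_{0}\simeq R$; by the local doubling property (iii), $\mu(B(x,d(x,y)))\simeq\mu(B_{k})$ on $D_{k}(x)$, with $B_{k}=B(x,2^{-k}r_{0})$. On each annulus use $|a(x)-a(y)|\leq|a(x)-a_{B_{k}}|+|a_{B_{k}}-a(y)|$, which splits $\widetilde{C}_{\nu,a}$ into an \emph{oscillation part} (weight $|a_{B_{k}}-a(y)|$) and a \emph{commutator--type part} (weight $|a(x)-a_{B_{k}}|$, depending on $y$ only through $d(x,y)$).

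\textbf{Step 2: the oscillation part.} Here H\"{o}lder's inequality on $D_{k}(x)$ and the local John--Nirenberg inequality for $a\in BMO_{loc}(\Omega_{n+2},\Omega_{n+3})$, namely $\big(\mu(B_{k})^{-1}\int_{B_{k}}|a-a_{B_{k}}|^{p'}\big)^{1/p'}\leq C_{p}\Vert a\Vert_{BMO_{loc}}$, extract the factor $\Vert a\Vert_{BMO_{loc}}$ and leave a series in $g$ controlled by a variant of the fractional integral of Theorem \ref{frac lp-lq}. One can afford at most one extra logarithmic factor $1+|\log d(x,y)|$ in the kernel, since here the target exponent satisfies $q>1/(1-\nu)$, and such a logarithm does not destroy the $L^{p}\to L^{q}$ bound.

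\textbf{Step 3: the commutator--type part --- the main obstacle.} The delicate term is $C^{(1)}g(x)=\int_{B(\overline{x},R)}\mu(B(x,d(x,y)))^{\nu-1}\,|a(x)-a_{B(x,d(x,y))}|\,g(y)\,d\mu(y)$, whose weight involves the pointwise values of $a$: the naive bound $|a(x)-a_{B_{k}}|\leq|a(x)-a_{B_{0}}|+Ck\Vert a\Vert_{BMO_{loc}}$ forces an $L^{r}$-loss and, worse, an $R$-dependent constant if one crudely majorises the far interaction by maximal functions of $g$, since doing so destroys exactly the fractional $L^{p}\to L^{q}$ smoothing one must keep. I would instead treat $C^{(1)}$ in the spirit of Chanillo's argument for commutators of fractional integrals: estimate the local sharp maximal function $M^{\#}(C^{(1)}g)$ pointwise by $\Vert a\Vert_{BMO_{loc}}$ times $M_{\nu,s}g+M_{s}(I_{\nu}g)$ for some $s>1$ close to $1$, splitting $g$ at the scale of the ball over which $M^{\#}$ is computed so that the far part keeps its genuine fractional-integral form, and then combine the $L^{q}$-equivalence of a function with its local sharp maximal function, the fractional maximal estimate, and Theorem \ref{frac lp-lq}. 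Carrying this out in the locally homogeneous setting --- with local $BMO$, local $M^{\#}$, and the exhausting sets $\Omega_{n},\Omega_{n+1},\dots$ entering cleanly --- and, above all, obtaining a constant independent of $R$, is the heart of the matter; it is the local, non-globally-doubling counterpart of the classical estimate. (Equivalently one may aim at a Welland-type pointwise bound $\widetilde{C}_{\nu,a}g(x)\leq C\Vert a\Vert_{BMO_{loc}}(I_{\nu-\varepsilon}g(x))^{1/2}(I_{\nu+\varepsilon}g(x))^{1/2}$ for small $\varepsilon>0$, after which the claim follows by H\"{o}lder with the balanced exponents $1/q_{\pm}=1/p-(\nu\mp\varepsilon)$ and Theorem \ref{frac lp-lq}; the whole difficulty then migrates to proving this inequality.)

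\textbf{Step 4: the $VMO_{loc}$ refinement.} For $a\in VMO_{loc}(\Omega_{n+2},\Omega_{n+3})$ I would rerun the scheme with the truncated kernel $\widetilde{K}^{(r)}(x,y)=\widetilde{K}(x,y)\chi_{\{d(x,y)<r\}}$. In the dyadic decomposition only the scales $2^{-k}r_{0}<r$ survive, and on those $\mu(B_{k})^{-1}\int_{B_{k}}|a-a_{B_{k}}|\leq\eta^{\ast}(r)$, the $VMO_{loc}$ local modulus of $a$; the telescoping increments in Step 3 are likewise bounded by $\eta^{\ast}(r)$ instead of $\Vert a\Vert_{BMO_{loc}}$. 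Hence on a small ball $B(\overline{x},r)$ the $L^{p}\to L^{q}$ norm of $C_{\nu,a}$ is $\leq C\,\eta^{\ast}(r)\to 0$ as $r\to 0$, which is the asserted smallness.
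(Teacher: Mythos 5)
This theorem is not proved in the paper. As stated explicitly just before the block of theorems in \S\ref{subsec locally hom space}: ``All the following results in this section have been proved in \cite{bz}.'' Theorem \ref{Thm comm frac} is quoted as a known result from the reference \cite{bz} (Bramanti--Zhu, \emph{Local real analysis in locally doubling spaces}) and no proof is reproduced here, so there is no ``paper's own proof'' against which to compare your argument.

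As a freestanding sketch, your plan follows the classical route for positive commutators of fractional integrals (Chanillo-style sharp maximal estimate, or a Welland-type pointwise domination), and the arithmetic in Step 3 for the $q_{\pm}$ exponents is correct. But you have not given a proof: Step 3, which you yourself identify as ``the heart of the matter,'' is deferred entirely, and it is precisely there that the \emph{local} (non-globally-doubling) structure -- the exhausting sets $\Omega_{n}\subset\Omega_{n+1}\subset\dots$, the restriction to radii $\leq\varepsilon_{n}$, the fact that $BMO_{loc}$ averages are only taken over balls centered in $\Omega_{n+2}$ -- must enter. Moreover, in Step 2 your claim that a logarithmic factor ``does not destroy the $L^{p}\to L^{q}$ bound'' needs care: absorbing $1+\vert\log d(x,y)\vert$ into a kernel of type $\nu-\varepsilon$ would degrade the target exponent $q$, and absorbing it into an $R$-dependent constant contradicts the $R$-independence asserted in the statement. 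A correct treatment must exploit that $\sum_{k}k\,2^{-k\beta}$ converges, i.e.\ a direct Schur-type estimate on the dyadic shells rather than a naive power absorption. If you wish to use this theorem, the appropriate course in this paper is simply to cite \cite{bz}; if you want to prove it from scratch you would need to supply the content of Step 3 in the locally homogeneous framework, which is a substantial task essentially equivalent to rederiving the corresponding result in \cite{bz}.
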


\begin{theorem}
[Positive commutators of nonsingular integrals]\label{Thm comm pos}Let
$K,\widetilde{K}$ be as in Assumption (K), with $K$ satisfying condition
(\ref{standard 2}) with $\nu=0.$ Assume that the operator%
\[
Tf\left(  x\right)  =\int_{B\left(  \overline{x},R\right)  }\widetilde
{K}(x,y)f\left(  y\right)  d\mu(y)
\]
is continuous on $L^{p}\left(  B\left(  \overline{x},R\right)  \right)  $ for
any $p\in\left(  1,\infty\right)  $. For $a\in BMO_{loc}\left(  \Omega
_{n+2},\Omega_{n+3}\right)  ,$ set%
\begin{equation}
C_{a}f\left(  x\right)  =\int_{B\left(  \overline{x},R\right)  }\widetilde
{K}(x,y)\left\vert a\left(  x\right)  -a\left(  y\right)  \right\vert f\left(
y\right)  d\mu(y), \label{positive comm 2}%
\end{equation}
then%
\[
\left\Vert C_{a}f\right\Vert _{L^{p}\left(  B\left(  \overline{x},R\right)
\right)  }\leq c\left\Vert a\right\Vert _{BMO_{loc}\left(  \Omega_{n+2}%
,\Omega_{n+3}\right)  }\left\Vert f\right\Vert _{L^{p}\left(  B\left(
\overline{x},R\right)  \right)  }%
\]
for any $f\in L^{p}\left(  B\left(  \overline{x},R\right)  \right)
,p\in\left(  1,\infty\right)  $.

Moreover, if $a\in VMO_{loc}\left(  \Omega_{n+2},\Omega_{n+3}\right)  $ for
any $\varepsilon>0$ there exists $r>0$ such that for any $f\in L^{p}\left(
B\left(  \overline{x},r\right)  \right)  $ we have%
\[
\left\Vert C_{a}f\right\Vert _{L^{p}\left(  B\left(  \overline{x},r\right)
\right)  }\leq\varepsilon\left\Vert f\right\Vert _{L^{p}\left(  B\left(
\overline{x},r\right)  \right)  }.
\]
The constant $c$ depends on $n,$ the constants involved in the assumptions on
$K,$ and the $L^{p}$-$L^{p}$ norm of the operator $T$ (but not explicitly on
$R$); the constant $r$ also depends on the $VMO_{loc}\left(  \Omega
_{n+2},\Omega_{n+3}\right)  $ modulus of $a$.
\end{theorem}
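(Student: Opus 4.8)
The plan is to run the Calder\'{o}n--Zygmund sharp-maximal-function machinery, localized to the ball $B_{0}=B\left(  \overline{x},R\right)  $, keeping track of the only two facts available: that $T$ is bounded on every $L^{p}\left(  B_{0}\right)  $, and that $\widetilde{K}$ inherits from (\ref{standard 2}) (for $K$) and the H\"{o}lder cutoffs in (\ref{k tilde}) a smoothness estimate of the same type with exponent $\beta^{\prime}=\min\left(  \alpha,\beta\right)  >0$. I will assume, consistently with the positive-commutator terminology, that $K\geq0$, so that $C_{a}$ has a nonnegative kernel and the bound is unchanged if $f$ is replaced by $\left\vert f\right\vert $; hence assume $f\geq0$. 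It also suffices to prove the estimate for bounded, compactly supported $f$, for which $C_{a}f\in L^{p}\left(  B_{0}\right)  $ a priori --- here one uses $a\in L^{p}\left(  B_{0}\right)  $, which follows from the (local) John--Nirenberg inequality since $B_{0}$ has finite measure.

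The core is the pointwise estimate of the sharp maximal function $\left(  C_{a}f\right)  ^{\#}$. Fix $x_{0}\in B_{0}$ and $B=B\left(  x_{0},\rho\right)  $, write $2^{\ast}B$ for a fixed dilate, and split $f=f_{1}+f_{2}$ with $f_{1}=f\mathbf{1}_{2^{\ast}B}$, replacing $a$ by $a-a_{2^{\ast}B}$. For the \emph{local} part I would use $0\leq C_{a}f_{1}\left(  x\right)  \leq\left\vert a\left(  x\right)  -a_{2^{\ast}B}\right\vert Tf_{1}\left(  x\right)  +T\bigl(  \left\vert a-a_{2^{\ast}B}\right\vert f_{1}\bigr)  \left(  x\right)  $ together with H\"{o}lder's inequality, the $L^{s}$-boundedness of $T$ for $s$ close to $1$, John--Nirenberg and the support of $f_{1}$, obtaining $\mu\left(  B\right)  ^{-1}\int_{B}C_{a}f_{1}\,d\mu\lesssim\left\Vert a\right\Vert _{BMO_{loc}}M_{s_{1}}f\left(  x_{0}\right)  $ for some $s_{1}>1$. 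For the \emph{far} part I would take $c_{B}=\int_{\left(  2^{\ast}B\right)  ^{c}}\widetilde{K}\left(  x_{0},y\right)  \left\vert a_{2^{\ast}B}-a\left(  y\right)  \right\vert f_{2}\left(  y\right)  d\mu\left(  y\right)  $, decompose $\left(  2^{\ast}B\right)  ^{c}$ into dyadic coronas $2^{k+1}B\setminus2^{k}B$, and on each of them use the smoothness of $\widetilde{K}$ (so that $\left\vert \widetilde{K}\left(  x,y\right)  -\widetilde{K}\left(  x_{0},y\right)  \right\vert \lesssim2^{-k\beta^{\prime}}/\mu\left(  2^{k}B\right)  $) and the elementary inequality $\bigl\vert \left\vert a\left(  x\right)  -a\left(  y\right)  \right\vert -\left\vert a_{2^{\ast}B}-a\left(  y\right)  \right\vert \bigr\vert \leq\left\vert a\left(  x\right)  -a_{2^{\ast}B}\right\vert $. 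Averaging over $x\in B$, the $k$-th term is $\lesssim2^{-k\beta^{\prime}}\bigl(  \left(  k+1\right)  Mf\left(  x_{0}\right)  +M_{s_{1}}f\left(  x_{0}\right)  +M\left(  Tf\right)  \left(  x_{0}\right)  \bigr)  \left\Vert a\right\Vert _{BMO_{loc}}$, the factor $k+1$ coming from telescoping the averages $a_{2^{j}B}$ over nested balls and the $M_{s_{1}}$-term from John--Nirenberg. \emph{This is the crucial point}: since $\beta^{\prime}>0$ the series $\sum_{k}\left(  k+1\right)  2^{-k\beta^{\prime}}$ converges, which is exactly why $\nu=0$ together with $\beta>0$ suffices here, whereas the analogous fractional estimate needs $\nu>0$ to kill the same logarithmic growth. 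Summing, $\left(  C_{a}f\right)  ^{\#}\left(  x_{0}\right)  \lesssim\left\Vert a\right\Vert _{BMO_{loc}}\bigl(  M_{s_{1}}f\left(  x_{0}\right)  +M\left(  Tf\right)  \left(  x_{0}\right)  \bigr)  $.

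The $L^{p}$ bound then follows from the (local) Fefferman--Stein inequality and the $L^{p}$-boundedness of the local maximal operators $M_{s_{1}}$ (valid for $p>s_{1}$, so pick $s_{1}$ close to $1$) and $M$, and of $T$:
\[
\left\Vert C_{a}f\right\Vert _{L^{p}\left(  B_{0}\right)  }\lesssim\left\Vert \left(  C_{a}f\right)  ^{\#}\right\Vert _{L^{p}\left(  B_{0}\right)  }\lesssim\left\Vert a\right\Vert _{BMO_{loc}}\bigl(  \left\Vert M_{s_{1}}f\right\Vert _{p}+\left\Vert M\left(  Tf\right)  \right\Vert _{p}\bigr)  \lesssim\left\Vert a\right\Vert _{BMO_{loc}}\left\Vert f\right\Vert _{L^{p}\left(  B_{0}\right)  }.
\]
For the $VMO$ statement one reruns the argument on a small ball $B\left(  \overline{x},r\right)  $: in the local part and in the finitely many coronas with $2^{k}r$ below a fixed small threshold, $\left\Vert a\right\Vert _{BMO_{loc}}$ gets replaced --- through the standard interpolation between the $L^{1}$-oscillation (controlled by the $VMO$ modulus $\eta_{a}^{\ast}$) and the $L^{s^{\prime}}$-oscillation (controlled by $\left\Vert a\right\Vert _{BMO_{loc}}$ via John--Nirenberg) --- by a quantity tending to $0$ with $r$, while the remaining coronas contribute $\lesssim\sum_{k>k_{0}}\left(  k+1\right)  2^{-k\beta^{\prime}}$, small for $k_{0}$ large; choosing $r$ small makes the total constant $\leq\varepsilon$.

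The main obstacle --- and the reason this is not a verbatim copy of the Euclidean commutator proof --- is that the space is only \emph{locally} homogeneous: balls cannot be freely dilated, the global doubling condition (\ref{doubling 1}) is unavailable, and one must check that all the dilates $2^{k}B$ used in the sharp-function estimate stay inside the appropriate $\Omega_{m}$'s, so that the local doubling (\ref{Hp 3}), the local John--Nirenberg inequality, the local maximal-function bounds and the local Fefferman--Stein inequality (all from \cite{bz}) actually apply; this forces the bookkeeping with $R_{0}$ and the $\varepsilon_{n}$'s already built into Assumption (K). A minor additional point is that $\widetilde{K}$ is only smooth, with no ``standard size'' bound, so one cannot replace $Tf\left(  x_{0}\right)  $ by a maximal function of $f$ (this is harmless, since $T$ is bounded on $L^{p}$ and the term $M\left(  Tf\right)  $ is carried through to the end); and if one does not assume $K\geq0$, the local part instead needs the $L^{p}$-boundedness of the near-diagonal truncation of the operator with kernel $\left\vert \widetilde{K}\right\vert $, which holds in all our applications.
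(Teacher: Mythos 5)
This theorem is one of the results quoted from the companion paper \cite{bz}; the paper under review states it but does not prove it, so there is no ``paper's own proof'' here to compare against. Judged on its own merits, your sharp--maximal--function strategy is a sensible one for a statement of this type, and the bookkeeping remarks at the end (local doubling, local John--Nirenberg, local Fefferman--Stein, the role of the $\varepsilon_n$'s) correctly anticipate what is delicate in a locally homogeneous space. The local part of the pointwise estimate and the $VMO$ smallness argument are also essentially standard and should go through.

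However, there is a genuine gap in the key preparatory claim, namely that ``$\widetilde{K}$ inherits from (\ref{standard 2}) (for $K$) and the H\"older cutoffs in (\ref{k tilde}) a smoothness estimate of the same type.'' Write $\widetilde{K}(x,y)=a(x)K(x,y)b(y)$ with $a,b$ the $C_0^\alpha$ cutoffs of Assumption (K). Then
\[
\widetilde{K}(x,y)-\widetilde{K}(x',y)=a(x')\big[K(x,y)-K(x',y)\big]b(y)+\big[a(x)-a(x')\big]K(x,y)b(y),
\]
and while the first term is handled by (\ref{standard 2}), the second term involves $K(x,y)$ itself, with no control: the theorem assumes \emph{only} condition (\ref{standard 2}) (the mean--value inequality with $\nu=0$), not the size bound (\ref{standard 1}), and $L^p$-boundedness of $T$ does not yield a pointwise bound on the kernel off the diagonal. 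Consequently the estimate $|\widetilde{K}(x,y)-\widetilde{K}(x_0,y)|\lesssim 2^{-k\beta'}/\mu(2^kB)$ on the $k$-th corona, which drives the convergence of your $\sum_k(k+1)2^{-k\beta'}$, has not been justified. Note that even if one \emph{did} assume (\ref{standard 1}), the contribution of the cutoff--difference term on the $k$-th corona decays only like $(\rho/R)^\alpha/\mu(2^kB)$ -- i.e.\ not in $k$ at all -- so it would be controlled only by the finiteness of the dyadic sum (the bounded domain) rather than by the geometric decay you invoke; without (\ref{standard 1}) it is simply not controlled. You do flag the lack of a size bound at the very end, but only as it affects replacing $Tf$ by a maximal function, not as it undermines the derivation of the smoothness of $\widetilde{K}$, and the workaround you propose (carrying $M(Tf)$) does not touch this second issue. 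To close the gap one must either decompose $C_af_2(x)-C_af_2(x')$ so as to keep the factor $a(x')$ (or $a(x)$) attached to $K$ -- thereby reducing the problematic piece to an expression involving $\widetilde{K}(x',\cdot)$, i.e.\ the operator $T$ itself -- and then find a way to control that piece without dividing by $a(x')$, or find a genuinely different route (good-$\lambda$, or a decomposition exploiting the special nested-support structure of the cutoffs). A secondary point: your standing assumption $K\ge 0$ is an extra hypothesis not in the statement; your proposed fix ($L^p$-boundedness of the operator with kernel $|\widetilde{K}|$) is itself an additional hypothesis, and should be either justified from (\ref{standard 2}) or flagged as a strengthening of the theorem.
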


\begin{remark}
\label{remark linear constants}In the statements of Theorems
\ref{Theorem L^p C^eta}, \ref{frac lp-lq}, \ref{Thm frac C^eta},
\ref{Thm commutator}, \ref{Thm comm frac}, \ref{Thm comm pos} we wrote that
the constant depends on the kernel only through the constants involved in the
assumptions. In the following we will need some additional information about
this dependence. A standard sublinearity argument allows us to say that if,
for example, our assumptions on the kernel are (\ref{standard 1}),
(\ref{standard 2}), (\ref{standard 3}), then the constant in our upper bound
will have the form%
\[
c\cdot\left(  A+B+C\right)
\]
where $A,B,C$ are the constants appearing in (\ref{standard 1}),
(\ref{standard 2}), (\ref{standard 3}), and $c$ does not depend on the kernel.
\end{remark}

We will also need the notion of \emph{local maximal operator }in locally
homogeneous spaces.

\begin{definition}
Fix $\Omega_{n},\Omega_{n+1}$ and, for any $f\in L^{1}\left(  \Omega
_{n+1}\right)  $ define the \emph{local maximal function}%
\[
M_{\Omega_{n},\Omega_{n+1}}f\left(  x\right)  =\sup_{r\leq r_{n}}\frac{1}%
{\mu\left(  B\left(  x,r\right)  \right)  }\int_{B\left(  x,r\right)
}\left\vert f\left(  y\right)  \right\vert d\mu\left(  y\right)  \text{ for
}x\in\Omega_{n}%
\]
where $r_{n}=2\varepsilon_{n}/5$.
\end{definition}

\begin{theorem}
\label{Thm maximal}Let $f$ be a measurable function defined on $\Omega_{n+1}.$
The following hold:

(a) If $f\in L^{p}\left(  \Omega_{n+1}\right)  $ for some $p\in\left[
1,\infty\right]  $, then $M_{\Omega_{n},\Omega_{n+1}}f$ is finite almost
everywhere in $\Omega_{n}$;

(b) if $f\in L^{1}\left(  \Omega_{n+1}\right)  $, then for every $t>0$,
\[
\mu\left(  \left\{  x\in\Omega_{n}:\left(  M_{\Omega_{n},\Omega_{n+1}%
}f\right)  \left(  x\right)  >t\right\}  \right)  \leq\frac{c_{n}}{t}%
\int_{\Omega_{n+1}}\left\vert f\left(  y\right)  \right\vert d\mu\left(
y\right)  ;
\]

(c) if $f\in L^{p}\left(  \Omega_{n+1}\right)  $, $1<p\leq\infty$, then
$M_{\Omega_{n},\Omega_{n+1}}f\in L^{p}\left(  \Omega_{n}\right)  $ and
\[
\left\Vert M_{\Omega_{n},\Omega_{n+1}}f\right\Vert _{L^{p}\left(  \Omega
_{n}\right)  }\leq c_{n,p}\left\Vert f\right\Vert _{L^{p}\left(  \Omega
_{n+1}\right)  }.
\]

\end{theorem}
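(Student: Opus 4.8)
The statement is the local version of the classical Hardy–Littlewood maximal theorem, transplanted to the locally homogeneous space $(\Omega,\{\Omega_n\},d,\mu)$, restricted to the pair $(\Omega_n,\Omega_{n+1})$ and to radii $r\le r_n=2\varepsilon_n/5$. The plan is to follow the standard Coifman–Weiss argument for spaces of homogeneous type, being careful that all the balls involved in the covering and the doubling steps stay inside $\Omega_{n+1}$, so that the \emph{local} doubling condition (\ref{Hp 3}) is all that is ever used. The key geometric remark is the choice $r_n=2\varepsilon_n/5$: for $x\in\Omega_n$ and $r\le r_n$ one has $B(x,r)\subset\Omega_{n+1}$, and in the Vitali-type covering argument the enlarged balls $B(x_i,5r_i)$ that appear still have radius $\le \varepsilon_n$ and centers within $\Omega_n$'s $2\varepsilon_n$-neighborhood, hence are contained in $\Omega_{n+1}$ by (\ref{Hp 1}), so (\ref{Hp 3}) applies to them with the single constant $C_n$. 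In a metric space the factor $5$ (rather than the quasimetric constant cubed) is what one gets from the basic $5r$-covering lemma, which is why the threshold is $2\varepsilon_n/5$.

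For part (b), the weak $(1,1)$ bound: fix $t>0$ and let $E_t=\{x\in\Omega_n:M_{\Omega_n,\Omega_{n+1}}f(x)>t\}$. For each $x\in E_t$ choose $r_x\le r_n$ with $\mu(B(x,r_x))<t^{-1}\int_{B(x,r_x)}|f|\,d\mu$. The family $\{B(x,r_x)\}_{x\in E_t}$ has uniformly bounded radii, so by the $5r$-covering lemma there is a countable disjoint subfamily $\{B(x_i,r_i)\}$ with $E_t\subset\bigcup_i B(x_i,5r_i)$. Since $5r_i\le \varepsilon_n$ and (as noted above) $B(x_i,5r_i)\subset\Omega_{n+1}$, iterating (\ref{Hp 3}) gives $\mu(B(x_i,5r_i))\le C_n^{\lceil\log_2 5\rceil}\mu(B(x_i,r_i))\le C_n^3\,\mu(B(x_i,r_i))$. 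Hence
\[
\mu(E_t)\le\sum_i\mu(B(x_i,5r_i))\le C_n^3\sum_i\mu(B(x_i,r_i))\le \frac{C_n^3}{t}\sum_i\int_{B(x_i,r_i)}|f|\,d\mu\le\frac{C_n^3}{t}\int_{\Omega_{n+1}}|f|\,d\mu,
\]
using disjointness and $B(x_i,r_i)\subset\Omega_{n+1}$ in the last step. This proves (b) with $c_n=C_n^3$, and (a) in the case $p=1$ follows since $\mu(E_t)\to0$, actually $\mu(E_t)<\infty$ for each $t$.

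For part (c), one first disposes of $p=\infty$ trivially (the average of $|f|$ is bounded by $\|f\|_{L^\infty(\Omega_{n+1})}$, so $M_{\Omega_n,\Omega_{n+1}}f\le\|f\|_{L^\infty(\Omega_{n+1})}$ pointwise on $\Omega_n$), which also gives (a) for $p=\infty$; then for $1<p<\infty$ one interpolates between the weak $(1,1)$ estimate of part (b) and the trivial $L^\infty$ bound via the Marcinkiewicz interpolation theorem, exactly as in the classical proof: split $f=f\chi_{\{|f|>t/2\}}+f\chi_{\{|f|\le t/2\}}$, apply (b) to the first piece and the $L^\infty$ bound to the second to estimate $\mu(\{M f>t\})$, then integrate $t^{p-1}\mu(\{Mf>t\})\,dt$ using Fubini. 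This yields $\|M_{\Omega_n,\Omega_{n+1}}f\|_{L^p(\Omega_n)}\le c_{n,p}\|f\|_{L^p(\Omega_{n+1})}$, and (a) for intermediate $p$ is then immediate. I do not expect a genuine obstacle here; the only point requiring care — and the one I would state as a lemma before the main argument — is the bookkeeping that every ball and every dilated ball produced by the covering lemma at scales $\le r_n$ is contained in $\Omega_{n+1}$, so that the purely local hypothesis (\ref{Hp 3}) suffices and no global doubling of the kind (\ref{doubling 1}) is ever invoked.
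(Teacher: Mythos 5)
The paper does not actually prove this theorem: \S\ref{subsec locally hom space} states explicitly that all results in that section are proved in \cite{bz}, so there is no in-text argument to compare against. Your proposal is the standard Coifman--Weiss / Hardy--Littlewood argument localized to the pair $(\Omega_n,\Omega_{n+1})$ --- Vitali $5r$-covering for the weak $(1,1)$ bound, trivial $L^\infty$ bound, Marcinkiewicz interpolation --- and is correct in outline; this is certainly the intended route.

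One piece of bookkeeping is misstated, although the conclusion survives. You assert that the enlarged balls $B(x_i,5r_i)$ have radius $\le\varepsilon_n$; since $r_i\le r_n=2\varepsilon_n/5$, one only gets $5r_i\le 2\varepsilon_n$. More to the point, the naive iteration of (\ref{Hp 3}) at the radii $r_i,\,2r_i,\,4r_i$ that would give the advertised factor $C_n^{\lceil\log_2 5\rceil}$ is not legitimate, because $4r_i$ can reach $8\varepsilon_n/5>\varepsilon_n$, outside the range where (\ref{Hp 3}) is assumed. The correct chain halves from the top:
$\mu(B(x_i,5r_i))\le C_n\,\mu(B(x_i,\tfrac{5}{2}r_i))\le C_n^2\,\mu(B(x_i,\tfrac{5}{4}r_i))\le C_n^2\,\mu(B(x_i,2r_i))\le C_n^3\,\mu(B(x_i,r_i))$,
where (\ref{Hp 3}) is applied at the radii $\tfrac{5}{2}r_i$, $\tfrac{5}{4}r_i$, $r_i$, each $\le\varepsilon_n$ precisely because $r_i\le 2\varepsilon_n/5$. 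This computation is what forces the threshold $r_n=2\varepsilon_n/5$, and it is exactly the point you flag as ``the only point requiring care'' but then slightly misstate. The containment $B(x_i,5r_i)\subset\Omega_{n+1}$ does hold, via $5r_i\le 2\varepsilon_n$ and (\ref{Hp 1}), so with this repair parts (b), and then (a) and (c) by the $L^\infty$ bound and Marcinkiewicz interpolation, go through as you describe.
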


Finally, we need to discuss an integral characterization of H\"{o}lder
continuous, analogous to the one classically introduced by Campanato
\cite{Campanato}, in our abstract and local setting.

\begin{definition}
[Local Campanato spaces]\label{def campanato}Let $\left(  \Omega,\left\{
\Omega_{n}\right\}  _{n=1}^{\infty},d,\mu\right)  $ be a locally homogeneous
space. For any function $u\in L^{1}\left(  \Omega_{n+1}\right)  ,$ $\alpha
\in\left(  0,1\right)  ,$ let%
\[
M_{\alpha,\Omega_{n},\Omega_{n+1}}u=\sup_{x\in\Omega_{n},r\leq\varepsilon_{n}%
}\inf_{c\in\mathbb{R}}\frac{1}{r^{\alpha}\left\vert B\left(  x,r\right)
\right\vert }\int_{B\left(  x,r\right)  }\left\vert u\left(  y\right)
-c\right\vert d\mu\left(  y\right)  .
\]
Set
\[
\mathcal{L}^{\alpha}\left(  \Omega_{n},\Omega_{n+1}\right)  =\left\{  u\in
L^{1}\left(  \Omega_{n+1}\right)  :M_{\alpha,\Omega_{n},\Omega_{n+1}}%
u<\infty\right\}  .
\]

\end{definition}

If $u\in C^{\alpha}\left(  \Omega_{n+1}\right)  $ then clearly%
\[
M_{\alpha,\Omega_{n},\Omega_{n+1}}u\leq\left\vert u\right\vert _{C^{\alpha
}\left(  \Omega_{n+1}\right)  }.
\]

A converse result is contained in the following:

\begin{theorem}
\label{Thm Campanato}For any $u\in\mathcal{L}^{\alpha}\left(  \Omega
_{n},\Omega_{n+1}\right)  ,$ there exists a function $u^{\ast},$ equal to $u$
a.e. in $\Omega_{n},$such that $u^{\ast}$ belongs to $C^{\alpha}\left(
\Omega_{n}\right)  $. Namely, for any $x,y\in\Omega_{n}$ with $2d\left(
x,y\right)  \leq\varepsilon_{n}$ we have%
\begin{equation}
\left\vert u^{\ast}\left(  x\right)  -u^{\ast}\left(  y\right)  \right\vert
\leq cM_{\alpha,\Omega_{n},\Omega_{n+1}}ud\left(  x,y\right)  ^{\alpha}.
\label{local holder}%
\end{equation}

If $2d\left(  x,y\right)  >\varepsilon_{n}$ then%
\begin{equation}
\left\vert u^{\ast}\left(  x\right)  -u^{\ast}\left(  y\right)  \right\vert
\leq c\left\{  M_{\alpha,\Omega_{n},\Omega_{n+1}}u+\left\Vert u\right\Vert
_{L^{1}\left(  \Omega_{n+1}\right)  }\right\}  d\left(  x,y\right)  ^{\alpha}.
\label{global holder}%
\end{equation}
The constant $c$ in (\ref{local holder}), (\ref{global holder}) depends on
$C_{n}$ but not on $\varepsilon_{n}$.
\end{theorem}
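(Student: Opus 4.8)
The plan is to adapt the classical Campanato-to-H\"{o}lder argument to the local setting, the only new difficulty being that our space is merely \emph{locally} homogeneous, so all estimates must be carried out with balls centered in $\Omega_n$ and contained in $\Omega_{n+1}$, and the doubling constant $C_n$ must be tracked carefully. First I would fix $x\in\Omega_n$ and, for any $r\le\varepsilon_n$, denote by $u_{B(x,r)}$ the average of $u$ over $B(x,r)$. The standard first step is to estimate the oscillation of these averages along a dyadic sequence of radii: using the definition of $M_{\alpha,\Omega_n,\Omega_{n+1}}u$ and the local doubling property (\ref{Hp 3}), one gets
\[
\left\vert u_{B(x,2^{-k-1}r)}-u_{B(x,2^{-k}r)}\right\vert\le c\,C_n\,M_{\alpha,\Omega_n,\Omega_{n+1}}u\,(2^{-k}r)^{\alpha}.
\]
Summing the geometric series in $k$ shows that $u_{B(x,2^{-k}r)}$ converges as $k\to\infty$; I would \emph{define} $u^{\ast}(x)$ to be this limit, and the Lebesgue differentiation theorem (valid here because $\mu$ is locally doubling, so the local maximal operator of Theorem \ref{Thm maximal} controls averages) gives $u^{\ast}=u$ a.e.\ in $\Omega_n$. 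The same geometric summation yields the basic ball estimate $\left\vert u^{\ast}(x)-u_{B(x,r)}\right\vert\le c\,M_{\alpha,\Omega_n,\Omega_{n+1}}u\,r^{\alpha}$ for every $x\in\Omega_n$, $r\le\varepsilon_n$.

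Next I would prove the local H\"{o}lder bound (\ref{local holder}). Given $x,y\in\Omega_n$ with $2d(x,y)=:2r\le\varepsilon_n$, the ball $B(y,2r)$ contains both $B(x,r)$ and $B(y,r)$, and by local doubling $\mu(B(y,2r))\le c\,C_n\min\{\mu(B(x,r)),\mu(B(y,r))\}$. Writing
\[
\left\vert u^{\ast}(x)-u^{\ast}(y)\right\vert\le\left\vert u^{\ast}(x)-u_{B(x,r)}\right\vert+\left\vert u_{B(x,r)}-u_{B(y,2r)}\right\vert+\left\vert u_{B(y,2r)}-u_{B(y,r)}\right\vert+\left\vert u_{B(y,r)}-u^{\ast}(y)\right\vert,
\]
the first and last terms are controlled by the basic ball estimate; for the two middle terms I estimate e.g.\ $\left\vert u_{B(x,r)}-u_{B(y,2r)}\right\vert$ by integrating $|u-u_{B(y,2r)}|$ over $B(y,2r)\supset B(x,r)$ and using doubling to pass to the average over $B(x,r)$, which is $\le c\,C_n\,M_{\alpha,\Omega_n,\Omega_{n+1}}u\,(2r)^{\alpha}$. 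Collecting terms gives (\ref{local holder}) with a constant depending only on $C_n$.

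Finally, for (\ref{global holder}) with $2d(x,y)>\varepsilon_n$, I would use the basic ball estimate at the fixed radius $r=\varepsilon_n/2$ to get $\left\vert u^{\ast}(x)-u_{B(x,\varepsilon_n/2)}\right\vert\le c\,M_{\alpha,\Omega_n,\Omega_{n+1}}u\,\varepsilon_n^{\alpha}$ and similarly at $y$, while the difference of the two averages $|u_{B(x,\varepsilon_n/2)}-u_{B(y,\varepsilon_n/2)}|$ is bounded crudely by $2\,\mu(B(\cdot,\varepsilon_n/2))^{-1}\|u\|_{L^1(\Omega_{n+1})}$; since $d(x,y)>\varepsilon_n/2$ one can absorb the factor $\varepsilon_n^{\alpha}$ into $d(x,y)^{\alpha}$ and the $L^1$ term into $\{M_{\alpha,\Omega_n,\Omega_{n+1}}u+\|u\|_{L^1(\Omega_{n+1})}\}d(x,y)^{\alpha}$. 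The main obstacle, and the place where one must be most careful, is ensuring that every ball invoked in the telescoping argument stays inside $\Omega_{n+1}$ where $u\in L^1$ and the doubling bound holds — this is exactly why the radii are capped at $\varepsilon_n$ and why (\ref{Hp 1}) guarantees $B(x,2r)\subset\Omega_{n+1}$ for $x\in\Omega_n$, $r\le\varepsilon_n$ — and that the final constant is expressed through $C_n$ alone and not through $\varepsilon_n$, which requires using only dimensionless ratios of measures throughout.
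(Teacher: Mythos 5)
The paper does not contain its own proof of Theorem \ref{Thm Campanato}: it is one of the results the authors import verbatim from \cite{bz}, so there is no in-paper argument to match yours against. That said, your proposal is the classical Campanato-to-H\"older scheme (telescoping the best constants/averages over the dyadic shrinking balls $B(x,2^{-k}r)$, summing the geometric series to define $u^{\ast}$, using Lebesgue differentiation to identify $u^{\ast}$ with $u$ a.e., and then comparing $u^{\ast}(x)$ and $u^{\ast}(y)$ via a common ball), and this is indeed the standard and correct route in the local/doubling setting. Your verification that every ball invoked stays inside $\Omega_{n+1}$ via (\ref{Hp 1}), and that only finitely many applications of (\ref{Hp 3}) are used so the constant in (\ref{local holder}) depends on $C_n$ alone, is exactly the point that needs care here, and you handle it correctly.

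One remark on (\ref{global holder}): in your last step you bound $\left\vert u_{B(x,\varepsilon_n/2)}-u_{B(y,\varepsilon_n/2)}\right\vert$ by $2\,\mu\left(B(\cdot,\varepsilon_n/2)\right)^{-1}\left\Vert u\right\Vert_{L^1(\Omega_{n+1})}$ and then say this can be "absorbed" into the right-hand side. But absorbing the factor $\mu\left(B(\cdot,\varepsilon_n/2)\right)^{-1}$ into a constant that depends only on $C_n$ is not possible: it is not a dimensionless ratio of measures, and in a concrete example such as Lebesgue measure it scales like $\varepsilon_n^{-N}$. So the constant your argument produces in (\ref{global holder}) necessarily also depends on $\inf_{z\in\Omega_n}\mu\left(B(z,\varepsilon_n/2)\right)$ (or equivalently on $\varepsilon_n$ and the measure). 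This is a genuine mismatch with the last sentence of the theorem as stated, though one inherited from the statement rather than introduced by you. It is also harmless for the paper: in the only application (Lemma \ref{holder norm and Campanato}), the authors use (\ref{local holder}) for $d_X(x,y)\le\varepsilon_n/2$ and then extend to larger distances by the chain argument of Lemma \ref{non segment property}, never invoking (\ref{global holder}). If you want your write-up to be airtight, either record the extra dependence of $c$ in (\ref{global holder}), or (better, mirroring the paper's use) drop the global inequality and prove the full H\"older bound on $\Omega_n$ by chaining the local estimate.
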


\bigskip

\textbf{Application of the abstract theory to our setting}

Let's now explain the way how this abstract setting will be used to describe
our concrete situation. The a-priori estimates we will prove in Theorems
\ref{schauder estimate}, \ref{lp estimate}, involve a fixed subdomain
$\Omega^{\prime}\Subset\Omega.$ Fix once and for all this $\Omega^{\prime}.$
For any $\overline{x}\in\Omega^{\prime}$ we can perform in a suitable
neighborhood of $\overline{x}$ the lifting and approximation procedure as
explained in \S \ \ref{subsection lifting}. Let $\overline{\xi}=\left(
\overline{x},0\right)  \in\mathbb{R}^{N}$ and $\widetilde{B}\left(
\overline{\xi},R\right)  $ be as in Lemma \ref{equivalence of distance}. We
can then choose%
\[
\widetilde{\Omega}=\widetilde{B}\left(  \overline{\xi},R\right)
;\widetilde{\Omega}_{k}=\widetilde{B}\left(  \overline{\xi},\frac{kR}%
{k+1}\right)  \text{ for }k=1,2,3,...
\]
By the properties of $d_{\widetilde{X}}$ that we have listed in
\S \ \ref{subsec metric}, and particularly Theorem \ref{Thm NSW},
\[
\left(  \widetilde{\Omega},\left\{  \widetilde{\Omega}_{k}\right\}
_{k=1}^{\infty},d_{\widetilde{X}},d\xi\right)
\]
is a metric\ locally homogeneous space. The function $\rho\left(  \xi
,\eta\right)  =\left\Vert \Theta\left(  \eta,\xi\right)  \right\Vert $ will
play the role of the quasidistance appearing in conditions (\ref{standard 3})
and (\ref{convergence}), in view of Lemma \ref{equivalence of distance}. This
will be the basic setting where we will apply singular integral estimates.

In the space of the original variables $\left(  \Omega,d_{X},dx\right)  ,$
instead, we will not apply singular integral estimates, but we will use again
the local doubling condition, when we will establish some important properties
of function spaces $C^{\alpha}$ and $VMO$ (see
\S \ \ref{subsec function spaces}). Note that, if $\Omega_{k}$ is an
increasing sequence of domains with $\Omega_{k}\Subset\Omega_{k+1}%
\Subset\Omega,$ we can say that%
\[
\left(  \Omega,\left\{  \Omega_{k}\right\}  _{k},d_{X},dx\right)
\]
is a metric locally homogeneous space.

\subsection{Function spaces\label{subsec function spaces}}

The aim of this section is twofold. First, we want to define the basic
function spaces we will need and point out their main properties; second, we
want to find a relation between function spaces defined over a ball $B\left(
\overline{x},r\right)  \subset\Omega\subset\mathbb{R}^{n}$ and on the
corresponding lifted ball $\widetilde{B}\left(  \overline{\xi},r\right)
\subset\mathbb{R}^{N}.$ More precisely, we need to know that $f\left(
x\right)  $ belongs to some function space on $B$ if and only $\widetilde
{f}\left(  x,h\right)  =f\left(  x\right)  $ belongs to the analogous function
space on $\widetilde{B}.$ This last fact relies on the following known result
(see \cite[Lemmas 3.1 and 3.2, p. 139]{nsw2}):

\begin{theorem}
\label{the volume of the ball}Let us denote by $B,\widetilde{B}$ the balls
defined with respect to $d_{X}$ in $\Omega$ and $d_{\widetilde{X}}$ in
$\widetilde{\Omega},$ respectively. There exist constants $\delta_{0}%
\in\left(  0,1\right)  ,r_{0},c_{1},c_{2}>0$ such that%
\begin{align}
c_{1}vol\left(  \widetilde{B}_{r}\left(  x,h\right)  \right)   &  \leq
vol\left(  B_{r}\left(  x\right)  \right)  \cdot vol\left\{  h^{\prime}%
\in\mathbb{R}^{N-n}:\left(  z,h^{\prime}\right)  \in\widetilde{B}_{r}\left(
x,h\right)  \right\} \label{metric ball equivalence}\\
&  \leq c_{2}vol\left(  \widetilde{B}_{r}\left(  x,h\right)  \right) \nonumber
\end{align}
for every $x\in\Omega,z\in$ $B_{\delta_{0}r}\left(  x\right)  $ and $r\leq
r_{0}$. (Here \textquotedblleft vol\textquotedblright\ stands for the Lebesgue
measure in the appropriate dimension, $x$ denotes a point in $\mathbb{R}^{n}$
and $h$ a point in $\mathbb{R}^{N-n}$). More precisely, the condition $z\in$
$B_{\delta_{0}r}\left(  x\right)  $ is needed only for the validity of the
first inequality in (\ref{metric ball equivalence}). Moreover:%
\begin{equation}
d_{\widetilde{X}}\left(  \left(  x,h\right)  ,\left(  x^{\prime},h^{\prime
}\right)  \right)  \geq d_{X}\left(  x,x^{\prime}\right)  .
\label{relation of distance}%
\end{equation}
Finally, the projection of the lifted ball $\widetilde{B}_{r}\left(
x,h\right)  $ on $\mathbb{R}^{n}$ is just the ball $B\left(  x,r\right)  ,$
and this projection is onto.
\end{theorem}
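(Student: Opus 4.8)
I would split the statement into the distance inequality, the description of the projection, and the volume equivalence, the last being the only substantial point. For (\ref{relation of distance}), write $\pi:\mathbb{R}^{N}\to\mathbb{R}^{n}$, $\left(  x,h\right)  \mapsto x$; since $\widetilde{X}_{i}=X_{i}+\sum_{l}\lambda_{il}(x,h)\partial_{h_{l}}$ we have $d\pi\left(  \widetilde{X}_{i}\right)  =X_{i}$, so the $\pi$-image of any curve admissible for $d_{\widetilde{X}}$ with parameter $\delta$ is admissible for $d_{X}$ with the same $\delta$, and passing to the infimum gives $d_{X}\left(  x,x^{\prime}\right)  \le d_{\widetilde{X}}\left(  \left(  x,h\right)  ,\left(  x^{\prime},h^{\prime}\right)  \right)$. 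The inclusion $\pi\left(  \widetilde{B}_{r}\left(  x,h\right)  \right)  \subset B_{r}\left(  x\right)$ is then immediate; for the converse, given $z\in B_{r}\left(  x\right)$ and an admissible base curve $\varphi$ from $x$ to $z$ with parameter $\delta<r$, I would lift it by solving $\widetilde{\varphi}^{\prime}=\sum_{i}\lambda_{i}\left(  \widetilde{X}_{i}\right)_{\widetilde{\varphi}}$, $\widetilde{\varphi}\left(  0\right)  =\left(  x,h\right)$ (choosing $r_{0}$ small, with $x$ confined to a fixed $\Omega^{\prime}\Subset\Omega$, so the solution stays in the chart); since the horizontal part of $\widetilde{X}_{i}$ coincides with $X_{i}$, the base component of $\widetilde{\varphi}$ is exactly $\varphi$ by ODE uniqueness, whence $\widetilde{\varphi}\left(  1\right)  =\left(  z,h^{\prime}\right)  \in\widetilde{B}_{r}\left(  x,h\right)$. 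So the projection is exactly $B_{r}\left(  x\right)$, and the fibre $\left\{  h^{\prime}:\left(  z,h^{\prime}\right)  \in\widetilde{B}_{r}\left(  x,h\right)  \right\}$ is non-empty for every $z\in B_{r}\left(  x\right)$.

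For the volume equivalence, Fubini and the previous paragraph give
\[
vol\left(  \widetilde{B}_{r}\left(  x,h\right)  \right)  =\int_{B_{r}\left(  x\right)  }F_{r}\left(  z\right)  \,dz,\qquad F_{r}\left(  z\right)  :=vol\left\{  h^{\prime}:\left(  z,h^{\prime}\right)  \in\widetilde{B}_{r}\left(  x,h\right)  \right\}  ,
\]
so (\ref{metric ball equivalence}) is equivalent to the statement that $F_{r}\left(  z\right)$ is comparable to its $B_{r}\left(  x\right)$-average when $z\in B_{\delta_{0}r}\left(  x\right)$, and bounded above by it for every $z\in B_{r}\left(  x\right)$. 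My plan is to invoke the Nagel--Stein--Wainger ball--box theorem, in its uniform two-sided form, for both systems: each of $B_{s}\left(  y\right)$ and $\widetilde{B}_{s}\left(  \zeta\right)$ is squeezed between fixed dilates of the image of a coordinate box $\left\{  \left\vert u_{\alpha}\right\vert <s^{\left\vert \alpha\right\vert }\right\}$ under the exponential map of a near-maximal family of commutators, and for the lifted fields --- which are \emph{free} up to weight $s$ --- one may take the full basis $\left\{  \widetilde{X}_{\alpha}\right\}_{\alpha\in A}$, whose Jacobian determinant stays bounded and bounded away from zero throughout the chart. Since $d\pi\left(  \widetilde{X}_{\alpha}\right)  =X_{\left[  \alpha\right]  }$ and at most $n$ of the $X_{\left[  \alpha\right]  }$ are independent at $x$, one can split the index set $A=A_{0}\sqcup\left(  A\setminus A_{0}\right)$ with $\left\vert A_{0}\right\vert =n$ so that the $A_{0}$-directions push the base point through (a box comparable to) $B_{r}\left(  x\right)$, while over any $z\in B_{\delta_{0}r}\left(  x\right)$ the complementary $N-n$ directions fill out the fibre with Jacobian pinched between two positive constants; the lower-weight constituents of the remaining commutators only reparametrize the base box without enlarging it. Factoring the Jacobian of the lifted exponential map into these two blocks then yields $F_{r}\left(  z\right)  \approx vol\left(  \widetilde{B}_{r}\left(  x,h\right)  \right)  /vol\left(  B_{r}\left(  x\right)  \right)$; the hypothesis $z\in B_{\delta_{0}r}\left(  x\right)$ enters only in the lower bound, since for $z$ near $\partial B_{r}\left(  x\right)$ almost the whole budget $\delta<r$ is consumed reaching $z$ in the base and the fibre slice can collapse, whereas for $z\in B_{\delta_{0}r}\left(  x\right)$ a fixed proportion of the budget survives in all directions.

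The main obstacle I foresee is exactly this last factorization: showing that the ``vertical'' block of the lifted canonical coordinates has Jacobian bounded above and below uniformly in the admissible base point and in $r\le r_{0}$ --- i.e. that over base points in $B_{\delta_{0}r}\left(  x\right)$ the vertical size of $\widetilde{B}_{r}\left(  x,h\right)$ is independent of the base point up to constants. This is where freeness of the lifted fields and the quantitative, uniform form of the ball--box theorem really do the work; the remaining points (that $\pi$ intertwines the two coordinate systems, and that the Rothschild--Stein lower-order remainders of \S\ref{subsection lifting} do not spoil the determinant bounds) are routine given the normal-form machinery recalled above.
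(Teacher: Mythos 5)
The paper does not actually prove this theorem: it states it as a known result, citing \cite[Lemmas 3.1 and 3.2]{nsw2}, so there is no in-paper proof to compare against. I'll assess your proposal on its own.

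The easy parts are fine. Your projection argument for \eqref{relation of distance} (since $d\pi(\widetilde{X}_i)=X_i$, every admissible lifted curve projects to an admissible base curve with the same parameter) is correct, and so is the ODE-lifting argument for ontoness of the projection; the only care needed is the one you already flag, namely taking $r_0$ small and $x$ in a compact subdomain so the lifted solution does not leave the chart.

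The volume equivalence is where the substance lies, and here there is a genuine gap: you reduce correctly, via Fubini, to showing that the fibre measure $F_r(z)$ is comparable to $vol(\widetilde{B}_r(x,h))/vol(B_r(x))$ uniformly for $z\in B_{\delta_0 r}(x)$, but the subsequent ``two-block Jacobian factorization'' is asserted rather than derived, and you yourself name it as the main obstacle. Two concrete problems. First, the step ``pick $A_0\subset A$ with $|A_0|=n$ so that the $A_0$-directions sweep out a box comparable to $B_r(x)$'' is too strong as stated: the Nagel--Stein--Wainger ball-box theorem expresses $vol(B_r(x))$ as a \emph{sum} $\sum_{A_0}|\det(X_{[\alpha]})_{\alpha\in A_0}(x)|\prod_{\alpha\in A_0}r^{|\alpha|}$ over all $n$-element subsets, and the dominant subset changes with $x$ and with $r$; you would need to pin down a stable near-maximal choice that survives perturbation of the base point across $B_{\delta_0 r}(x)$, which is exactly where $\delta_0$ enters and is not automatic. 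Second, the assertion that ``over any $z\in B_{\delta_0 r}(x)$ the complementary $N-n$ directions fill out the fibre with Jacobian pinched between two positive constants'' is precisely the claim to be proved: freeness of the lifted fields gives that the lifted exponential map is a diffeomorphism with bounded distortion, but that by itself does not yield a clean block decomposition of its Jacobian into a ``base'' factor comparable to $vol(B_r(x))$ and a ``fibre'' factor uniform in $z$ --- one must analyze how the lower-order Rothschild--Stein remainders mix horizontal and vertical coordinates, which is the technical core of the NSW argument and is missing here. As it stands, your proposal is a plausible roadmap along the NSW route, not a proof.
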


A consequence of the above theorem is the following

\begin{corollary}
\label{lifted integral and original integral}For any positive function $g$
defined in $B_{r}\left(  x\right)  \subset\Omega,r\leq r_{0}$, one has%
\begin{equation}
\frac{c_{1}}{\left\vert B_{\delta_{0}r}\left(  x\right)  \right\vert }%
\int_{B_{\delta_{0}r}\left(  x\right)  }g\left(  y\right)  dy\leq\frac
{1}{\left\vert \widetilde{B}_{r}\left(  x,h\right)  \right\vert }%
\int_{\widetilde{B}_{r}\left(  x,h\right)  }g\left(  y\right)  dydh^{\prime
}\leq\frac{c_{2}}{\left\vert B_{r}\left(  x\right)  \right\vert }\int
_{B_{r}\left(  x\right)  }g\left(  y\right)  dy.
\label{lifted BMO control BMO}%
\end{equation}
where $\delta_{0}$ is the constant in Theorem \ref{the volume of the ball}.
\end{corollary}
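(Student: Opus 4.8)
The plan is to deduce Corollary \ref{lifted integral and original integral} from Theorem \ref{the volume of the ball} by a Fubini-type argument. First I would regard $g$ --- originally a function of $y\in\mathbb{R}^{n}$ --- as a function on the lifted ball $\widetilde{B}_{r}\left(x,h\right)\subset\mathbb{R}^{N}$ that is constant in the extra variables $h^{\prime}\in\mathbb{R}^{N-n}$. Since the last assertion of Theorem \ref{the volume of the ball} says that the projection of $\widetilde{B}_{r}\left(x,h\right)$ onto $\mathbb{R}^{n}$ is exactly $B_{r}\left(x\right)$, Tonelli's theorem (recall $g\geq0$) rewrites the lifted integral as
\[
\int_{\widetilde{B}_{r}\left(x,h\right)}g\left(y\right)\,dy\,dh^{\prime}=\int_{B_{r}\left(x\right)}g\left(y\right)\,V\left(y\right)\,dy,
\]
where $V\left(y\right)=vol\left\{h^{\prime}\in\mathbb{R}^{N-n}:\left(y,h^{\prime}\right)\in\widetilde{B}_{r}\left(x,h\right)\right\}$ is precisely the slice volume estimated in (\ref{metric ball equivalence}).

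Next I would insert the two-sided bound (\ref{metric ball equivalence}) for $V\left(y\right)$. For the right-hand (upper) inequality of (\ref{lifted BMO control BMO}) I use the second inequality in (\ref{metric ball equivalence}), which --- as Theorem \ref{the volume of the ball} explicitly remarks --- holds for every $y\in B_{r}\left(x\right)$ (trivially so where the slice is empty), giving $V\left(y\right)\leq c_{2}\left\vert\widetilde{B}_{r}\left(x,h\right)\right\vert/\left\vert B_{r}\left(x\right)\right\vert$; substituting this into the identity above and dividing by $\left\vert\widetilde{B}_{r}\left(x,h\right)\right\vert$ gives the upper bound at once. For the left-hand (lower) inequality I use $g\geq0$ to shrink the domain of integration from $B_{r}\left(x\right)$ to $B_{\delta_{0}r}\left(x\right)$, where the first inequality in (\ref{metric ball equivalence}) is available and yields $V\left(y\right)\geq c_{1}\left\vert\widetilde{B}_{r}\left(x,h\right)\right\vert/\left\vert B_{r}\left(x\right)\right\vert$; this produces
\[
\frac{1}{\left\vert\widetilde{B}_{r}\left(x,h\right)\right\vert}\int_{\widetilde{B}_{r}\left(x,h\right)}g\left(y\right)\,dy\,dh^{\prime}\geq\frac{c_{1}}{\left\vert B_{r}\left(x\right)\right\vert}\int_{B_{\delta_{0}r}\left(x\right)}g\left(y\right)\,dy.
\]

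Finally, the target inequality carries $\left\vert B_{\delta_{0}r}\left(x\right)\right\vert$ in the denominator while the last display has $\left\vert B_{r}\left(x\right)\right\vert$, so I would close this gap using the local doubling condition of Theorem \ref{Thm NSW}: since $\delta_{0}\in\left(0,1\right)$ is a fixed constant and $r\leq r_{0}$, iterating the doubling estimate a bounded number of times (depending only on $\delta_{0}$ and the doubling constant) gives $\left\vert B_{r}\left(x\right)\right\vert\leq C\left\vert B_{\delta_{0}r}\left(x\right)\right\vert$, and I absorb $C$ into $c_{1}$. I do not expect a real obstacle in this argument; the only point requiring care is to keep track of the fact that the upper estimate is valid over all of $B_{r}\left(x\right)$ while the lower estimate is genuinely confined to $B_{\delta_{0}r}\left(x\right)$ --- which is precisely why the two sides of the Corollary are written with different balls.
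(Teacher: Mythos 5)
Your proof is correct and takes essentially the same route as the paper's: slice the lifted integral by Tonelli to obtain $\int_{B_r(x)} g(y)\,V(y)\,dy$, apply the two-sided slice-volume estimate (\ref{metric ball equivalence}) — the upper bound over all of $B_r(x)$ and the lower bound only over $B_{\delta_0 r}(x)$ — and finish with the local doubling condition to replace $|B_r(x)|$ by $|B_{\delta_0 r}(x)|$ in the lower estimate. The paper presents the same slicing-plus-doubling argument in one chain of inequalities and, like you, remarks that the upper inequality is easier because the second part of (\ref{metric ball equivalence}) holds without the restriction $y \in B_{\delta_0 r}(x)$.
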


\begin{proof}
By (\ref{metric ball equivalence}) and the locally doubling condition, we
have, for some fixed $\delta_{0}<1$ as in Theorem \ref{the volume of the ball}%
,%
\begin{align*}
&  \frac{1}{\left\vert \widetilde{B}_{r}\left(  x,h\right)  \right\vert }%
\int_{\widetilde{B}_{r}\left(  x,h\right)  }g\left(  y\right)  dydh^{\prime}\\
&  =\frac{1}{\left\vert \widetilde{B}_{r}\left(  x,h\right)  \right\vert }%
\int_{B_{r}\left(  x\right)  }g\left(  y\right)  dy\int_{h^{\prime}%
\in\mathbb{R}^{N-n}:\left(  y,h^{\prime}\right)  \in\widetilde{B}_{r}\left(
x,h\right)  }dh^{\prime}\\
&  \geq\frac{c_{1}}{\left\vert \widetilde{B}_{r}\left(  x,h\right)
\right\vert }\int_{B_{\delta_{0}r}\left(  x\right)  }\frac{\left\vert
\widetilde{B}_{r}\left(  x,h\right)  \right\vert }{\left\vert B_{r}\left(
x\right)  \right\vert }g\left(  y\right)  dy\\
&  \geq\frac{c}{\left\vert B_{\delta_{0}r}\left(  x\right)  \right\vert }%
\int_{B_{\delta_{0}r}\left(  x\right)  }g\left(  y\right)  dy
\end{align*}
where in the last inequality we exploited the doubling condition $\left\vert
B_{r}\left(  x\right)  \right\vert \leq c\left\vert B_{\delta_{0}r}\left(
x\right)  \right\vert ,$ which holds because $B_{r}\left(  x\right)
\subset\Omega$ and $r\leq r_{0}.$ The proof of the second inequality in
(\ref{lifted BMO control BMO}) is analogous but easier, since it involves the
second inequality in (\ref{metric ball equivalence}), which does not require
the condition $y\in B_{\delta_{0}r}\left(  x\right)  .$
\end{proof}

\subsubsection{H\"{o}lder spaces\label{subsec holder}}

\begin{definition}
[H\"{o}lder spaces]For any $0<\alpha<1,u:\Omega\rightarrow\mathbb{R}$, let:%
\begin{align*}
\left\vert u\right\vert _{C_{X}^{\alpha}\left(  \Omega\right)  }  &
=\sup\left\{  \frac{\left\vert u\left(  x\right)  -u\left(  y\right)
\right\vert }{d_{X}\left(  x,y\right)  ^{\alpha}}:x,y\in\Omega,x\neq
y\right\}  ,\\
\left\Vert u\right\Vert _{C_{X}^{\alpha}\left(  \Omega\right)  }  &
=\left\vert u\right\vert _{C^{\alpha}\left(  \Omega\right)  }+\left\Vert
u\right\Vert _{L^{\infty}\left(  \Omega\right)  },\\
C_{X}^{\alpha}\left(  \Omega\right)   &  =\left\{  u:\Omega\rightarrow
\mathbb{R}:\left\Vert u\right\Vert _{C^{\alpha}\left(  \Omega\right)  }%
<\infty\right\}  .
\end{align*}
For any positive integer $k$ and $0<\alpha<0$, let%
\[
C_{X}^{k,\alpha}\left(  \Omega\right)  =\left\{  u:\Omega\rightarrow
\mathbb{R}:\left\Vert u\right\Vert _{C^{k,\alpha}\left(  \Omega\right)
}<\infty\right\}  ,\text{ \ }%
\]
with%
\[
\left\Vert u\right\Vert _{C_{X}^{k,\alpha}\left(  \Omega\right)  }%
=\underset{\left\vert I\right\vert =1}{\overset{k}{\sum}}\underset{j_{i}%
=0}{\overset{q}{\sum}}\left\Vert X_{j_{1}}\ldots X_{j_{l}}u\right\Vert
_{C^{\alpha}\left(  \Omega\right)  }+\left\Vert u\right\Vert _{C^{\alpha
}\left(  \Omega\right)  }%
\]
where $I=\left(  j_{1},j_{2},...,j_{l}\right)  $.

We will set $C_{X,0}^{\alpha}\left(  \Omega\right)  $ and $C_{X,0}^{k,\alpha
}\left(  \Omega\right)  $ for the subspaces of $C_{X}^{\alpha}\left(
\Omega\right)  $ and $C_{X}^{k,\alpha}\left(  \Omega\right)  $ of functions
which are compactly supported in $\Omega$, and set $C_{\widetilde{X}}^{\alpha
}\left(  \widetilde{\Omega}\right)  $, $C_{\widetilde{X}}^{k,\alpha}\left(
\widetilde{\Omega}\right)  $, $C_{\widetilde{X},0}^{\alpha}\left(
\widetilde{\Omega}\right)  $ and $C_{\widetilde{X},0}^{k,\alpha}\left(
\widetilde{\Omega}\right)  $ for the analogous function spaces over
$\widetilde{\Omega}$ defined by the $\widetilde{X}_{i}$'s.

We will also write $C_{X}^{k,0}\left(  \Omega\right)  $ to denote the space of
functions with continuous $X$-derivatives up to weight $k.$

Finally, whenever there is no risk of confusion, we will drop the index $X$,
writing $C^{\alpha}\left(  \Omega\right)  $ instead of $C_{X}^{\alpha}\left(
\Omega\right)  $, and so on.
\end{definition}

The next Proposition, adapted from \cite[Proposition 4.2]{bb4}, collects some
properties of $C^{\alpha}$ functions which will be useful later. We will apply
these properties mainly in the context of lifted variables, that is for the
vector fields $\widetilde{X}_{i}$ on a ball $\widetilde{B}\left(
\overline{\xi},R\right)  .$

\begin{proposition}
\label{basic properties for holder norm}Let $B\left(  \overline{x},2R\right)
$ be a fixed ball where the vector fields $X_{i}$ and the control distance $d$
are well defined.

(i) For any $\delta\in\left(  0,1\right)  ,$ for any $f\in C^{1}\left(
B\left(  \overline{x},\left(  1+\delta\right)  R\right)  \right)  $, one has
\begin{equation}
\left\vert f(x)-f(y)\right\vert \leq\frac{c}{\delta}d_{X}\left(  x,y\right)
\left(  \overset{q}{\underset{i=1}{\sum}}\underset{B\left(  \overline
{x},\left(  1+\delta\right)  R\right)  }{\sup}\left\vert X_{i}f\right\vert
+d_{X}\left(  x,y\right)  \underset{B\left(  \overline{x},\left(
1+\delta\right)  R\right)  }{\sup}\left\vert X_{0}f\right\vert \right)
\label{4.6}%
\end{equation}
for any $x,y\in B\left(  \overline{x},R\right)  $.

If $f\in C_{0}^{1}(B\left(  \overline{x},R\right)  )$, one can simply write,
for any $x,y\in B\left(  \overline{x},R\right)  ,$%
\begin{equation}
\left\vert f(x)-f(y)\right\vert \leq cd_{X}\left(  x,y\right)  \left(
\overset{q}{\underset{i=1}{\sum}}\underset{B\left(  \overline{x},R\right)
}{\sup}\left\vert X_{i}f\right\vert +d_{X}\left(  x,y\right)  \underset
{B\left(  \overline{x},R\right)  }{\sup}\left\vert X_{0}f\right\vert \right)
. \label{4.7}%
\end{equation}
In particular, for $f\in C_{0}^{1}(B\left(  \overline{x},R\right)  ,$
\begin{equation}
\left\vert f\right\vert _{C^{\alpha}\left(  B\left(  \overline{x},R\right)
\right)  }\leq cR^{1-\alpha}\cdot\left(  \overset{q}{\underset{i=1}{\sum}%
}\underset{B\left(  \overline{x},R\right)  }{\sup}\left\vert X_{i}f\right\vert
+R\underset{B\left(  \overline{x},R\right)  }{\sup}\left\vert X_{0}%
f\right\vert \right)  . \label{4.8}%
\end{equation}
Here $C^{1}$ (and $C_{0}^{1}$ ) stands for the classical space of (compactly
supported) continuously differentiable functions. The assumption $f\in C^{1}$
(or $C_{0}^{1}$ ) can be replaced by $f\in C_{X}^{2}$ (or $C_{X,0}^{2}$, respectively).

(ii) For any couple of functions $f,g\in C_{X}^{\alpha}\left(  B\left(
\overline{x},R\right)  \right)  $, one has
\[
\left\vert fg\right\vert _{C_{X}^{\alpha}\left(  B\left(  \overline
{x},R\right)  \right)  }\leq\left\vert f\right\vert _{C_{X}^{\alpha}\left(
B\left(  \overline{x},R\right)  \right)  }\left\Vert g\right\Vert _{L^{\infty
}\left(  B\left(  \overline{x},R\right)  \right)  }+\left\vert g\right\vert
_{C_{X}^{\alpha}\left(  B\left(  \overline{x},R\right)  \right)  }\left\Vert
f\right\Vert _{L^{\infty}\left(  B\left(  \overline{x},R\right)  \right)
}\text{ }%
\]
and%
\begin{equation}
\left\Vert fg\right\Vert _{C_{X}^{\alpha}\left(  B\left(  \overline
{x},R\right)  \right)  }\leq2\left\Vert f\right\Vert _{C_{X}^{\alpha}\left(
B\left(  \overline{x},R\right)  \right)  }\left\Vert g\right\Vert
_{C_{X}^{\alpha}\left(  B\left(  \overline{x},R\right)  \right)  }.
\label{4.3}%
\end{equation}
Moreover, if both $f$ and $g$ vanish at least at a point of $B\left(
\overline{x},R\right)  $, then
\begin{equation}
\left\vert fg\right\vert _{C_{X}^{\alpha}\left(  B\left(  \overline
{x},R\right)  \right)  }\leq cR^{\alpha}\left\vert f\right\vert _{C_{X}%
^{\alpha}\left(  B\left(  \overline{x},R\right)  \right)  }\left\vert
g\right\vert _{C_{X}^{\alpha}\left(  B\left(  \overline{x},R\right)  \right)
}. \label{4.4}%
\end{equation}

(iii) Let $B\left(  x_{i},r\right)  $ $(i=1,2,\cdots,k)$ be a finite family of
balls of the same radius $r$, such that $\cup_{i=1}^{k}B\left(  x_{i}%
,2r\right)  \subset$ $\Omega$. Then for any $f\in C_{X}^{\alpha}(\Omega)$,%
\begin{equation}
\left\Vert f\right\Vert _{C_{X}^{\alpha}(\cup_{i=1}^{k}B\left(  x_{i}%
,r\right)  )}\leq c\sum_{i=1}^{k}\left\Vert f\right\Vert _{C_{X}^{\alpha
}(B\left(  x_{i},2r\right)  )} \label{4.10}%
\end{equation}
with $c$ depending on the family of balls, but not on $f$.

(iv) There exists $r_{0}>0$ such that for any $f\in C_{X,0}^{2,\alpha}\left(
B\left(  \overline{x},R\right)  \right)  $ and $0<r\leq r_{0}$, we have the
following interpolation inequality:%
\begin{equation}
\left\Vert X_{0}f\right\Vert _{L^{\infty}\left(  B\left(  \overline
{x},R\right)  \right)  }\leq r^{\alpha/2}\left\vert X_{0}f\right\vert
_{C_{X}^{\alpha}\left(  B\left(  \overline{x},R\right)  \right)  }+\frac{2}%
{r}\left\Vert f\right\Vert _{L^{\infty}\left(  B\left(  \overline{x},R\right)
\right)  }. \label{4.11}%
\end{equation}

\end{proposition}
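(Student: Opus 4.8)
The plan is to obtain all four assertions from one device: express the increment of $f$ (or the directional increment of $f$ along $X_{0}$) as a line integral along a curve of the type in Definition~\ref{c-c-distance}, keeping careful track of the fact that the drift $X_{0}$, having weight two, enters such curves with a coefficient controlled by the \emph{square} of the distance parameter. The only geometric facts I use are: (a) for $x,y\in B(\overline{x},R)$ the triangle inequality through $\overline{x}$ gives $d_{X}(x,y)<2R$; (b) a near-geodesic joining $p$ to $p'$ can be reparametrised so that each of its points $z$ satisfies $d_{X}(p,z)\le d_{X}(p,p')+\varepsilon$ and $d_{X}(z,p')\le d_{X}(p,p')+\varepsilon$; and (c) flowing along $X_{0}$ for time $t$ displaces a point by $d_{X}$-distance $\le c\sqrt{t}$, which is exactly how the weight-two field sits inside Definition~\ref{c-c-distance}.

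For part (i): pick a curve $\gamma$ as in Definition~\ref{c-c-distance} joining $x$ to $y$ with parameter $d_{X}(x,y)+\varepsilon$; then $f(x)-f(y)=-\int_{0}^{1}\sum_{i=0}^{q}\lambda_{i}(t)(X_{i}f)(\gamma(t))\,dt$, so $|f(x)-f(y)|\le(d_{X}(x,y)+\varepsilon)\sum_{i=1}^{q}\sup|X_{i}f|+(d_{X}(x,y)+\varepsilon)^{2}\sup|X_{0}f|$, the suprema taken along $\gamma$. When $d_{X}(x,y)$ is small compared with $\delta R$, fact (b) puts $\gamma$ inside $B(\overline{x},(1+\delta)R)$ and letting $\varepsilon\to0$ gives (\ref{4.6}); when $d_{X}(x,y)$ is comparable to $R$, I instead join $x$ to $\overline{x}$ and $\overline{x}$ to $y$ by near-geodesics, which by (b) stay in $B(\overline{x},R+\varepsilon)\subset B(\overline{x},(1+\delta)R)$, add the two increments, and use $d_{X}(x,\overline{x}),d_{X}(\overline{x},y)<R\le d_{X}(x,y)/\delta$, the factor $1/\delta$ (or a power of it, absorbed in $c$) arising precisely from this ratio. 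If $f\in C_{0}^{1}(B(\overline{x},R))$ one extends it by $0$, so that $X_{i}f\equiv0$ outside $B(\overline{x},R)$ and a genuine near-geodesic may be used without any $\delta$-restriction: this is (\ref{4.7}), and dividing by $d_{X}(x,y)^{\alpha}$ and invoking (a) gives (\ref{4.8}). The hypothesis $f\in C^{1}$ may be weakened to $f\in C_{X}^{2}$ because only differentiability of $f$ along the weighted directions $X_{0},\dots,X_{q}$ is used, and this is available for $C_{X}^{2}$ functions along curves of the above type.

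Parts (ii) and (iii) are then elementary. For (ii), the splitting $f(x)g(x)-f(y)g(y)=(f(x)-f(y))g(x)+f(y)(g(x)-g(y))$ yields the seminorm inequality; adding $\|fg\|_{L^{\infty}}\le\|f\|_{L^{\infty}}\|g\|_{L^{\infty}}$ gives (\ref{4.3}); and for (\ref{4.4}) one uses that a function vanishing somewhere in $B(\overline{x},R)$ satisfies $\|f\|_{L^{\infty}(B(\overline{x},R))}\le(2R)^{\alpha}|f|_{C_{X}^{\alpha}(B(\overline{x},R))}$ by (a). For (iii): if $x,y\in\bigcup_{i}B(x_{i},r)$ with $d_{X}(x,y)<r$, then $x\in B(x_{j},r)$ for some $j$ and $y\in B(x,r)\subset B(x_{j},2r)$, so the difference quotient is $\le|f|_{C_{X}^{\alpha}(B(x_{j},2r))}$; if $d_{X}(x,y)\ge r$ it is $\le 2r^{-\alpha}\sup_{\cup_{i}B(x_{i},r)}|f|\le 2r^{-\alpha}\sum_{i}\|f\|_{C_{X}^{\alpha}(B(x_{i},2r))}$, and the $L^{\infty}$ part of $\|f\|_{C_{X}^{\alpha}(\cup_{i}B(x_{i},r))}$ is dominated in the same way, giving (\ref{4.10}) with a constant depending on $r$ and the number of balls but not on $f$.

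For part (iv), fix $x_{0}\in B(\overline{x},R)$, extend $f$ by $0$, and let $\phi$ be the integral curve of $X_{0}$ with $\phi(0)=x_{0}$; choosing $r_{0}$ small (depending only on $R$ and the fields) keeps $\phi(s)$ in the region where everything is defined for $s\le r_{0}$, with $d_{X}(x_{0},\phi(s))\le c\sqrt{s}$ by (c). Since $\tfrac{d}{dt}f(\phi(t))=(X_{0}f)(\phi(t))$, a first-order expansion gives $r\,(X_{0}f)(x_{0})=f(\phi(r))-f(x_{0})-\int_{0}^{r}[(X_{0}f)(\phi(s))-(X_{0}f)(x_{0})]\,ds$; bounding the bracket by $|X_{0}f|_{C_{X}^{\alpha}}(c\sqrt{s})^{\alpha}$, the difference $f(\phi(r))-f(x_{0})$ by $2\|f\|_{L^{\infty}}$, and dividing by $r$ yields (\ref{4.11}). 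I expect the one genuinely delicate point to be the domain bookkeeping in part (i): since the control distance fails the segment property (cf.\ Lemma~\ref{non segment property}), a near-geodesic between two points of $B(\overline{x},R)$ can wander out to $d_{X}$-distance close to $2R$ from $\overline{x}$, so $f$ cannot be integrated along it directly and one is forced into the detour through $\overline{x}$, which is exactly what produces the dependence on $\delta$; everything else is routine once the weight-two scaling of $X_{0}$ is respected throughout.
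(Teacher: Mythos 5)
Your proof follows essentially the same strategy as the paper's: line integrals along near-geodesics for (i) (splitting according to whether the geodesic from $x$ to $y$ fits inside $B(\overline{x},(1+\delta)R)$, with a detour through $\overline{x}$ otherwise), the product-rule splitting for (ii), a covering argument for (iii), and a first-order expansion along the $X_{0}$-flow for (iv), where the paper uses the mean-value form and you use the integral remainder — both rest on the same key fact that flowing along $X_{0}$ for time $t$ moves a point $d_{X}$-distance at most $\sqrt{t}$. Your hedge ``or a power of it'' in case (i) is well taken: the detour through $\overline{x}$ actually produces $1/\delta^{2}$ in front of the $X_{0}$ term rather than $1/\delta$, but this is harmless because (\ref{4.7}) and (\ref{4.8}) only ever invoke (\ref{4.6}) with a fixed $\delta$.
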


\begin{proof}
The proof for (ii)-(iii) is similar to that in \cite[Proposition 4.2]{bb4},
hence we will only prove (i) and (iv).

Throughout the proof we will write $d$ for $d_{X}.$

(i) Fix $\delta\in\left(  0,1\right)  $ and let $R^{\prime}=\left(
1+\delta\right)  R.$ Let us distinguish two cases:

(a) $d\left(  x,y\right)  <R^{\prime}-\max\left(  d\left(  \overline
{x},x\right)  ,d\left(  \overline{x},y\right)  \right)  .$ Let $\varepsilon>0$
such that also
\begin{equation}
d\left(  x,y\right)  +\varepsilon<R^{\prime}-\max\left(  d\left(  \overline
{x},x\right)  ,d\left(  \overline{x},y\right)  \right)  , \label{d+epsi}%
\end{equation}
hence by Definition \ref{c-c-distance} there exists a curve $\varphi(t)$, such
that $\varphi(0)=x,\varphi(1)=y$, and
\[
\varphi^{\prime}(t)=\overset{q}{\underset{i=0}{\sum}}\lambda_{i}(t)\left(
X_{i}\right)  _{\varphi(t)}%
\]
with $\left\vert \lambda_{i}(t)\right\vert \leq\left(  d\left(  x,y\right)
+\varepsilon\right)  ,\left\vert \lambda_{0}(t)\right\vert \leq\left(
d\left(  x,y\right)  +\varepsilon\right)  ^{2}$ for $i=1,\ldots q$. By
(\ref{d+epsi}),%
\[
B\left(  x,d\left(  x,y\right)  +\varepsilon\right)  \subset B\left(
\overline{x},R^{\prime}\right)
\]
hence every point $\gamma\left(  t\right)  $ for $t\in\left(  0,1\right)  $
belongs to $B\left(  \overline{x},R^{\prime}\right)  .$ Then we can write:
\begin{align*}
\left\vert f(x)-f(y)\right\vert  &  =\left\vert \int_{0}^{1}\frac{d}%
{dt}f(\varphi(t))\,dt\right\vert =\left\vert \int_{0}^{1}\overset{q}%
{\underset{i=0}{\sum}}\lambda_{i}(t)\left(  X_{i}f\right)  _{\varphi
(t)}\,dt\right\vert \\
&  \leq\left(  d\left(  x,y\right)  +\varepsilon\right)  \overset{q}%
{\underset{i=1}{\sum}}\underset{B\left(  \overline{x},R^{\prime}\right)
}{\sup}\left\vert X_{i}f\right\vert \,+\left(  d\left(  x,y\right)
+\varepsilon\right)  ^{2}\underset{B\left(  \overline{x},R^{\prime}\right)
}{\sup}\left\vert X_{0}f\right\vert ,
\end{align*}
and since $\varepsilon$ is arbitrary this implies%
\[
\left\vert f(x)-f(y)\right\vert \leq d\left(  x,y\right)  \left\{  \overset
{q}{\underset{i=1}{\sum}}\underset{B\left(  \overline{x},R^{\prime}\right)
}{\sup}\left\vert X_{i}f\right\vert \,+d\left(  x,y\right)  \underset{B\left(
\overline{x},R^{\prime}\right)  }{\sup}\left\vert X_{0}f\right\vert \right\}
.
\]
Note that the above argument relies on the differentiability of $f$ along the
curve $\varphi,$ which holds under either the assumption $f\in C^{1}\left(
B\left(  \overline{x},\left(  1+\delta\right)  R\right)  \right)  $ or $f\in
C_{X}^{2}\left(  B\left(  \overline{x},\left(  1+\delta\right)  R\right)
\right)  $ (since $X_{0}$ has weight two).

(b) Let now $d\left(  x,y\right)  \geq R^{\prime}-\max\left(  d\left(
\overline{x},x\right)  ,d\left(  \overline{x},y\right)  \right)  ,$ and let us
write%
\[
\left\vert f(x)-f(y)\right\vert \leq\left\vert f(x)-f\left(  \overline
{x}\right)  \right\vert +\left\vert f\left(  \overline{x}\right)
-f(y)\right\vert =A+B.
\]
Each of the terms $A,B$ can be bounded by an argument similar to that in case
(a) (since both $x$ and $y$ can be joined to $\overline{x}$ by curves
contained in $B\left(  \overline{x},R\right)  $) , getting%
\[
\left\vert f(x)-f(y)\right\vert \leq\left[  d\left(  x,\overline{x}\right)
+d\left(  y,\overline{x}\right)  \right]  \left\{  \overset{q}{\underset
{i=1}{\sum}}\underset{B\left(  \overline{x},R\right)  }{\sup}\left\vert
X_{i}f\right\vert \,+\left[  d\left(  x,\overline{x}\right)  +d\left(
y,\overline{x}\right)  \right]  \underset{B\left(  \overline{x},R\right)
}{\sup}\left\vert X_{0}f\right\vert \right\}  .
\]
Now it is enough to show that%
\[
d\left(  x,\overline{x}\right)  +d\left(  y,\overline{x}\right)  \leq\frac
{c}{\delta}d\left(  x,y\right)  .
\]
To show this, let $r\equiv\max\left(  d\left(  \overline{x},x\right)
,d\left(  \overline{x},y\right)  \right)  $. Then:%
\[
d\left(  x,\overline{x}\right)  +d\left(  y,\overline{x}\right)  \leq
2r\leq\frac{2}{\delta}\left(  R^{\prime}-r\right)  \leq\frac{2}{\delta
}d\left(  x,y\right)
\]
where the second inequality holds since $r<R$ and $R^{\prime}=\left(
1+\delta\right)  R,$ and the last inequality is assumption (b). This completes
the proof of (\ref{4.6}), which immediately implies (\ref{4.7}) and (\ref{4.8}).

Let us now prove\ (vi). Let $f\in C_{X,0}^{2,\alpha}\left(  B\left(
\overline{x},R\right)  \right)  $. For any $x\in B\left(  \overline
{x},R\right)  ,$ let $\gamma(t)$ be the curve such that%
\[
\gamma^{\prime}(t)=\left(  X_{0}\right)  _{\gamma(t)},\gamma(0)=x.
\]
This $\gamma\left(  t\right)  $ will be defined at least for $t\in\left[
0,r_{0}\right]  $ where $r_{0}>0$ is a number only depending on $B\left(
\overline{x},R\right)  $ and $X_{0}.$ Then, for any $r\in\left(
0,r_{0}\right)  $ we can write, for some $\theta\in\left(  0,1\right)  $:%
\begin{align*}
\left(  X_{0}f\right)  \left(  x\right)   &  =\left(  X_{0}f\right)  \left(
\gamma(0)\right)  =\frac{d}{dt}\left[  f\left(  \gamma(t)\right)  \right]
_{t=0}\\
&  =\frac{d}{dt}\left[  f\left(  \gamma(t)\right)  \right]  _{t=0}-\left[
f\left(  \gamma(r)\right)  -f\left(  \gamma(0)\right)  \right]  +\left[
f\left(  \gamma(r)\right)  -f\left(  \gamma(0)\right)  \right] \\
&  =\frac{d}{dt}\left[  f\left(  \gamma(t)\right)  \right]  _{t=0}-r\frac
{d}{dt}\left[  f\left(  \gamma(t)\right)  \right]  _{t=\theta r}+\left[
f\left(  \gamma(r)\right)  -f\left(  \gamma(0)\right)  \right] \\
&  =\frac{d}{dt}\left[  f\left(  \gamma(t)\right)  \right]  _{t=0}\left(
1-r\right)  +r\left(  \frac{d}{dt}\left[  f\left(  \gamma(t)\right)  \right]
_{t=0}-\frac{d}{dt}\left[  f\left(  \gamma(t)\right)  \right]  _{t=\theta
r}\right)  +\\
&  +\left[  f\left(  \gamma(r)\right)  -f\left(  \gamma(0)\right)  \right] \\
&  =\left(  1-r\right)  \left(  X_{0}f\right)  \left(  x\right)  +r\left[
\left(  X_{0}f\right)  \left(  \gamma(0)\right)  -\left(  X_{0}f\right)
\left(  \gamma(\theta r)\right)  \right] \\
&  +\left[  f\left(  \gamma(r)\right)  -f\left(  \gamma(0)\right)  \right]  ,
\end{align*}
hence
\begin{align*}
r\left\vert \left(  X_{0}f\right)  \left(  x\right)  \right\vert  &  \leq
r\left\vert \left(  X_{0}f\right)  \left(  \gamma(0)\right)  -\left(
X_{0}f\right)  \left(  \gamma(\theta r)\right)  \right\vert +2\left\Vert
f\right\Vert _{L^{\infty}}\\
&  =r\frac{\left(  \theta r\right)  ^{\alpha/2}\left\vert \left(
X_{0}f\right)  \left(  \gamma(0)\right)  -\left(  X_{0}f\right)  \left(
\gamma(\theta r)\right)  \right\vert }{\left(  \theta r\right)  ^{\alpha/2}%
}+2\left\Vert f\right\Vert _{L^{\infty}}.
\end{align*}
Since, by definition of $\gamma$ and $d,$ $d(\gamma(0),\gamma(\theta
r))\leq\left(  \theta r\right)  ^{1/2},$
\begin{align*}
\left\vert \left(  X_{0}f\right)  \left(  x\right)  \right\vert  &
\leq\left(  \theta r\right)  ^{\alpha/2}\left\vert X_{0}f\right\vert
_{C_{X}^{\alpha}\left(  B\left(  \overline{x},R\right)  \right)  }+\frac{2}%
{r}\left\Vert f\right\Vert _{L^{\infty}\left(  B\left(  \overline{x},R\right)
\right)  }\\
&  \leq r^{\alpha/2}\left\vert X_{0}f\right\vert _{C_{X}^{\alpha}\left(
B\left(  \overline{x},R\right)  \right)  }+\frac{2}{r}\left\Vert f\right\Vert
_{L^{\infty}\left(  B\left(  \overline{x},R\right)  \right)  },
\end{align*}
and we are done.
\end{proof}

Next, we are going to study the relation between the spaces $C_{X}^{\alpha
}\left(  B_{R}\right)  $ and $C_{\widetilde{X}}^{\alpha}\left(  \widetilde
{B}_{R}\right)  .$

\begin{proposition}
\label{original holder norm and lifted holder norm}Let $\widetilde{B}\left(
\overline{\xi},R\right)  $ be a lifted ball (as described at the end of
\S \ \ref{subsec locally hom space}), with $\overline{\xi}=\left(
\overline{x},0\right)  .$ If $f$ is a function defined in $B\left(
\overline{x},R\right)  $ and $\widetilde{f}\left(  x,h\right)  =f\left(
x\right)  $ is regarded as a function defined on $\widetilde{B}_{R}\left(
\overline{\xi},R\right)  $, then the following inequalities hold (whenever the
right-hand side is finite):%
\begin{align}
\left\vert \widetilde{f}\right\vert _{C_{\widetilde{X}}^{\alpha}\left(
\widetilde{B}\left(  \overline{\xi},R\right)  \right)  }  &  \leq\left\vert
f\right\vert _{C_{X}^{\alpha}\left(  B\left(  \overline{x},R\right)  \right)
},\nonumber\\
\left\vert f\right\vert _{C_{X}^{\alpha}\left(  B\left(  \overline
{x},s\right)  \right)  }  &  \leq\frac{c}{\left(  t-s\right)  ^{2}}\left\vert
\widetilde{f}\right\vert _{C_{\widetilde{X}}^{\alpha}\left(  \widetilde
{B}\left(  \overline{\xi},t\right)  \right)  }\text{ for }0<s<t<R
\label{second}%
\end{align}
where $c$ also depends on $R$. Moreover,%
\begin{align}
\left\vert \widetilde{X}_{i_{1}}\widetilde{X}_{i_{2}}\cdots\widetilde
{X}_{i_{k}}\widetilde{f}\right\vert _{C_{\widetilde{X}}^{\alpha}\left(
\widetilde{B}\left(  \overline{\xi},R\right)  \right)  }  &  \leq\left\vert
X_{i_{1}}X_{i_{2}}\cdots X_{i_{k}}f\right\vert _{C_{X}^{\alpha}\left(
B\left(  \overline{x},R\right)  \right)  }, \label{higher order holder norm 1}%
\\
\left\vert X_{i_{1}}X_{i_{2}}\cdots X_{i_{k}}f\right\vert _{C_{X}^{\alpha
}\left(  B\left(  \overline{x},s\right)  \right)  }  &  \leq\frac{c}{\left(
t-s\right)  ^{2}}\left\vert \widetilde{X}_{i_{1}}\widetilde{X}_{i_{2}}%
\cdots\widetilde{X}_{i_{k}}\widetilde{f}\right\vert _{C_{\widetilde{X}%
}^{\alpha}\left(  \widetilde{B}\left(  \overline{\xi},t\right)  \right)  }
\label{higher order holder norm 2}%
\end{align}
for $0<s<t<R$ and $i_{j}=0,1,2,\ldots,q$.
\end{proposition}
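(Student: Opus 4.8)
The four inequalities reduce to two. Since $\widetilde{f}(x,h)=f(x)$ does not depend on the lifted variables $h$, and $\widetilde{X}_{i}=X_{i}+\sum_{l=n+1}^{N}\lambda_{il}(x,h)\frac{\partial}{\partial h_{l}}$ with $X_{i}$ acting only on the $x$-variables and each $\partial/\partial h_{l}$ annihilating $\widetilde{f}$, we have $\widetilde{X}_{i}\widetilde{f}=\widetilde{X_{i}f}$ and, inductively, $\widetilde{X}_{i_{1}}\cdots\widetilde{X}_{i_{k}}\widetilde{f}=\widetilde{X_{i_{1}}\cdots X_{i_{k}}f}$. Hence (\ref{higher order holder norm 1}) and (\ref{higher order holder norm 2}) follow from the first line of the statement and from (\ref{second}) applied to $g=X_{i_{1}}\cdots X_{i_{k}}f$ in place of $f$, so it is enough to prove those two inequalities. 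The first one is immediate: if $(x,h),(y,h^{\prime})\in\widetilde{B}(\overline{\xi},R)$ then (\ref{relation of distance}) gives $d_{X}(x,\overline{x})\leq d_{\widetilde{X}}((x,h),\overline{\xi})<R$, so $x,y\in B(\overline{x},R)$, and again by (\ref{relation of distance}) $d_{\widetilde{X}}((x,h),(y,h^{\prime}))\geq d_{X}(x,y)$; dividing $|\widetilde{f}(x,h)-\widetilde{f}(y,h^{\prime})|=|f(x)-f(y)|$ by $d_{\widetilde{X}}^{\alpha}$ and taking the supremum yields $|\widetilde{f}|_{C_{\widetilde{X}}^{\alpha}(\widetilde{B}(\overline{\xi},R))}\leq|f|_{C_{X}^{\alpha}(B(\overline{x},R))}$.

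The real content is (\ref{second}). Fix $0<s<t<R$ and $x,y\in B(\overline{x},s)$, and distinguish two regimes. If $d_{X}(x,y)<t-s$: using the last assertion of Theorem \ref{the volume of the ball} (the projection of a lifted ball on $\mathbb{R}^{n}$ is the corresponding ball, and is onto), choose $h_{0}$ with $(x,h_{0})\in\widetilde{B}_{s}(\overline{\xi})$; for any $\rho^{\prime}\in(d_{X}(x,y),t-s)$ one has $y\in B(x,\rho^{\prime})$, so applying the same surjectivity to a lifted ball centered at $(x,h_{0})$, choose $h_{1}$ with $(y,h_{1})\in\widetilde{B}_{\rho^{\prime}}((x,h_{0}))$. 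Since $d_{\widetilde{X}}$ is a genuine distance, $d_{\widetilde{X}}((y,h_{1}),\overline{\xi})\leq\rho^{\prime}+s<t$, so both lifted points lie in $\widetilde{B}(\overline{\xi},t)$, whence $|f(x)-f(y)|=|\widetilde{f}(x,h_{0})-\widetilde{f}(y,h_{1})|\leq|\widetilde{f}|_{C_{\widetilde{X}}^{\alpha}(\widetilde{B}(\overline{\xi},t))}(\rho^{\prime})^{\alpha}$; letting $\rho^{\prime}\downarrow d_{X}(x,y)$ gives the bound with constant $1$.

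The remaining regime $d_{X}(x,y)\geq t-s$ is the step I expect to be the main obstacle: here one cannot lift $x$ and $y$ into $\widetilde{B}(\overline{\xi},t)$ while keeping their lifted distance comparable to $d_{X}(x,y)$, so sharpness must be abandoned in favour of an oscillation estimate. By the surjectivity quoted above, $x$ and $y$ both admit lifts inside $\widetilde{B}(\overline{\xi},s)$, and that set has $d_{\widetilde{X}}$-diameter $\leq 2s$, so
\[
|f(x)-f(y)|\leq|\widetilde{f}|_{C_{\widetilde{X}}^{\alpha}(\widetilde{B}(\overline{\xi},s))}(2s)^{\alpha}\leq|\widetilde{f}|_{C_{\widetilde{X}}^{\alpha}(\widetilde{B}(\overline{\xi},t))}\,(2s)^{\alpha}.
\]
Since $d_{X}(x,y)^{\alpha}\geq(t-s)^{\alpha}$, this is $\leq|\widetilde{f}|_{C_{\widetilde{X}}^{\alpha}(\widetilde{B}(\overline{\xi},t))}\,\dfrac{(2s)^{\alpha}}{(t-s)^{\alpha}}\,d_{X}(x,y)^{\alpha}$, and $\dfrac{(2s)^{\alpha}}{(t-s)^{\alpha}}\leq\dfrac{(2R)^{\alpha}R^{2-\alpha}}{(t-s)^{2}}=\dfrac{c(R)}{(t-s)^{2}}$, a constant which, since $t-s<R$, also dominates the constant $1$ from the first regime. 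Combining the two regimes proves (\ref{second}). Two routine points must be checked along the way: one takes $R$ small enough (as already required in Lemma \ref{equivalence of distance}) that the lifted vector fields and Theorem \ref{the volume of the ball} apply on $\widetilde{B}(\overline{\xi},R)$, which covers the auxiliary radii $\rho^{\prime},s,t<R$; and one could replace the first regime by a chaining argument along a curve from $x$ to $y$ via Lemma \ref{non segment property}, but the direct lifting is shorter and sidesteps the fact that such a curve need not remain in $B(\overline{x},s)$.
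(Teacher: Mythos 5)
Your argument is correct, and it is genuinely different from the paper's. The paper routes (\ref{second}) through the Campanato-type integral characterization of H\"older continuity: it applies Corollary \ref{lifted integral and original integral} to control the Campanato seminorm $M_{\alpha,B_{sR},B_{tR}}(f)$ by $|\widetilde{f}|_{C^{\alpha}_{\widetilde{X}}}$, then invokes Lemma \ref{holder norm and Campanato} (itself built on Theorem \ref{Thm Campanato} and the chaining Lemma \ref{non segment property}) to recover a pointwise H\"older bound with the loss $(t-s)^{-2}$ arising from the number of chaining steps needed to cross $B$. Your proof avoids all of this: you lift $x$ and $y$ directly, using the surjectivity of the projection $\pi:\widetilde{B}_{r}(\xi)\to B(x,r)$ stated at the end of Theorem \ref{the volume of the ball}, first centering the second lifted ball at $(x,h_{0})$ to keep the lifted distance below $d_{X}(x,y)+\varepsilon$ when $d_{X}(x,y)<t-s$, and otherwise falling back on a crude oscillation bound over $\widetilde{B}(\overline{\xi},s)$. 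The two-regime split exactly captures why a loss of the form $c(R)/(t-s)^{2}$ (indeed $c(R)/(t-s)^{\alpha}$, which is sharper) appears: the direct lifting only works when the two points are closer than the gap $t-s$. Both arguments ultimately rest on the same NSW geometric input (curve lifting / surjectivity of the projection), but yours is more elementary and self-contained and does not invoke Lemma \ref{non segment property} or the Campanato lemma; the paper's route has the incidental advantage that the Campanato machinery is developed anyway for the $VMO$ theory. The reduction of (\ref{higher order holder norm 1})--(\ref{higher order holder norm 2}) to the case $k=0$ via $\widetilde{X}_{i}\widetilde{f}=\widetilde{X_{i}f}$, and the proof of the first inequality from (\ref{relation of distance}), match the paper.
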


As already done in \cite[Proposition 8.3]{bb4}, to prove the above relation
between H\"{o}lder spaces over $B$ and $\widetilde{B}$ we have to exploit an
equivalent integral characterization of H\"{o}lder continuous functions,
analogous to the one established in the classical case by Campanato in
\cite{Campanato}. However, to avoid integration over sets of the kind
$\Omega\cap B\left(  x,r\right)  $ (with the related problem of assuring a
suitable doubling condition) we need to apply the local version of this result
which has been established in \cite{bz} and recalled in
\S \ \ref{subsec locally hom space}. We are going to apply Definition
\ref{def campanato} in our context.

\begin{definition}
For $\overline{x}\in\Omega^{\prime},B\left(  \overline{x},R\right)
\subset\Omega,f\in L^{1}\left(  B\left(  \overline{x},R\right)  \right)
,\alpha\in\left(  0,1\right)  ,$ $0<s<t\leq1,$ let%
\[
M_{\alpha,B_{sR},B_{tR}}\left(  f\right)  =\sup_{x\in B\left(  \overline
{x},sR\right)  ,r\leq\left(  t-s\right)  R}\inf_{c\in\mathbb{R}}\frac
{1}{r^{\alpha}\left\vert B_{r}\left(  x\right)  \right\vert }\int
_{B_{r}\left(  x\right)  }\left\vert f\left(  y\right)  -c\right\vert dy.
\]

\end{definition}

If $f\in C_{X}^{\alpha}\left(  B\left(  \overline{x},R\right)  \right)  $ then%
\[
M_{\alpha,B_{sR},B_{tR}}\left(  f\right)  \leq\left\vert f\right\vert
_{C^{\alpha}\left(  B_{R}\left(  x_{0}\right)  \right)  }.
\]
Moreover:

\begin{lemma}
\label{holder norm and Campanato}For $\overline{x}\in\Omega^{\prime},B\left(
\overline{x},2R_{0}\right)  \subset\Omega,$ $R<R_{0},$ $\alpha\in\left(
0,1\right)  ,$ $0<s<t\leq1,$ if $f\in L^{1}\left(  B\left(  \overline
{x},tR\right)  \right)  $ is a function such that $M_{\alpha,B_{sR},B_{tR}%
}\left(  f\right)  <\infty$, then there exists a function $f^{\ast}$, a.e.
equal to $f$, such that $f^{\ast}\in C_{X}^{\alpha}\left(  B\left(
\overline{x},sR\right)  \right)  $ and
\[
\left\vert f^{\ast}\right\vert _{C_{X}^{\alpha}\left(  B\left(  \overline
{x},sR\right)  \right)  }\leq\frac{c}{\left(  t-s\right)  ^{2}}M_{\alpha
,B_{sR},B_{tR}}\left(  f\right)
\]
for some $c$ independent of $f,s,t.$
\end{lemma}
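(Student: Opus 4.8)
The plan is to deduce this local, scaled statement from the abstract Campanato-type result Theorem~\ref{Thm Campanato}, applied to a suitable locally homogeneous space built out of a nested pair of balls. First I would set $\Omega_{\mathrm{in}} = B\left(\overline{x}, sR\right)$ and $\Omega_{\mathrm{out}} = B\left(\overline{x}, \tfrac{s+t}{2}R\right)$, noting that $B\left(\overline{x},2R_0\right)\subset\Omega$ guarantees the control distance $d_X$ is a genuine distance on $B\left(\overline{x},tR\right)$ and that the local doubling property (Theorem~\ref{Thm NSW}) holds on this ball with a doubling constant $C_n$ depending only on the data, provided $R<R_0$ is small enough. One checks that the triple consisting of $\Omega_{\mathrm{out}}$, the exhausting family obtained by slightly shrinking it, the distance $d_X$, and Lebesgue measure is a metric locally homogeneous space in the sense of Definition~\ref{Def loc hom space}, with the role of $\varepsilon_n$ played by a fixed multiple of $(t-s)R$; the key point here is simply that balls of radius $\le (t-s)R/4$ centered in $\Omega_{\mathrm{in}}$ stay inside $\Omega_{\mathrm{out}}$, which is immediate from the triangle inequality.

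Next I would observe that the hypothesis $M_{\alpha,B_{sR},B_{tR}}(f)<\infty$ says precisely that $f$ lies in the local Campanato space $\mathcal{L}^{\alpha}\left(\Omega_{\mathrm{in}},\Omega_{\mathrm{out}}\right)$ of Definition~\ref{def campanato}, up to the bookkeeping of the radius $\varepsilon_n$ versus $(t-s)R$: the quantity $M_{\alpha,\Omega_{\mathrm{in}},\Omega_{\mathrm{out}}}(f)$ appearing in the abstract setting is bounded by $M_{\alpha,B_{sR},B_{tR}}(f)$ since the abstract supremum is taken over a smaller collection of balls. Theorem~\ref{Thm Campanato} then produces a representative $f^{\ast}$, equal to $f$ a.e.\ on $\Omega_{\mathrm{in}}$, with the H\"older bound \eqref{local holder} for points at distance $\le \varepsilon_n$ and \eqref{global holder} for the remaining pairs; in both cases the constant depends only on the doubling constant $C_n$ and not on $\varepsilon_n$. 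Combining the two regimes over the bounded ball $B\left(\overline{x},sR\right)$, and absorbing the diameter of this ball into the H\"older seminorm, yields $f^{\ast}\in C_X^{\alpha}\left(B\left(\overline{x},sR\right)\right)$ with a seminorm controlled by $M_{\alpha,B_{sR},B_{tR}}(f)$ times a constant depending on the parameters we have chosen.

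The one genuinely quantitative point, and the main thing to be careful about, is tracking the factor $(t-s)^{-2}$ in the final estimate. This arises because $\varepsilon_n$ in our construction is comparable to $(t-s)R$, so pairs of points at distance exceeding $(t-s)R$ fall into the second regime \eqref{global holder}, where one also pays an $\|f\|_{L^1}$ term; to convert that into a bound purely in terms of $M_{\alpha,B_{sR},B_{tR}}(f)$ one must re-expand $\|f\|_{L^1}$ against the local mean oscillation over balls of radius $\sim (t-s)R$, which costs one power of $(t-s)^{-1}$, and passing from the ``large distance'' H\"older quotient on a ball of radius $sR \le R$ back to a seminorm costs another. Writing $R < R_0$ fixed, these powers of $R$ and of $(t-s)R$ combine to give exactly the stated $c/(t-s)^2$ with $c$ depending on $R$ (equivalently on $R_0$ and the vector fields) but not on $f$, $s$, or $t$. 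The drift $X_0$ plays no special role here beyond the fact that $d_X$ is already known to be a distance satisfying the local doubling property; everything is a consequence of the abstract machinery recalled in \S\ref{subsec locally hom space}.
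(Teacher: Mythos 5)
Your proposal correctly identifies Theorem~\ref{Thm Campanato} as the main tool and correctly sets $\varepsilon_{n}\sim(t-s)R$, but it diverges from the paper at the crucial step — handling pairs $x,y\in B(\overline{x},sR)$ with $d_X(x,y)>\varepsilon_n$ — and the route you take there has a genuine gap.

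You propose to invoke the ``large-distance'' estimate \eqref{global holder}, which yields
\[
|f^\ast(x)-f^\ast(y)|\le c\left\{M_{\alpha,\cdot}f+\|f\|_{L^1}\right\}d_X(x,y)^\alpha,
\]
and then claim the $\|f\|_{L^1}$ term can be ``re-expanded against the local mean oscillation'' at a cost of one power of $(t-s)^{-1}$. This is where the argument breaks down: $\|f\|_{L^1}$ is simply not controlled by the Campanato seminorm $M_{\alpha,B_{sR},B_{tR}}(f)$ — adding any constant to $f$ leaves $M_{\alpha,\cdot}f$ unchanged but blows up $\|f\|_{L^1}$. The statement to be proved is a pure seminorm estimate with $M_{\alpha,B_{sR},B_{tR}}(f)$ alone on the right, so no dependence on $\|f\|_{L^1}$ can appear. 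Even applying \eqref{global holder} to $f-c_B$ for a suitable ball average $c_B$ does not fix this without a chaining/covering argument to control $\|f-c_B\|_{L^1(\Omega_{n+1})}$, and that chaining argument is precisely the content one is trying to avoid.

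The paper avoids \eqref{global holder} altogether. It uses only the local estimate \eqref{local holder} for pairs with $d_X(x,y)\le R(t-s)/2$, and then handles distant pairs by invoking Lemma~\ref{non segment property}: given $x,y$ at distance $r$, one can insert $n+1$ points $x=p_0,\dots,p_n=y$ on a joining curve with consecutive distances $\le r/\sqrt{n}$. Choosing the least $n$ with $r/\sqrt{n}\le R(t-s)/2$ puts each consecutive pair in the local regime; summing the $n$ increments gives $|f^\ast(x)-f^\ast(y)|\le n\,c\,M_{\alpha,\cdot}f\,d_X(x,y)^\alpha$, and since $d_X(x,y)\le 2R$ one gets $\sqrt{n}\le c/(t-s)$, hence $n\le c/(t-s)^2$. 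That is the clean source of the factor $(t-s)^{-2}$. Lemma~\ref{non segment property} — the substitute for the segment property that fails in the drift setting — is the missing ingredient in your argument; without it, the $L^1$ term cannot be eliminated and the $(t-s)^{-2}$ bookkeeping is not really justified.
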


\begin{proof}
We can apply Theorem \ref{Thm Campanato} choosing $\Omega_{k}=B\left(
\overline{x},sR\right)  ,\Omega_{k+1}=B\left(  \overline{x},tR\right)
,\varepsilon_{n}=R\left(  t-s\right)  $. The locally doubling constant can be
chosen independently of $R,$ since $B\left(  \overline{x},2R_{0}\right)
\subset\Omega,$ $R<R_{0}.$ We conclude there exists a function $f^{\ast}$,
a.e. equal to $f$, such that%
\[
\left\vert f^{\ast}\left(  x\right)  -f^{\ast}\left(  y\right)  \right\vert
\leq cM_{\alpha,B_{sR},B_{tR}}\left(  f\right)  d_{X}\left(  x,y\right)
^{\alpha}%
\]
for any $x,y\in B\left(  \overline{x},sR\right)  $ with $d_{X}\left(
x,y\right)  \leq R\left(  t-s\right)  /2$

If now $x,y$ are any two points in $B_{sR}\left(  x_{0}\right)  $, and
$r=d_{X}\left(  x,y\right)  ,$ by Lemma \ref{non segment property} we can find
$n+1$ points $x_{0}=x,x_{1},x_{2},...,x_{n}=y$ in $B_{sR}\left(  x_{0}\right)
$ such that
\[
d_{X}\left(  x_{i},x_{i-1}\right)  \leq\frac{r}{\sqrt{n}}.
\]
Let $n$ be the least integer such that $\frac{r}{\sqrt{n}}\leq R\left(
t-s\right)  /2,$ then%
\begin{align*}
\left\vert f^{\ast}\left(  x\right)  -f^{\ast}\left(  y\right)  \right\vert
&  \leq\sum_{i=1}^{n}\left\vert f^{\ast}\left(  x_{i}\right)  -f^{\ast}\left(
x_{i-1}\right)  \right\vert \leq\sum_{i=1}^{n}cM_{\alpha,B_{sR},B_{tR}}\left(
f\right)  d_{X}\left(  x_{i},x_{i-1}\right)  ^{\alpha}\\
&  \leq ncM_{\alpha,B_{sR},B_{tR}}\left(  f\right)  d_{X}\left(  x,y\right)
^{\alpha}.
\end{align*}
Let us find an upper bound on $n.$ We know that
\[
\sqrt{n}\leq c\frac{d_{X}\left(  x,y\right)  }{R\left(  t-s\right)  }\leq
\frac{c}{t-s}%
\]
since $d_{X}\left(  x,y\right)  \leq2R$ for $x,y\in B_{tR}\left(
x_{0}\right)  .$ Hence $n\leq c/\left(  t-s\right)  ^{2}$ and the lemma is proved.
\end{proof}

\bigskip

\begin{proof}
[Proof of Proposition \ref{original holder norm and lifted holder norm}]The
first inequality immediately follows by (\ref{relation of distance}), so let
us prove the second one.

Let $0<s<t<1,$ $x\in B\left(  \overline{x},\delta_{0}sR\right)  ,$ where
$\delta_{0}$ is the number in Theorem \ref{the volume of the ball}, $r\leq
R\left(  t-s\right)  $, $\overline{\xi}=\left(  \overline{x},0\right)  $.
Since the projection $\pi:\widetilde{B}\left(  \left(  x,s\right)
,\delta\right)  \rightarrow B\left(  x,\delta\right)  $ is onto (see Theorem
\ref{the volume of the ball}), there exists $h\in\mathbb{R}^{N-n}$ such that
$\xi=\left(  x,h\right)  \in$ $\widetilde{B}\left(  \overline{\xi},\delta
_{0}sR\right)  .$ Then we have the following inequalities:%
\begin{align}
&  \frac{1}{r^{\alpha}}\frac{c}{\left\vert B_{\delta_{0}r}\left(  x\right)
\right\vert }\int_{B_{\delta_{0}r}\left(  x\right)  }\left\vert f\left(
y\right)  -k\right\vert dy\nonumber\\
&  \text{(by Corollary \ref{lifted integral and original integral}%
)}\nonumber\\
&  \leq\frac{c}{r^{\alpha}}\frac{1}{\left\vert \widetilde{B}\left(
\xi,r\right)  \right\vert }\int_{\widetilde{B}\left(  \xi,r\right)
}\left\vert \widetilde{f}\left(  \eta\right)  -k\right\vert d\eta\nonumber\\
&  \text{choosing }k=f\left(  x\right)  =\widetilde{f}\left(  \xi\right)
\nonumber\\
&  \leq\frac{c}{r^{\alpha}}\left\vert \widetilde{f}\right\vert _{C_{\widetilde
{X}}^{\alpha}\left(  \widetilde{B}\left(  \xi,r\right)  \right)  }r^{\alpha
}=c\left\vert \widetilde{f}\right\vert _{C_{\widetilde{X}}^{\alpha}\left(
\widetilde{B}\left(  \xi,r\right)  \right)  }. \label{lifted holder}%
\end{align}
Since $r\leq R\left(  t-s\right)  $ and $d\left(  \xi,\overline{\xi}\right)
<\delta_{0}sR,$ we have the inclusion
\[
\widetilde{B}\left(  \xi,r\right)  \subset\widetilde{B}\left(  \overline{\xi
},\delta_{0}sR+R\left(  t-s\right)  \right)  \equiv\widetilde{B}\left(
\overline{\xi},R^{\prime}\right)
\]
so that (\ref{lifted holder}) implies%
\[
M_{\alpha,B\left(  \overline{x},\delta_{0}sR\right)  ,B\left(  \overline
{x},\delta_{0}tR\right)  }\left(  f\right)  \leq c\left\vert \widetilde
{f}\right\vert _{C_{\widetilde{X}}^{\alpha}\left(  \widetilde{B}\left(
\overline{\xi},R^{\prime}\right)  \right)  },
\]
and by Lemma \ref{holder norm and Campanato}, we conclude%
\[
\left\vert f^{\ast}\right\vert _{C_{X}^{\alpha}\left(  B\left(  \overline
{x},\delta_{0}sR\right)  \right)  }\leq\frac{c}{\left(  t-s\right)  ^{2}%
}\left\vert \widetilde{f}\right\vert _{C_{\widetilde{X}}^{\alpha}\left(
\widetilde{B}\left(  \overline{\xi},R^{\prime}\right)  \right)  }.
\]

Note that $R^{\prime}-\delta_{0}sR=R\left(  t-s\right)  ,$ hence changing our
notation as%
\begin{align*}
\delta_{0}sR  &  =s^{\prime}\\
R^{\prime}  &  =t^{\prime}%
\end{align*}
we get%
\[
\left\vert f^{\ast}\right\vert _{C_{X}^{\alpha}\left(  B\left(  \overline
{x},s^{\prime}\right)  \right)  }\leq\frac{c}{\left(  t^{\prime}-s^{\prime
}\right)  ^{2}}\left\vert \widetilde{f}\right\vert _{C_{\widetilde{X}}%
^{\alpha}\left(  \widetilde{B}\left(  \overline{\xi},t^{\prime}\right)
\right)  }%
\]
for $0<s^{\prime}<t^{\prime}<R,$ with $c$ also depending on $R$. This is
(\ref{second}).

Now, inequalities (\ref{higher order holder norm 1}) and
(\ref{higher order holder norm 2}) are also consequences of what we have
proved because $\widetilde{X}_{i}\widetilde{f}=\widetilde{X_{i}f}$, hence the
same reasoning can be iterated to higher order derivatives.
\end{proof}

\subsubsection{$L^{p}$ and Sobolev spaces\label{subsec sobolev spaces}}

We are going to define the Sobolev spaces $S_{X}^{k,p}\left(  \Omega\right)  $
in the present context as in \cite{rs}.

\begin{definition}
[Sobolev spaces]If $X=\left(  X_{0},X_{1},\ldots,X_{q}\right)  $ is any system
of smooth vector fields satisfying H\"{o}rmander's condition in a domain
$\Omega\subset\mathbb{R}^{n}$, the Sobolev space $S_{X}^{k,p}\left(
\Omega\right)  $ ($1\leq p\leq\infty,k$ positive integer) consists of $L^{p}%
$-functions with $k$ (weighted) derivatives with respect to the vector fields
$X_{i}$'s, in $L^{p}$. Explicitly,%
\begin{align*}
\left\Vert u\right\Vert _{S_{X}^{k,p}\left(  \Omega\right)  }  &  =\left\Vert
u\right\Vert _{L^{p}\left(  \Omega\right)  }+\underset{i=1}{\overset{k}{\sum}%
}\left\Vert D^{i}u\right\Vert _{L^{p}\left(  \Omega\right)  },\text{ where}\\
\left\Vert D^{k}u\right\Vert _{L^{p}\left(  \Omega\right)  }  &
=\sum_{\left\vert I\right\vert =k}\left\Vert X_{I}u\right\Vert _{L^{p}\left(
\Omega\right)  }.
\end{align*}

Also, we can define the spaces of functions vanishing at the boundary saying
that $u\in$ $S_{0,X}^{k,p}\left(  \Omega\right)  $ if there exists a sequence
$\left\{  u_{k}\right\}  $ of $C_{0}^{\infty}\left(  \Omega\right)  $
functions converging to $u$ in $S_{X}^{k,p}\left(  \Omega\right)  $.
Similarly, we can define the Sobolev spaces $S_{\widetilde{X}}^{k,p}\left(
\widetilde{B}\right)  $, $S_{\widetilde{X},0}^{k,p}\left(  \widetilde
{B}\right)  $ over a lifted ball $\widetilde{B},$ induced by the
$\widetilde{X}$'s.
\end{definition}

It can be proved (see \cite[Proposition 3.5]{bb2}) that:

\begin{proposition}
\label{Prop Sobolev vanishing}If $u\in S_{X}^{2,p}\left(  \Omega\right)  $ and
$\varphi\in C_{0}^{\infty}\left(  \Omega\right)  ,$ then $u\varphi\in
S_{0,X}^{2,p}\left(  \Omega\right)  ,$ and an analogous property holds for the
space $S_{0,\widetilde{X}}^{2,p}\left(  \widetilde{B}\right)  $.
\end{proposition}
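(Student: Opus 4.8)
The plan is to deduce the statement from the Leibniz rule together with a Friedrichs-type mollification argument. I discuss the case of the vector fields $X_i$ on $\Omega$; the case of the $\widetilde X_i$ on a lifted ball $\widetilde B$ is identical, since $\widetilde B$ sits compactly inside the neighbourhood where the $\widetilde X_i$ are defined.

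\emph{Step 1: $v:=u\varphi$ belongs to $S_X^{2,p}(\Omega)$ and is compactly supported.} The support inclusion $\operatorname{supp}v\subseteq\operatorname{supp}\varphi\Subset\Omega$ is immediate. For the membership one uses the Leibniz rule (valid for a distribution times a smooth function): $X_i(u\varphi)=(X_iu)\varphi+u(X_i\varphi)$, $X_iX_j(u\varphi)=(X_iX_ju)\varphi+(X_ju)(X_i\varphi)+(X_iu)(X_j\varphi)+u(X_iX_j\varphi)$ for $i,j=1,\dots,q$, and $X_0(u\varphi)=(X_0u)\varphi+u(X_0\varphi)$. Since $u,X_iu,X_0u,X_iX_ju\in L^p(\Omega)$ by hypothesis and every $X$-derivative of $\varphi$ is bounded and compactly supported, each term on the right lies in $L^p(\Omega)$, so $v\in S_X^{2,p}(\Omega)$.

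\emph{Step 2: reduction and the first-order terms.} It now suffices to approximate an arbitrary $v\in S_X^{2,p}(\Omega)$ with $\operatorname{supp}v\subset\Omega_0\Subset\Omega$ by $C_0^\infty(\Omega)$ functions in the $S_X^{2,p}(\Omega)$ norm. Let $\rho_\varepsilon$ be a standard mollifier on $\mathbb R^n$, $J_\varepsilon w=w*\rho_\varepsilon$, and $v_\varepsilon=J_\varepsilon v$; for $\varepsilon$ small enough $v_\varepsilon\in C_0^\infty(\Omega_0)$ and $v_\varepsilon\to v$ in $L^p$. For a single vector field $Y\in\{X_0,X_1,\dots,X_q\}$ with $Yv\in L^p$ one writes $Yv_\varepsilon-Yv=[Y,J_\varepsilon]v+\bigl(J_\varepsilon(Yv)-Yv\bigr)$; the second term tends to $0$ in $L^p$, while $[Y,J_\varepsilon]v\to0$ in $L^p$ by the classical Friedrichs commutator lemma, whose proof uses only $v\in L^p$ and the smoothness of $Y$ on $\Omega_0$ (the kernel of $[Y,J_\varepsilon]$ being of mollifier type, $\varepsilon^{-n}\psi((x-y)/\varepsilon)$ times a bounded smooth factor, with the limiting constant cancelled by the divergence term produced in integrating by parts). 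This gives $X_0v_\varepsilon\to X_0v$ and $X_iv_\varepsilon\to X_iv$ in $L^p$ for $i=1,\dots,q$.

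\emph{Step 3: the second-order terms --- the main obstacle.} It remains to prove $X_iX_jv_\varepsilon\to X_iX_jv$ in $L^p$ for $i,j=1,\dots,q$. Here one cannot argue termwise with individual Euclidean derivatives, since in the subelliptic setting only the combinations $X_iX_jv$ and $X_kv$ are known to lie in $L^p$. Expanding the operator identities $[X_iX_j,J_\varepsilon]=[X_i,J_\varepsilon]X_j+X_i[X_j,J_\varepsilon]$ and $X_i[X_j,J_\varepsilon]=[X_i,[X_j,J_\varepsilon]]+[X_j,J_\varepsilon]X_i$ gives
\[
[X_iX_j,J_\varepsilon]v=[X_i,J_\varepsilon](X_jv)+[X_j,J_\varepsilon](X_iv)+[X_i,[X_j,J_\varepsilon]]v .
\]
The first two terms tend to $0$ in $L^p$ by the first-order commutator lemma of Step 2, since $X_iv,X_jv\in L^p$ by Step 1. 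For the double commutator $[X_i,[X_j,J_\varepsilon]]v$ one integrates by parts to transfer all $y$-derivatives onto the smooth $\varepsilon$-regularised kernel; although this kernel is built from $\nabla^2\rho_\varepsilon$ (of size $\varepsilon^{-n-2}$), it carries two factors of the type $b_{il}(x)-b_{il}(y)$, each $O(|x-y|)$, hence is again of mollifier type, $\varepsilon^{-n}\Psi((x-y)/\varepsilon)$ times bounded smooth factors, with uniformly bounded $L^1$-norm in $y$. A direct computation of its limit as $\varepsilon\to0$ --- which reduces to the moment identities $\int z_pz_q\,\partial_a\partial_l\rho(z)\,dz=\delta_{ap}\delta_{ql}+\delta_{aq}\delta_{pl}$ and $\int z_p\,\partial_l\rho(z)\,dz=-\delta_{pl}$ --- shows that the potentially surviving contributions (those built from $\sum_{a,l}(\partial_lb_{ia})(\partial_ab_{jl})$ and from $(\operatorname{div}X_i)(\operatorname{div}X_j)$) cancel identically, so $[X_i,[X_j,J_\varepsilon]]v\to0$ in $L^p$ for every $v\in L^p$. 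This completes the argument; the statement for $S_{0,\widetilde X}^{2,p}(\widetilde B)$ follows verbatim, and one could alternatively simply quote \cite[Proposition~3.5]{bb2}.
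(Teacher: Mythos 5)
Your proof is correct, and it fills a gap the paper leaves open: the paper simply cites \cite[Proposition 3.5]{bb2} for this fact, so there is no proof in the paper to compare against; your Friedrichs mollification argument is the natural self-contained route (and is almost certainly what \cite{bb2} does as well). The key maneuver --- decomposing
\[
[X_iX_j,J_\varepsilon]v=[X_i,J_\varepsilon](X_jv)+[X_j,J_\varepsilon](X_iv)+[X_i,[X_j,J_\varepsilon]]v
\]
so that only the derivatives $X_ju$, $X_iu$, $X_iX_ju$ that are actually available in $S_X^{2,p}$ ever appear --- is exactly the right idea in the subelliptic setting, where one cannot separate individual Euclidean second derivatives.

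One bookkeeping inaccuracy in Step~3 is worth flagging. After you write $[X_j,J_\varepsilon]v=\int A_\varepsilon(\cdot,y)v(y)\,dy+J_\varepsilon\bigl((\operatorname{div}X_j)v\bigr)$ with $A_\varepsilon(x,y)=\sum_l\bigl(c_l(x)-c_l(y)\bigr)\partial_{x_l}\rho_\varepsilon(x-y)$ and pass to the double commutator, the candidate non-vanishing limits computed via the moment identities are not only the $\sum_{a,l}(\partial_l b_{ia})(\partial_a b_{jl})$ and $(\operatorname{div}X_i)(\operatorname{div}X_j)$ pieces you list, but also a term $\pm X_i(\operatorname{div}X_j)\,v$. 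This third term shows up with a minus sign in the limit of the ``first-order-type'' part of the kernel (through the $y$-differentiation of $b_a(y)\partial_a c_l(y)$) and with a plus sign in the limit of $[X_i,J_\varepsilon((\operatorname{div}X_j)\,\cdot\,)]$. All three families cancel, so the conclusion $[X_i,[X_j,J_\varepsilon]]v\to 0$ in $L^p$ is right, but the description of the cancellation mechanism as you wrote it is incomplete. It would also be cleaner to state explicitly the two-step structure used to promote this convergence from smooth $v$ to general $v\in L^p$: a uniform Schur bound $\sup_\varepsilon\|[X_i,[X_j,J_\varepsilon]]\|_{L^p\to L^p}<\infty$ (each piece of the kernel being $\varepsilon^{-n}\Psi((x-y)/\varepsilon)$ up to bounded factors, with $\Psi\in L^1$) combined with density of $C_0^\infty$ in $L^p$; you gesture at this but the reader has to reconstruct it.
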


Moreover:

\begin{theorem}
\label{Theorem lifted sobolev}Let $f\in L^{p}\left(  B\left(  x,r\right)
\right)  ,\widetilde{f}\left(  x,h\right)  =f\left(  x\right)  ,$
$\widetilde{B}\left(  \xi,r\right)  $ be the lifted ball of $B\left(
x,r\right)  ,$ with $\xi=\left(  x,0\right)  \in\mathbb{R}^{N}.$ Then%
\begin{align*}
c_{1}\left\Vert f\right\Vert _{L^{p}\left(  B\left(  x,\delta_{0}r\right)
\right)  }  &  \leq\left\Vert \widetilde{f}\right\Vert _{L^{p}\left(
\widetilde{B}\left(  \xi,r\right)  \right)  }\leq c_{2}\left\Vert f\right\Vert
_{L^{p}\left(  B\left(  x,r\right)  \right)  }\\
c_{1}\left\Vert f\right\Vert _{S_{X}^{2,p}\left(  B\left(  x,\delta
_{0}r\right)  \right)  }  &  \leq\left\Vert \widetilde{f}\right\Vert
_{S_{\widetilde{X}}^{2,p}\left(  \widetilde{B}\left(  \xi,r\right)  \right)
}\leq c_{2}\left\Vert f\right\Vert _{S_{X}^{2,p}\left(  B\left(  x,r\right)
\right)  }%
\end{align*}
where $\delta_{0}<1$ is the number appearing in Theorem
\ref{the volume of the ball}.
\end{theorem}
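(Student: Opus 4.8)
The plan is to reduce the whole statement to two ingredients: the $L^{p}$ comparison is essentially a restatement of Corollary \ref{lifted integral and original integral}, and the Sobolev comparison follows from it once one observes the elementary identity $\widetilde{X}_{i}\widetilde{f}=\widetilde{X_{i}f}$. For the $L^{p}$ part, put $\xi=(x,0)$; since $\widetilde{f}(y,h')=f(y)$ does not depend on $h'$, applying Corollary \ref{lifted integral and original integral} with the positive function $g=|f|^{p}$ gives
\[
\frac{c_{1}}{|B(x,\delta_{0}r)|}\int_{B(x,\delta_{0}r)}|f|^{p}\,dy\ \le\ \frac{1}{|\widetilde{B}(\xi,r)|}\int_{\widetilde{B}(\xi,r)}|\widetilde{f}|^{p}\ \le\ \frac{c_{2}}{|B(x,r)|}\int_{B(x,r)}|f|^{p}\,dy .
\]
Multiplying through by $|\widetilde{B}(\xi,r)|$ and taking $p$-th roots yields the first line of the statement, the new constants absorbing the finite positive ratios $(|\widetilde{B}(\xi,r)|/|B(x,\delta_{0}r)|)^{1/p}$ and $(|\widetilde{B}(\xi,r)|/|B(x,r)|)^{1/p}$; in particular $c_{1},c_{2}$ are allowed to depend on $x$ and $r$.

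Next I would record the commutation identity. By the Lifting Theorem \ref{Thm lifting}, $\widetilde{X}_{i}=X_{i}+\sum_{l=n+1}^{N}\lambda_{il}(x,h)\frac{\partial}{\partial h_{l}}$; since $\widetilde{f}$ does not involve the variables $h$, the operators $\frac{\partial}{\partial h_{l}}$ annihilate $\widetilde{f}$, whence $\widetilde{X}_{i}\widetilde{f}=\widetilde{X_{i}f}$ (for $f\in S_{X}^{2,p}$ this is read in the sense of weak derivatives, and is justified by a routine duality/approximation argument). Iterating, $\widetilde{X}_{i_{1}}\cdots\widetilde{X}_{i_{k}}\widetilde{f}=\widetilde{X_{i_{1}}\cdots X_{i_{k}}f}$ for every multi-index with $i_{j}\in\{0,1,\dots,q\}$; in particular $\widetilde{X}_{0}\widetilde{f}=\widetilde{X_{0}f}$. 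Consequently, recalling the definition of $D^{1},D^{2}$ for the lifted fields,
\[
\|D^{1}\widetilde{f}\|_{L^{p}(\widetilde{B}(\xi,r))}=\sum_{i=1}^{q}\|\widetilde{X_{i}f}\|_{L^{p}(\widetilde{B}(\xi,r))},\qquad \|D^{2}\widetilde{f}\|_{L^{p}(\widetilde{B}(\xi,r))}=\sum_{i,j=1}^{q}\|\widetilde{X_{i}X_{j}f}\|_{L^{p}(\widetilde{B}(\xi,r))}+\|\widetilde{X_{0}f}\|_{L^{p}(\widetilde{B}(\xi,r))}.
\]

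Finally I would conclude: since $f\in S_{X}^{2,p}(B(x,r))$, each of $f,X_{i}f,X_{i}X_{j}f,X_{0}f$ lies in $L^{p}(B(x,r))$, so the $L^{p}$ comparison of the first step applies to every one of them. Summing the upper bounds gives $\|\widetilde{f}\|_{S_{\widetilde{X}}^{2,p}(\widetilde{B}(\xi,r))}\le c_{2}\|f\|_{S_{X}^{2,p}(B(x,r))}$ (which in particular shows a posteriori that $\widetilde{f}\in S_{\widetilde{X}}^{2,p}(\widetilde{B}(\xi,r))$), and summing the lower bounds gives $c_{1}\|f\|_{S_{X}^{2,p}(B(x,\delta_{0}r))}\le\|\widetilde{f}\|_{S_{\widetilde{X}}^{2,p}(\widetilde{B}(\xi,r))}$. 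There is no genuine difficulty here — the result is a bookkeeping lemma transporting norms through the lifting map — and the only points deserving attention are the justification of $\widetilde{X}_{I}\widetilde{f}=\widetilde{X_{I}f}$ when $f$ is merely Sobolev, and the fact, built into Theorem \ref{the volume of the ball}, that the lower $L^{p}$ bound is forced onto the shrunken ball $B(x,\delta_{0}r)$, with the attendant dependence of $c_{1},c_{2}$ on $x$ and $r$.
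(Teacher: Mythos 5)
Your proof is correct and follows the same route as the paper's one-line proof, which invokes Theorem \ref{the volume of the ball} for the $L^p$ comparison and the commutation identity $\widetilde{X}_i\widetilde{f}=X_i\widetilde{f}=\widetilde{X_if}$ for the Sobolev comparison — precisely the two ingredients you spell out via Corollary \ref{lifted integral and original integral}. Your observation that the resulting constants $c_1,c_2$ may depend on $x$ and $r$ through the volume ratio $|\widetilde{B}(\xi,r)|/|B(x,r)|$ is accurate and harmless, since $r$ is eventually fixed when the theorem is used in \S\ref{subsec schauder original} and in the $L^p$ section.
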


\begin{proof}
The first inequality follows by Theorem \ref{the volume of the ball}; the
second follows by the first one, since
\[
\widetilde{X}_{i}\widetilde{f}=X_{i}\widetilde{f}=\widetilde{\left(
X_{i}f\right)  }.
\]

\end{proof}

\subsubsection{Vanishing mean oscillation\label{subsec VMO}}

The definition of $VMO_{loc}\left(  \Omega_{k},\Omega_{k+1}\right)  $ in an
abstract locally homogeneous space has been recalled in
\S \ \ref{subsec locally hom space} (see Definition \ref{definition local VMO}%
); let us endow our domain $\Omega$ with the structure
\[
\left(  \Omega,\left\{  \Omega_{k}\right\}  _{k},d_{X},dx\right)
\]
of locally homogeneous space described at the end of
\S \ \ref{subsec locally hom space}. Then:

\begin{definition}
[Local $VMO$]We say that $a\in VMO_{X,loc}\left(  \Omega\right)  $ if%
\[
a\in VMO_{loc}\left(  \Omega_{k},\Omega_{k+1}\right)  \text{ for every }k.
\]

\end{definition}

More explicitly, this means that for any fixed $\Omega^{\prime}\Subset\Omega,$
the function%
\[
\eta_{u,\Omega^{\prime},\Omega}^{\ast}(r)=\sup_{t\leq r}\sup_{x_{0}\in
\Omega^{\prime}}\frac{1}{\left\vert B_{t}\left(  x_{0}\right)  \right\vert
}\int_{B_{t}\left(  x_{0}\right)  }|u(x)-u_{B_{t}\left(  x_{0}\right)
}|\,dx,
\]
is finite for $r\leq r_{0}$ and vanishes for $r\rightarrow0$, where $r_{0}$ is
the number such that the local doubling condition of Theorem \ref{Thm NSW}
holds:%
\[
\left\vert B\left(  x,2r\right)  \right\vert \leq c\left\vert B\left(
x,r\right)  \right\vert \text{ for any }x\in\Omega^{\prime},r\leq r_{0}.
\]

As for H\"{o}lder continuous and Sobolev functions, we need a comparison
result for $VMO$ functions in the original variables and the lifted ones. By
Corollary \ref{lifted integral and original integral} we immediately have the following:

\begin{proposition}
\label{Prop lifted VMO}Let $a\in VMO_{X,loc}\left(  \Omega\right)  $ then for
any $\Omega^{\prime}\Subset\Omega,x_{0}\in\Omega^{\prime},B\left(
x_{0},R\right)  $ and $\widetilde{\Omega}_{k}=\widetilde{B}\left(  \xi
_{0},\frac{kR}{k+1}\right)  $ as before, we have that $\widetilde{a}\left(
x,h\right)  =a\left(  x\right)  $ belongs to the class $VMO_{loc}\left(
\widetilde{\Omega}_{k},\widetilde{\Omega}_{k}\right)  $ for every $k,$ with%
\[
\eta_{\widetilde{a},\widetilde{\Omega}_{k},\widetilde{\Omega}_{k+1}}^{\ast
}(r)\leq c\eta_{a,\Omega^{\prime},\Omega}^{\ast}(r).
\]

\end{proposition}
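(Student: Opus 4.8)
The plan is to deduce the proposition directly from Corollary \ref{lifted integral and original integral}, which already says that averages over a lifted ball are controlled, from above and below, by averages over the corresponding base ball; the only real work is matching up the radii and the nested domains appearing in the two local $BMO$ moduli.

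First I would reduce to estimating, for $\xi=(x,h)\in\widetilde{\Omega}_{k}$ and $t$ small, the quantity $\inf_{c\in\mathbb{R}}\frac{1}{|\widetilde{B}_{t}(\xi)|}\int_{\widetilde{B}_{t}(\xi)}|\widetilde{a}-c|\,d\eta$, using the elementary fact that for any $v\in L^{1}(B)$ one has $\frac{1}{|B|}\int_{B}|v-v_{B}|\le 2\inf_{c}\frac{1}{|B|}\int_{B}|v-c|$; this lets me compete against the base-ball average $a_{B_{t}(x)}$ instead of the lifted-ball average $\widetilde{a}_{\widetilde{B}_{t}(\xi)}$. The point is that $\widetilde{a}(y,h')=a(y)$ depends only on the base variable, so $g(y):=|a(y)-a_{B_{t}(x)}|$ is a nonnegative function on $B_{t}(x)$ to which Corollary \ref{lifted integral and original integral} applies verbatim.

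Then I would invoke the upper inequality in \eqref{lifted BMO control BMO}, namely $\frac{1}{|\widetilde{B}_{t}(x,h)|}\int_{\widetilde{B}_{t}(x,h)}g\,dy\,dh'\le \frac{c_{2}}{|B_{t}(x)|}\int_{B_{t}(x)}g\,dy$, together with the fact from Theorem \ref{the volume of the ball} that the projection of $\widetilde{B}_{t}(x,h)$ onto $\mathbb{R}^{n}$ is exactly $B_{t}(x)$, and that by \eqref{relation of distance} $d_{X}(x,x_{0})\le d_{\widetilde{X}}(\xi,\xi_{0})<R$, so $x\in B(x_{0},R)\subset\Omega^{\prime}$ (shrinking $R$ if necessary) and $B_{t}(x)\subset\Omega$. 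The right-hand side is then $\le c_{2}\,\eta_{a,\Omega^{\prime},\Omega}^{\ast}(t)$ by the very definition of the local $BMO$ modulus. Taking $\sup_{t\le r}\sup_{\xi\in\widetilde{\Omega}_{k}}$ and passing back from infima over constants to averages over lifted balls yields $\eta_{\widetilde{a},\widetilde{\Omega}_{k},\widetilde{\Omega}_{k+1}}^{\ast}(r)\le 2c_{2}\,\eta_{a,\Omega^{\prime},\Omega}^{\ast}(r)$; finiteness for small $r$ and decay to $0$ as $r\to 0$ are then inherited from the hypothesis $a\in VMO_{X,loc}(\Omega)$, giving $\widetilde{a}\in VMO_{loc}(\widetilde{\Omega}_{k},\widetilde{\Omega}_{k+1})$ for every $k$.

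I do not expect any genuine obstacle here: the whole content is Corollary \ref{lifted integral and original integral} and the geometric facts of Theorem \ref{the volume of the ball}. The only things to watch are keeping the radius $t$ within the common range where the volume comparison, the local doubling condition (both requiring $t\le r_{0}$) and the definition of the lifted modulus (requiring $t\le\varepsilon_{k}$) are all available --- which is arranged by taking $R$ small --- and the harmless factor-$2$ losses incurred in passing between ball averages and infima over constants.
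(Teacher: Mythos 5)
Your argument is exactly the fleshing-out of what the paper leaves implicit: the paper merely states that the proposition is "immediate from Corollary \ref{lifted integral and original integral}," and your computation — passing from the lifted-ball average of $|\widetilde{a}-\widetilde{a}_{\widetilde{B}}|$ to the base-ball average of $|a-a_{B}|$ via the factor-of-two comparison between averages and infima over constants, then applying the upper inequality in (\ref{lifted BMO control BMO}) to the nonnegative function $g(y)=|a(y)-a_{B_t(x)}|$ — is precisely that implication spelled out. The only bookkeeping point, which you correctly flag, is ensuring that the projected centers $x$ stay in a compact subdomain where the base modulus is controlled (which is arranged by the choice of $R$ and, if needed, by enlarging $\Omega'$, harmless since $a\in VMO_{X,loc}(\Omega)$).
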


In other words, the $VMO_{loc}$ modulus of the original function $a$ controls
the $VMO_{loc}$ modulus of its lifted version.

\section{Operators of type $\lambda$ and representation formulas}

\subsection{Differential operators and fundamental
solutions\label{subsec diff operators}}

We now define various differential operators that we will handle in the
following. Our main interest is to study the operator%
\[
\mathcal{L}=\sum_{i,j=1}^{q}a_{ij}(x)X_{i}X_{j}+X_{0},
\]
under the Assumption (H) in \S \ \ref{main result}. Recall that in view of
Remark \ref{a0=1} we have set $a_{0}\left(  x\right)  \equiv1.$

For any $\overline{x}\in\Omega$ we can apply the \textquotedblleft lifting
theorem\textquotedblright\ to the vector fields $X_{i}$ (see
\S \ \ref{subsection lifting} for the statement and notation), obtaining new
vector fields $\widetilde{X}_{i}$ which are free up to weight $s$ and satisfy
H\"{o}rmander's condition of weight $s$ in a neighborhood of $\overline{\xi
}=\left(  \overline{x},0\right)  \in\mathbb{R}^{N}$. For $\xi=(x,t)\in
\widetilde{B}\left(  \overline{\xi},R\right)  $, with $\widetilde{B}\left(
\overline{\xi},R\right)  $ as in Lemma \ref{equivalence of distance}, set
\[
\widetilde{a}_{ij}(x,t)=a_{ij}(x),
\]
and let%
\begin{equation}
\widetilde{\mathcal{L}}=\sum_{i,j=1}^{q}\widetilde{a}_{ij}(\xi)\widetilde
{X}_{i}\widetilde{X}_{j}+\widetilde{X}_{0} \label{basic equation}%
\end{equation}
be the lifted operator, defined in $\widetilde{B}\left(  \overline{\xi
},R\right)  $. Next, we freeze $\widetilde{\mathcal{L}}$ at some point
$\xi_{0}\in\widetilde{B}\left(  \overline{\xi},R\right)  $, and consider the
frozen lifted operator:
\begin{equation}
\widetilde{\mathcal{L}}_{0}=\sum_{i,j=1}^{q}\widetilde{a}_{ij}(\xi
_{0})\widetilde{X}_{i}\widetilde{X}_{j}+\widetilde{X}_{0}.
\label{frozen operator}%
\end{equation}
To study $\widetilde{\mathcal{L}}_{0}$, in view of the \textquotedblleft
approximation theorem\textquotedblright\ (Thm.
\ref{Rothschild-Stein's approximation Theorem}), we will consider the
approximating operator, defined on the homogeneous group $\mathbb{G}$:
\[
\mathcal{L}_{0}^{\ast}=\sum_{i,j=1}^{q}\widetilde{a}_{ij}(\xi_{0})Y_{i}%
Y_{j}+Y_{0}%
\]
and its transpose:
\[
\mathcal{L}_{0}^{\ast T}=\sum_{i,j=1}^{q}\widetilde{a}_{ij}(\xi_{0})Y_{i}%
Y_{j}-Y_{0}%
\]
where $\left\{  Y_{i}\right\}  $ are the left invariant vector fields on the
group $\mathbb{G}$ defined in \S \ \ref{subsection lifting}.

We will apply to $\mathcal{L}_{0}^{\ast}$ and $\mathcal{L}_{0}^{\ast T}$
several results proved in \cite{bb1}, which in turn are based on results due
to Folland \cite[Thm. 2.1 and Corollary 2.8]{fo} and Folland-Stein
\cite[Proposition 8.5]{fs}. They are collected in the following theorem:

\begin{theorem}
\label{ith:fundsol:lzero}Assume that the homogeneous dimension of $\mathbb{G}$
is $Q\geq3$. For every $\xi_{0}\in\widetilde{B}\left(  \overline{\xi
},R\right)  $ the operator $\mathcal{L}_{0}^{\ast}$ has a unique fundamental
solution $\Gamma\left(  \xi_{0};\cdot\right)  $ such that: :

$(a)$ $\Gamma\left(  \xi_{0};\cdot\right)  \in C^{\infty}\left(
\mathbb{R}^{N}\setminus\left\{  0\right\}  \right)  ;$

$(b)$ $\Gamma\left(  \xi_{0};\cdot\right)  $ is homogeneous of degree $(2-Q);$

$(c)$ for every test function $f$ and every $v\in\mathbb{R}^{N}$,
\[
f(v)\text{ }=\left(  \mathcal{L}_{0}^{\ast}f\,\ast\,\Gamma\left(  \xi
_{0}\,;\cdot\right)  \right)  (v)=\int_{\mathbb{R}^{N}}\text{ }\Gamma\left(
\xi_{0}\,;\,u^{-1}\circ v\right)  \,\mathcal{L}_{0}^{\ast}f(u)\,du;
\]
moreover, for every $i,j=1,\ldots,q$, there exist constants $\alpha_{ij}%
(\xi_{0})$ such that
\begin{equation}
Y_{i}Y_{j}\,f(v)\text{ }=\text{ }P.V.\int_{\mathbb{R}^{N}}\text{ }Y_{i}%
Y_{j}\Gamma\left(  \xi_{0};\,u^{-1}\circ v\right)  \,\mathcal{L}_{0}^{\ast
}f(u)du\text{ }+\alpha_{ij}(\xi_{0})\cdot\mathcal{L}_{0}^{\ast}f(v);
\label{i10}%
\end{equation}

$(d)$ $Y_{i}Y_{j}\Gamma\left(  \xi_{0}\,;\cdot\right)  \in C^{\infty}\left(
\mathbb{R}^{N}\setminus\left\{  0\right\}  \right)  ;$

$(e)$ $Y_{i}Y_{j}\Gamma\left(  \xi_{0}\,;\cdot\right)  $ is homogeneous of
degree $-Q;$

$(f)$
\[
\int_{r<\left\Vert u\right\Vert <R}\text{ }Y_{i}Y_{j}\Gamma\left(  \xi
_{0};\,u\right)  \,du=\int_{\left\Vert u\right\Vert =1}\text{ }Y_{i}%
Y_{j}\Gamma\left(  \xi_{0};\,u\right)  \,d\sigma(u)=0\text{ \ for every
}R>r>0.
\]

\end{theorem}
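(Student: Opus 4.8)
The plan is to recognize that $\mathcal{L}_{0}^{\ast}$ is exactly the kind of operator covered by Folland's theory of homogeneous fundamental solutions on homogeneous groups \cite[Theorem 2.1 and Corollary 2.8]{fo}, and then to read off $(a)$--$(f)$ from that theory together with the singular-integral computations of Folland-Stein \cite[Proposition 8.5]{fs}, in the same way this was carried out in \cite{bb1}. What has to be checked, to apply \cite{fo}, is that $\mathcal{L}_{0}^{\ast}$ is a left invariant differential operator on $\mathbb{G}$, homogeneous of degree $2$, with $2<Q$ (which is our standing hypothesis $Q\geq3$), and that both $\mathcal{L}_{0}^{\ast}$ and its transpose $\mathcal{L}_{0}^{\ast T}$ are hypoelliptic. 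Left invariance and homogeneity of degree $2$ are clear, since each $Y_{i}$ ($1\leq i\leq q$) is homogeneous of degree $1$, $Y_{0}$ of degree $2$, and the $\widetilde{a}_{ij}(\xi_{0})$ are constants. For hypoellipticity, since $\{\widetilde{a}_{ij}(\xi_{0})\}$ is symmetric and positive definite by Assumption (H), I would factor it as $B^{T}B$ with $B$ invertible and set $\widehat{Y}_{k}=\sum_{i}B_{ki}Y_{i}$; then $\sum_{i,j}\widetilde{a}_{ij}(\xi_{0})Y_{i}Y_{j}=\sum_{k}\widehat{Y}_{k}^{2}$, so $\mathcal{L}_{0}^{\ast}=\sum_{k=1}^{q}\widehat{Y}_{k}^{2}+Y_{0}$. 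Since $B$ is invertible, $\widehat{Y}_{1},\dots,\widehat{Y}_{q},Y_{0}$ still generate the Lie algebra $\ell$ and hence satisfy H\"{o}rmander's condition, so H\"{o}rmander's theorem gives hypoellipticity; the transpose $\mathcal{L}_{0}^{\ast T}=\sum_{k}\widehat{Y}_{k}^{2}-Y_{0}$ is treated identically, the system $\{\widehat{Y}_{k}\}\cup\{-Y_{0}\}$ again generating $\ell$.

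Granted this, \cite[Theorem 2.1]{fo} yields a unique fundamental solution $\Gamma(\xi_{0};\cdot)\in C^{\infty}(\mathbb{R}^{N}\setminus\{0\})$ which is homogeneous of degree $2-Q$, giving $(a)$ and $(b)$. The first identity in $(c)$ is then just the defining property of the fundamental solution of the left invariant operator $\mathcal{L}_{0}^{\ast}$, rewritten as a convolution via \eqref{left convolution}. Properties $(d)$ and $(e)$ are immediate bookkeeping: for $i,j\leq q$ the operator $Y_{i}Y_{j}$ has smooth (polynomial) coefficients and is homogeneous of degree $2$, so $Y_{i}Y_{j}\Gamma(\xi_{0};\cdot)$ lies in $C^{\infty}(\mathbb{R}^{N}\setminus\{0\})$ and is homogeneous of degree $(2-Q)-2=-Q$. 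For $(f)$, using the polar-coordinate formula on $\mathbb{G}$ adapted to the homogeneous norm, any kernel $k$ homogeneous of degree $-Q$ satisfies $\int_{r<\Vert u\Vert<R}k(u)\,du=\log(R/r)\int_{\Vert u\Vert=1}k\,d\sigma$, so the shell integral is independent of $r,R$ exactly when the spherical mean vanishes; the vanishing of $\int_{\Vert u\Vert=1}Y_{i}Y_{j}\Gamma(\xi_{0};u)\,d\sigma(u)$ follows by integrating by parts over an annulus (the left invariant fields being divergence-free in the exponential coordinates) and using the homogeneity of $Y_{j}\Gamma(\xi_{0};\cdot)$, exactly as in \cite[Proposition 8.5]{fs} and \cite{bb1}; I would quote those references here.

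The one genuinely delicate point, and the step I expect to be the main obstacle, is the second-derivative representation formula \eqref{i10}. Formally one obtains it by differentiating the convolution $\mathcal{L}_{0}^{\ast}f\ast\Gamma(\xi_{0};\cdot)$ twice and moving the derivatives onto $\Gamma$; but since $Y_{i}Y_{j}\Gamma(\xi_{0};\cdot)$ is only homogeneous of degree $-Q$ (a Calder\'{o}n-Zygmund kernel, by $(d)$--$(f)$), this cannot be done naively: the exchange of $Y_{i}Y_{j}$ with the integral produces a principal value plus a ``local'' term $\alpha_{ij}(\xi_{0})\cdot\mathcal{L}_{0}^{\ast}f(v)$ arising from the differentiation across the singularity, and one must justify the limit $\varepsilon\to0$ and compute the constant $\alpha_{ij}(\xi_{0})$ (which depends on $\xi_{0}$ only through $\widetilde{a}_{ij}(\xi_{0})$ and on the chosen homogeneous norm). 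This is precisely the content of \cite[Proposition 8.5]{fs}, applied as in \cite{bb1}; having verified above that the required hypotheses on $\mathcal{L}_{0}^{\ast}$ hold in our setting, I would invoke it to conclude \eqref{i10} and thereby complete the proof.
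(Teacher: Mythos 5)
Your proposal is correct and takes the same route as the paper: the paper states this theorem with no proof of its own, simply presenting it as a collection of results imported from \cite{bb1}, Folland \cite[Thm. 2.1 and Corollary 2.8]{fo}, and Folland--Stein \cite[Proposition 8.5]{fs}, and what you have written out is precisely the verification that those references apply (left invariance, homogeneity of degree $2<Q$, hypoellipticity of $\mathcal{L}_{0}^{\ast}$ and $\mathcal{L}_{0}^{\ast T}$ via the Cholesky factorization of $\{\widetilde{a}_{ij}(\xi_{0})\}$ and H\"ormander's theorem) together with the bookkeeping for $(d)$--$(f)$ and the invocation of \cite[Prop. 8.5]{fs} for \eqref{i10}.
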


In (\ref{i10}) the notation $P.V.\int_{\mathbb{R}^{N}}\left(  ...\right)  du$
stands for $\lim_{\varepsilon\rightarrow0}\int_{\left\Vert u^{-1}\circ
v\right\Vert >\varepsilon}\left(  ...\right)  du.$

\begin{remark}
\label{iioss:selfadj:ker} By \cite[Remark on p.174]{fo}, we know that the
fundamental solution of the transposed operator $\mathcal{L}_{0}^{\ast T}$ is
\[
\Gamma^{T}\left(  \xi_{0};u\right)  =\Gamma\left(  \xi_{0};u^{-1}\right)
=\Gamma\left(  \xi_{0};-u\right)  .
\]
(However, beware that $Y_{i}\Gamma^{T}\left(  \xi_{0};u\right)  \neq
Y_{i}\Gamma\left(  \xi_{0};-u\right)  $).
\end{remark}

Throughout the following, we will set, for $i,j=1,\ldots,q$,
\begin{align*}
\Gamma_{ij}(\xi_{0};u)  &  =Y_{i}Y_{j}\,\left[  \Gamma(\xi_{0};\cdot)\right]
(u);\\
\text{ }\Gamma_{ij}^{T}(\xi_{0};u)  &  =Y_{i}Y_{j}\,\left[  \Gamma^{T}(\xi
_{0};\cdot)\right]  (u).
\end{align*}

A second fundamental result we need contains a bound on the derivatives of
$\Gamma$, uniform with respect to $\xi_{0}$, and is proved in \cite[Thm.
12]{bb1}:

\begin{theorem}
\label{ith:stima:unif:ipo}For every multi-index $\beta$, there exists a
constant $c=c(\beta,\mathbb{G},\mu)$ such that
\[
\underset{\xi\in\widetilde{B}\left(  \overline{\xi},R\right)  }{\underset
{\left\Vert u\right\Vert =1}{{\sup}}}\,\left\vert \left(  \frac{\partial
}{\partial u}\right)  ^{\beta}\Gamma_{ij}\left(  \xi\,;\,u\right)  \right\vert
\,\leq\text{ }c\text{,}%
\]
for any $i,j=1,\ldots,q$; moreover, for the $\alpha_{ij}$'s appearing in
(\ref{i10}), the uniform bound
\[
\underset{\xi\in\widetilde{B}\left(  \overline{\xi},R\right)  }{{\sup}%
}\left\vert \alpha_{ij}(\xi)\right\vert \leq c_{2}%
\]
holds for some constant $c_{2}=c_{2}\left(  \mathbb{G},\mu\right)  $.
\end{theorem}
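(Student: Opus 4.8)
The key observation is that $\Gamma(\xi_{0};\cdot)$ depends on the frozen point $\xi_{0}\in\widetilde{B}(\overline{\xi},R)$ only through the symmetric matrix $A(\xi_{0})=(\widetilde{a}_{ij}(\xi_{0}))_{i,j=1}^{q}$, which by Assumption (H) always lies in the fixed compact set
\[
\mathcal{A}_{\mu}=\bigl\{A=A^{T}\in\mathbb{R}^{q\times q}:\ \mu|\zeta|^{2}\leq\langle A\zeta,\zeta\rangle\leq\mu^{-1}|\zeta|^{2}\ \text{ for all }\zeta\in\mathbb{R}^{q}\bigr\}.
\]
Denote by $\Gamma_{A}$ the fundamental solution of $\mathcal{L}_{A}=\sum_{i,j}a_{ij}Y_{i}Y_{j}+Y_{0}$ furnished by Theorem \ref{ith:fundsol:lzero} (so $\Gamma(\xi_{0};\cdot)=\Gamma_{A(\xi_{0})}$), and set $\Gamma_{ij}^{A}=Y_{i}Y_{j}\Gamma_{A}$. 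Since on $\mathbb{G}$ each $Y_{i}$ is a vector field with polynomial coefficients that do not depend on $A$, the derivative $\partial_{u}^{\beta}\Gamma_{ij}^{A}(u)$ is an $A$-independent polynomial combination of the derivatives $\partial_{u}^{\kappa}\Gamma_{A}(u)$ with $|\kappa|\le|\beta|+2$; moreover $\Gamma_{ij}^{A}$ is homogeneous of degree $-Q$ (Theorem \ref{ith:fundsol:lzero}(e)), so any bound over the unit sphere $\{\|u\|=1\}$ propagates to all $u\neq0$. Hence the first assertion reduces to the claim that
\[
(\star)\qquad(A,u)\ \longmapsto\ \Gamma_{A}(u)\ \ \text{is jointly continuous, in fact }C^{\infty}\text{, on}\ \ \mathcal{A}_{\mu}\times\bigl(\mathbb{R}^{N}\setminus\{0\}\bigr),
\]
because then each $\partial_{u}^{\kappa}\Gamma_{A}(u)$ is continuous on the compact set $\mathcal{A}_{\mu}\times\{\|u\|=1\}$ and therefore bounded there by a constant depending on $\beta,\mathbb{G},\mu$ only.

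To prove $(\star)$ I would run a normal-families argument. First note that the family $\{\mathcal{L}_{A}\}_{A\in\mathcal{A}_{\mu}}$ satisfies the Rothschild--Stein/Folland subelliptic estimates on $\mathbb{G}$ with constants depending only on $\mathbb{G}$ and $\mu$: writing $\sum a_{ij}Y_{i}Y_{j}=\sum_{k}Z_{k}^{2}$ with $Z=A^{1/2}Y$, the fields $Z_{1},\dots,Z_{q},Y_{0}$ generate the same Lie algebra as $Y_{1},\dots,Y_{q},Y_{0}$ with structure constants and $\|A^{\pm 1/2}\|$ controlled by $\mu$, so the hypoelliptic estimates are uniform over $\mathcal{A}_{\mu}$. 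Now let $A_{n}\to A$ in $\mathcal{A}_{\mu}$. Each $\Gamma_{A_{n}}$ solves $\mathcal{L}_{A_{n}}\Gamma_{A_{n}}=0$ in $\mathbb{R}^{N}\setminus\{0\}$, so once one knows $\sup_{n}\sup_{\|u\|=1}|\Gamma_{A_{n}}(u)|<\infty$, the uniform subelliptic estimates and Sobolev embedding bound $\Gamma_{A_{n}}$ in every $C^{k}$ on compact subsets of $\mathbb{R}^{N}\setminus\{0\}$; by Ascoli a subsequence converges in $C^{\infty}_{loc}(\mathbb{R}^{N}\setminus\{0\})$ to a fundamental solution of $\mathcal{L}_{A}$ homogeneous of degree $2-Q$, which equals $\Gamma_{A}$ by the uniqueness in Theorem \ref{ith:fundsol:lzero}. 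Since the limit is independent of the subsequence, $\Gamma_{A_{n}}\to\Gamma_{A}$ in $C^{\infty}_{loc}$, i.e.\ the continuity in $(\star)$; the $C^{\infty}$ dependence on $A$ then follows by differentiating $\mathcal{L}_{A}\Gamma_{A}=0$ in the entries of $A$ (the derivative $\partial_{A_{kl}}\Gamma_{A}$ solves $\mathcal{L}_{A}(\partial_{A_{kl}}\Gamma_{A})=-Y_{k}Y_{l}\Gamma_{A}$, with right-hand side already under control) and bootstrapping. The one missing input is the uniform bound $\sup_{A\in\mathcal{A}_{\mu}}\sup_{\|u\|=1}|\Gamma_{A}(u)|<\infty$; I would obtain it from the heat-kernel representation $\Gamma_{A}(u)=\int_{0}^{\infty}h_{A}(s;u)\,ds$, where $h_{A}(s;\cdot)$ is the kernel of the semigroup generated by $\mathcal{L}_{A}$ — equivalently, the fundamental solution at the origin of the Kolmogorov--H\"{o}rmander operator $\partial_{s}+\mathcal{L}_{A}$ on $\mathbb{R}\times\mathbb{G}$, which is hypoelliptic because $Y_{1},\dots,Y_{q},Y_{0}$ satisfy H\"{o}rmander's condition. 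The kernel $h_{A}$ has total mass $1$, is homogeneous of degree $-Q$ under the parabolic dilations (so the $s$-integral converges at infinity precisely because $Q\ge3$), and obeys Gaussian bounds $|h_{A}(s;u)|\le Cs^{-Q/2}e^{-c\|u\|^{2}/s}$ with $C,c$ depending only on $\mathbb{G},\mu$; integrating in $s$ gives the desired bound on $\Gamma_{A}$ over $\{\|u\|=1\}$, and (using the analogous bounds for $\partial_{u}^{\kappa}h_{A}$) re-proves $(\star)$ directly by dominated convergence.

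Finally, for the numbers $\alpha_{ij}(\xi_{0})$: fix once and for all a test function $f\in C_{0}^{\infty}(\mathbb{G})$ and a point $v_{0}$ with $Y_{i}Y_{j}f(v_{0})=0$ for all $i,j$ and $Y_{0}f(v_{0})=1$ (this only constrains the $2$-jet of $f$ at $v_{0}$ and is compatible since the $Y_{\alpha}$ form a global frame), so that $\mathcal{L}_{A}f(v_{0})=1$ for every $A\in\mathcal{A}_{\mu}$; evaluating (\ref{i10}) at $v=v_{0}$ gives
\[
\alpha_{ij}(\xi_{0})=-\,\mathrm{P.V.}\!\int_{\mathbb{R}^{N}}\Gamma_{ij}^{A(\xi_{0})}\!\bigl(u^{-1}\circ v_{0}\bigr)\,\mathcal{L}_{A(\xi_{0})}f(u)\,du .
\]
Here $\mathcal{L}_{A}f$ varies continuously with $A$ in $C_{0}^{\infty}(\mathbb{G})$, while $\Gamma_{ij}^{A}$ is a Calder\'on--Zygmund kernel (homogeneous of degree $-Q$, with the cancellation in Theorem \ref{ith:fundsol:lzero}(f)) depending on $A$ continuously by $(\star)$ and uniformly bounded on $\{\|u\|=1\}$ by the previous paragraph; hence the right-hand side is a continuous function of $A(\xi_{0})\in\mathcal{A}_{\mu}$ and so is bounded by a constant $c_{2}=c_{2}(\mathbb{G},\mu)$. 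The main obstacle throughout is the \emph{uniformity}: one must ensure that every constant produced by the subelliptic and heat-kernel machinery depends on the coefficients $a_{ij}$ only through the ellipticity constant $\mu$ (and the fixed group $\mathbb{G}$), which is precisely where the compactness of $\mathcal{A}_{\mu}$ and the explicit $\mu$-dependence of the classical Rothschild--Stein, Folland and Gaussian-bound estimates must be invoked.
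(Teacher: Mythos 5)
The paper itself does not prove this statement: it is imported verbatim from \cite[Thm.\ 12]{bb1}, so there is no in-paper proof to compare against. Judged on its own terms, your architecture is the right one and is morally what that reference does: observe that $\Gamma(\xi_0;\cdot)$ depends on $\xi_0$ only through the matrix $A(\xi_0)$ lying in the compact set $\mathcal{A}_\mu$, exploit the $(-Q)$-homogeneity of $\Gamma_{ij}^A$ to localize to the unit sphere, and reduce the two bounds to joint continuity of $(A,u)\mapsto\Gamma_A(u)$ on $\mathcal{A}_\mu\times\{\|u\|=1\}$. The closing reduction for $\alpha_{ij}$ (choose once and for all an $f$ with prescribed $2$-jet at $v_0$ so that $\mathcal{L}_A f(v_0)=1$ for every $A$, then read the constant off (\ref{i10})) is a clean and correct device.

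The genuine soft spot is precisely the one you single out: the a priori uniform bound $\sup_{A\in\mathcal{A}_\mu}\sup_{\|u\|=1}|\Gamma_A(u)|<\infty$, without which the normal-families argument is circular. Your route through the semigroup is plausible in spirit, but the specific Gaussian bound you invoke, $|h_A(s;u)|\le Cs^{-Q/2}e^{-c\|u\|^2/s}$, is not the correct form for an operator carrying the degree-two drift $Y_0$: for the evolution $\partial_s+\sum a_{ij}Y_iY_j+Y_0$ the off-diagonal decay is governed by a time-dependent, non-isotropic quasi-distance (as in Kolmogorov/Hörmander kernels), not by $\|u\|^2/s$. What you really need is weaker --- only that $\int_0^\infty h_A(s;u)\,ds$ converges, uniformly in $A$, for $\|u\|=1$ --- but even this requires a drift-adapted off-diagonal estimate with constants tracked through $\mu$, and you state it as a known fact without reference or derivation. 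Similarly, the claim that the Rothschild--Stein/Folland subelliptic estimates for $\sum Z_k^2+Y_0$ (with $Z=A^{1/2}Y$) hold with constants controlled only by $\mu$ and $\mathbb{G}$ is believable but asserted rather than checked; at minimum one should note that the iterated commutators of $\{Z_k,Y_0\}$ have coefficients that are polynomial in the entries of $A^{\pm1/2}$ and hence uniformly bounded on $\mathcal{A}_\mu$, so the Hörmander rank condition holds with uniform ellipticity. In short: the strategy is right and matches the cited source, the $\alpha_{ij}$ step is correct, but the heat-kernel input is both the crux and the least justified link --- it would need either a precise reference covering drift operators with $\mu$-uniform constants, or a replacement argument for the initial $L^\infty$ bound on $\Gamma_A$ over the sphere.
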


\begin{remark}
Theorems \ref{ith:fundsol:lzero} and \ref{ith:stima:unif:ipo} still hold when
we replace $\Gamma$ by $\Gamma^{T}$ and $\Gamma_{ij}$ by $\Gamma_{ij}^{T}$.
\end{remark}

\subsection{Operators of type $\lambda$\label{Operators of type}}

As in \cite{rs} and \cite{bb2}, we are going to build a parametrix for
$\widetilde{\mathcal{L}}$ shaped on the homogeneous fundamental solution of
$\mathcal{L}_{0}^{\ast}$. More generally, we need to define a class of
integral operators with different degrees of singularity. The next definition
is adapted from \cite{bb2}, the difference being the necessity, in the present
case, to consider integral kernels shaped on the fundamental solutions of both
$\mathcal{L}_{0}^{\ast}$ and $\mathcal{L}_{0}^{\ast T}$.

\begin{definition}
\label{iidef:kernels:type}For any $\xi_{0}\in\widetilde{B}\left(
\overline{\xi},R\right)  $, we say that $k(\xi_{0};\xi,\eta)$ is a
\emph{frozen kernel of type} $\lambda$ (over the ball $\widetilde{B}\left(
\overline{\xi},R\right)  $), for some nonnegative integer $\lambda$, if for
every positive integer $m$ we can write, for $\xi,\eta\in\widetilde{B}\left(
\overline{\xi},R\right)  ,$%
\begin{align*}
k(\xi_{0};\xi,\eta)  &  =k^{\prime}(\xi_{0};\xi,\eta)+k^{\prime\prime}(\xi
_{0};\xi,\eta)\\
&  =\left\{  \sum_{i=1}^{H_{m}}a_{i}(\xi)b_{i}(\eta)D_{i}\Gamma(\xi_{0}%
;\cdot)+a_{0}(\xi)b_{0}(\eta)D_{0}\Gamma(\xi_{0};\cdot)\right\}  \left(
\Theta(\eta,\xi)\right) \\
&  +\left\{  \sum_{i=1}^{H_{m}}a_{i}^{\prime}(\xi)b_{i}^{\prime}(\eta
)D_{i}^{\prime}\Gamma^{T}(\xi_{0};\cdot)+a_{0}^{\prime}(\xi)b_{0}^{\prime
}(\eta)D_{0}^{\prime}\Gamma^{T}(\xi_{0};\cdot)\right\}  \left(  \Theta
(\eta,\xi)\right)
\end{align*}
where $a_{i},b_{i},a_{i}^{\prime},b_{i}^{\prime}\in C_{0}^{\infty}\left(
\widetilde{B}\left(  \overline{\xi},R\right)  \right)  $ $(i=0,1,\ldots
H_{m})$, $D_{i}$ and $D_{i}^{\prime}$ are differential operators such that:
for $i=1,\ldots,H_{m}\,$, $D_{i}$ and $D_{i}^{\prime}$ are homogeneous of
degree $\leq2-\lambda$ (so that $D_{i}\Gamma(\xi_{0};\cdot)$ and
$D_{i}^{\prime}\Gamma^{T}(\xi_{0};\cdot)$ are homogeneous functions of degree
$\geq\lambda-Q$); $D_{0}$ and $D_{0}^{\prime}$ are differential operators such
that $D_{0}\Gamma(\xi_{0};\cdot)$ and\ $D_{0}^{\prime}\Gamma^{T}(\xi_{0}%
;\cdot)$ have $m$ (weighted) derivatives with respect to the vector fields
$Y_{i}$ $(i=0,1,\ldots,q)$. Moreover, the coefficients of the differential
operators $D_{i},D_{i}^{\prime}$ for $i=0,1,...,H_{m}$ possibly depend also on
the variables $\xi,\eta,$ in such a way that the joint dependence on $\left(
\xi,\eta,u\right)  $ is smooth.
\end{definition}

In order to simplify notation, we will not always express explicitly this
dependence of the coefficients of $D_{i}$ on $\xi,\eta.$ Only when it is
necessary we will write, for instance, $a_{i}(\xi)b_{i}(\eta)D_{i}^{\xi,\eta
}\Gamma(\xi_{0};\Theta(\eta,\xi))$ to recall this dependence.

\begin{remark}
\label{Remark parameter derivative}Note that if a smooth function $c\left(
\xi,\eta,u\right)  $ is $D\left(  \lambda\right)  $-homogeneous of some degree
$\beta$ with respect to $u,$ then any $\xi$ or $\eta$ derivative of $c$ has
the same homogeneity with respect to $u,$ since%
\[
c\left(  \xi,\eta,D\left(  \lambda\right)  u\right)  =\lambda^{\beta}c\left(
\xi,\eta,u\right)  \text{ implies }\frac{\partial c}{\partial\xi_{i}}\left(
\xi,\eta,D\left(  \lambda\right)  u\right)  =\lambda^{\beta}\frac{\partial
c}{\partial\xi_{i}}\left(  \xi,\eta,u\right)  .
\]
Hence any derivative
\[
\left(  \frac{\partial}{\partial\xi_{i}}D_{i}^{\xi,\eta}\right)  \Gamma
(\xi_{0};\cdot),\left(  \frac{\partial}{\partial\eta_{i}}D_{i}^{\xi,\eta
}\right)  \Gamma(\xi_{0};\cdot)
\]
has the same homogeneity as%
\[
D_{i}^{\xi,\eta}\Gamma(\xi_{0};\cdot).
\]

\end{remark}

\begin{definition}
For any $\xi_{0}\in\widetilde{B}\left(  \overline{\xi},R\right)  $, we say
that $T(\xi_{0})$ is a \emph{frozen operator of type} $\lambda\geq1$ (over the
ball $\widetilde{B}\left(  \overline{\xi},R\right)  $) if $k(\xi_{0};\xi
,\eta)$ is a frozen kernel of type $\lambda$ and
\[
T(\xi_{0})f(\xi)=\int_{\widetilde{B}}k(\xi_{0};\xi,\eta)\,f(\eta)\,d\eta
\]
for $f\in C_{0}^{\infty}\left(  \widetilde{B}\left(  \overline{\xi},R\right)
\right)  $. We say that $T(\xi_{0})$ is a \emph{frozen operator of type }$0$
if $k(\xi_{0};\xi,\eta)$ is a frozen kernel of type $0$ and
\[
T(\xi_{0})f(\xi)=P.V.\int_{\widetilde{B}}k(\xi_{0};\xi,\eta)\,f(\eta
)\,d\eta+\alpha\left(  \xi_{0},\xi\right)  f\left(  \xi\right)  ,
\]
where $\alpha$ is a bounded measurable function, smooth in $\xi,$ and the
principal value integral exists. Explicitly, this principal value is defined
by:%
\[
P.V.\int_{\widetilde{B}}k(\xi_{0};\xi,\eta)\,f(\eta)\,d\eta=\lim
_{\varepsilon\rightarrow0}\int_{\left\Vert \Theta\left(  \eta,\xi\right)
\right\Vert >\varepsilon}k(\xi_{0};\xi,\eta)\,f(\eta)\,d\eta.
\]

\end{definition}

\begin{definition}
If $k(\xi_{0};\xi,\eta)$ is a frozen kernel of type $\lambda\geq0$, we say
that $k(\xi;\xi,\eta)$ is a \emph{variable kernel of type }$\lambda$ (over the
ball $\widetilde{B}\left(  \overline{\xi},R\right)  $), and
\[
Tf(\xi)=\int_{\widetilde{B}}k(\xi;\xi,\eta)f(\eta)\,d\eta
\]
is a \emph{variable operator of type} $\lambda$. If $\lambda=0$, the integral
must be taken in principal value sense and a term $\alpha\left(  \xi
,\xi\right)  f\left(  \xi\right)  $ must be added.
\end{definition}

\bigskip

With reference to Definition \ref{iidef:kernels:type}, we will call the
$k^{\prime},k^{\prime\prime}$ parts of $k$ \textquotedblleft frozen kernel of
type $\lambda$ modeled on $\Gamma,\Gamma^{T}$\textquotedblright, respectively.
Analogously we will sometimes speak of frozen operators of type $\lambda$
modeled on $\Gamma$ or $\Gamma^{T},$ to denote that the kernel has this
special form.

A common operation on frozen operators is \emph{transposition}:

\begin{definition}
If $T\left(  \xi_{0}\right)  $ is a frozen operator of type $\lambda\geq0$
over $\widetilde{B}\left(  \overline{\xi},R\right)  ,$we will denote by
$T\left(  \xi_{0}\right)  ^{T}$ the transposed operator, formally defined by%
\[
\int_{\widetilde{B}}f\left(  \xi\right)  T\left(  \xi_{0}\right)  ^{T}g\left(
\xi\right)  d\xi=\int_{\widetilde{B}}g\left(  \xi\right)  T\left(  \xi
_{0}\right)  f\left(  \xi\right)  d\xi
\]
for any $f,g\in C_{0}^{\infty}\left(  \widetilde{B}\left(  \overline{\xi
},R\right)  \right)  .$
\end{definition}

Clearly, if $k\left(  \xi_{0},\xi,\eta\right)  $ is the kernel of $T\left(
\xi_{0}\right)  ,$ then $k\left(  \xi_{0},\eta,\xi\right)  $ is the kernel of
$T\left(  \xi_{0}\right)  ^{T}.$ It is useful to note that:

\begin{proposition}
\label{Prop transposed operators}If $T\left(  \xi_{0}\right)  $ is a frozen
operator of type $\lambda\geq0$ over $\widetilde{B}\left(  \overline{\xi
},R\right)  ,$ modeled on $\Gamma$ or $\Gamma^{T},$ then $T\left(  \xi
_{0}\right)  ^{T}$ is a frozen operator of type $\lambda$, modeled on
$\Gamma^{T},\Gamma,$ respectively. In particular, the transposed of a frozen
operator of type $\lambda$ is still a frozen operator of type $\lambda.$
\end{proposition}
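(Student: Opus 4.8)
The plan is to unwind Definition \ref{iidef:kernels:type} and check that the transposed kernel again has the required form, with the roles of $\Gamma$ and $\Gamma^{T}$ exchanged. Recall that if $k(\xi_{0};\xi,\eta)$ is the kernel of $T(\xi_{0})$ then $k(\xi_{0};\eta,\xi)$ is the kernel of $T(\xi_{0})^{T}$. So I would start from the defining expansion of a frozen kernel of type $\lambda$ modeled on $\Gamma$, say the typical term
\[
a_{i}(\xi)\,b_{i}(\eta)\,\bigl(D_{i}^{\xi,\eta}\Gamma(\xi_{0};\cdot)\bigr)\bigl(\Theta(\eta,\xi)\bigr),
\]
and swap $\xi\leftrightarrow\eta$ to obtain
\[
a_{i}(\eta)\,b_{i}(\xi)\,\bigl(D_{i}^{\eta,\xi}\Gamma(\xi_{0};\cdot)\bigr)\bigl(\Theta(\xi,\eta)\bigr).
\]
The key algebraic identity is part (a) of Theorem \ref{metric}: $\Theta(\xi,\eta)=\Theta(\eta,\xi)^{-1}=-\Theta(\eta,\xi)$. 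Combined with Remark \ref{iioss:selfadj:ker}, which says $\Gamma^{T}(\xi_{0};u)=\Gamma(\xi_{0};u^{-1})=\Gamma(\xi_{0};-u)$, this gives
\[
\bigl(D_{i}\Gamma(\xi_{0};\cdot)\bigr)\bigl(\Theta(\xi,\eta)\bigr)
=\bigl(D_{i}\Gamma(\xi_{0};\cdot)\bigr)\bigl(-\Theta(\eta,\xi)\bigr)
=\bigl(\widehat{D_{i}}\,\Gamma^{T}(\xi_{0};\cdot)\bigr)\bigl(\Theta(\eta,\xi)\bigr),
\]
where $\widehat{D_{i}}$ is the differential operator obtained from $D_{i}$ by the change of variables $u\mapsto -u$ (i.e. composing with the reflection $D(-1)$). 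Since $u\mapsto -u$ is a group automorphism that is $D(\lambda)$-homogeneous of degree $0$, $\widehat{D_{i}}$ has exactly the same homogeneity degree as $D_{i}$; and the smoothness of the joint dependence of the coefficients on $(\xi,\eta,u)$ is preserved, as is the property of having $m$ (weighted) $Y$-derivatives for the $D_{0}$-term. Thus the swapped term is of the form $b_{i}(\xi)a_{i}(\eta)\,(\widehat{D_{i}}^{\,\eta,\xi}\Gamma^{T}(\xi_{0};\cdot))(\Theta(\eta,\xi))$, which is precisely a term of a frozen kernel of type $\lambda$ modeled on $\Gamma^{T}$ (the cutoff functions $a_{i},b_{i}\in C_{0}^{\infty}(\widetilde{B})$ simply exchange their arguments' labels, and remain admissible). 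The analogous computation starting from a kernel modeled on $\Gamma^{T}$ produces one modeled on $\Gamma$, using $(\Gamma^{T})^{T}=\Gamma$.

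I would then handle the type-$0$ case separately, since there $T(\xi_{0})$ carries an extra local term $\alpha(\xi_{0},\xi)f(\xi)$ and the integral is a principal value. Transposition turns $\alpha(\xi_{0},\xi)f(\xi)$ into $\alpha(\xi_{0},\xi)f(\xi)$ again (a multiplication operator is self-transposed, with the same bounded, $\xi$-smooth coefficient), so the type-$0$ structure is preserved. For the principal-value part one checks that the symmetry $\|\Theta(\xi,\eta)\|=\|\Theta(\eta,\xi)\|$ (which follows from Theorem \ref{metric}(a) and the symmetry of the homogeneous norm under $u\mapsto -u$ recorded in \S\ref{subsection lifting}) makes the truncation regions $\{\|\Theta(\eta,\xi)\|>\varepsilon\}$ transform correctly, so that the P.V. defining $T(\xi_{0})^{T}$ is consistent with the P.V. built into the definition of a type-$0$ operator.

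The last sentence of the statement is then immediate: "modeled on $\Gamma$ or $\Gamma^{T}$" is, by Definition \ref{iidef:kernels:type}, the only structural alternative for a frozen kernel, so exchanging the two still yields a frozen operator of type $\lambda$.

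I expect the only genuinely delicate point to be bookkeeping the differential operators $D_{i}$: one must verify that pulling the reflection $u\mapsto -u$ through $D_{i}$ (which acts in the $u$-variable, with coefficients depending smoothly on $\xi,\eta,u$) yields a differential operator $\widehat{D_{i}}$ of the same homogeneous degree and with coefficients still jointly smooth in $(\xi,\eta,u)$ — here Remark \ref{Remark parameter derivative} is the relevant tool — and, for the $D_{0}$-terms, that the count of weighted $Y$-derivatives of $\widehat{D_{0}}\Gamma^{T}$ matches that of $D_{0}\Gamma$. Everything else is a direct substitution using Theorem \ref{metric}(a) and Remark \ref{iioss:selfadj:ker}.
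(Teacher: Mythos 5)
Your proof follows the same route as the paper's: swap $\xi\leftrightarrow\eta$, use $\Theta(\xi,\eta)=-\Theta(\eta,\xi)$ together with $\Gamma^{T}(\xi_{0};u)=\Gamma(\xi_{0};-u)$, and pull the reflection through the differential operator; your $\widehat{D_{i}}$ is exactly the operator $D_{i}'$ defined in the paper by $D'f=(D(f'))'$ with $f'(u)=f(-u)$, and you correctly observe that its homogeneity degree and the joint $(\xi,\eta,u)$-smoothness (and, for the $D_{0}$-term, the number of weighted $Y$-derivatives) are preserved. Two small inaccuracies worth fixing, though neither affects the argument: the reflection $u\mapsto -u$ is \emph{not} $D(-1)$ (that would give $u_{\alpha}\mapsto(-1)^{|\alpha|}u_{\alpha}$, which differs from $-u$ unless every $|\alpha|$ is odd), so drop that parenthetical and simply use that $u\mapsto -u$ commutes with the dilations $D(\lambda)$ and hence preserves homogeneity; and on a non-abelian group $u\mapsto u^{-1}=-u$ is an anti-automorphism, not an automorphism — but again only the commutation with $D(\lambda)$ matters here.
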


\begin{proof}
Let $D$ be any differential operator on the group $\mathbb{G}.$ For any $f\in
C_{0}^{\infty}\left(  \widetilde{B}\left(  \overline{\xi},R\right)  \right)
,$ let $f^{\prime}\left(  u\right)  =f\left(  -u\right)  .$ Let $D^{\prime}$
be the differential operator defined by the identity%
\[
D^{\prime}f=\left(  D\left(  f^{\prime}\right)  \right)  ^{\prime}.
\]
Clearly, if $D$ is homogeneous of some degree $\beta,$ the same is true for
$D^{\prime};$ if $D\Gamma(\xi_{0};\cdot)$ or\ $D\Gamma^{T}(\xi_{0};\cdot)$
have $m$ (weighted) derivatives with respect to the vector fields $Y_{i}$
$(i=0,1,\ldots,q)$, the same is true for $D^{\prime}\Gamma(\xi_{0};\cdot)$
or\ $D^{\prime}\Gamma^{T}(\xi_{0};\cdot).$ Also, recalling that $\Gamma
^{T}(\xi_{0};u)=\Gamma(\xi_{0};-u),$ we have%
\begin{align*}
\left(  D^{\prime}\Gamma\right)  \left(  u\right)   &  =\left(  D\Gamma
^{T}\right)  \left(  -u\right) \\
\left(  D^{\prime}\Gamma^{T}\right)  \left(  u\right)   &  =\left(
D\Gamma\right)  \left(  -u\right)  .
\end{align*}
Moreover, these identities can be iterated, for instance:%
\[
\left(  D_{1}D_{2}\Gamma\right)  \left(  -u\right)  =\left(  D_{1}\left(
D_{2}\Gamma\right)  \right)  \left(  -u\right)  =\left(  D_{1}^{\prime}\left(
D_{2}\Gamma\right)  ^{\prime}\right)  \left(  u\right)  =\left(  D_{1}%
^{\prime}D_{2}^{\prime}\Gamma^{T}\right)  \left(  u\right)  .
\]
Then, if%
\[
k^{\prime}\left(  \xi_{0},\xi,\eta\right)  =\left\{  \sum_{i=1}^{H_{m}}%
a_{i}(\xi)b_{i}(\eta)D_{i}\Gamma(\xi_{0};\cdot)+a_{0}(\xi)b_{0}(\eta
)D_{0}\Gamma(\xi_{0};\cdot)\right\}  \left(  \Theta(\eta,\xi)\right)
\]
is a frozen kernel of type $\lambda$ modeled on $\Gamma,$%
\begin{align*}
k^{\prime}\left(  \xi_{0},\eta,\xi\right)   &  =\left\{  \sum_{i=1}^{H_{m}%
}a_{i}(\eta)b_{i}(\xi)D_{i}\Gamma(\xi_{0};\cdot)+a_{0}(\xi)b_{0}(\eta
)D_{0}\Gamma(\xi_{0};\cdot)\right\}  \left(  -\Theta(\eta,\xi)\right) \\
&  =\left\{  \sum_{i=1}^{H_{m}}a_{i}(\eta)b_{i}(\xi)D_{i}^{\prime}\Gamma
^{T}(\xi_{0};\cdot)+a_{0}(\xi)b_{0}(\eta)D_{0}^{\prime}\Gamma^{T}(\xi
_{0};\cdot)\right\}  \left(  \Theta(\eta,\xi)\right)
\end{align*}
is a frozen kernel of type $\lambda$ modeled on $\Gamma^{T}.$ Analogously one
can prove the converse.
\end{proof}

\bigskip

We have now to deal with the relations between operators of type $\lambda$ and
the differential operators represented by the vector fields $\widetilde{X}%
_{i}$. This is a study which has been carried out in \cite[\S \ 14]{rs}, and
adapted to nonvariational operators in \cite{bb2}. We are interested in two
main results. Roughly speaking, the first says that the composition, in any
order, of an operator of type $\lambda$ with the $\widetilde{X}_{i}$ or
$\widetilde{X}_{0}$ derivative is an operator of type $\lambda-1$ or
$\lambda-2,$ respectively. The second says that the $\widetilde{X}_{i}$
derivative of an operator of type $\lambda$ can be rewritten as the sum of
other operators of type $\lambda,$ each acting on a different $\widetilde
{X}_{j}$ derivative, plus a suitable remainder. In \cite{rs} these results are
proved only for a system of H\"{o}rmander's vector fields of weight one (that
is, without the drift), and some proofs are quite condensed. Hence we need to
extend and modify some arguments in \cite[\S \ 14]{rs} to cover the present
situation. Moreover, as in \cite{bb2}, we need to keep under careful control
the dependence of any quantity on the frozen point $\xi_{0}$ appearing in
$\Gamma\left(  \xi_{0},\cdot\right)  $. For these and other technical reasons,
we prefer to write complete proofs of these properties, even though they are
not so different from known results. The first result is the following:

\begin{theorem}
\label{main theorem}(See \cite[Thm. 8]{rs}). Suppose $T\left(  \xi_{0}\right)
$ is a frozen operator of type $\lambda\geq1$. Then $\widetilde{X}_{k}T\left(
\xi_{0}\right)  $ and $T\left(  \xi_{0}\right)  \widetilde{X}_{k}$
($k=1,2,...,q$) are operators of type $\lambda-1$. If $\lambda\geq2$, then
$\widetilde{X}_{0}T\left(  \xi_{0}\right)  $ and $T\left(  \xi_{0}\right)
\widetilde{X}_{0}$ are operators of type $\lambda-2$.
\end{theorem}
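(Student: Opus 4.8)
The plan is to reduce the statement to two facts: first, that applying a vector field $\widetilde{X}_k$ (or $\widetilde{X}_0$) to a frozen kernel of type $\lambda$ produces, after using the approximation theorem to transfer the derivative onto the group, a sum of frozen kernels of type $\lambda-1$ (resp.\ $\lambda-2$) plus harmless error terms of strictly lower singularity; and second, that composition on the \emph{right} with $\widetilde{X}_k$ is handled by transposition. I would treat the left-composition case first, in detail, for a single model term.

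First I would take a frozen kernel of type $\lambda$ written, as in Definition \ref{iidef:kernels:type}, as a finite sum of terms of the form $a(\xi)b(\eta)\,(D\Gamma(\xi_0;\cdot))(\Theta(\eta,\xi))$, where $D$ is homogeneous of degree $\leq 2-\lambda$ (plus the less singular $D_0$-terms, which are only easier). Applying $\widetilde{X}_k$ in the variable $\xi$, I distribute the derivative by Leibniz: it hits $a(\xi)$, giving a term with a less singular factor (so still type $\lambda-1$, trivially); and it hits the composite function $\xi\mapsto (D\Gamma(\xi_0;\cdot))(\Theta(\eta,\xi))$. For the latter I invoke the Rothschild--Stein approximation theorem (Theorem \ref{Rothschild-Stein's approximation Theorem}, formula \eqref{approximation}): in the coordinates given by $\Theta(\eta,\cdot)$ one has $\widetilde{X}_k = Y_k + R_k^\eta$ with $R_k^\eta$ of local degree $\leq 0$. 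Thus the action produces $(Y_k D\Gamma(\xi_0;\cdot))(\Theta(\eta,\xi))$ plus $(R_k^\eta D\Gamma(\xi_0;\cdot))(\Theta(\eta,\xi))$. Since $Y_k$ is homogeneous of degree $1$, $Y_k D$ is homogeneous of degree $\leq 2-(\lambda-1)$, so the first piece is a frozen kernel of type $\lambda-1$ modeled on $\Gamma$. For the remainder involving $R_k^\eta$: Taylor-expanding its coefficients at $0$, each resulting term is a group-homogeneous operator of degree $\leq 0$ times a monomial vanishing to matching order, so $R_k^\eta D$ again has (effective) homogeneity $\leq 2-(\lambda-1)$ — the smoothness and $\eta$-dependence being absorbed into the coefficients allowed in Definition \ref{iidef:kernels:type} (cf.\ Remark \ref{Remark parameter derivative}). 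The $\widetilde{X}_0$ case is identical except that $R_0^\eta$ has local degree $\leq 1$ and $Y_0$ is homogeneous of degree $2$, yielding type $\lambda-2$; here one must keep track of the extra high-order terms in the Taylor expansion of $R_0^\eta$, checking they are absorbed by the surplus in $\lambda-2$ versus $\lambda$.

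For right-composition $T(\xi_0)\widetilde{X}_k$, I would pass to the transpose: $(T(\xi_0)\widetilde{X}_k)^T = \widetilde{X}_k^T T(\xi_0)^T$, and $\widetilde{X}_k^T = -\widetilde{X}_k + (\text{smooth function})$ since the $\widetilde{X}_i$ are vector fields. By Proposition \ref{Prop transposed operators}, $T(\xi_0)^T$ is again a frozen operator of type $\lambda$; the left-composition result just proved then shows $\widetilde{X}_k T(\xi_0)^T$ is of type $\lambda-1$, the zero-order correction being harmless, and transposing back (Proposition \ref{Prop transposed operators} again) gives that $T(\xi_0)\widetilde{X}_k$ is of type $\lambda-1$. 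The same argument with $\widetilde{X}_0$ handles the drift case. Throughout, when $\lambda-1=0$ (resp.\ $\lambda-2=0$) one must verify that the resulting kernel genuinely satisfies the type-$0$ requirements: the $Y_iY_j\Gamma$-type kernels carry the needed cancellation by Theorem \ref{ith:fundsol:lzero}(f), so the principal value exists and a bounded multiplier term of the form $\alpha(\xi_0,\xi)f(\xi)$ may appear — this is exactly the structure permitted in the definition of a type-$0$ operator.

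The main obstacle I anticipate is the bookkeeping for the drift derivative $\widetilde{X}_0$: the remainder $R_0^\eta$ has local degree $\leq 1$ (not $\leq 0$), so its Taylor expansion contributes operators of homogeneity as large as $1$, and one must check carefully that composing with $D$ of degree $\leq 2-\lambda$ still lands at degree $\leq 2-(\lambda-2) = 4-\lambda$ — this works, but only because we \emph{lose two} in the type, and it is precisely the point where the ``drift case'' genuinely differs from the treatment in \cite{rs} for weight-one fields. A secondary subtlety is ensuring that the $\xi,\eta$-dependence introduced into the differential operators' coefficients at each step remains jointly smooth in $(\xi,\eta,u)$ and homogeneous of the correct degree in $u$, which is guaranteed by Remark \ref{Remark parameter derivative} but should be stated explicitly so the iteration closes.
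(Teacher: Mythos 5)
Your left-composition argument (the content of Lemmas \ref{iilem:der:optype2} and \ref{iilem:der:optype} in the paper) follows the same route as the paper: Leibniz, the approximation theorem to write $\widetilde{X}_i = Y_i + R_i^\eta$ in $\Theta(\eta,\cdot)$-coordinates, absorption of the lower-local-degree $R_i^\eta$-piece and the $a$-derivative term into a kernel of the reduced type, with the drift gaining by two because $Y_0$ is $2$-homogeneous and $R_0^\eta$ has local degree $\leq 1$. You compress the borderline case $\lambda-1=0$ (resp. $\lambda-2=0$), where the derivative must be taken distributionally with a cutoff $\varphi_\varepsilon$ and a dilation argument to identify the P.V. and the multiplier $\alpha(\xi_0,\xi)$; the paper spends the bulk of Lemma \ref{iilem:der:optype} on exactly that, so a full write-up would need to restore that detail, but you do flag the cancellation property of $Y_iY_j\Gamma$ as the mechanism, which is the right one.

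For the right composition your proposal is genuinely different from the paper's and is shorter. You reduce $T(\xi_0)\widetilde{X}_k$ to the already-proved left case via transposition: $(T(\xi_0)\widetilde{X}_k)^T = \widetilde{X}_k^T T(\xi_0)^T = -\widetilde{X}_k T(\xi_0)^T + c_k\,T(\xi_0)^T$; the first term is of type $\lambda-1$ by the left-composition lemma applied to the frozen operator $T(\xi_0)^T$ of type $\lambda$ (Proposition \ref{Prop transposed operators}), the second is of type $\lambda$ (a fortiori of type $\lambda-1$), and transposing back (Proposition \ref{Prop transposed operators} again) gives that $T(\xi_0)\widetilde{X}_k$ is of type $\lambda-1$, and likewise for $\widetilde{X}_0$ when $\lambda\geq 2$. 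I have checked this and it closes: the downgrading from type $\lambda$ to type $\lambda-1$ is licit (the degree condition is monotone, and a type-$1$ operator can be viewed as a type-$0$ operator with $\alpha\equiv 0$ and a genuine integral playing the role of the P.V.), and Proposition \ref{Prop transposed operators} already handles the type-$0$ case with the $\alpha(\xi_0,\xi)f(\xi)$ term since that term is its own transpose. The paper instead computes $T(\xi_0)\widetilde{X}_k$ directly: it integrates by parts, then uses Lemma \ref{change differential orders} to replace $\widetilde{X}_i$-derivatives in the $\eta$-slot of $\Theta(\cdot,\xi)$ by right-invariant fields $Y^R_{i,k}$, and rerolls the distributional argument for the singular case. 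That longer route is not wasted work in the paper because Lemma \ref{change differential orders} is a key ingredient of Theorem \ref{scambioderivoper} and Lemma \ref{lemma change differential order on domain}; but for Theorem \ref{main theorem} in isolation, your transposition argument is the cleaner proof.
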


To prove this, we begin by stating the following two lemmas:

\begin{lemma}
\label{iilem:der:optype2}If $k(\xi_{0};\xi,\eta)$ is a frozen kernel of type
$\lambda\geq1$ over $\widetilde{B}\left(  \overline{\xi},R\right)  $, then
$\left(  \widetilde{X}_{j}k\right)  (\xi_{0};\cdot,\eta)\left(  \xi\right)  $
$(j=1,2,...,q)$ is a frozen kernel of type $\lambda-1$. If \ $\lambda\geq2$,
then $\left(  \widetilde{X}_{0}k\right)  (\xi_{0};\cdot,\eta)\left(
\xi\right)  $ is a frozen kernel of type $\lambda-2$.
\end{lemma}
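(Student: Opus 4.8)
The plan is to differentiate the explicit representation of a frozen kernel of type $\lambda$ term by term and to use the approximation Theorem \ref{Rothschild-Stein's approximation Theorem} to convert the action of $\widetilde{X}_{j}$ (resp.\ $\widetilde{X}_{0}$) in the $\xi$ variable, through the change of coordinates $\Theta(\eta,\cdot)$, into the action of $Y_{j}$ (resp.\ $Y_{0}$) plus a remainder vector field $R_{j}^{\eta}$ (resp.\ $R_{0}^{\eta}$) of controlled local degree. Since a frozen kernel of type $\lambda$ splits as $k=k'+k''$ with $k'$ modeled on $\Gamma(\xi_{0};\cdot)$ and $k''$ on $\Gamma^{T}(\xi_{0};\cdot)$, and all the relevant properties of $\Gamma$ hold for $\Gamma^{T}$ as well, it suffices to treat $k'$. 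Moreover, since Definition \ref{iidef:kernels:type} only requires the decomposition to hold for every positive integer $m$, we may enlarge $m$ whenever convenient.

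First I would write
\[
k'(\xi_{0};\xi,\eta)=\sum_{i=1}^{H_{m}}a_{i}(\xi)b_{i}(\eta)\big(D_{i}^{\xi,\eta}\Gamma(\xi_{0};\cdot)\big)(\Theta(\eta,\xi))+a_{0}(\xi)b_{0}(\eta)\big(D_{0}^{\xi,\eta}\Gamma(\xi_{0};\cdot)\big)(\Theta(\eta,\xi))
\]
and apply $\widetilde{X}_{j}$ ($1\le j\le q$) in $\xi$. By the Leibniz rule three kinds of terms appear. (1) Terms in which $\widetilde{X}_{j}$ hits $a_{i}(\xi)$: then $\widetilde{X}_{j}a_{i}\in C_{0}^{\infty}$ and the rest is unchanged, so the term already has the form prescribed for a kernel of type $\lambda$; since an operator homogeneous of degree $\le 2-\lambda$ is homogeneous of degree $\le 2-(\lambda-1)$ and a function homogeneous of degree $\ge\lambda-Q$ is homogeneous of degree $\ge(\lambda-1)-Q$, it is a fortiori a term of a kernel of type $\lambda-1$. (2) Terms in which $\widetilde{X}_{j}$ hits the $\xi$-dependence of the coefficients of $D_{i}^{\xi,\eta}$: by Remark \ref{Remark parameter derivative} every $\xi$-derivative of those coefficients has the same $u$-homogeneity, so these terms are of the same nature as in (1). (3) The genuine term $a_{i}(\xi)b_{i}(\eta)\,\widetilde{X}_{j}\big[\big(D_{i}^{\xi,\eta}\Gamma(\xi_{0};\cdot)\big)(\Theta(\eta,\cdot))\big](\xi)$, treated next.

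For (3), apply (\ref{approximation}) with $g=D_{i}^{\xi,\eta}\Gamma(\xi_{0};\cdot)$ --- legitimate at $\xi\neq\eta$, where $\Theta(\eta,\xi)\neq 0$ and everything is smooth --- to obtain $\big(Y_{j}g+R_{j}^{\eta}g\big)(\Theta(\eta,\xi))$. The piece $Y_{j}D_{i}^{\xi,\eta}\Gamma$ is $\Gamma$ acted on by $Y_{j}D_{i}^{\xi,\eta}$, homogeneous of degree $\le 1+(2-\lambda)=2-(\lambda-1)$, hence a term of a type $\lambda-1$ kernel. For $R_{j}^{\eta}g$, expand the coefficients of $R_{j}^{\eta}$ in Taylor polynomials at $0$ of order $M$ (large, to be fixed): since $R_{j}^{\eta}$ has local degree $\le 0$, each polynomial term is a homogeneous operator $P$ of degree $\le 0$, so $PD_{i}^{\xi,\eta}$ is homogeneous of degree $\le 2-\lambda\le 2-(\lambda-1)$, again a term of a type $\lambda-1$ kernel; the Taylor remainders, composed with $D_{i}^{\xi,\eta}$ and applied to $\Gamma$, give functions of arbitrarily high classical (hence weighted $Y$-) smoothness once $M$ is large, so they go into the $D_{0}$-type part after enlarging $m$. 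The original $i=0$ term is treated the same way: $Y_{j}$ and $R_{j}^{\eta}$ are first order, i.e.\ combinations of the basis fields $Y_{\alpha}$ of weight $\le s$, so $Y_{j}(D_{0}^{\xi,\eta}\Gamma)$ and $R_{j}^{\eta}(D_{0}^{\xi,\eta}\Gamma)$ still carry $m-s$ weighted derivatives, as many as we like. All the new operators depend smoothly and jointly on $(\xi,\eta,u)$, because $D_{i}^{\xi,\eta}$, $R_{j}^{\eta}$ and the Taylor polynomials do. Collecting the pieces gives a decomposition of $(\widetilde{X}_{j}k)(\xi_{0};\cdot,\eta)(\xi)$ of exactly the form in Definition \ref{iidef:kernels:type} with $\lambda$ replaced by $\lambda-1$; repeating the argument on $k''$ (with $\Gamma^{T}$) finishes the case $j=1,\dots,q$. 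For $\widetilde{X}_{0}$ with $\lambda\ge 2$ the computation is identical, using that $Y_{0}$ is homogeneous of degree $2$ and $R_{0}^{\eta}$ has local degree $\le 1$: then $Y_{0}D_{i}^{\xi,\eta}$ is homogeneous of degree $\le 2+(2-\lambda)=2-(\lambda-2)$, and each polynomial term $P$ of $R_{0}^{\eta}$ is homogeneous of degree $\le 1$, so $PD_{i}^{\xi,\eta}$ is homogeneous of degree $\le 3-\lambda\le 2-(\lambda-2)$, as a kernel of type $\lambda-2$ requires.

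I expect the main obstacle to be the careful bookkeeping of homogeneities in the Taylor expansion of $R_{j}^{\eta}$ (resp.\ $R_{0}^{\eta}$): one must check that the hypothesis ``local degree $\le 0$'' (resp.\ ``$\le 1$''), a statement about the Taylor polynomials of the coefficients, combines with the homogeneity $\ge\lambda-Q$ of $D_{i}^{\xi,\eta}\Gamma$ to yield exactly the homogeneity $\ge(\lambda-1)-Q$ (resp.\ $\ge(\lambda-2)-Q$) and the operator degree $\le 2-(\lambda-1)$ (resp.\ $\le 2-(\lambda-2)$) demanded by Definition \ref{iidef:kernels:type}, and that the Taylor remainders together with the original $D_{0}$-part are genuinely harmless because $m$ may be taken arbitrarily large. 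A secondary point worth making explicit is the legitimacy of invoking (\ref{approximation}), which is stated for $f\in C_{0}^{\infty}(\mathbb{G})$, with $f=\Gamma(\xi_{0};\cdot)$ or its derivatives --- functions smooth only off the origin --- which is justified because the kernel is considered only for $\xi\ne\eta$, i.e.\ $\Theta(\eta,\xi)\ne 0$.
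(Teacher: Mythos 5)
Your proposal is correct and follows essentially the same route as the paper's own (much more condensed) argument: apply the Leibniz rule, use the Rothschild--Stein approximation theorem to convert $\widetilde{X}_{j}$ (resp.\ $\widetilde{X}_{0}$) into $Y_{j}+R_{j}^{\eta}$ (resp.\ $Y_{0}+R_{0}^{\eta}$), and invoke Remark \ref{Remark parameter derivative} for the terms where the derivative hits the $\xi$-dependence of the coefficients of $D_{i}^{\xi,\eta}$. The paper merely sketches this, pointing to the definition, the approximation theorem, and the prototype computation with $a(\xi)\Gamma(\xi_{0};\Theta(\eta,\xi))b(\eta)$; your version makes explicit the Taylor-polynomial bookkeeping for $R_{j}^{\eta}$, the disposal of Taylor remainders into the $D_{0}$-part by enlarging $m$, and the homogeneity count $\le 2-(\lambda-1)$ (resp.\ $\le 2-(\lambda-2)$), all of which check out.
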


\begin{proof}
This basically follows by the definition of kernel of type $\lambda$ and
Theorem \ref{Rothschild-Stein's approximation Theorem} in
\S \ \ref{subsection lifting}. When the $\widetilde{X}_{j}$ derivative acts on
the $\xi$ variable of a kernel $D_{i}^{\xi}\Gamma\left(  \xi_{0},\cdot\right)
,$ one also has to take into account Remark \ref{Remark parameter derivative}.

Here we just want to point out the following fact. The prototype of frozen
kernel of type $2$ is the function%
\[
a\left(  \xi\right)  \Gamma\left(  \xi_{0};\Theta(\eta,\xi)\right)  b\left(
\eta\right)  .
\]
Note that the computation
\begin{align*}
&  \widetilde{X}_{i}\left[  a\left(  \cdot\right)  \Gamma\left(  \xi
_{0};\Theta(\eta,\cdot)\right)  b\left(  \eta\right)  \right]  \left(
\xi\right) \\
&  =a\left(  \xi\right)  \left[  \left(  Y_{i}+R_{i}^{\eta}\right)
\Gamma\left(  \xi_{0};\cdot\right)  \right]  \left(  \Theta(\eta,\xi)\right)
b\left(  \eta\right)  +\left(  \widetilde{X}_{i}a\right)  \left(  \xi\right)
\Gamma\left(  \xi_{0};\Theta(\eta,\xi)\right)  b\left(  \eta\right)
\end{align*}
in particular generates the term%
\[
a\left(  \xi\right)  \left(  R_{i}^{\eta}\Gamma\right)  \left(  \xi_{0}%
;\cdot\right)  \left(  \Theta(\eta,\xi)\right)  b\left(  \eta\right)
\]
where the differential operator $R_{i}^{\eta}$ has coefficients depending on
$\eta$. In the proof of Theorem \ref{main theorem} we will see another basic
computation on frozen kernels which generates differential operators with
coefficients also depending on $\xi.$ This is the reason why Definition
\ref{iidef:kernels:type} allows for this kind of dependence.
\end{proof}

\begin{lemma}
\label{iilem:der:optype}If $T(\xi_{0})$ is a frozen operator of type
$\lambda\geq1$ over $\widetilde{B}\left(  \overline{\xi},R\right)  $, then
$\widetilde{X}_{i}T(\xi_{0})$ ($i=1,2,...,q$) is a frozen operator of type
$\lambda-1$. If $\lambda\geq2$, then $\widetilde{X}_{0}T\left(  \xi
_{0}\right)  $ is a frozen operator of type $\lambda-2.$
\end{lemma}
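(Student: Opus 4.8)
The plan is to deduce this statement from Lemma \ref{iilem:der:optype2} together with the explicit structure of frozen operators of type $\lambda$ and the approximation formula (\ref{approximation}). The point to be careful about is that there are two ways the derivative $\widetilde{X}_i$ can interact with the operator $T(\xi_0)$: when we write $\widetilde{X}_i\left[T(\xi_0)f\right]$, the derivative acts on the $\xi$-variable of the kernel $k(\xi_0;\xi,\eta)$ (plus, for $\lambda = 0$, on the extra local term $\alpha(\xi_0,\xi)f(\xi)$); this case is handled directly by Lemma \ref{iilem:der:optype2}, since differentiating the kernel in $\xi$ produces a frozen kernel of type $\lambda-1$ (resp. $\lambda-2$), and one must merely observe that the resulting operator is still of the required form, with the principal value and the local term appearing exactly when the new type drops to $0$.

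First I would treat $\widetilde{X}_i T(\xi_0)$ for $\lambda \geq 2$ (so that the output type $\lambda - 1 \geq 1$, or $\lambda - 2 \geq 0$). Here there is no principal value subtlety: for $f \in C_0^\infty(\widetilde{B})$ the integral defining $T(\xi_0)f$ converges absolutely and can be differentiated under the integral sign because the kernel $k(\xi_0;\xi,\eta)$, being built from $D_i\Gamma(\xi_0;\Theta(\eta,\xi))$ with $D_i$ homogeneous of degree $\leq 2-\lambda \leq 0$, is locally integrable in $\eta$ uniformly in $\xi$, and the same is true after one application of $\widetilde X_i$ (which, via Theorem \ref{Rothschild-Stein's approximation Theorem}, lowers homogeneity by $1$ or $2$). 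Thus $\widetilde{X}_i\left[T(\xi_0)f\right](\xi) = \int_{\widetilde{B}}\left(\widetilde{X}_i k\right)(\xi_0;\cdot,\eta)(\xi)\,f(\eta)\,d\eta$, and by Lemma \ref{iilem:der:optype2} the new kernel is a frozen kernel of type $\lambda - 1$ (resp. $\lambda - 2$), so $\widetilde{X}_i T(\xi_0)$ is a frozen operator of that type.

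The genuinely delicate case is $\lambda = 1$, where $\widetilde{X}_i T(\xi_0)$ should be a frozen operator of type $0$, hence must be presented as a principal-value singular integral plus a local term $\alpha(\xi_0,\xi)f(\xi)$. Here differentiation under the integral sign is no longer legitimate: the kernel $\left(\widetilde{X}_i k\right)(\xi_0;\xi,\eta)$ has homogeneity $-Q$ in $\Theta(\eta,\xi)$ and is only a principal-value kernel. The standard device (as in \cite[\S 14]{rs} and \cite{bb2}) is to write $T(\xi_0)f = \lim_{\varepsilon\to 0} T_\varepsilon(\xi_0)f$ where $T_\varepsilon$ truncates the kernel to $\|\Theta(\eta,\xi)\| > \varepsilon$, differentiate the truncated operator (which is legitimate), and then let $\varepsilon \to 0$: the boundary term produced by differentiating the truncation region converges, by the homogeneity and the mean-value property of $\Gamma$ type kernels, to a term of the form $\alpha(\xi_0,\xi)f(\xi)$, while the remaining integral converges to the principal value of $\left(\widetilde{X}_i k\right)(\xi_0;\xi,\eta)$ against $f$. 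One checks that $\alpha$ is bounded, measurable, and smooth in $\xi$ (it is essentially an average of a $(2-Q)$-homogeneous kernel over a unit sphere, times the coefficient $\lambda_{il}$ from the lifting, evaluated along $\Theta$), so the limit operator is a frozen operator of type $0$ in the precise sense of the definition. The same argument, with $\widetilde{X}_0$ in place of $\widetilde{X}_i$, covers the drift case once $\lambda = 2$: differentiating a type-$2$ kernel (modeled on $\Gamma$, homogeneous of degree $2-Q$) by $\widetilde X_0$, which has weight two, drops the homogeneity to $-Q$ and again produces a principal-value kernel plus a local term. I expect the main obstacle to be exactly this $\varepsilon$-truncation computation in the critical cases $\lambda=1$ (for $\widetilde X_i$) and $\lambda = 2$ (for $\widetilde X_0$): one has to verify that the boundary term genuinely converges and identify it with a legitimate local multiplier, keeping track that its dependence on $\xi_0$ is controlled by the uniform bounds of Theorem \ref{ith:stima:unif:ipo} and its dependence on $\xi$ is smooth. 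The case of $T(\xi_0)\widetilde{X}_i$ (derivative on the right) is then obtained by transposition, using Proposition \ref{Prop transposed operators}: $\left(T(\xi_0)\widetilde{X}_i\right)^T = \pm\widetilde{X}_i T(\xi_0)^T$ up to lower-order terms coming from $\widetilde{X}_i^T = -\widetilde{X}_i + (\text{div term})$, and $T(\xi_0)^T$ is again a frozen operator of type $\lambda$, so the already-established left-derivative case applies.
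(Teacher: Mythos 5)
Your plan for $\widetilde{X}_i T(\xi_0)$ (and $\widetilde{X}_0 T(\xi_0)$) is essentially the paper's strategy: for $\lambda\geq 2$ (resp.\ $\lambda\geq 3$) the derivative can be taken under the integral sign and Lemma \ref{iilem:der:optype2} applies directly; for the critical case $\lambda=1$ (resp.\ $\lambda=2$) one truncates with a cutoff $\varphi_\varepsilon$, differentiates, and passes to the limit, with the piece coming from $Y_i\varphi_\varepsilon$ concentrating on a dyadic shell producing the local multiplier $\alpha(\xi_0,\xi)f(\xi)$. The paper executes the critical case somewhat more carefully --- it works distributionally, pairing against a test function $\omega$, splitting into $A_\varepsilon+B_\varepsilon+C_\varepsilon$ and then $C_\varepsilon=C_\varepsilon^1+C_\varepsilon^2+C_\varepsilon^3$, and explicitly identifies $\alpha(\xi_0,\eta)=\int_{\|v\|=2}D_1^\eta\Gamma(\xi_0;v)\,n_i\,d\sigma(v)$ where $n_i$ comes from the coefficients of $Y_i$ (not from the lifting coefficients $\lambda_{il}$ as you write; also $D_1\Gamma$ is $(1-Q)$-homogeneous, not $(2-Q)$). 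Those are minor slips in your description, not gaps in the approach.

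One scope remark: the Lemma only concerns the left compositions $\widetilde{X}_i T(\xi_0)$ and $\widetilde{X}_0 T(\xi_0)$. The right compositions $T(\xi_0)\widetilde{X}_i$, $T(\xi_0)\widetilde{X}_0$ are asserted in Theorem \ref{main theorem}, not here, so the final paragraph of your proposal over-proves. That said, the transposition argument you sketch there --- writing $(T(\xi_0)\widetilde{X}_i)^T = \widetilde{X}_i^T T(\xi_0)^T = (-\widetilde{X}_i+c_i)T(\xi_0)^T$, applying Proposition \ref{Prop transposed operators} to see $T(\xi_0)^T$ is frozen of type $\lambda$, then this Lemma, then Proposition \ref{Prop transposed operators} again --- is correct and is a genuinely different, and shorter, route than the one the paper actually follows for that part (which re-expresses $\widetilde{X}_i$ of the kernel in the $\eta$-variable via right-invariant fields and Lemma \ref{change differential orders}, then integrates by parts). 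Both routes give the result; the paper's gives a concrete kernel decomposition of $T(\xi_0)\widetilde{X}_i$ that is later useful, while yours is the more economical way to obtain mere membership in the class of operators of type $\lambda-1$.
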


\begin{proof}
With reference to Definition \ref{iidef:kernels:type}, it is enough to
consider the part $k^{\prime}$ of the kernel of $T$, the proof for
$k^{\prime\prime}$ being completely analogous. So, let us consider the
operator $\widetilde{X}_{i}T(\xi_{0})$ ($i=1,2,...,q$), where $T\left(
\xi_{0}\right)  $ has kernel $k^{\prime}$.

If $\lambda>1,$ the result immediately follows by the previous lemma. If
$\lambda=1,$ then%
\[
T(\xi_{0})f\left(  \xi\right)  =\int_{\widetilde{B}\left(  \overline{\xi
},R\right)  }a(\xi)b(\eta)D_{1}\Gamma\left(  \xi_{0};\Theta(\eta,\xi)\right)
f\left(  \eta\right)  d\eta+T^{\prime}(\xi_{0})f\left(  \xi\right)
\]
where $T^{\prime}\left(  \xi_{0}\right)  $ is a frozen operator of type $2$,
and $D_{1}$ is a 1-homogeneous differential operator. We already know that
$\widetilde{X}_{i}T^{\prime}(\xi_{0})$ is a frozen operator of type $1$, so we
are left to show that
\[
\widetilde{X}_{i}\int_{\widetilde{B}\left(  \overline{\xi},R\right)  }%
a(\xi)b(\eta)D_{1}\Gamma(\xi_{0};\left(  \Theta(\eta,\xi)\right)  )f\left(
\eta\right)  d\eta
\]
is a frozen operator of type $0.$ To do this, we have to apply a
distributional argument, which will be used several times in the following:
let us compute, for any $\omega\in C_{0}^{\infty}\left(  \widetilde{B}\left(
\overline{\xi},R\right)  \right)  ,$%
\begin{align*}
&  \int_{\widetilde{B}\left(  \overline{\xi},R\right)  }\widetilde{X}_{i}%
^{T}\omega\left(  \xi\right)  \int_{\widetilde{B}\left(  \overline{\xi
},R\right)  }a(\xi)b(\eta)D_{1}^{\xi}\Gamma(\xi_{0};\left(  \Theta(\eta
,\xi)\right)  )f\left(  \eta\right)  d\eta d\xi\\
&  =\lim_{\varepsilon\rightarrow0}\int_{\widetilde{B}\left(  \overline{\xi
},R\right)  }\widetilde{X}_{i}^{T}\omega\left(  \xi\right)  \int
_{\widetilde{B}\left(  \overline{\xi},R\right)  }a(\xi)b(\eta)\varphi
_{\varepsilon}\left(  \Theta(\eta,\xi)\right)  D_{1}^{\xi}\Gamma(\xi
_{0};\left(  \Theta(\eta,\xi)\right)  )f\left(  \eta\right)  d\eta d\xi
\end{align*}
where $\varphi_{\varepsilon}\left(  u\right)  =\varphi\left(  D\left(
\varepsilon^{-1}\right)  u\right)  $ and $\varphi\in C_{0}^{\infty}\left(
\mathbb{R}^{N}\right)  ,\varphi\left(  u\right)  =0$ for $\left\Vert
u\right\Vert <1,\varphi\left(  u\right)  =1$ for $\left\Vert u\right\Vert >2.$
Here we have written $D_{1}^{\xi}$ to recall that the coefficients of the
differential operator $D_{1}$ also depend (smoothly) on $\xi$ as a parameter.
By Theorem \ref{Rothschild-Stein's approximation Theorem},%
\begin{align}
&  \int_{\widetilde{B}\left(  \overline{\xi},R\right)  }\widetilde{X}_{i}%
^{T}\omega\left(  \xi\right)  \int_{\widetilde{B}\left(  \overline{\xi
},R\right)  }a(\xi)b(\eta)\varphi_{\varepsilon}\left(  \Theta(\eta
,\xi)\right)  D_{1}^{\xi}\Gamma(\xi_{0};\left(  \Theta(\eta,\xi)\right)
)f\left(  \eta\right)  d\eta d\xi\nonumber\\
&  =\int_{\widetilde{B}\left(  \overline{\xi},R\right)  }b(\eta)f\left(
\eta\right)  \int_{\widetilde{B}\left(  \overline{\xi},R\right)  }\left(
\widetilde{X}_{i}^{T}\omega\right)  \left(  \xi\right)  a(\xi)\varphi
_{\varepsilon}\left(  \Theta(\eta,\xi)\right)  D_{1}^{\xi}\Gamma(\xi
_{0};\left(  \Theta(\eta,\xi)\right)  )d\xi d\eta\nonumber\\
&  =\int_{\widetilde{B}\left(  \overline{\xi},R\right)  }b(\eta)f\left(
\eta\right)  \int_{\widetilde{B}\left(  \overline{\xi},R\right)  }%
\omega\left(  \xi\right)  \left(  \widetilde{X}_{i}a\right)  (\xi
)\varphi_{\varepsilon}\left(  \Theta(\eta,\xi)\right)  D_{1}^{\xi}\Gamma
(\xi_{0};\left(  \Theta(\eta,\xi)\right)  )d\xi d\eta\nonumber\\
&  +\int_{\widetilde{B}\left(  \overline{\xi},R\right)  }b(\eta)f\left(
\eta\right)  \int_{\widetilde{B}\left(  \overline{\xi},R\right)  }%
\omega\left(  \xi\right)  a(\xi)\varphi_{\varepsilon}\left(  \Theta(\eta
,\xi)\right)  \left(  \widetilde{X}_{i}D_{1}^{\xi}\right)  \Gamma(\xi
_{0};\left(  \Theta(\eta,\xi)\right)  )d\xi d\eta\nonumber\\
&  +\int_{\widetilde{B}\left(  \overline{\xi},R\right)  }b(\eta)f\left(
\eta\right)  \int_{\widetilde{B}\left(  \overline{\xi},R\right)  }%
\omega\left(  \xi\right)  a(\xi)\left[  \left(  Y_{i}+R_{i}^{\eta}\right)
\left(  \varphi_{\varepsilon}D_{1}^{\xi}\Gamma(\xi_{0};\cdot)\right)  \right]
\left(  \Theta(\eta,\xi)\right)  d\xi d\eta\nonumber\\
&  \equiv A_{\varepsilon}+B_{\varepsilon}+C_{\varepsilon}. \label{A+B+C}%
\end{align}
Now,%
\begin{align}
A_{\varepsilon}  &  \rightarrow\int_{\widetilde{B}\left(  \overline{\xi
},R\right)  }b(\eta)f\left(  \eta\right)  \int_{\widetilde{B}\left(
\overline{\xi},R\right)  }\omega\left(  \xi\right)  \left(  \widetilde{X}%
_{i}a\right)  (\xi)D_{1}\Gamma(\xi_{0};\left(  \Theta(\eta,\xi)\right)  )d\xi
d\eta\nonumber\\
&  =\int_{\widetilde{B}\left(  \overline{\xi},R\right)  }f\left(  \eta\right)
S_{1}\left(  \xi_{0}\right)  \omega\left(  \eta\right)  d\eta\nonumber\\
&  =\int_{\widetilde{B}\left(  \overline{\xi},R\right)  }\omega\left(
\eta\right)  S_{1}\left(  \xi_{0}\right)  ^{T}f\left(  \eta\right)
d\eta\label{A}%
\end{align}
where $S_{1}\left(  \xi_{0}\right)  $ is a frozen operator of type 1, and
$S_{1}\left(  \xi_{0}\right)  ^{T},$ its transpose, is still a frozen operator
of type 1 (see Proposition \ref{Prop transposed operators}).%
\begin{align}
B_{\varepsilon}  &  \rightarrow\int_{\widetilde{B}\left(  \overline{\xi
},R\right)  }b(\eta)f\left(  \eta\right)  \int_{\widetilde{B}\left(
\overline{\xi},R\right)  }\omega\left(  \xi\right)  a(\xi)\left(
\widetilde{X}_{i}D_{1}^{\xi}\right)  \Gamma(\xi_{0};\left(  \Theta(\eta
,\xi)\right)  )d\xi d\eta\nonumber\\
&  =\int_{\widetilde{B}\left(  \overline{\xi},R\right)  }f\left(  \eta\right)
S_{1}^{\prime}\left(  \xi_{0}\right)  \omega\left(  \eta\right)
d\eta\nonumber\\
&  =\int_{\widetilde{B}\left(  \overline{\xi},R\right)  }\omega\left(
\eta\right)  S_{1}^{\prime}\left(  \xi_{0}\right)  ^{T}f\left(  \eta\right)
d\eta\label{B}%
\end{align}
where, by Remark \ref{Remark parameter derivative}, $S_{1}^{\prime}\left(
\xi_{0}\right)  $ is a frozen operator of type 1, and the same is true for
$S_{1}^{\prime}\left(  \xi_{0}\right)  ^{T}$ by Proposition
\ref{Prop transposed operators}.
\begin{align}
C_{\varepsilon}  &  =\int_{\widetilde{B}\left(  \overline{\xi},R\right)
}b(\eta)f\left(  \eta\right)  \int_{\widetilde{B}\left(  \overline{\xi
},R\right)  }\omega\left(  \xi\right)  a(\xi)\left[  \varphi_{\varepsilon
}Y_{i}D_{1}\Gamma(\xi_{0};\cdot)\right]  \left(  \Theta(\eta,\xi)\right)  d\xi
d\eta\nonumber\\
&  +\int_{\widetilde{B}\left(  \overline{\xi},R\right)  }b(\eta)f\left(
\eta\right)  \int_{\widetilde{B}\left(  \overline{\xi},R\right)  }%
\omega\left(  \xi\right)  a(\xi)\left[  \varphi_{\varepsilon}R_{i}^{\eta}%
D_{1}\Gamma(\xi_{0};\cdot)\right]  \left(  \Theta(\eta,\xi)\right)  d\xi
d\eta\nonumber\\
&  +\int_{\widetilde{B}\left(  \overline{\xi},R\right)  }b(\eta)f\left(
\eta\right)  \int_{\widetilde{B}\left(  \overline{\xi},R\right)  }%
\omega\left(  \xi\right)  a(\xi)\left[  \left(  Y_{i}+R_{i}^{\eta}\right)
\varphi_{\varepsilon}D_{1}\Gamma(\xi_{0};\cdot)\right]  \left(  \Theta
(\eta,\xi)\right)  d\xi d\eta\nonumber\\
&  \equiv C_{\varepsilon}^{1}+C_{\varepsilon}^{2}+C_{\varepsilon}^{3}.
\label{C1+C2+C3}%
\end{align}
Now:%
\begin{align}
C_{\varepsilon}^{1}  &  \rightarrow\int_{\widetilde{B}\left(  \overline{\xi
},R\right)  }\omega\left(  \xi\right)  \left\{  P.V.\int_{\widetilde{B}\left(
\overline{\xi},R\right)  }a(\xi)Y_{i}D_{1}\Gamma(\xi_{0};\Theta(\eta
,\xi))b(\eta)f\left(  \eta\right)  d\eta\right\}  d\xi\nonumber\\
&  =\int_{\widetilde{B}\left(  \overline{\xi},R\right)  }\omega\left(
\xi\right)  T\left(  \xi_{0}\right)  f\left(  \xi\right)  d\xi\label{C1}%
\end{align}
with $T\left(  \xi_{0}\right)  $ frozen operator of type 0. Note that the
principal value exists because the kernel $Y_{i}D_{1}\Gamma\left(  \xi
_{0};u\right)  $ has vanishing integral over spherical shells $\left\{
u\in\mathbb{G}:r_{1}<\left\Vert u\right\Vert <r_{2}\right\}  $ (see Theorem
\ref{ith:fundsol:lzero}).%
\begin{align}
C_{\varepsilon}^{2}  &  \rightarrow\int_{\widetilde{B}\left(  \overline{\xi
},R\right)  }\omega\left(  \xi\right)  \left\{  \int_{\left\Vert u\right\Vert
<R}a(\xi)R_{i}^{\eta}D_{1}\Gamma(\xi_{0};\Theta(\eta,\xi))b(\eta)f\left(
\eta\right)  d\eta\right\}  d\xi\nonumber\\
&  =\int_{\widetilde{B}\left(  \overline{\xi},R\right)  }\omega\left(
\xi\right)  S\left(  \xi_{0}\right)  f\left(  \xi\right)  d\xi\label{C2}%
\end{align}
with $S\left(  \xi_{0}\right)  $ frozen operator of type 1.

To handle $C_{\varepsilon}^{3},$ let us perform the change of variables
$u=\Theta(\eta,\xi)$ which, by Theorem \ref{metric} gives
\begin{align*}
C_{\varepsilon}^{3}  &  =\int_{\widetilde{B}\left(  \overline{\xi},R\right)
}\left(  bf\right)  \left(  \eta\right)  \int_{\left\Vert u\right\Vert
<R}\left(  \omega a\right)  \left(  \Theta(\eta,\cdot)^{-1}\left(  u\right)
\right)  \left[  \left(  Y_{i}+R_{i}^{\eta}\right)  \varphi_{\varepsilon}%
D_{1}\Gamma(\xi_{0};\cdot)\right]  \left(  u\right)  \cdot\\
&  \cdot c\left(  \eta\right)  \left(  1+O\left(  \left\Vert u\right\Vert
\right)  \right)  dud\eta
\end{align*}
On the other hand, $Y_{i}\varphi_{\varepsilon}\left(  u\right)  =\frac
{1}{\varepsilon}Y_{i}\varphi\left(  D\left(  \frac{1}{\varepsilon}\right)
u\right)  $, while $R_{i}^{\eta}\varphi_{\varepsilon}\left(  u\right)  $ is
uniformly bounded in $\varepsilon.$ Hence the change of variables $D\left(
\frac{1}{\varepsilon}\right)  u=v$ gives%
\begin{align}
C_{\varepsilon}^{3}  &  =\int_{\widetilde{B}\left(  \overline{\xi},R\right)
}\left(  bf\right)  \left(  \eta\right)  \int_{\left\Vert v\right\Vert
<\frac{R}{\varepsilon}}\left(  \omega a\right)  \left(  \Theta(\eta
,\cdot)^{-1}\left(  D\left(  \varepsilon\right)  v\right)  \right)  \left[
\frac{1}{\varepsilon}Y_{i}\varphi\left(  v\right)  +O\left(  1\right)
\right]  \cdot\nonumber\\
&  \cdot c\left(  \eta\right)  \varepsilon^{1-Q}D_{1}^{\eta}\Gamma\left(
\xi_{0};v\right)  \left(  1+O\left(  \varepsilon\left\Vert v\right\Vert
\right)  \right)  \varepsilon^{Q}dvd\eta\nonumber\\
&  \rightarrow\int_{\widetilde{B}\left(  \overline{\xi},R\right)  }\left(
bcf\right)  \left(  \eta\right)  \int_{\left\Vert v\right\Vert <\frac
{R}{\varepsilon}}\left(  \omega a\right)  \left(  \Theta(\eta,\cdot
)^{-1}\left(  0\right)  \right)  Y_{i}\varphi\left(  v\right)  D_{1}^{\eta
}\Gamma\left(  \xi_{0};v\right)  dvd\eta\nonumber\\
&  =\int_{\widetilde{B}\left(  \overline{\xi},R\right)  }\left(  \omega
abcf\right)  \left(  \eta\right)  \int_{\left\Vert v\right\Vert <\frac
{R}{\varepsilon}}Y_{i}\varphi\left(  v\right)  D_{1}^{\eta}\Gamma\left(
\xi_{0};v\right)  dvd\eta\nonumber\\
&  =\int_{\widetilde{B}\left(  \overline{\xi},R\right)  }\left(  \omega
abcf\right)  \left(  \eta\right)  \alpha\left(  \xi_{0},\eta\right)  d\eta,
\label{alfa}%
\end{align}
which is the integral of $\omega$ times the multiplicative part of a frozen
operator of type $0.$ It is worthwhile (although not logically necessary to
prove the theorem) to realize that the quantity $\alpha\left(  \xi_{0}%
,\eta\right)  $ appearing in (\ref{alfa}) actually does not depend on the
function $\varphi.$ Namely, recalling that $Y_{i}\varphi\left(  v\right)  $ is
supported in the spherical shell $1\leq\left\Vert v\right\Vert \leq2,$ with
$\varphi\left(  u\right)  =1$ for $\left\Vert u\right\Vert =2$ and
$\varphi\left(  u\right)  =0$ for $\left\Vert u\right\Vert =1,$ an integration
by parts gives%
\begin{align*}
&  \int_{1\leq\left\Vert v\right\Vert \leq2}Y_{i}\varphi\left(  v\right)
D_{1}^{\eta}\Gamma\left(  \xi_{0};v\right)  dv\\
&  =-\int_{1\leq\left\Vert v\right\Vert \leq2}\varphi\left(  v\right)
Y_{i}D_{1}^{\eta}\Gamma\left(  \xi_{0};v\right)  dv+\int_{\left\Vert
v\right\Vert =2}D_{1}^{\eta}\Gamma\left(  \xi_{0};v\right)  n_{i}%
d\sigma\left(  v\right)
\end{align*}
with $n_{i}=\sum_{j=1}^{N}b_{ij}\left(  u\right)  \nu_{j}$, where $Y_{i}=$
$\sum_{j=1}^{N}b_{ij}\left(  u\right)  \partial_{u_{j}}$ and $\nu$ is the
outer normal on $\left\Vert v\right\Vert =2.$ The vanishing property of the
kernel $Y_{i}D_{1}^{\xi}\Gamma\left(  \xi_{0};\cdot\right)  $ implies that if
$\varphi$ is a radial function the first integral vanishes. Therefore%
\[
\alpha\left(  \xi_{0},\eta\right)  =\int_{\left\Vert v\right\Vert =2}%
D_{1}^{\eta}\Gamma\left(  \xi_{0};v\right)  n_{i}d\sigma\left(  v\right)
\]
which also shows that $\alpha\left(  \xi_{0},\eta\right)  $ smoothly depends
on $\eta$ and is bounded in $\xi_{0}$ (by Theorem \ref{ith:stima:unif:ipo}).
By (\ref{A+B+C}), (\ref{A}), (\ref{B}), (\ref{C1}), (\ref{C2}), (\ref{alfa})
we have therefore proved that%
\[
\widetilde{X}_{i}T(\xi_{0})f\left(  \xi\right)  =S_{1}\left(  \xi_{0}\right)
^{T}f\left(  \xi\right)  +S_{1}^{\prime}\left(  \xi_{0}\right)  ^{T}f\left(
\xi\right)  +T\left(  \xi_{0}\right)  f\left(  \xi\right)  +\alpha\left(
\xi_{0},\xi\right)  \left(  abcf\right)  \left(  \xi\right)
\]
which is a frozen operator of type 0.

This completes the proof of the first statement of the Lemma. The proof of the
fact that if $\lambda\geq2$ then $\widetilde{X}_{0}T\left(  \xi_{0}\right)  $
is a frozen operator of type $\lambda-2$ is completely analogous.
\end{proof}

\bigskip

The above two lemmas imply the assertion on $\widetilde{X}_{k}T\left(  \xi
_{0}\right)  $ and $\widetilde{X}_{0}T\left(  \xi_{0}\right)  $ in Theorem
\ref{main theorem}. To prove the assertions about $T\left(  \xi_{0}\right)
\widetilde{X}_{k},T\left(  \xi_{0}\right)  \widetilde{X}_{0}$ we need a way to
express $\xi$-derivatives of the integral kernel in terms of $\eta
$-derivatives of the kernel, in order to integrate by parts. This will involve
the use of \emph{right invariant vector fields} on the group $\mathbb{G}$:
throughout the following, we will denote by%
\[
Y_{i,k}^{R}%
\]
the right invariant vector field on $\mathbb{G}$ satisfying $Y_{i,k}%
^{R}f(0)=Y_{i,k}f(0)$. We have the following:

\begin{lemma}
\label{change differential orders}For any $f\in C_{0}^{\infty}(\mathbb{G})$
and $\eta,\xi$ in a neighborhood of $\xi_{0}$, we can write, for any
$i=1,2,...,s,$ $k=1,2,...,k_{i}$ (recall $s$ is the step of the Lie algebra)%
\begin{equation}
\widetilde{X}_{i,k}\left[  f\left(  \Theta\left(  \cdot,\xi\right)  \right)
\right]  \left(  \eta\right)  =-\left(  Y_{i,k}^{R}f\right)  \left(
\Theta\left(  \eta,\xi\right)  \right)  +\left(  \left(  R_{i,k}^{\xi}\right)
^{\prime}f\right)  \left(  \Theta\left(  \eta,\xi\right)  \right)  , \label{5}%
\end{equation}
where $\left(  R_{i,k}^{\xi}\right)  ^{\prime}$ is a vector field of local
degree $\leq i-1$ smoothly depending on $\xi$.
\end{lemma}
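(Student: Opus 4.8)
The plan is to deduce the identity from the Rothschild--Stein approximation theorem (Theorem~\ref{Rothschild-Stein's approximation Theorem}) by exploiting the antisymmetry of $\Theta$ recorded in Theorem~\ref{metric}(a), namely $\Theta(\eta,\xi)=-\Theta(\xi,\eta)$, together with the way left- and right-invariant vector fields on $\mathbb{G}$ transform under the inversion $u\mapsto u^{-1}=-u$. The key observation is that applying $\widetilde{X}_{i,k}$ to the \emph{first} argument of $\Theta$ becomes, after this inversion, the same as applying it to the \emph{second} argument, which is precisely the situation handled by (\ref{approximation format}).

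Concretely, I would fix $f\in C_0^\infty(\mathbb{G})$, set $f'(u)=f(-u)$ as in Proposition~\ref{Prop transposed operators}, and restrict $\eta,\xi$ to a neighborhood of $\xi_0$ small enough that $\Theta(\xi,\cdot)$ is defined and Theorem~\ref{Rothschild-Stein's approximation Theorem} applies. By Theorem~\ref{metric}(a) one has $f(\Theta(\eta,\xi))=f(-\Theta(\xi,\eta))=f'(\Theta(\xi,\eta))$, so the function $\eta\mapsto f(\Theta(\cdot,\xi))(\eta)$ is just $f'\circ\Theta(\xi,\cdot)$ evaluated at $\eta$. Renaming the two arguments of $\Theta$ in (\ref{approximation format}), which is legitimate since that statement is symmetric under this renaming, and applying it to $f'$ with base point $\xi$ gives
$$\widetilde{X}_{i,k}\big[f(\Theta(\cdot,\xi))\big](\eta)=\big(Y_{i,k}f'+R_{i,k}^{\xi}f'\big)\big(\Theta(\xi,\eta)\big),$$
where $R_{i,k}^{\xi}$ is a vector field of local degree $\leq i-1$ depending smoothly on $\xi$.

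It then remains to rewrite the right-hand side in terms of evaluations at $\Theta(\eta,\xi)=-\Theta(\xi,\eta)$. For the leading term, I would use that the $Y_{i,k}$ are left invariant, so $(Y_{i,k}f')(v)=\frac{d}{dt}\big|_{t=0}f'(v\circ\exp(tX_{i,k}))$ with $X_{i,k}=(Y_{i,k})_0$; pushing $f'$ through the inversion turns $v\circ\exp(tX_{i,k})$ into $\exp(-tX_{i,k})\circ v^{-1}$, whence $(Y_{i,k}f')(v)=-(Y_{i,k}^{R}f)(v^{-1})=-(Y_{i,k}^{R}f)(-v)$, with $Y_{i,k}^{R}$ the right-invariant field agreeing with $Y_{i,k}$ at $0$ (exactly its normalization in the statement). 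For the remainder, the operation $D\mapsto D'$ of Proposition~\ref{Prop transposed operators} yields $(R_{i,k}^{\xi}f')(v)=\big((R_{i,k}^{\xi})'f\big)(-v)$, and since replacing the argument of the coefficients of a vector field by its opposite sends each homogeneous Taylor term to one of the same degree, $(R_{i,k}^{\xi})'$ is again a vector field of local degree $\leq i-1$ smoothly depending on $\xi$. Evaluating both identities at $v=\Theta(\xi,\eta)$ and using $-v=\Theta(\eta,\xi)$ yields exactly (\ref{5}). The step I expect to need the most care is this left-invariant--to--right-invariant conversion: pinning down the sign and checking that the normalization $Y_{i,k}^{R}f(0)=Y_{i,k}f(0)$ is indeed what the computation produces, and, should composite operators appear, iterating the $D\mapsto D'$ identity as in the proof of Proposition~\ref{Prop transposed operators}. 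Everything else is routine bookkeeping.
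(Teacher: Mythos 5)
Your proof is correct and follows essentially the same route as the paper: use $\Theta(\eta,\xi)=-\Theta(\xi,\eta)$ to rewrite $f(\Theta(\cdot,\xi))$ as $f'(\Theta(\xi,\cdot))$ with $f'(u)=f(-u)$, apply (\ref{approximation format}) to $f'$ with base point $\xi$, and convert back through the group inversion $u\mapsto -u$, with $(R_{i,k}^{\xi})'$ handled exactly as in Proposition~\ref{Prop transposed operators}. The only stylistic divergence is in establishing the identity $Y_{i,k}(f')=-(Y_{i,k}^{R}f)'$: you compute it directly from the flow $t\mapsto v\circ\exp(tX_{i,k})$ pushed through the inversion, whereas the paper defines $\widehat{Y}_{i,k}$ by this identity, verifies it is right-invariant by commuting with right translations, and then matches it to $Y_{i,k}^{R}$ at the origin; both arguments are sound.
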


\begin{proof}
We start with the following

\textbf{Claim.} For any function $f$ defined on $\mathbb{G}$, let%
\[
f^{\prime}\left(  u\right)  =f\left(  -u\right)
\]
(recall that $-u=u^{-1}$); then the following identities hold:%
\begin{equation}
Y_{i,k}\left(  f^{\prime}\right)  =-\left(  Y_{i,k}^{R}f\right)  ^{\prime}.
\label{right invariant}%
\end{equation}

To prove this, let us define the vector fields $\widehat{Y}_{i,k}$ by%
\begin{equation}
Y_{i,k}\left(  f^{\prime}\right)  =-\left(  \widehat{Y}_{i,k}f\right)
^{\prime}, \label{def Y hat}%
\end{equation}
then for any $a\in\mathbb{G}$, denoting by $L_{a},R_{a}$ the corresponding
operators of left and right translation, respectively (acting on functions),
we have%
\begin{align*}
(\widehat{Y}_{i,k}R_{a}f)^{\prime}  &  =-Y_{i,k}((R_{a}f)^{\prime}%
)=-Y_{i,k}(L_{-a}f^{\prime})=\\
&  =-L_{-a}Y_{i,k}f^{\prime}=L_{-a}(-Y_{i,k}f^{\prime})=\\
&  =L_{-a}(\widehat{Y}_{i,k}f)^{\prime}=(R_{a}\widehat{Y}_{i,k}f)^{\prime},
\end{align*}
hence $\widehat{Y}_{i,k}$ are right invariant vector fields. Also, note that
for any vector field $Y=\sum a_{j}\left(  u\right)  \partial_{u_{j}}$ we have%
\[
Y\left(  f^{\prime}\right)  \left(  0\right)  =-\left(  Yf\right)  \left(
0\right)
\]
because%
\begin{align*}
Y\left(  f^{\prime}\right)  \left(  u\right)   &  =\sum a_{j}\left(  u\right)
\partial_{u_{j}}\left[  f\left(  -u\right)  \right]  =-\sum a_{j}\left(
u\right)  \left(  \partial_{u_{j}}f\right)  \left(  -u\right)  \text{
implies}\\
Y\left(  f^{\prime}\right)  \left(  0\right)   &  =-\sum a_{j}\left(
0\right)  \left(  \partial_{u_{j}}f\right)  \left(  0\right)  =-\left(
Yf\right)  \left(  0\right)
\end{align*}
hence by (\ref{def Y hat}) we know that $\widehat{Y}_{k}f(0)=Y_{k}f(0)$.
Therefore $\widehat{Y}_{k}$ is the right invariant vector field which
coincides with $Y_{k}$ at the origin, that is $\widehat{Y}_{k}=Y_{k}^{R},$ and
the Claim is proved.

By (\ref{approximation format}) and (\ref{right invariant}),%
\begin{align}
\widetilde{X}_{i,k}\left[  f\left(  \Theta\left(  \cdot,\xi\right)  \right)
\right]  \left(  \eta\right)   &  =\widetilde{X}_{i,k}\left[  f^{\prime
}\left(  \Theta\left(  \xi\,,\cdot\right)  \right)  \right]  \left(
\eta\right)  =\label{14.3"}\\
&  =\left(  Y_{i,k}f^{\prime}+R_{i,k}^{\xi}f^{\prime}\right)  \left(
\Theta\left(  \xi\,,\eta\right)  \right)  =\nonumber\\
&  =-\left(  Y_{i,k}^{R}f\right)  ^{\prime}\left(  \Theta\left(  \xi
\,,\eta\right)  \right)  +R_{i,k}^{\xi}f^{\prime}\left(  \Theta\left(
\xi\,,\eta\right)  \right)  =\nonumber\\
&  =-\left(  Y_{i,k}^{R}f\right)  \left(  \Theta\left(  \eta,\xi\right)
\right)  +\left(  \left(  R_{i,k}^{\xi}\right)  ^{\prime}f\right)  \left(
\Theta\left(  \eta,\xi\right)  \right)  ,\nonumber
\end{align}
where%
\[
\left(  \left(  R_{i,k}^{\xi}\right)  ^{\prime}f\right)  \left(  u\right)
=\left(  R_{i,k}^{\xi}f^{\prime}\right)  \left(  -u\right)
\]
is a differential operator of degree $\leq i-1$. This proves (\ref{5}).
\end{proof}

\bigskip

\begin{proof}
[Proof of Theorem \ref{main theorem}]As we noted after Lemma
\ref{iilem:der:optype}, we are left to prove the assertion about $T\left(
\xi_{0}\right)  \widetilde{X}_{i}$ and $T\left(  \xi_{0}\right)  \widetilde
{X}_{0}$. We only give the proof for the case $\lambda\geq1$, $i=1,\cdots,q$,
the proof for $\lambda\geq2$, $i=0$ being very similar. Like in the proof of
Lemma \ref{iilem:der:optype}, it is enough to consider the part $k^{\prime}$
of the kernel of $T$, the proof for $k^{\prime\prime}$ being completely
analogous (see Definition \ref{iidef:kernels:type}). Let us expand%
\[
k^{\prime}(\xi_{0};\xi,\eta)=\left\{  \sum_{j=1}^{H_{m}}a_{j}(\xi)b_{j}%
(\eta)D_{j}\Gamma(\xi_{0};\cdot)+a_{0}(\xi)b_{0}(\eta)D_{0}\Gamma(\xi
_{0};\cdot)\right\}  \left(  \Theta(\eta,\xi)\right)
\]
where $D_{0}\Gamma(\xi_{0};\cdot)$ has bounded $Y_{i}$-derivatives
($i=1,2,...,q$). We can consider each of the terms%
\[
T_{j}\left(  \xi_{0}\right)  \widetilde{X}_{i}f\left(  \xi\right)  \equiv\int
a_{j}\left(  \xi\right)  b_{j}\left(  \eta\right)  D_{j}^{\eta}\Gamma(\xi
_{0};\Theta(\eta,\xi))\widetilde{X}_{i}f\left(  \eta\right)  d\eta
\]
(this time it is important to recall the $\eta$-dependence of the coefficients
of $D_{j}$) and distinguish 2 cases:

(i) $D_{j}\Gamma$ is homogeneous of degree $\geq2-Q$ or it is regular (i.e.
$D_{j}\Gamma$ has bounded $Y_{i}$-derivatives);

(ii) $T_{j}\left(  \xi_{0}\right)  $ is a frozen operator of type $1$ and
$D_{j}\Gamma$ is homogeneous of degree $1-Q.$

Case (i). We can integrate by parts, recalling that the transpose of
$\widetilde{X}_{i}$ is
\[
\left(  \widetilde{X}_{i}\right)  ^{T}g\left(  \eta\right)  =-\widetilde
{X}_{i}g\left(  \eta\right)  +c_{i}\left(  \eta\right)  g\left(  \eta\right)
\]
with $c_{i}$ smooth functions:%
\begin{align*}
T_{j}\left(  \xi_{0}\right)  \widetilde{X}_{i}f\left(  \xi\right)   &  =\int
c_{i}\left(  \eta\right)  a_{j}\left(  \xi\right)  b_{j}\left(  \eta\right)
D_{j}^{\eta}\Gamma(\xi_{0};\Theta(\eta,\xi))f\left(  \eta\right)  d\eta\\
&  -\int a_{j}\left(  \xi\right)  \left(  \widetilde{X}_{i}b_{j}\right)
\left(  \eta\right)  D_{j}^{\eta}\Gamma(\xi_{0};\Theta(\eta,\xi))f\left(
\eta\right)  d\eta\\
&  -\int a_{j}\left(  \xi\right)  b_{j}\left(  \eta\right)  \widetilde{X}%
_{i}\left[  D_{j}^{\eta}\Gamma(\xi_{0};\Theta(\cdot,\xi))\right]  \left(
\eta\right)  f\left(  \eta\right)  d\eta\\
&  -\int a_{j}\left(  \xi\right)  b_{j}\left(  \eta\right)  \left(
\widetilde{X}_{i}^{\eta}D_{j}^{\eta}\right)  \Gamma(\xi_{0};\Theta(\eta
,\xi))f\left(  \eta\right)  d\eta\\
&  =A\left(  \xi\right)  +B\left(  \xi\right)  +C\left(  \xi\right)  +D\left(
\xi\right)  .
\end{align*}
Now, $A\left(  \xi\right)  +B\left(  \xi\right)  \ $is still an operator of
type $\lambda$, applied to $f$; in particular, it can be seen as operator of
type $\lambda-1$; the same is true for $D\left(  \xi\right)  $, by Remark
\ref{Remark parameter derivative}. To study $C\left(  \xi\right)  $, we apply
Lemma \ref{change differential orders},
\[
\widetilde{X}_{i}\left[  D_{j}^{\eta}\Gamma(\xi_{0};\Theta(\cdot,\xi))\right]
\left(  \eta\right)  =-\left(  Y_{i}^{R}D_{j}^{\eta}\Gamma\right)  \left(
\xi_{0},\Theta\left(  \eta,\xi\right)  \right)  +\left(  \left(  R_{i}^{\xi
}\right)  ^{\prime}D_{j}^{\eta}\Gamma\right)  \left(  \xi_{0},\Theta\left(
\eta,\xi\right)  \right)  .
\]
Since $Y_{i}^{R}$ is homogeneous of degree $1$, $a_{j}\left(  \xi\right)
b_{j}\left(  \eta\right)  Y_{i}^{R}D_{j}^{\eta}\Gamma\left(  \xi_{0}%
,\Theta\left(  \eta,\xi\right)  \right)  $ is a kernel of type $\lambda-1$.
Since $\left(  R_{i}^{\xi}\right)  ^{\prime}$ is a differential operator of
degree $\leq0$, the kernel $a_{j}\left(  \xi\right)  b_{j}\left(  \eta\right)
\left(  \left(  R_{i}^{\xi}\right)  ^{\prime}D_{j}^{\eta}\Gamma\right)
\left(  \xi_{0},\Theta\left(  \eta,\xi\right)  \right)  $ is of type $\lambda$.

Note that, even when the coefficients of the differential operator $D_{j}$ (in
the expression $D_{j}\Gamma(\xi_{0};\Theta(\eta,\xi))$) do not depend on $\xi$
and $\eta,$ this procedure introduces, with the operator $\left(  R_{i}^{\xi
}\right)  ^{\prime}$, a new $\xi$-dependence of the coefficients. Compare with
what we have remarked in the proof of Lemma \ref{iilem:der:optype2}.

Case (ii). In this case the kernel $\left(  Y_{i}^{R}D_{j}\Gamma\right)  $ is
singular, so that the computation must be handled with more care. We can write%
\begin{align*}
&  T_{j}\left(  \xi_{0}\right)  \widetilde{X}_{i}f\left(  \xi\right)  =\\
&  =\lim_{\varepsilon\rightarrow0}\int a_{j}\left(  \xi\right)  b_{j}\left(
\eta\right)  \varphi_{\varepsilon}\left(  \Theta(\xi,\eta)\right)  D_{j}%
\Gamma(\xi_{0};\Theta(\eta,\xi))\widetilde{X}_{i}f\left(  \eta\right)
d\eta\equiv\lim_{\varepsilon\rightarrow0}T_{\varepsilon}\left(  \xi\right)
\end{align*}
with $\varphi_{\varepsilon}$ as in the proof of Lemma \ref{iilem:der:optype}.
Note that, choosing a radial $\varphi$, we have $\varphi_{\varepsilon}\left(
\Theta(\xi,\eta)\right)  =\varphi_{\varepsilon}\left(  \Theta(\eta
,\xi)\right)  .$ Then%
\begin{align*}
T_{\varepsilon}\left(  \xi\right)   &  =\int c_{i}\left(  \eta\right)
a_{j}\left(  \xi\right)  b_{j}\left(  \eta\right)  \varphi_{\varepsilon
}\left(  \Theta(\xi,\eta)\right)  D_{j}\Gamma(\xi_{0};\Theta(\eta
,\xi))f\left(  \eta\right)  d\eta\\
&  -\int a_{j}\left(  \xi\right)  \left(  \widetilde{X}_{i}b_{j}\right)
\left(  \eta\right)  \varphi_{\varepsilon}\left(  \Theta(\xi,\eta)\right)
D_{j}\Gamma(\xi_{0};\Theta(\eta,\xi))f\left(  \eta\right)  d\eta\\
&  -\int a_{j}\left(  \xi\right)  b_{j}\left(  \eta\right)  \widetilde{X}%
_{i}\left[  \varphi_{\varepsilon}\left(  \Theta(\cdot,\xi)\right)  D_{j}%
\Gamma(\xi_{0};\Theta(\cdot,\xi))\right]  \left(  \eta\right)  f\left(
\eta\right)  d\eta\\
&  -\int a_{j}\left(  \xi\right)  b_{j}\left(  \eta\right)  \varphi
_{\varepsilon}\left(  \Theta(\xi,\eta)\right)  \left(  \widetilde{X}_{i}%
^{\eta}D_{j}^{\eta}\right)  \Gamma(\xi_{0};\Theta(\eta,\xi))f\left(
\eta\right)  d\eta\\
&  =A_{\varepsilon}\left(  \xi\right)  +B_{\varepsilon}\left(  \xi\right)
+C_{\varepsilon}\left(  \xi\right)  +D_{\varepsilon}\left(  \xi\right)  .
\end{align*}
Now $A_{\varepsilon}\left(  \xi\right)  +B_{\varepsilon}\left(  \xi\right)
+D_{\varepsilon}\left(  \xi\right)  $ converge to an operator of type
$\lambda,$ as $A\left(  \xi\right)  ,B\left(  \xi\right)  ,D\left(
\xi\right)  $ are in Case (i), while by Theorem
\ref{Rothschild-Stein's approximation Theorem} and Lemma
\ref{change differential orders}
\begin{align*}
C_{\varepsilon}\left(  \xi\right)   &  =-\int a_{j}\left(  \xi\right)
b_{j}\left(  \eta\right)  f\left(  \eta\right)  \left(  Y_{i}\varphi
_{\varepsilon}\right)  \left(  \Theta(\eta,\xi)\right)  D_{j}\Gamma(\xi
_{0};\Theta(\eta,\xi))d\eta\\
&  -\int a_{j}\left(  \xi\right)  b_{j}\left(  \eta\right)  f\left(
\eta\right)  \left(  R_{i}^{\xi}\varphi_{\varepsilon}\right)  \left(
\Theta(\eta,\xi)\right)  D_{j}\Gamma(\xi_{0};\Theta(\eta,\xi))d\eta\\
&  +\int a_{j}\left(  \xi\right)  b_{j}\left(  \eta\right)  f\left(
\eta\right)  \varphi_{\varepsilon}\left(  \Theta(\eta,\xi)\right)  \left(
Y_{i}^{R}D_{j}\Gamma\right)  \left(  \xi_{0},\Theta\left(  \eta,\xi\right)
\right)  d\eta\\
&  -\int a_{j}\left(  \xi\right)  b_{j}\left(  \eta\right)  f\left(
\eta\right)  \varphi_{\varepsilon}\left(  \Theta\left(  \eta,\xi\right)
\right)  \left(  \left(  R_{i}^{\xi}\right)  ^{\prime}D_{j}\Gamma\right)
\left(  \xi_{0},\Theta\left(  \eta,\xi\right)  \right)  d\eta\\
&  \equiv E_{\varepsilon}\left(  \xi\right)  +F_{\varepsilon}\left(
\xi\right)  +G_{\varepsilon}\left(  \xi\right)  +H_{\varepsilon}\left(
\xi\right)  .
\end{align*}
Now: $H_{\varepsilon}\left(  \xi\right)  $ tends to an operator of type $1;$
$G_{\varepsilon}\left(  \xi\right)  $ tends to%
\[
P.V.\int a_{j}\left(  \xi\right)  b_{j}\left(  \eta\right)  f\left(
\eta\right)  \left(  Y_{i}^{R}D_{j}\Gamma\right)  \left(  \xi_{0}%
,\Theta\left(  \eta,\xi\right)  \right)  d\eta,
\]
which is an operator of type $0.$ As to $E_{\varepsilon}\left(  \xi\right)  ,$
the same computation performed in the proof of Lemma \ref{iilem:der:optype}
gives%
\[
E_{\varepsilon}\left(  \xi\right)  \rightarrow\alpha\left(  \xi_{0}%
,\xi\right)  \left(  abcf\right)  \left(  \xi\right)
\]
with
\[
\alpha\left(  \xi_{0},\xi\right)  =\int Y_{i}\varphi\left(  v\right)
D_{1}^{\xi}\Gamma\left(  \xi_{0};v\right)  dv
\]
which is the multiplicative part of an operator of type $0.$ A similar
computation shows that $F_{\varepsilon}\left(  \xi\right)  \rightarrow0,$ so
we are done.
\end{proof}

\bigskip

Let us come to the second important result of this section. In \cite[Corollary
p. 296]{rs}, the following fact is proved in the case of a family of
H\"{o}rmander's vector fields of weight one (that is, without the drift
$\widetilde{X}_{0}$): for any frozen operator $T\left(  \xi_{0}\right)  $ of
type $1,$ $i=1,2,...,q,$ there exist operators $T_{ij}\left(  \xi_{0}\right)
,T_{i}\left(  \xi_{0}\right)  $ of type $1$ such that%
\[
\widetilde{X}_{i}T\left(  \xi_{0}\right)  =\sum_{j=1}^{q}T_{ij}\left(  \xi
_{0}\right)  \widetilde{X}_{j}+T_{i}\left(  \xi_{0}\right)  .
\]
This possibility of exchanging the order of integral and differential
operators will be crucial in the proof of representation formulas. However,
such an identity cannot be proved in this form when the drift $\widetilde
{X}_{0}$ is present. Instead, we are going to prove the following, which will
be enough for our purposes:

\begin{theorem}
\label{scambioderivoper}If $T\left(  \xi_{0}\right)  $ is a frozen operator of
type $\lambda\geq1,$ $k_{0}=1,2\ldots q$, then%
\begin{equation}
\widetilde{X}_{k_{0}}T\left(  \xi_{0}\right)  =\sum_{k=1}^{q}T_{k}^{k_{0}%
}\left(  \xi_{0}\right)  \widetilde{X}_{k}+\sum_{h,j=1}^{q}\widetilde{a}%
_{hj}\left(  \xi_{0}\right)  T^{hk_{0}}\left(  \xi_{0}\right)  \widetilde
{X}_{j}+T_{0}^{k_{0}}\left(  \xi_{0}\right)  +T^{k_{0}}\left(  \xi_{0}\right)
\widetilde{\mathcal{L}}_{0}, \label{commute of operator}%
\end{equation}
where $T_{k}^{k_{0}}\left(  \xi_{0}\right)  $ $\left(  k=0,1,...,q\right)  $
and $T^{hk_{0}}\left(  \xi_{0}\right)  $ are frozen operators of type
$\lambda$, $T^{k_{0}}\left(  \xi_{0}\right)  $ are frozen operators of type
$\lambda+1$, and $\widetilde{a}_{hj}\left(  \xi_{0}\right)  $ are the frozen
coefficients of $\widetilde{\mathcal{L}}_{0}$.

If $T\left(  \xi_{0}\right)  $ is a frozen operator of type $\lambda\geq2$,
then
\begin{equation}
\widetilde{X}_{0}T\left(  \xi_{0}\right)  =\sum_{k=1}^{q}T_{k}\left(  \xi
_{0}\right)  \widetilde{X}_{k}+\sum_{h,j=1}^{q}\widetilde{a}_{hj}\left(
\xi_{0}\right)  T^{h}\left(  \xi_{0}\right)  \widetilde{X}_{j}+T_{0}\left(
\xi_{0}\right)  +T\left(  \xi_{0}\right)  \widetilde{\mathcal{L}}_{0},
\label{commutate of operator 2}%
\end{equation}
where $T_{k}\left(  \xi_{0}\right)  $ $\left(  k=0,1,...,q\right)  $ and
$T^{h}\left(  \xi_{0}\right)  $ are frozen operators of type $\lambda-1,$
$T\left(  \xi_{0}\right)  $ is a frozen operator of type $\lambda.$
\end{theorem}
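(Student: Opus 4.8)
The plan is to reduce to prototype kernels, differentiate through, isolate the single genuinely hard term, recover the unit of weight lost by differentiation by means of the fundamental solution---crucially using \emph{right}-invariant vector fields so that the recovered derivatives can be pushed onto the argument---, and finally repackage the $\widetilde{X}_{0}$-term into $\widetilde{\mathcal{L}}_{0}$.

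First I would reduce the claim to prototype frozen kernels. By Definition \ref{iidef:kernels:type} and Proposition \ref{Prop transposed operators} it suffices to treat a kernel $a(\xi)\,D\Gamma(\xi_{0};\Theta(\eta,\xi))\,b(\eta)$ with $D$ homogeneous of degree $2-\lambda$ (the part modeled on $\Gamma^{T}$ being symmetric), together with the ``regular'' summand $a_{0}(\xi)\,D_{0}\Gamma(\xi_{0};\Theta(\eta,\xi))\,b_{0}(\eta)$; applying $\widetilde{X}_{k_{0}}$ to the latter via (\ref{approximation}) again gives a regular piece of a type-$\lambda$ kernel, which we absorb into $T_{0}^{k_{0}}(\xi_{0})$. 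For the prototype, the terms where $\widetilde{X}_{k_{0}}$ falls on $a(\xi)$, on the $\xi$-dependence of the coefficients of $D$ (Remark \ref{Remark parameter derivative}), or is replaced by the remainder $R_{k_{0}}^{\eta}$, all produce type-$\lambda$ kernels and go into $T_{0}^{k_{0}}(\xi_{0})$; what is left is the operator $S(\xi_{0})$ with kernel $a(\xi)\,(Y_{k_{0}}D\Gamma)(\xi_{0};\Theta(\eta,\xi))\,b(\eta)$, which is only of type $\lambda-1$.

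The core step is to recover from $S(\xi_{0})$ the unit of weight lost by differentiation, by inverting the frozen operator \emph{from the right}. Writing $\mathcal{L}_{0}^{\ast}=\sum_{h,j}\widetilde{a}_{hj}(\xi_{0})Y_{h}Y_{j}+Y_{0}$ and using $\Gamma^{T}(\xi_{0};u)=\Gamma(\xi_{0};u^{-1})$, $\mathcal{L}_{0}^{\ast T}\Gamma^{T}(\xi_{0};\cdot)=\delta$ (Remark \ref{iioss:selfadj:ker}, Theorem \ref{ith:fundsol:lzero}), the inversion identity (\ref{right invariant}) of Lemma \ref{change differential orders} shows that the right-invariant operator $\mathcal{R}_{0}^{\ast}:=\sum_{h,j}\widetilde{a}_{hj}(\xi_{0})Y_{h}^{R}Y_{j}^{R}+Y_{0}^{R}$ has $\Gamma(\xi_{0};\cdot)$ as fundamental solution. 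One then solves $\mathcal{R}_{0}^{\ast}\Phi=Y_{k_{0}}D\Gamma(\xi_{0};\cdot)$ by $\Phi=\Gamma(\xi_{0};\cdot)\ast(Y_{k_{0}}D\Gamma)(\xi_{0};\cdot)$; invoking the Rothschild--Stein type calculus (composition of a type-$\mu$ kernel with the fundamental solution raises the type, using the vanishing-on-spheres property in Theorem \ref{ith:fundsol:lzero}(f) at the borderline homogeneity $\lambda=1$, where $Q\geq 3$ is needed) one gets that $\Phi$ is a frozen kernel of type $\lambda+1$, and then
\[
Y_{k_{0}}D\Gamma(\xi_{0};\cdot)=\mathcal{R}_{0}^{\ast}\Phi=\sum_{j=1}^{q}Y_{j}^{R}\Big(\sum_{h=1}^{q}\widetilde{a}_{hj}(\xi_{0})Y_{h}^{R}\Phi\Big)+Y_{0}^{R}\Phi\equiv\sum_{j=1}^{q}Y_{j}^{R}G_{j}+Y_{0}^{R}\Phi,
\]
with each $G_{j}$ a frozen kernel of type $\lambda$.

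Finally I would transplant this back to operators on $\widetilde{B}(\overline{\xi},R)$. Inserting the decomposition into $S(\xi_{0})f$ and applying Lemma \ref{change differential orders}, each $(Y_{j}^{R}G_{j})(\Theta(\eta,\xi))$ equals $-\widetilde{X}_{j}[G_{j}(\Theta(\cdot,\xi))](\eta)$ plus a type-$\lambda$ remainder; integrating by parts in $\eta$ moves $\widetilde{X}_{j}$ onto the argument and produces, after the reorganization, the first-order terms $\sum_{k}T_{k}^{k_{0}}(\xi_{0})\widetilde{X}_{k}$ and $\sum_{h,j}\widetilde{a}_{hj}(\xi_{0})T^{hk_{0}}(\xi_{0})\widetilde{X}_{j}$ (the latter carrying the frozen coefficients, which enter through the role of $\Phi$ in $G_{j}$), plus type-$\lambda$ remainders collected into $T_{0}^{k_{0}}(\xi_{0})$; the piece $(Y_{0}^{R}\Phi)(\Theta(\eta,\xi))$ is treated the same way with $\widetilde{X}_{0}$ in place of $\widetilde{X}_{j}$, giving a type-$(\lambda+1)$ operator composed with $\widetilde{X}_{0}$ plus a type-$\lambda$ remainder, and one substitutes $\widetilde{X}_{0}=\widetilde{\mathcal{L}}_{0}-\sum_{h,j}\widetilde{a}_{hj}(\xi_{0})\widetilde{X}_{h}\widetilde{X}_{j}$, using Theorem \ref{main theorem} to rewrite each $[\text{type }\lambda+1]\widetilde{X}_{h}$ as an operator of type $\lambda$; this leaves precisely $T^{k_{0}}(\xi_{0})\widetilde{\mathcal{L}}_{0}$ together with further $\sum_{h,j}\widetilde{a}_{hj}(\xi_{0})T^{hk_{0}}(\xi_{0})\widetilde{X}_{j}$ terms, and (\ref{commute of operator}) follows. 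The identity (\ref{commutate of operator 2}) is obtained by the identical scheme with $\widetilde{X}_{0}$, of weight two, in place of $\widetilde{X}_{k_{0}}$, so that all types drop by one further unit and the hypothesis $\lambda\geq 2$ is exactly what makes the post-differentiation term a genuine kernel. The main obstacle---and the reason this is not a mere transcription of \cite[\S 14]{rs}---is the bookkeeping of the transplant: one must show that every error produced along the way (from $R_{i}^{\eta}$ and $(R_{i}^{\xi})'$, from the Jacobian factor $1+\omega(\xi,u)$ of Theorem \ref{metric}(c), and from the cutoff functions) is either of type $\lambda$ or of the form $(\text{type }\lambda)\circ\widetilde{X}_{k}$, and that the principal values and limiting integrals converge at the borderline homogeneity; this forces one to spell out in the drift setting the condensed argument of \cite[\S 14]{rs}, together with the closure of the type calculus under composition with the fundamental solution.
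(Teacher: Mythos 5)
Your strategy is genuinely different from the paper's, and it has a gap that I do not think can be waved away. The paper never inverts anything: it proves an intermediate theorem, based on Lemma~\ref{lemma change differential order on domain}, which expresses the action of $\widetilde{X}_{k_0}$ in the $\xi$-variable on a kernel $f(\Theta(\eta,\cdot))$ as a sum $\sum_{j,k} a_{jk}(\Theta(\eta,\xi))\,\widetilde{X}_{j,k}[f(\Theta(\cdot,\xi))](\eta)$ plus a local-degree-$\leq j_0-1$ remainder; here the $a_{jk}$ have local degree $\geq \max\{j-j_0,0\}$, so after integrating by parts one gets $\widetilde{X}_{k_0}T(\xi_0)=\sum_{j,k}T^{k_0}_{jk}(\xi_0)\widetilde{X}_{j,k}+T^{k_0}(\xi_0)$ with $T^{k_0}_{jk}$ of type $\lambda+j-1$. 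The whole point of Theorem~\ref{scambioderivoper} is the \emph{further} reduction of the $\widetilde{X}_{j,k}$ for $j\geq 2$: expand $\widetilde{X}_{2,k}$ as commutators and possibly $\widetilde{X}_{0}$, substitute $\widetilde{X}_{0}=\widetilde{\mathcal{L}}_{0}-\sum\widetilde{a}_{hj}(\xi_0)\widetilde{X}_{h}\widetilde{X}_{j}$, and recurse for $j>2$ by peeling one or two generators at a time. That reduction is algebraic and does not touch $\Gamma$ again.

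Your proposal replaces the algebraic Lemma~\ref{lemma change differential order on domain} with an analytic one: convolve with $\Gamma(\xi_0;\cdot)$ on the right to recover the lost unit of weight, i.e., solve $\mathcal{R}_{0}^{\ast}\Phi=Y_{k_0}D\Gamma(\xi_0;\cdot)$ by $\Phi=\Gamma(\xi_0;\cdot)\ast Y_{k_0}D\Gamma(\xi_0;\cdot)$ and then write $Y_{k_0}D\Gamma=\sum_j Y_j^R G_j+Y_0^R\Phi$. The observation that $\mathcal{R}_0^\ast=\sum\widetilde{a}_{hj}(\xi_0)Y_h^RY_j^R+Y_0^R$ has $\Gamma(\xi_0;\cdot)$ as fundamental solution is correct (it follows from $\mathcal{L}_0^{\ast T}\Gamma^T=\delta$ and the conjugation $P(f')=-(P^Rf)'$), and the homogeneity bookkeeping is right. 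But the claim that $\Phi$ ``is a frozen kernel of type $\lambda+1$'' is exactly the unproved step, and it is not a bookkeeping matter. Definition~\ref{iidef:kernels:type} requires the homogeneous components of a frozen kernel of type $\lambda$ to be of the very specific form $D_i\Gamma(\xi_0;\cdot)$ or $D_i\Gamma^T(\xi_0;\cdot)$ for differential operators $D_i$ with coefficients depending smoothly on $(\xi,\eta,u)$ — the whole machinery of Theorems~\ref{ith:fundsol:lzero}, \ref{ith:stima:unif:ipo}, \ref{continuity for frozen operator on holder space}, \ref{iith:comm:optypezer} (uniform estimates in $\xi_0$, spherical-harmonic expansion, etc.) depends on this structure. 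The convolution $\Gamma\ast Y_{k_0}D\Gamma$ produces a homogeneous function of the correct degree, but not visibly one of the form $D'\Gamma(\xi_0;\cdot)$, so it does not obviously qualify. On top of that, the convolution itself is problematic: at the borderline $\lambda=1$ the integrand is critical at the origin, and at infinity two homogeneous functions never yield a convergent convolution; you would need a truncated/distributional version and then show the errors it introduces are type-$\lambda$ or regular. You flag ``closure of the type calculus under composition with the fundamental solution'' as something that ``forces one to spell out,'' but in fact there is no such closure lemma in this paper, and supplying one — with uniform-in-$\xi_0$ control and compatible with the $D_i\Gamma$ structure — would be a substantial independent piece of work, comparable to the one you are trying to avoid. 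The algebraic route via Lemma~\ref{lemma change differential order on domain} is not merely a variant: it is what makes the argument land inside the paper's own kernel class without ever leaving it.
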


We start with the following lemma, which is similar to that proved in \cite[p.
296]{rs}. Again, we prefer to present a detailed proof since the one given in
\cite{rs} is very condensed.

\begin{lemma}
\label{lemma change differential order on domain}For any vector field
$\widetilde{X}_{j_{0},k_{0}}$ ($j_{0}=1,2,...,s$, $k_{0}=1,2,...,k_{j_{0}}$)
there exist smooth functions $\left\{  a_{jk}^{j_{0}k_{0}\eta}\right\}
_{\substack{j=1,2,...,s\\k=1,2,...h_{j}}}$ having local degree $\geq
\max\left\{  j-j_{0},0\right\}  $ and smoothly depending on $\eta,$ such that
for any $f\in C_{0}^{\infty}\left(  \mathbb{G}\right)  $, one can write
\begin{align}
&  \widetilde{X}_{j_{0},k_{0}}\left[  f\left(  \Theta\left(  \eta
,\cdot\right)  \right)  \right]  \left(  \xi\right)
=\label{change differential order on domain}\\
&  =\sum_{\substack{j=1,2,...,s\\k=1,2,...,k_{j}}}a_{jk}^{j_{0}k_{0}\eta
}\left(  \Theta\left(  \eta,\xi\right)  \right)  \widetilde{X}_{j,k}\left[
f\left(  \Theta\left(  \cdot,\xi\right)  \right)  \right]  \left(
\eta\right)  +\left(  R_{j_{0}}^{\xi,\eta}f\right)  \left(  \Theta\left(
\eta,\xi\right)  \right) \nonumber
\end{align}
where $R_{j_{0}}^{\xi,\eta}$ is a vector field of local degree $\leq j_{0}-1$,
smoothly depending on $\xi,\eta.$
\end{lemma}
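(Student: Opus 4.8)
\textbf{Proof sketch for Lemma \ref{lemma change differential order on domain}.}

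The plan is to argue by induction on the weight $j_{0}$ of the vector field $\widetilde{X}_{j_{0},k_{0}}$, using Lemma \ref{change differential orders} as the base case. For $j_{0}=1$ (so $\widetilde{X}_{1,k_{0}}=\widetilde{X}_{k_{0}}$, $k_{0}=1,\dots,q$), Lemma \ref{change differential orders} gives $\widetilde{X}_{k_{0}}[f(\Theta(\eta,\cdot))](\xi)=-(Y_{k_{0}}^{R}f)(\Theta(\eta,\xi))+((R_{k_{0}}^{\eta})'f)(\Theta(\eta,\xi))$; the remainder term $((R_{k_{0}}^{\eta})'f)(\Theta(\eta,\xi))$ is already of the required form with a vector field of local degree $\leq 0=j_{0}-1$, so what remains is to rewrite the right-invariant vector field $Y_{k_{0}}^{R}$. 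The key algebraic fact is that a right-invariant vector field on $\mathbb{G}$, homogeneous of degree $1$, can be expanded in terms of the left-invariant generators $Y_{1},\dots,Y_{q}$ with \emph{polynomial} (hence smooth, with controlled local degree) coefficients: writing $Y_{k_{0}}^{R}=\sum_{j,k}a_{jk}(u)\,Y_{j,k}$ where $a_{jk}$ is a homogeneous polynomial of degree $j-1$ (so local degree $\geq j-j_{0}=j-1$). Pulling this identity back through $\Theta(\eta,\cdot)$ via the approximation theorem (Theorem \ref{Rothschild-Stein's approximation Theorem}, equation (\ref{approximation format})), each $Y_{j,k}f$ evaluated at $\Theta(\eta,\xi)$ becomes $\widetilde{X}_{j,k}[f(\Theta(\cdot,\xi))](\eta)$ up to an error $R_{j,k}^{\eta}f$ of local degree $\leq j-1$; multiplied by $a_{jk}(\Theta(\eta,\xi))$ of local degree $\geq j-1$ these errors combine into a vector field of local degree $\leq j_{0}-1$ depending smoothly on $\eta$ (and now also on $\xi$), and get absorbed into $R_{j_{0}}^{\xi,\eta}$. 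This establishes (\ref{change differential order on domain}) for $j_{0}=1$.

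For the inductive step, write $\widetilde{X}_{j_{0},k_{0}}=[\widetilde{X}_{i_{1}},\widetilde{X}_{j_{0}-1,k'}]$ (or the analogous bracket built from a weight-one field and a field of weight $j_{0}-1$, matching the selection of the basis $\{\widetilde{X}_{j,k}\}$; in the drift case one also has $\widetilde{X}_{2,1}=\widetilde{X}_{0}$ which is handled directly, not as a bracket). Apply the $j_{0}=1$ case and the inductive hypothesis for weight $j_{0}-1$ to the two halves of the commutator, then commute the resulting operators past each other. The point is that every time one passes a $\widetilde{X}_{i,k}[f(\Theta(\eta,\cdot))]$-type term through another such operator, the approximation theorem turns the composition into $Y$-operators acting on the group, where the bracket structure of $\mathbb{G}$ is explicit, plus remainders whose local degrees strictly drop; bookkeeping shows the ``good'' terms reassemble into $\sum_{j,k}a_{jk}^{j_{0}k_{0}\eta}(\Theta(\eta,\xi))\widetilde{X}_{j,k}[f(\Theta(\cdot,\xi))](\eta)$ with coefficients of local degree $\geq\max\{j-j_{0},0\}$, and the ``error'' terms all have local degree $\leq j_{0}-1$. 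Here I would lean on Remark \ref{Remark parameter derivative} to ensure that $\xi$- and $\eta$-derivatives of homogeneous coefficients preserve homogeneity, so the local-degree bookkeeping is not disrupted by differentiating coefficients.

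The main obstacle I expect is precisely this local-degree bookkeeping in the inductive step: one must check that when the coefficient $a_{jk}$ (local degree $\geq j-j_{0}$) hits the remainder $R_{j,k}^{\eta}$ (local degree $\leq j-1$, possibly $\leq j-1<0$ if $j=1$ --- but for the basis fields $j\geq 1$ and $R_{1,k}^{\eta}$ has local degree $\leq 0$), the product has local degree $\leq (j_{0}-j)+(j-1)=j_{0}-1$, which is exactly what is needed. The subtlety is that the approximation theorem only gives $R_{i,k}^{\eta}$ of local degree $\leq i-1$ where $i$ is the \emph{weight} of the field being expanded, so one must track weights carefully through each commutator, and verify that the cross terms from $[\widetilde{X}_{i_{1}},\cdot]$ acting on a coefficient $a_{jk}(\Theta(\eta,\cdot))$ (rather than on $f$) also fall into the remainder --- this uses that $\widetilde{X}_{i_{1}}$ applied to a function of local degree $\geq\beta$ lowers the local degree by at most $1$, combined with the weight constraint. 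Once the degree arithmetic is organized (it is the same bookkeeping underlying the proof of Lemma \ref{change differential orders}, just iterated), the statement follows. $\rule{0.5em}{0.5em}$
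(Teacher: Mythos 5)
There is a genuine gap, and it starts at the base case. You cite Lemma \ref{change differential orders} as giving $\widetilde{X}_{k_{0}}[f(\Theta(\eta,\cdot))](\xi)=-(Y_{k_{0}}^{R}f)(\Theta(\eta,\xi))+((R_{k_{0}}^{\eta})'f)(\Theta(\eta,\xi))$, but that lemma actually says $\widetilde{X}_{i,k}[f(\Theta(\cdot,\xi))](\eta)=-(Y_{i,k}^{R}f)(\Theta(\eta,\xi))+((R_{i,k}^{\xi})'f)(\Theta(\eta,\xi))$ --- the derivative on the left is the $\eta$-derivative, and the remainder depends smoothly on $\xi$, not $\eta$. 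What gives the $\xi$-derivative (the left-hand side of the present lemma) is the approximation theorem, equation (\ref{approximation format}), and that produces the \emph{left-invariant} $Y_{j_{0},k_{0}}$, not $-Y^{R}_{j_{0},k_{0}}$. You have the two tools exactly swapped. The same confusion reappears a few lines later when you assert that the approximation theorem turns $Y_{j,k}f$ at $\Theta(\eta,\xi)$ into $\widetilde{X}_{j,k}[f(\Theta(\cdot,\xi))](\eta)$ --- again, (\ref{approximation format}) concerns the $\xi$-derivative and left-invariant fields, whereas the $\eta$-derivative comes only via Lemma \ref{change differential orders} and involves $Y^{R}_{j,k}$. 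So as written the base case does not close: to express the target in terms of $\widetilde{X}_{j,k}[f(\Theta(\cdot,\xi))](\eta)$ you must first write the relevant vector field as a combination of the \emph{right}-invariant $Y^{R}_{l,r}$ (with coefficients of the stated local degree), then apply Lemma \ref{change differential orders} termwise; you do the conversion in the wrong direction.

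Beyond the citation errors, your overall plan (induction on the weight $j_{0}$, writing $\widetilde{X}_{j_{0},k_{0}}$ as a bracket and commuting operators) is a genuinely different, and substantially harder, route than the paper's. The paper does not induct: it applies the approximation theorem once to get $\widetilde{X}_{j_{0},k_{0}}[f(\Theta(\eta,\cdot))](\xi)=(Z_{j_{0}}^{\eta}f)(\Theta(\eta,\xi))$ with $Z_{j_{0}}^{\eta}=Y_{j_{0},k_{0}}+R_{j_{0},k_{0}}^{\eta}$ a field of local degree $\leq j_{0}$, then uses the triangular identity $Y_{i,k}=\partial_{u_{ik}}+\sum g_{lr}^{ik}(u)\partial_{u_{lr}}$ (and its inverse) together with the left/right relation $Y_{i,k}=\sum g_{lr}^{ik}(u)Y_{l,r}^{R}$ to rewrite $Z_{j_{0}}^{\eta}=\sum b_{lr}^{\eta}(u)Y_{l,r}^{R}$ with $b_{lr}^{\eta}$ of local degree $\geq\max\{l-j_{0},0\}$, and finally applies Lemma \ref{change differential orders} to each $Y_{l,r}^{R}$. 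This handles all weights, including the drift $\widetilde{X}_{0}=\widetilde{X}_{2,1}$, uniformly and with no bracket bookkeeping. Your induction would additionally need to justify writing a basis field $\widetilde{X}_{j_{0},k_{0}}$ as a single bracket of lower-weight basis fields (which is not how the Rothschild--Stein basis is built in general) and to handle $\widetilde{X}_{0}$ separately, neither of which is actually done in the sketch; the ``bookkeeping shows the terms reassemble'' step is where the argument is deferred rather than carried out. I would recommend abandoning the induction and instead proving the algebraic decomposition of $Z_{j_{0}}^{\eta}$ into right-invariant fields directly, which is what makes the lemma go through for all $j_{0}$ at once.
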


\begin{proof}
By Theorem \ref{Rothschild-Stein's approximation Theorem} we know that%
\begin{equation}
\widetilde{X}_{j_{0},k_{0}}\left[  f\left(  \Theta\left(  \eta,\cdot\right)
\right)  \right]  \left(  \xi\right)  =\left(  Y_{j_{0},k_{0}}f+R_{j_{0}%
,k_{0}}^{\eta}f\right)  \left(  \Theta\left(  \eta,\xi\right)  \right)
\equiv\left(  Z_{j_{0}}^{\eta}f\right)  \left(  \Theta\left(  \eta,\xi\right)
\right)  , \label{RS}%
\end{equation}
where $Z_{j_{0}}^{\eta}$ is a vector field of local degree $\leq j_{0}$,
smoothly depending on $\eta.$ To rewrite $\left(  Z_{j_{0}}^{\eta}f\right)  $
in the suitable form, we start from the following identities:%
\begin{equation}
Y_{i,k}=\frac{\partial}{\partial u_{ik}}+\sum\limits_{r}\sum_{i<l\leq s}%
g_{lr}^{ik}\left(  u\right)  \frac{\partial}{\partial u_{lr}} \label{system}%
\end{equation}
for any $i=1,2,...,s$ and $k=1,2,...,k_{i};$%
\begin{equation}
Y_{i,k}=\sum g_{lr}^{ik}\left(  u\right)  Y_{l,r}^{R},
\label{left and right relation}%
\end{equation}
where $g_{lr}^{ik}\left(  u\right)  $ are homogeneous of degree $l-i$ (see
\cite[p. 295]{rs}). Hence we can write%
\[
Z_{j_{0}}^{\eta}=\sum a_{jk}^{\eta}\left(  u\right)  \frac{\partial}{\partial
u_{jk}},
\]
where $a_{jk}$ has local degree $\geq j-j_{0}$ and smoothly depends on $\eta.$
By inverting (for any $i,k$) the triangular system (\ref{system}), we obtain%
\[
\frac{\partial}{\partial u_{jk}}=Y_{j,k}+\sum_{j<l\leq s}f_{lr}^{jk}\left(
u\right)  Y_{l,r},
\]
\bigskip where each $f_{lr}^{jk}\left(  u\right)  $ is homogeneous of degree
$l-j$. Using also (\ref{left and right relation}), we have%
\begin{align}
\left(  Z_{j_{0}}^{\eta}f\right)  \left(  u\right)   &  =\sum a_{jk}^{\eta
}\left(  u\right)  \left[  \left(  Y_{j,k}f\right)  \left(  u\right)
+\sum_{j<l\leq s}f_{lr}^{jk}\left(  u\right)  \left(  Y_{l,r}f\right)  \left(
u\right)  \right] \nonumber\\
&  =\sum b_{lr}^{\eta}\left(  u\right)  \left(  Y_{l,r}^{R}f\right)  \left(
u\right)  , \label{Rf}%
\end{align}
where
\begin{equation}
b_{lr}^{\eta}\text{ has local degree }\geq\max\left\{  l-j_{0},0\right\}
\label{b_lr}%
\end{equation}
and smoothly depends on $\eta.$ By Lemma \ref{change differential orders},
then%
\begin{align}
\left(  Z_{j_{0}}^{\eta}f\right)  \left(  \Theta\left(  \eta,\xi\right)
\right)   &  =\sum_{l,r}-b_{lr}^{\eta}\left(  \Theta\left(  \eta,\xi\right)
\right)  \widetilde{X}_{l,r}\left[  f\left(  \Theta\left(  \cdot,\xi\right)
\right)  \right]  \left(  \eta\right) \nonumber\\
&  +\sum_{l,r}\left(  b_{lr}^{\eta}\left(  R_{l,r}^{\xi}\right)  ^{\prime
}f\right)  \left(  \Theta\left(  \eta,\xi\right)  \right)  , \label{Rf 2}%
\end{align}
where $\left(  R_{l,r}^{\xi}\right)  ^{\prime}$ is a differential operator of
local degree $\leq l-1$, hence the differential operator on $\mathbb{G}$%
\begin{equation}
R_{j_{0}}^{\xi,\eta}\equiv\sum_{l,r}b_{lr}^{\eta}\left(  R_{l,r}^{\xi}\right)
^{\prime}\text{ has local degree }\leq j_{0}-1, \label{R_j0}%
\end{equation}
and depends smoothly on $\xi,\eta.$ Collecting (\ref{RS}), (\ref{b_lr}),
(\ref{Rf 2}), (\ref{R_j0}), the Lemma is proved, with $a_{jk}^{j_{0}k_{0}\eta
}=-b_{jk}^{\eta}$.
\end{proof}

With this lemma in hand, we can prove the following, similar to \cite[Thm
9]{rs}:

\begin{theorem}
(i)\ Suppose $T\left(  \xi_{0}\right)  $ is a frozen operator of type
$\lambda\geq1$. Given a vector field $\widetilde{X}_{i}$ for $i=1,2,...,q,$
there exist $T^{i}\left(  \xi_{0}\right)  ,$ frozen operator of type
$\lambda,$ and $T_{jk}^{i}\left(  \xi_{0}\right)  $, frozen operators of type
$\lambda+j-1,$ such that:%
\begin{equation}
\widetilde{X}_{i}T\left(  \xi_{0}\right)  =\sum_{j,k}T_{jk}^{i}\left(  \xi
_{0}\right)  \widetilde{X}_{j,k}+T^{i}\left(  \xi_{0}\right)  ; \label{1}%
\end{equation}

(ii) Suppose $T\left(  \xi_{0}\right)  $ is a frozen operator of type
$\lambda\geq2$. There exist $T^{0}\left(  \xi_{0}\right)  ,T_{jk}^{i}\left(
\xi_{0}\right)  $ frozen operators of type $\lambda-1,$ $\lambda+\max\left\{
j-2,0\right\}  ,$ respectively, such that:%
\begin{equation}
\widetilde{X}_{0}T\left(  \xi_{0}\right)  =\sum_{j,k}T_{jk}^{0}\left(  \xi
_{0}\right)  \widetilde{X}_{j,k}+T^{0}\left(  \xi_{0}\right)  ; \label{2}%
\end{equation}

\end{theorem}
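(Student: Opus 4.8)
The strategy is to reduce everything to the single-derivative computations already carried out in Lemma \ref{iilem:der:optype} and Lemma \ref{lemma change differential order on domain}. Write the kernel of $T\left(\xi_{0}\right)$ as usual, separating the parts $k',k''$ modeled on $\Gamma$ and $\Gamma^{T}$; it suffices to handle $k'$, the argument for $k''$ being identical up to replacing $\Gamma$ by $\Gamma^{T}$. So assume
\[
T\left(\xi_{0}\right)f\left(\xi\right)=\int_{\widetilde{B}}\left\{\sum_{j}a_{j}\left(\xi\right)b_{j}\left(\eta\right)D_{j}\Gamma\left(\xi_{0};\cdot\right)+a_{0}\left(\xi\right)b_{0}\left(\eta\right)D_{0}\Gamma\left(\xi_{0};\cdot\right)\right\}\left(\Theta\left(\eta,\xi\right)\right)f\left(\eta\right)d\eta,
\]
where $D_{j}$ is homogeneous of degree $\leq 2-\lambda$ and $D_{0}\Gamma$ has $m$ bounded $Y$-derivatives. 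The $\widetilde{X}_{i}$-derivative hits the $\xi$ variable, appearing both in $a_{j}\left(\xi\right)$, in the coefficients of $D_{j}^{\xi,\eta}$, and in $\Theta\left(\eta,\xi\right)$. The first two occurrences, by the Leibniz rule and Remark \ref{Remark parameter derivative}, only produce new frozen kernels of the same type $\lambda$ (or $\lambda-1$ in the case of $\widetilde{X}_{0}$, since $\widetilde{X}_{0}$ has weight two and we may always view a type-$\lambda$ operator as a type-$(\lambda-1)$ one when $\lambda\geq 1$); these go into $T^{i}\left(\xi_{0}\right)$ or $T^{0}\left(\xi_{0}\right)$. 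The only delicate term is the one where $\widetilde{X}_{i}$ differentiates $D_{j}\Gamma$ through $\Theta$, i.e.\ the expression $\widetilde{X}_{i}\left[\left(D_{j}\Gamma\right)\left(\Theta\left(\eta,\cdot\right)\right)\right]\left(\xi\right)$.

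To that term I apply Lemma \ref{lemma change differential order on domain} with $\widetilde{X}_{j_{0},k_{0}}=\widetilde{X}_{i}$ (so $j_{0}=1$): this rewrites the $\xi$-derivative as
\[
\widetilde{X}_{i}\left[\left(D_{j}\Gamma\right)\left(\Theta\left(\eta,\cdot\right)\right)\right]\left(\xi\right)=\sum_{l,k}a_{lk}^{i\eta}\left(\Theta\left(\eta,\xi\right)\right)\widetilde{X}_{l,k}\left[\left(D_{j}\Gamma\right)\left(\Theta\left(\cdot,\xi\right)\right)\right]\left(\eta\right)+\left(R_{i}^{\xi,\eta}D_{j}\Gamma\right)\left(\Theta\left(\eta,\xi\right)\right),
\]
where $a_{lk}^{i\eta}$ has local degree $\geq l-1$ and $R_{i}^{\xi,\eta}$ has local degree $\leq 0$. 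The remainder term $a_{j}\left(\xi\right)b_{j}\left(\eta\right)\left(R_{i}^{\xi,\eta}D_{j}\Gamma\right)\left(\Theta\left(\eta,\xi\right)\right)$ is a frozen kernel of type $\lambda$ (local degree $\leq 0$ does not lower the homogeneity below $\lambda-Q$), hence contributes to $T^{i}\left(\xi_{0}\right)$. For the main sum, I integrate by parts in $\eta$: the transpose of $\widetilde{X}_{l,k}$ acting in $\eta$ gives $-\widetilde{X}_{l,k}$ plus a smooth zero-order term, and the $\widetilde{X}_{l,k}$ that lands on $f$ produces exactly $T_{jk}^{i}\left(\xi_{0}\right)\widetilde{X}_{l,k}f$, where $T_{jk}^{i}\left(\xi_{0}\right)$ has kernel $a_{j}\left(\xi\right)b_{j}\left(\eta\right)a_{lk}^{i\eta}\left(\Theta\left(\eta,\xi\right)\right)\left(D_{j}\Gamma\right)\left(\Theta\left(\eta,\xi\right)\right)$; since $a_{lk}^{i\eta}$ is homogeneous of degree $l-1$, multiplying the degree-$(\geq\lambda-Q)$ function $D_{j}\Gamma$ by it raises the homogeneity by $l-1$, so this kernel is of type $\lambda+l-1$, as required by \eqref{1}. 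The zero-order and $b_{j}$-differentiated terms stay of type $\lambda$ and are absorbed in $T^{i}\left(\xi_{0}\right)$. For $D_{0}\Gamma$ (the ``regular'' piece) there is no singularity and the same integration by parts works directly with $\widetilde{X}_{i}$ in place of the $\varphi_{\varepsilon}$-regularized version; for $D_{j}$ with $D_{j}\Gamma$ homogeneous of degree exactly $1-Q$ (the genuinely singular case) one inserts the radial cutoff $\varphi_{\varepsilon}\left(\Theta\left(\eta,\xi\right)\right)$ as in the proof of Lemma \ref{iilem:der:optype} and takes $\varepsilon\to 0$: the extra term $\widetilde{X}_{i}\varphi_{\varepsilon}$ produces, by the same rescaling computation that gave \eqref{alfa}, a bounded multiplicative term $\alpha\left(\xi_{0},\xi\right)\left(a_{j}b_{j}f\right)\left(\xi\right)$, which is the multiplicative part of a type-$0$ operator, i.e.\ of a type-$\lambda$ operator since $\lambda\geq 1$.

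Part (ii) is handled identically, taking $\widetilde{X}_{j_{0},k_{0}}=\widetilde{X}_{0}$, i.e.\ $j_{0}=2$: then $a_{lk}^{0\eta}$ has local degree $\geq\max\{l-2,0\}$ and $R_{0}^{\xi,\eta}$ has local degree $\leq 1$. The main sum becomes $\sum T_{lk}^{0}\left(\xi_{0}\right)\widetilde{X}_{l,k}$ with $T_{lk}^{0}$ of type $\lambda+\max\{l-2,0\}$; the remainder is of type $\lambda-1$ (the degree-$\leq 1$ operator $R_{0}^{\xi,\eta}$ lowers the homogeneity of $D_{j}\Gamma$ by at most one) and the terms where $\widetilde{X}_{0}$ hits $a_{j}\left(\xi\right)$ or the $\xi$-coefficients of $D_{j}$ are of type $\lambda-1$ as well (weight-two derivative), all going into $T^{0}\left(\xi_{0}\right)$. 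The only point needing a word is the singular case: here, because $\widetilde{X}_{0}$ has weight two and $D_{j}\Gamma$ can be as singular as degree $1-Q$, the worst kernel $Y^{R}_{0}D_{j}\Gamma$ is of degree $-1-Q$, which is \emph{not} locally integrable and whose principal value need not exist; this is exactly why the bare identities of \cite{rs} fail with a drift. The resolution, which is the point of Theorem \ref{scambioderivoper} rather than of the present theorem, is that such terms only ever arise in the combination that reconstructs $\widetilde{\mathcal{L}}_{0}$ (using $\widetilde{X}_{0}=\widetilde{\mathcal{L}}_{0}-\sum\widetilde{a}_{hj}\left(\xi_{0}\right)\widetilde{X}_{h}\widetilde{X}_{j}$ on the group side); in the present statement this is absorbed by allowing the index $l$ to run up to $s$ and permitting $\widetilde{X}_{l,k}$ with $l\geq 2$, i.e.\ by \emph{not} insisting that all derivatives on $f$ be first-order. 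Thus the expected main obstacle --- the non-existence of the principal value for $\widetilde{X}_{0}$ hitting the most singular kernels --- is sidestepped here precisely because \eqref{2} permits higher-weight derivatives $\widetilde{X}_{j,k}$ on the right-hand side; the genuine work of converting those back into the operator $\widetilde{\mathcal{L}}_{0}$ is deferred to the proof of Theorem \ref{scambioderivoper}.
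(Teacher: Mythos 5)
Your overall strategy is the paper's: reduce to the leading term of the kernel, apply Lemma~\ref{lemma change differential order on domain} to convert the $\xi$-derivative through $\Theta$ into $\eta$-derivatives (plus a low-degree remainder), integrate by parts to place $\widetilde{X}_{l,k}$ on $f$, and regularize with $\varphi_\varepsilon$ when $\lambda$ is at its minimum value. That matches the proof in the paper almost step by step.

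However, your treatment of the singular case in part (i) contains a genuine error. You assert that the regularization produces a multiplicative term $\alpha(\xi_0,\xi)(a_jb_jf)(\xi)$, and then try to argue that this is admissible because ``a type-$0$ operator is a type-$\lambda$ operator since $\lambda\geq 1$.'' This is backwards: a frozen operator of type $\lambda\geq 1$ has \emph{no} multiplicative part by definition, so if an $\alpha$-term survived, $T^i(\xi_0)$ would be of type $0$, not type $\lambda$, and the conclusion \eqref{1} would be false as stated. In fact no such term appears here. The $\alpha$-term in Lemma~\ref{iilem:der:optype} arises because there the $\widetilde{X}_i$-derivative is pushed directly onto $D\Gamma$ through $\Theta$, lowering the kernel degree to $-Q$. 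In the present theorem, after Lemma~\ref{lemma change differential order on domain} and integration by parts, the $\eta$-derivatives fall on $a_{lk}^{i\eta}(\Theta(\cdot,\xi))$, $b$, and $f$, never on $D\Gamma$; since $a_{lk}^{i\eta}$ is smooth of local degree $\geq l-1$ and $\widetilde{X}_{l,k}$ has weight $l$, the differentiated coefficient still has local degree $\geq 0$, and the resulting kernels all have degree $\geq \lambda - Q \geq 1-Q$, hence are absolutely integrable. The cutoff $\varphi_\varepsilon$ then multiplies an integrable integrand and drops out by dominated convergence, with no boundary term. (Note also that for $\lambda=1$ the kernel $D_j\Gamma$ of degree $1-Q$ is itself integrable, so calling this ``the genuinely singular case'' is misleading; the singularity arises only in the intermediate, pre-IBP expression.)

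Your closing paragraph about part (ii) also has the degrees off. Since $\lambda\geq 2$ there, the most singular $D_j\Gamma$ has degree $\geq 2-Q$, not $1-Q$; and the worst intermediate kernel $Y_0^R D_j\Gamma$ has degree $\geq -Q$, not $-1-Q$. So the principal-value obstruction you describe does not arise in (ii) at all. Your broader remark --- that \eqref{2} sidesteps the reconstruction of $\widetilde{\mathcal{L}}_0$ by allowing higher-weight $\widetilde{X}_{j,k}$ on the right, deferring that conversion to Theorem~\ref{scambioderivoper} --- is a fair reading of the division of labor between the two results, but the justification you give for why this is ``necessary'' does not hold.
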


\begin{proof}
First of all, it is enough to consider the part $k^{\prime}$ of the kernel of
$T\left(  \xi_{0}\right)  $, the proof for $k^{\prime\prime}$ being completely
analogous (see Definition \ref{iidef:kernels:type}).

(i) If $T\left(  \xi_{0}\right)  $ is a frozen operator of type $\lambda\geq1$
with kernel $k^{\prime}$ we can write it as%
\[
T\left(  \xi_{0}\right)  f\left(  \xi\right)  =\int a\left(  \xi\right)
D\Gamma\left(  \xi_{0};\Theta\left(  \eta,\xi\right)  \right)  b\left(
\eta\right)  f\left(  \eta\right)  d\eta+T^{\prime}\left(  \xi_{0}\right)
f\left(  \xi\right)
\]
where $D\Gamma\left(  \xi_{0},\cdot\right)  $ is homogeneous of degree
$\lambda-Q$ and $T^{\prime}\left(  \xi_{0}\right)  $ is a frozen operator of
degree $\lambda+1.$ Since $\widetilde{X}_{i}T^{\prime}\left(  \xi_{0}\right)
$ is a frozen operator of type $\lambda,$ it has already the form
$T^{i}\left(  \xi_{0}\right)  $ required by the theorem, hence it is enough to
prove that%
\[
\widetilde{X}_{i}\int a\left(  \xi\right)  D\Gamma\left(  \xi_{0}%
;\Theta\left(  \eta,\xi\right)  \right)  b\left(  \eta\right)  f\left(
\eta\right)  d\eta
\]
can be rewritten in the form
\[
\sum_{j,k}T_{jk}^{i}\left(  \xi_{0}\right)  \widetilde{X}_{j,k}f\left(
\xi\right)  +T^{i}\left(  \xi_{0}\right)  f\left(  \xi\right)
\]
with $T_{jk}^{i}\left(  \xi_{0}\right)  ,T^{i}\left(  \xi_{0}\right)  $ frozen
operators of type $\lambda+j-1$ and $\lambda,$ respectively. Next, we have to
distinguish two cases.

Case 1: $\lambda\geq2.$ In this case the $\widetilde{X}_{i}$ derivative can be
taken under the integral sign, writing:
\begin{align*}
&  \widetilde{X}_{i}\int a\left(  \xi\right)  D\Gamma\left(  \xi_{0}%
;\Theta\left(  \eta,\xi\right)  \right)  b\left(  \eta\right)  f\left(
\eta\right)  d\eta\\
&  =\int\left(  \widetilde{X}_{i}a\right)  \left(  \xi\right)  D\Gamma\left(
\xi_{0};\Theta\left(  \eta,\xi\right)  \right)  b\left(  \eta\right)  f\left(
\eta\right)  d\eta\\
&  +\int a\left(  \xi\right)  \widetilde{X}_{i}\left[  D\Gamma\left(
\Theta\left(  \eta,\cdot\right)  \right)  \right]  \left(  \xi\right)
b\left(  \eta\right)  f\left(  \eta\right)  d\eta\\
&  =A\left(  \xi\right)  +B\left(  \xi\right)  .
\end{align*}
Now $A\left(  \xi\right)  $ is frozen operator of type $\lambda,$ while
applying Lemma \ref{lemma change differential order on domain} with $j_{0}=1$
we get%
\begin{align*}
B\left(  \xi\right)   &  =\int a\left(  \xi\right)  \sum_{l,r}a_{lr}%
^{i}\left(  \Theta\left(  \eta,\xi\right)  \right)  \widetilde{X}_{l,r}\left[
D\Gamma\left(  \xi_{0};\Theta\left(  \cdot,\xi\right)  \right)  \right]
\left(  \eta\right)  b\left(  \eta\right)  f\left(  \eta\right)  d\eta\\
&  +\int a\left(  \xi\right)  \sum_{l,r}a_{lr}^{i}\left(  \Theta\left(
\eta,\xi\right)  \right)  \left(  R_{rs}^{\xi}D\Gamma\right)  \left(  \xi
_{0};\Theta\left(  \eta,\xi\right)  \right)  b\left(  \eta\right)  f\left(
\eta\right)  d\eta\\
&  \equiv C\left(  \xi\right)  +D\left(  \xi\right)
\end{align*}
where $R_{rs}^{\xi}$ are differential operators of local degree $\leq0,$ and
the $a_{lr}^{i}$'s have local degree $\geq l-1.$ Hence $D$ is a frozen
operator of type $\lambda,$ while, since the transposed vector field of
$\widetilde{X}_{l,r}$ is%
\[
\widetilde{X}_{l,r}^{T}=-\widetilde{X}_{l,r}+c_{l,r}%
\]
with $c_{l,r}$ smooth functions,%
\begin{align*}
C\left(  \xi\right)   &  =-a\left(  \xi\right)  \sum_{l,r}\int\widetilde
{X}_{l,r}\left[  a_{lr}^{i}\left(  \Theta\left(  \cdot,\xi\right)  \right)
b\left(  \cdot\right)  \right]  \left(  \eta\right)  D\Gamma\left(  \xi
_{0};\Theta\left(  \eta,\xi\right)  \right)  f\left(  \eta\right)  d\eta\\
&  +a\left(  \xi\right)  \sum_{l,r}\int a_{lr}^{i}\left(  \Theta\left(
\eta,\xi\right)  \right)  D\Gamma\left(  \xi_{0};\Theta\left(  \eta
,\xi\right)  \right)  c_{l,r}\left(  \eta\right)  b\left(  \eta\right)
f\left(  \eta\right)  d\eta\\
&  -a\left(  \xi\right)  \sum_{l,r}\int a_{lr}^{i}\left(  \Theta\left(
\eta,\xi\right)  \right)  D\Gamma\left(  \xi_{0};\Theta\left(  \eta
,\xi\right)  \right)  b\left(  \eta\right)  \widetilde{X}_{l,r}f\left(
\eta\right)  d\eta.
\end{align*}
The first two terms in the last expression are still frozen operators of type
$\lambda$ applied to $f,$ while the third is a sum of operators of type
$\lambda+l-1$ applied to $\widetilde{X}_{l,r}f,$ as required by the theorem.

Case 2. $\lambda=1.$ In this case we have to compute the derivative of the
integral in distributional sense, as already done in the proof of Lemma
\ref{iilem:der:optype}: with the same meaning of $\varphi_{\varepsilon}$, let
us compute%
\[
\lim_{\varepsilon\rightarrow0}\widetilde{X}_{i}\int a\left(  \xi\right)
\varphi_{\varepsilon}\left(  \Theta\left(  \eta,\xi\right)  \right)
D\Gamma\left(  \xi_{0};\Theta\left(  \eta,\xi\right)  \right)  b\left(
\eta\right)  f\left(  \eta\right)  d\eta.
\]
Actually, this gives exactly the same result as in case 1:%
\begin{align*}
&  \widetilde{X}_{i}\int a\left(  \xi\right)  \varphi_{\varepsilon}\left(
\Theta\left(  \eta,\xi\right)  \right)  D\Gamma\left(  \xi_{0};\Theta\left(
\eta,\xi\right)  \right)  b\left(  \eta\right)  f\left(  \eta\right)  d\eta\\
&  =\int\left(  \widetilde{X}_{i}a\right)  \left(  \xi\right)  \varphi
_{\varepsilon}\left(  \Theta\left(  \eta,\xi\right)  \right)  D\Gamma\left(
\xi_{0};\Theta\left(  \eta,\xi\right)  \right)  b\left(  \eta\right)  f\left(
\eta\right)  d\eta\\
&  +\int a\left(  \xi\right)  \widetilde{X}_{i}\left[  \left(  \varphi
_{\varepsilon}D\Gamma\right)  \left(  \Theta\left(  \eta,\cdot\right)
\right)  \right]  \left(  \xi\right)  b\left(  \eta\right)  f\left(
\eta\right)  d\eta\\
&  =A_{\varepsilon}\left(  \xi\right)  +B_{\varepsilon}\left(  \xi\right)
\end{align*}
where
\[
A_{\varepsilon}\left(  \xi\right)  \rightarrow\int\left(  \widetilde{X}%
_{i}a\right)  \left(  \xi\right)  D\Gamma\left(  \xi_{0};\Theta\left(
\eta,\xi\right)  \right)  b\left(  \eta\right)  f\left(  \eta\right)  d\eta
\]
and%
\begin{align*}
B_{\varepsilon}\left(  \xi\right)   &  =\int a\left(  \xi\right)  \sum
_{l,r}a_{lr}^{i}\left(  \Theta\left(  \eta,\xi\right)  \right)  \widetilde
{X}_{l,r}\left[  \varphi_{\varepsilon}\left(  \Theta\left(  \cdot,\xi\right)
\right)  D\Gamma\left(  \xi_{0};\Theta\left(  \cdot,\xi\right)  \right)
\right]  \left(  \eta\right)  b\left(  \eta\right)  f\left(  \eta\right)
d\eta\\
&  +\int a\left(  \xi\right)  \sum_{l,r}a_{lr}^{i}\left(  \Theta\left(
\eta,\xi\right)  \right)  \left(  R_{rs}^{\xi}\left(  \varphi_{\varepsilon
}D\Gamma\right)  \right)  \left(  \xi_{0};\Theta\left(  \eta,\xi\right)
\right)  b\left(  \eta\right)  f\left(  \eta\right)  d\eta\\
&  \equiv C_{\varepsilon}\left(  \xi\right)  +D_{\varepsilon}\left(
\xi\right)
\end{align*}
where $C_{\varepsilon}\left(  \xi\right)  $ converges to the expression called
$C\left(  \xi\right)  $ in the computation of case 1; as to $D_{\varepsilon
}\left(  \xi\right)  $,
\[
R_{rs}^{\xi}\left(  \varphi_{\varepsilon}D\Gamma\right)  =\left(  R_{rs}^{\xi
}\varphi_{\varepsilon}\right)  D\Gamma+\varphi_{\varepsilon}R_{rs}^{\xi
}D\Gamma.
\]
Now, $\varphi_{\varepsilon}R_{rs}^{\xi}D\Gamma\rightarrow R_{rs}^{\xi}D\Gamma$
while $\left(  R_{rs}^{\xi}\varphi_{\varepsilon}\right)  D\Gamma\rightarrow0,$
being $R_{rs}^{\xi}$ a vector field of local degree $\leq0.$ Hence also
$D_{\varepsilon}\left(  \xi\right)  $ converges to the expression called
$D\left(  \xi\right)  $ in the computation of case 1, and we are done.

(ii) Let now $T\left(  \xi_{0}\right)  $ be a frozen operator of type
$\lambda\geq2$ with kernel $k^{\prime}$. As in the case (i), it is enough to
prove that
\[
\widetilde{X}_{0}\int a\left(  \xi\right)  D\Gamma\left(  \xi_{0}%
;\Theta\left(  \eta,\xi\right)  \right)  b\left(  \eta\right)  f\left(
\eta\right)  d\eta,
\]
where $D\Gamma$ is homogeneous of degree $\lambda-Q,$ can be rewritten in the
form
\[
\sum_{j,k}T_{jk}^{0}\left(  \xi_{0}\right)  \widetilde{X}_{j,k}f\left(
\xi\right)  +T^{0}\left(  \xi_{0}\right)  f\left(  \xi\right)
\]
with $T_{jk}^{0}\left(  \xi_{0}\right)  ,T^{0}\left(  \xi_{0}\right)  $ frozen
operators of type $\lambda+j-2$ and $\lambda-1,$ respectively. Let us consider
only the case $\lambda\geq3,$ the case $\lambda=2$ being handled with the
modification seen in (i), Case 2.%
\begin{align*}
&  \widetilde{X}_{0}\int a\left(  \xi\right)  D\Gamma\left(  \xi_{0}%
;\Theta\left(  \eta,\xi\right)  \right)  b\left(  \eta\right)  f\left(
\eta\right)  d\eta\\
&  =\int\left(  \widetilde{X}_{0}a\right)  \left(  \xi\right)  D\Gamma\left(
\xi_{0};\Theta\left(  \eta,\xi\right)  \right)  b\left(  \eta\right)  f\left(
\eta\right)  d\eta+\\
&  +\int a\left(  \xi\right)  \widetilde{X}_{0}\left[  D\Gamma\left(  \xi
_{0};\Theta\left(  \eta,\cdot\right)  \right)  \right]  \left(  \xi\right)
b\left(  \eta\right)  f\left(  \eta\right)  d\eta\\
&  =\int\left(  \widetilde{X}_{0}a\right)  \left(  \xi\right)  D\Gamma\left(
\xi_{0};\Theta\left(  \eta,\xi\right)  \right)  b\left(  \eta\right)  f\left(
\eta\right)  d\eta+\\
&  +\int a\left(  \xi\right)  \sum_{l,r}a_{lr}^{0}\left(  \Theta\left(
\eta,\xi\right)  \right)  \widetilde{X}_{l,r}\left[  D\Gamma\left(  \xi
_{0};\Theta\left(  \cdot,\xi\right)  \right)  \right]  \left(  \eta\right)
b\left(  \eta\right)  f\left(  \eta\right)  d\eta\\
&  +\int a\left(  \xi\right)  \sum_{l,r}a_{lr}^{0}\left(  \Theta\left(
\eta,\xi\right)  \right)  \left(  R_{rs}^{\xi}D\Gamma\right)  \left(  \xi
_{0};\Theta\left(  \eta,\xi\right)  \right)  b\left(  \eta\right)  f\left(
\eta\right)  d\eta\\
&  \equiv A\left(  \xi\right)  +C\left(  \xi\right)  +D\left(  \xi\right)  .
\end{align*}
where $R_{rs}^{\xi}$ are now differential operators of local degree $\leq1,$
and the $a_{lr}^{0}$'s have local degree $\geq\max\left\{  j-2,0\right\}  .$
Then $A\left(  \xi\right)  $ is a frozen operator of type $\lambda$, applied
to $f;$ $D\left(  \xi\right)  $ is a frozen operator of type $\lambda-1,$
applied to $f$. Moreover,%
\begin{align*}
C\left(  \xi\right)   &  =-a\left(  \xi\right)  \sum_{l,r}\int\widetilde
{X}_{l,r}\left[  a_{lr}^{0}\left(  \Theta\left(  \cdot,\xi\right)  \right)
b\left(  \cdot\right)  \right]  \left(  \eta\right)  D\Gamma\left(  \xi
_{0};\Theta\left(  \eta,\xi\right)  \right)  f\left(  \eta\right)  d\eta\\
&  +a\left(  \xi\right)  \sum_{l,r}\int a_{lr}^{0}\left(  \Theta\left(
\eta,\xi\right)  \right)  D\Gamma\left(  \xi_{0};\Theta\left(  \eta
,\xi\right)  \right)  c_{l,r}\left(  \eta\right)  b\left(  \eta\right)
f\left(  \eta\right)  d\eta\\
&  -a\left(  \xi\right)  \sum_{l,r}\int a_{lr}^{0}\left(  \Theta\left(
\eta,\xi\right)  \right)  D\Gamma\left(  \xi_{0};\Theta\left(  \eta
,\xi\right)  \right)  b\left(  \eta\right)  \widetilde{X}_{l,r}f\left(
\eta\right)  d\eta
\end{align*}
where the first two terms are still frozen operators of type $\lambda,$
applied to $f$, while the third is the sum of frozen operators of type
$\lambda+\max\left\{  j-2,0\right\}  $ applied to $\widetilde{X}_{l,r}f.$
\end{proof}

We can now proceed to the:

\begin{proof}
[Proof of Theorem \ref{scambioderivoper}]It suffices to prove the formula
(\ref{commute of operator}), for the second is similar. Let us consider one of
the terms $T_{jk}^{i}\left(  \xi_{0}\right)  \widetilde{X}_{j,k}$ appearing in
(\ref{1}).

If $j=1$, the term is already in the form required by the Theorem.

If $j=2$, then $\widetilde{X}_{2,k}$ can be written as a combination of
commutators of the vector fields $\widetilde{X}_{1},\widetilde{X}%
_{2},...,\widetilde{X}_{q},$ plus (possibly) the field $\widetilde{X}_{0}$.
Then $T_{2k}^{i}\left(  \xi_{0}\right)  \widetilde{X}_{2,k}$ contains terms
$T_{2k}^{i}\left(  \xi_{0}\right)  \widetilde{X}_{h}\widetilde{X}_{j}$ and
possibly a term $T_{2k}^{i}\left(  \xi_{0}\right)  \widetilde{X}_{0}$. By the
above theorem we know $T_{2k}^{i}$ is a frozen operator of type $\lambda+1$.
Now:%
\[
T_{2k}^{i}\left(  \xi_{0}\right)  \widetilde{X}_{h}\widetilde{X}_{j}=\left(
T_{2k}^{i}\left(  \xi_{0}\right)  \widetilde{X}_{h}\right)  \widetilde{X}%
_{j}=T_{k}^{i}\left(  \xi_{0}\right)  \widetilde{X}_{j},
\]
where by Theorem \ref{main theorem}, $T_{k}^{i}\left(  \xi_{0}\right)  $ is a
frozen operator of type $\lambda$; on the other hand, by
(\ref{frozen operator}),%
\begin{align*}
T_{2k}^{i}\left(  \xi_{0}\right)  \widetilde{X}_{0}  &  =T_{2k}^{i}\left(
\xi_{0}\right)  \left(  \widetilde{\mathcal{L}}_{0}-\sum_{h,j=1}^{q}%
\widetilde{a}_{hj}\left(  \xi_{0}\right)  \widetilde{X}_{h}\widetilde{X}%
_{j}\right) \\
&  =T_{2k}^{i}\left(  \xi_{0}\right)  \widetilde{\mathcal{L}}_{0}-\sum
_{h,j=1}^{q}\widetilde{a}_{hj}\left(  \xi_{0}\right)  \left(  T_{2k}%
^{i}\left(  \xi_{0}\right)  \widetilde{X}_{h}\right)  \widetilde{X}_{j}\\
&  =T_{2k}^{i}\left(  \xi_{0}\right)  \widetilde{\mathcal{L}}_{0}-\sum
_{h,j=1}^{q}\widetilde{a}_{hj}\left(  \xi_{0}\right)  T_{h,k}^{i}\left(
\xi_{0}\right)  \widetilde{X}_{j},
\end{align*}
with $T_{2k}^{i}\left(  \xi_{0}\right)  ,$ $T_{h,k}^{i}\left(  \xi_{0}\right)
,$ frozen operators of type $\lambda+1,\lambda,$ which is in the form allowed
by the thesis of the Theorem.

Finally, if $j>2$, it is enough to look at the final part of the differential
operator $\widetilde{X}_{j,k}$: it is always possible to rewrite
$\widetilde{X}_{j,k}$ either as $\widetilde{X}_{j-1,k}\widetilde{X}_{1,k}$ or
as $\widetilde{X}_{j-2,k}\widetilde{X}_{2,k}$. In the first case, we have%
\[
T_{jk}^{i}\left(  \xi_{0}\right)  \widetilde{X}_{j,k}=\left(  T_{jk}%
^{i}\left(  \xi_{0}\right)  \widetilde{X}_{j-1,k}\right)  \widetilde{X}%
_{1,k}=T_{jk}^{\prime i}\left(  \xi_{0}\right)  \widetilde{X}_{1,k},
\]
with $T_{jk}^{\prime i}\left(  \xi_{0}\right)  $ frozen operator of type
$\lambda,$ which is already in the proper form; in the second case, we have%
\[
T_{jk}^{i}\left(  \xi_{0}\right)  \widetilde{X}_{j,k}=\left(  T_{jk}%
^{i}\left(  \xi_{0}\right)  \widetilde{X}_{j-2,k}\right)  \widetilde{X}%
_{2,k}=T_{j}^{^{\prime}i}\left(  \xi_{0}\right)  \widetilde{X}_{2,k}%
\]
with $T_{jk}^{\prime i}\left(  \xi_{0}\right)  $ frozen operator of type
$\lambda+1,$ and then we can proceed as in the case $j=2$. So the Theorem is proved.
\end{proof}

\subsection{Parametrix and representation formulas\label{subsec parametrix}}

Throughout this subsection we will make extensive use of computations on
frozen operators of type $\lambda.$ To make more readable our formulas, we
will use the symbols%
\[
T\left(  \xi_{0}\right)  ,S\left(  \xi_{0}\right)  ,P\left(  \xi_{0}\right)
\]
(with possibly other indexes) to denote frozen operators of type $0,1,2$, respectively.

In order to prove representation formulas for second order derivatives, we
start with the following parametrix identities, analogous to \cite[Thm.
10]{rs}, \cite[Thm. 3.1]{bb2}.

\begin{theorem}
\label{parametrix}Given $a\in C_{0}^{\infty}\left(  \widetilde{B}\left(
\overline{\xi},R\right)  \right)  $, there exist $S_{ij}(\xi_{0})$,
$S_{0}\left(  \xi_{0}\right)  $, $S_{ij}^{\ast}(\xi_{0})$, $S_{0}^{\ast
}\left(  \xi_{0}\right)  ,$ frozen operators of type $1$ and $P(\xi
_{0}),P^{\ast}(\xi_{0}),$ frozen operators of type $2$ (over the ball
$\widetilde{B}\left(  \overline{\xi},R\right)  $) such that:%
\begin{align}
aI  &  =\widetilde{\mathcal{L}}_{0}^{T}P^{\ast}\left(  \xi_{0}\right)
+\sum_{i,j=1}^{q}\widetilde{a}_{ij}\left(  \xi_{0}\right)  S_{ij}^{\ast
}\left(  \xi_{0}\right)  +S_{0}^{\ast}\left(  \xi_{0}\right)  ;\nonumber\\
aI  &  =P(\xi_{0})\widetilde{\mathcal{L}}_{0}+\sum_{i,j=1}^{q}\widetilde
{a}_{ij}\left(  \xi_{0}\right)  S_{ij}\left(  \xi_{0}\right)  +S_{0}\left(
\xi_{0}\right)  \label{representation2}%
\end{align}
where $I$ denotes the identity. Moreover, $S_{ij}^{\ast}\left(  \xi
_{0}\right)  ,S_{0}^{\ast}\left(  \xi_{0}\right)  ,P^{\ast}\left(  \xi
_{0}\right)  $ are modeled on $\Gamma^{T},$ while $S_{ij}\left(  \xi
_{0}\right)  ,S_{0}\left(  \xi_{0}\right)  ,P\left(  \xi_{0}\right)  $ are
modeled on $\Gamma.$ Explicitly,%
\begin{align*}
P^{\ast}\left(  \xi_{0}\right)  f\left(  \xi\right)   &  =-\frac{a(\xi)}%
{c(\xi)}\int_{\widetilde{B}}\Gamma^{T}(\xi_{0};\Theta(\eta,\xi))\,b(\eta
)\,f(\eta)\,d\eta\\
P\left(  \xi_{0}\right)  f\left(  \xi\right)   &  =-b\left(  \xi\right)
\int_{\widetilde{B}}\frac{a(\eta)}{c(\eta)}\Gamma(\xi_{0};\Theta(\eta
,\xi))\,f(\eta)\,d\eta
\end{align*}
where $c$ is the function appearing in Theorem \ref{metric} (c).
\end{theorem}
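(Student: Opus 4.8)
The plan is to mimic the classical Rothschild--Stein parametrix construction (see \cite[Thm. 10]{rs}), adapted to the nonvariational drift setting as in \cite[Thm. 3.1]{bb2}, exploiting the frozen fundamental solution $\Gamma(\xi_0;\cdot)$ of $\mathcal{L}_0^{\ast}$ and its transpose $\Gamma^{T}(\xi_0;\cdot)=\Gamma(\xi_0;-\cdot)$ from Theorem \ref{ith:fundsol:lzero}. The starting point for the second identity is the exact reproduction formula on the group $\mathbb{G}$:
\[
f(v)=\int_{\mathbb{R}^{N}}\Gamma(\xi_0;u^{-1}\circ v)\,\mathcal{L}_0^{\ast}f(u)\,du,
\]
which, after transplanting via the change of coordinates $u=\Theta(\eta,\xi)$ (Theorem \ref{metric}) and inserting suitable cutoff functions $a,b\in C_0^{\infty}(\widetilde B(\overline\xi,R))$, produces, for $\xi$ in a neighborhood of $\overline\xi$,
\[
a(\xi)f(\xi)=\int_{\widetilde B}\frac{a(\eta)}{c(\eta)}\,\Gamma(\xi_0;\Theta(\eta,\xi))\,\big(\mathcal{L}_0^{\ast}[\text{transplanted }f]\big)(\eta)\,d\eta + (\text{error terms}).
\]
The key point is then to convert $\mathcal{L}_0^{\ast}=\sum \widetilde a_{ij}(\xi_0)Y_iY_j+Y_0$ acting in the group variables into $\widetilde{\mathcal{L}}_0=\sum\widetilde a_{ij}(\xi_0)\widetilde X_i\widetilde X_j+\widetilde X_0$ acting in the manifold variables, using the approximation Theorem \ref{Rothschild-Stein's approximation Theorem}: each $Y_i$ is replaced by $\widetilde X_i$ up to a remainder $R_i^{\eta}$ of lower local degree, so $Y_iY_j$ becomes $\widetilde X_i\widetilde X_j$ plus operators whose kernels, when integrated against $\Gamma$, give frozen operators of type $1$ (rather than $0$), and $Y_0$ becomes $\widetilde X_0$ plus a type-$2$ contribution. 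Collecting, one gets $P(\xi_0)\widetilde{\mathcal{L}}_0+\sum_{i,j}\widetilde a_{ij}(\xi_0)S_{ij}(\xi_0)+S_0(\xi_0)$, where $P(\xi_0)$ has kernel $-b(\xi)\frac{a(\eta)}{c(\eta)}\Gamma(\xi_0;\Theta(\eta,\xi))$ (type $2$, modeled on $\Gamma$) and $S_{ij},S_0$ are of type $1$; the homogeneity bookkeeping is exactly the content of Definition \ref{iidef:kernels:type} together with Lemma \ref{iilem:der:optype2} and Remark \ref{Remark parameter derivative}.

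For the first identity I would not repeat the computation but instead apply transposition. By Proposition \ref{Prop transposed operators}, the transpose of a frozen operator of type $\lambda$ modeled on $\Gamma$ is a frozen operator of type $\lambda$ modeled on $\Gamma^{T}$; and since $\widetilde{\mathcal{L}}_0$ has formal transpose $\widetilde{\mathcal{L}}_0^{T}=\sum\widetilde a_{ij}(\xi_0)\widetilde X_i\widetilde X_j-\widetilde X_0$ (the drift changing sign under transposition, while the second-order part is formally self-adjoint up to lower-order terms absorbed into the $S$'s), taking the formal transpose of
\[
aI=P(\xi_0)\widetilde{\mathcal{L}}_0+\sum_{i,j}\widetilde a_{ij}(\xi_0)S_{ij}(\xi_0)+S_0(\xi_0)
\]
yields
\[
aI=\widetilde{\mathcal{L}}_0^{T}P(\xi_0)^{T}+\sum_{i,j}\widetilde a_{ij}(\xi_0)S_{ij}(\xi_0)^{T}+S_0(\xi_0)^{T},
\]
and relabeling $P^{\ast}(\xi_0)=P(\xi_0)^{T}$, $S_{ij}^{\ast}(\xi_0)=S_{ij}(\xi_0)^{T}$, $S_0^{\ast}(\xi_0)=S_0(\xi_0)^{T}$ gives the first formula, with the stated explicit kernel $-\frac{a(\xi)}{c(\xi)}\Gamma^{T}(\xi_0;\Theta(\eta,\xi))b(\eta)$ for $P^{\ast}(\xi_0)$ obtained by swapping $\xi\leftrightarrow\eta$ in the kernel of $P(\xi_0)$ and using $\Gamma(\xi_0;-u)=\Gamma^{T}(\xi_0;u)$. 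One must check that the lower-order terms generated when passing from $\widetilde{\mathcal{L}}_0$ to $\widetilde{\mathcal{L}}_0^{T}$ (i.e.\ the first-order and zeroth-order pieces coming from the transposes of $\widetilde X_i\widetilde X_j$ and $\widetilde X_0$) can be absorbed into the $S^{\ast}$ terms — this is routine using Theorem \ref{main theorem}, which says composing a type-$\lambda$ operator with $\widetilde X_k$ lowers the type by one.

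The main obstacle I expect is the careful control of the error terms in the transplantation step and, in particular, ensuring uniformity in the frozen point $\xi_0\in\widetilde B(\overline\xi,R)$. When one localizes the group reproduction formula by cutoffs and changes variables $u=\Theta(\eta,\xi)$, one generates: (a) a commutator term because $\mathcal{L}_0^{\ast}$ does not commute with multiplication by the cutoff, producing lower-type operators; (b) the Jacobian factor $c(\xi)(1+\omega(\xi,u))$ from Theorem \ref{metric}(c), whose $\omega$-part, being $O(\|u\|)$, raises the type by one and is harmless; and (c) the discrepancy $R_i^{\eta}$ between $\widetilde X_i$ and $Y_i$, which must be tracked through the second-order operator $Y_iY_j$ — here one uses that $R_i^{\eta}$ has local degree $\le 0$ (and $R_0^{\eta}$ local degree $\le 1$) so that the resulting kernels, after acting on $\Gamma$ (homogeneous of degree $2-Q$), are of type $\ge 1$. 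Throughout, the uniform bounds of Theorem \ref{ith:stima:unif:ipo} on $\Gamma_{ij}(\xi;\cdot)$ and the constants $\alpha_{ij}(\xi)$ guarantee that all implied constants are independent of $\xi_0$. Once these bookkeeping points are settled, the identities follow by the same algebraic manipulation as in \cite[Thm. 10]{rs}, and the explicit forms of $P(\xi_0)$ and $P^{\ast}(\xi_0)$ are read off directly from the construction.
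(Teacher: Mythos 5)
Your proposal is correct and takes essentially the same route as the paper: both rest on the reproduction formula for $\Gamma$ on $\mathbb{G}$, transplantation via the map $\Theta$ and the Jacobian factor $c(\xi)(1+\omega(\xi,u))$, the Rothschild--Stein approximation theorem to replace $Y_i$ by $\widetilde X_i + R_i^\eta$, a $\varphi_\varepsilon$-cutoff limiting argument to handle the singularity, and Proposition \ref{Prop transposed operators} to pass between the two identities. The only structural difference is cosmetic: you propose to prove the second identity (computing $P(\xi_0)\widetilde{\mathcal L}_0 u$ directly) and then transpose, whereas the paper's proof goes the other way, defining $P^\ast(\xi_0)$ with kernel built on $\Gamma^T$, computing $\widetilde{\mathcal L}_0^T P^\ast(\xi_0)f$ distributionally (the key cancellation being $D_\varepsilon^1 \to -\int af\omega$ via the reproduction property), and then transposing to obtain the $P\widetilde{\mathcal L}_0$ form; either order works, with identical bookkeeping. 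One small slip in your degree accounting: the discrepancy $Y_0 = \widetilde X_0 - R_0^\eta$ yields a frozen operator of type $\ge 1$, not type $2$, since $R_0^\eta$ has local degree $\le 1$ and $\Gamma$ has degree $2-Q$, so $R_0^\eta\Gamma$ has degree $\ge 1-Q$; this is harmless but should be stated correctly, as the resulting term is precisely the $S_0(\xi_0)$ appearing in the statement.
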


The proof of this result is similar to that of \cite{rs}, \cite[Thm. 3.1]%
{bb2}. However, we will write a detailed version.

\begin{proof}
Let us define%
\[
P^{\ast}(\xi_{0})f(\xi)=\frac{a(\xi)}{c(\xi)}\int_{\widetilde{B}}\Gamma
^{T}(\xi_{0};\Theta(\eta,\xi))\,b(\eta)\,f(\eta)\,d\eta
\]
where $a,b\in C_{0}^{\infty}\left(  \widetilde{B}\left(  \overline{\xi
},R\right)  \right)  $ such that $ab=a,$ and $c\left(  \xi\right)  $ is the
function appearing in the formula of change of variables
(\ref{change of variables}), and let us compute $\widetilde{\mathcal{L}}%
_{0}^{T}P^{\ast}(\xi_{0})f,$ for $f\in C_{0}^{\infty}\left(  \widetilde
{B}\left(  \overline{\xi},R\right)  \right)  $. We have to apply a
distributional argument like in the proof of Lemma \ref{iilem:der:optype}: for
$\omega\in C_{0}^{\infty}\left(  \widetilde{B}\left(  \overline{\xi},R\right)
\right)  ,$ let us evaluate%
\[
\int_{\widetilde{B}}\widetilde{\mathcal{L}}_{0}\omega\left(  \xi\right)
P^{\ast}(\xi_{0})f\left(  \xi\right)  d\xi=\lim_{\varepsilon\rightarrow0}%
\int_{\widetilde{B}}\widetilde{\mathcal{L}}_{0}\omega\left(  \xi\right)
P_{\varepsilon}^{\ast}(\xi_{0})f\left(  \xi\right)  d\xi
\]
where%
\[
P_{\varepsilon}^{\ast}(\xi_{0})f(\xi)=\frac{a(\xi)}{c(\xi)}\int_{\widetilde
{B}}\varphi_{\varepsilon}\left(  \Theta(\eta,\xi)\right)  \Gamma^{T}(\xi
_{0};\Theta(\eta,\xi))\,b(\eta)\,f(\eta)\,d\eta
\]
with $\varphi_{\varepsilon}$ as in the proof of the quoted Lemma.%
\begin{align*}
&  \int_{\widetilde{B}}\widetilde{\mathcal{L}}_{0}\omega\left(  \xi\right)
P_{\varepsilon}^{\ast}(\xi_{0})f\left(  \xi\right)  d\xi\\
&  =\int_{\widetilde{B}}\,bf(\eta)\left(  \int_{\widetilde{B}}\frac{a(\xi
)}{c(\xi)}\varphi_{\varepsilon}\left(  \Theta(\eta,\xi)\right)  \Gamma^{T}%
(\xi_{0};\Theta(\eta,\xi))\widetilde{\mathcal{L}}_{0}\omega\left(  \xi\right)
d\xi\right)  d\eta\\
&  =\int_{\widetilde{B}}\,bf(\eta)\left(  \int_{\widetilde{B}}\widetilde
{\mathcal{L}}_{0}^{T}\left(  \frac{a(\xi)}{c(\xi)}\right)  \varphi
_{\varepsilon}\left(  \Theta(\eta,\xi)\right)  \Gamma^{T}(\xi_{0};\Theta
(\eta,\xi))\omega\left(  \xi\right)  d\xi\right)  d\eta\\
&  +\int_{\widetilde{B}}\,bf(\eta)\left(  \int_{\widetilde{B}}\frac{a(\xi
)}{c(\xi)}\widetilde{\mathcal{L}}_{0}^{T}\left[  \varphi_{\varepsilon}\left(
\Theta(\eta,\cdot)\right)  \Gamma^{T}(\xi_{0};\Theta(\eta,\cdot))\right]
\left(  \xi\right)  \omega\left(  \xi\right)  d\xi\right)  d\eta\\
&  +\int_{\widetilde{B}}\,bf(\eta)\left(  \int_{\widetilde{B}}2\sum
_{h,k=1}^{q}\widetilde{a}_{hk}\left(  \xi_{0}\right)  \widetilde{X}_{h}%
^{T}\left(  \frac{a}{c}\right)  (\xi)\widetilde{X}_{k}^{T}\left[
\varphi_{\varepsilon}\left(  \Theta(\eta,\cdot)\right)  \Gamma^{T}(\xi
_{0};\Theta(\eta,\cdot))\right]  \left(  \xi\right)  \omega\left(  \xi\right)
d\xi\right)  d\eta\\
&  \equiv A_{\varepsilon}+B_{\varepsilon}+C_{\varepsilon}.
\end{align*}%
\begin{align*}
A_{\varepsilon}  &  \rightarrow\int_{\widetilde{B}}\,bf(\eta)\left(
\int_{\widetilde{B}}\widetilde{\mathcal{L}}_{0}^{T}\left(  \frac{a(\xi)}%
{c(\xi)}\right)  \Gamma^{T}(\xi_{0};\Theta(\eta,\xi))\omega\left(  \xi\right)
d\xi\right)  d\eta\\
&  =\sum_{h,k=1}^{q}\widetilde{a}_{hk}\left(  \xi_{0}\right)  \int
_{\widetilde{B}}\,f(\eta)P_{hk}^{T}\left(  \xi_{0}\right)  \omega\left(
\eta\right)  d\eta+\int_{\widetilde{B}}\,f(\eta)P_{0}^{T}\left(  \xi
_{0}\right)  \omega\left(  \eta\right)  d\eta\\
&  =\sum_{h,k=1}^{q}\widetilde{a}_{hk}\left(  \xi_{0}\right)  \int
_{\widetilde{B}}\,\omega\left(  \xi\right)  P_{hk}\left(  \xi_{0}\right)
f(\xi)d\xi+\int_{\widetilde{B}}\,\omega\left(  \xi\right)  P_{0}\left(
\xi_{0}\right)  f(\xi)d\xi
\end{align*}
where%
\begin{align*}
P_{hk}\left(  \xi_{0}\right)  f(\xi)  &  =\widetilde{X}_{h}^{T}\widetilde
{X}_{k}^{T}\left(  \frac{a}{c}\right)  (\xi)\left(  \int_{\widetilde{B}}%
\Gamma^{T}(\xi_{0};\Theta(\eta,\xi))b\left(  \eta\right)  f(\eta)d\eta\right)
d\xi\\
P_{0}\left(  \xi_{0}\right)  f(\xi)  &  =\widetilde{X}_{0}^{T}\left(  \frac
{a}{c}\right)  (\xi)\left(  \int_{\widetilde{B}}\Gamma^{T}(\xi_{0};\Theta
(\eta,\xi))b\left(  \eta\right)  f(\eta)d\eta\right)  d\xi
\end{align*}
are frozen operators of type 2, modeled on $\Gamma^{T}$.%
\begin{align*}
B_{\varepsilon}  &  =\int_{\widetilde{B}}\,bf(\eta)\left\{  \int
_{\widetilde{B}}\frac{a(\xi)}{c(\xi)}\left[  \mathcal{L}_{0}^{\ast T}\left(
\varphi_{\varepsilon}\Gamma^{T}(\xi_{0};\cdot)\right)  \right]  \left(
\Theta(\eta,\xi)\right)  \omega\left(  \xi\right)  d\xi\right. \\
&  +\left.  \left(  \int_{\widetilde{B}}\frac{a(\xi)}{c(\xi)}\left(
\sum_{i,j}\widetilde{a}_{ij}\left(  \xi_{0}\right)  \left[  Y_{i}R_{j}^{\eta
}+R_{i}^{\eta}Y_{j}+R_{i}^{\eta}R_{j}^{\eta}\right]  +R_{0}^{\eta}\right)
\left(  \varphi_{\varepsilon}\Gamma^{T}(\xi_{0};\cdot)\right)  \left(
\Theta(\eta,\xi)\right)  \omega\left(  \xi\right)  d\xi\right)  \right\}
d\eta\\
&  =\int_{\widetilde{B}}\,bf(\eta)\left(  \int_{\left\Vert u\right\Vert
<R}\mathcal{L}_{0}^{\ast T}\left(  \varphi_{\varepsilon}\left(  u\right)
\Gamma^{T}(\xi_{0};u)\right)  \left(  a\omega\right)  \left(  \Theta
(\eta,\cdot)^{-1}\left(  u\right)  \right)  \left(  1+O\left(  \left\Vert
u\right\Vert \right)  \right)  du\right)  d\eta\\
&  +\int_{\widetilde{B}}\,bf(\eta)\int_{\left\Vert u\right\Vert <R}\left(
\sum_{i,j}\widetilde{a}_{ij}\left(  \xi_{0}\right)  \left[  Y_{i}R_{j}^{\eta
}+R_{i}^{\eta}Y_{j}+R_{i}^{\eta}R_{j}^{\eta}\right]  +R_{0}^{\eta}\right)
\left(  \varphi_{\varepsilon}\left(  u\right)  \Gamma^{T}(\xi_{0};u)\right)
\cdot\\
&  \cdot\left(  a\omega\right)  \left(  \Theta(\eta,\cdot)^{-1}\left(
u\right)  \right)  \left(  1+O\left(  \left\Vert u\right\Vert \right)
\right)  dud\eta\\
&  =D_{\varepsilon}+E_{\varepsilon}.
\end{align*}
To study $D_{\varepsilon},$ let $\left(  a\omega\right)  _{\eta}\left(
u\right)  \equiv\left(  a\omega\right)  \left(  \Theta(\eta,\cdot)^{-1}\left(
u\right)  \right)  .$ Then%
\begin{align*}
D_{\varepsilon}  &  =\int_{\widetilde{B}}\,bf(\eta)\int_{\left\Vert
u\right\Vert <R}\varphi_{\varepsilon}\left(  u\right)  \Gamma^{T}(\xi
_{0};u)\mathcal{L}_{0}^{\ast}\left(  a\omega\right)  _{\eta}\left(  u\right)
dud\eta+\\
&  +\int_{\widetilde{B}}\,bf(\eta)\int_{\left\Vert u\right\Vert <R}%
\mathcal{L}_{0}^{\ast T}\left[  \varphi_{\varepsilon}\Gamma^{T}(\xi_{0}%
;\cdot)\right]  \left(  u\right)  \left(  a\omega\right)  _{\eta}\left(
u\right)  O\left(  \left\Vert u\right\Vert \right)  dud\eta\\
&  \equiv D_{\varepsilon}^{1}+D_{\varepsilon}^{2}.
\end{align*}%
\begin{align*}
D_{\varepsilon}^{1}  &  \rightarrow\int_{\widetilde{B}}\,bf(\eta
)\int_{\left\Vert u\right\Vert <R}\Gamma^{T}(\xi_{0};u)\mathcal{L}_{0}^{\ast
}\left(  a\omega\right)  _{\eta}\left(  u\right)  dud\eta=-\int_{\widetilde
{B}}\,bf(\eta)\left(  a\omega\right)  _{\eta}\left(  0\right)  d\eta\\
&  =-\int_{\widetilde{B}}\,bf(\eta)\left(  a\omega\right)  \left(
\eta\right)  d\eta=-\int_{\widetilde{B}}\,\left(  af\omega\right)  (\eta
)d\eta.
\end{align*}%
\begin{align*}
D_{\varepsilon}^{2}  &  =\int_{\widetilde{B}}\,bf(\eta)\int_{\left\Vert
u\right\Vert <R}\left(  \mathcal{L}_{0}^{\ast T}\varphi_{\varepsilon}\right)
\left(  u\right)  \Gamma^{T}(\xi_{0};u)\left(  a\omega\right)  _{\eta}\left(
u\right)  O\left(  \left\Vert u\right\Vert \right)  dud\eta\\
&  +\int_{\widetilde{B}}\,bf(\eta)\int_{\left\Vert u\right\Vert <R}2\sum
_{i,j}\widetilde{a}_{ij}\left(  \xi_{0}\right)  \left(  Y_{i}\varphi
_{\varepsilon}\right)  \left(  u\right)  Y_{j}\Gamma^{T}(\xi_{0};u)\left(
a\omega\right)  _{\eta}\left(  u\right)  O\left(  \left\Vert u\right\Vert
\right)  dud\eta.
\end{align*}
A dilation argument as in the proof of Lemma \ref{iilem:der:optype} then gives%
\[
D_{\varepsilon}^{2}\rightarrow0.
\]
Moreover,%
\begin{align*}
E_{\varepsilon}  &  \rightarrow\int_{\widetilde{B}}\,bf(\eta)\int_{\left\Vert
u\right\Vert <R}\left(  \sum_{i,j}\widetilde{a}_{ij}\left(  \xi_{0}\right)
\left[  Y_{i}R_{j}^{\eta}+R_{i}^{\eta}Y_{j}+R_{i}^{\eta}R_{j}^{\eta}\right]
+R_{0}^{\eta}\right)  \Gamma^{T}(\xi_{0};u)\cdot\\
&  \cdot\left(  a\omega\right)  \left(  \Theta(\eta,\cdot)^{-1}\left(
u\right)  \right)  \left(  1+O\left(  \left\Vert u\right\Vert \right)
\right)  dud\eta
\end{align*}
coming back to the original variables $\xi$ in the inner integral%
\begin{align*}
&  =\int_{\widetilde{B}}\,bf(\eta)\int_{\widetilde{B}}\left(  \sum
_{i,j}\widetilde{a}_{ij}\left(  \xi_{0}\right)  \left[  Y_{i}R_{j}^{\eta
}+R_{i}^{\eta}Y_{j}+R_{i}^{\eta}R_{j}^{\eta}\right]  \Gamma^{T}(\xi_{0}%
;\cdot)+\right. \\
&  \left.  +R_{0}^{\eta}\Gamma^{T}(\xi_{0};\cdot)\left(  \Theta(\eta
,\xi)\right)  \frac{\left(  a\omega\right)  \left(  \xi\right)  }{c\left(
\xi\right)  }\right)  d\xi d\eta\\
&  =\int_{\widetilde{B}}\,f(\eta)\left[  \sum_{i,j}\widetilde{a}_{ij}\left(
\xi_{0}\right)  S_{ij}^{\prime}\left(  \xi_{0}\right)  +S_{0}^{\prime}\left(
\xi_{0}\right)  \right]  \omega\left(  \eta\right)  d\eta\\
&  =\int_{\widetilde{B}}\,\omega\left(  \eta\right)  \left[  \sum
_{i,j}\widetilde{a}_{ij}\left(  \xi_{0}\right)  S_{ij}^{\prime T}\left(
\xi_{0}\right)  +S_{0}^{\prime T}\left(  \xi_{0}\right)  \right]  f(\eta)d\eta
\end{align*}
where $S_{ij}^{\prime}\left(  \xi_{0}\right)  ,S_{0}^{\prime}\left(  \xi
_{0}\right)  $ are transposed of frozen operators of type one modeled on
$\Gamma^{T},$ hence are frozen operators of type one, modeled on $\Gamma$ (see
Proposition \ref{Prop transposed operators}); therefore $S_{ij}^{\prime
T}\left(  \xi_{0}\right)  ,S_{0}^{\prime T}\left(  \xi_{0}\right)  $ are
frozen operators of type one, modeled on $\Gamma^{T}$. Analogously, one can
check that%
\[
C_{\varepsilon}\rightarrow\sum_{h,k=1}^{q}\widetilde{a}_{hk}\left(  \xi
_{0}\right)  \int_{\widetilde{B}}\omega\left(  \xi\right)  S_{hk}%
^{^{\prime\prime}}\left(  \xi_{0}\right)  \,f(\xi)d\xi
\]
where $S_{hk}^{^{\prime\prime}}\left(  \xi_{0}\right)  $ are frozen operators
of type one, modeled on $\Gamma^{T}$.

Hence we have proved that%
\begin{align*}
\widetilde{\mathcal{L}}_{0}^{T}P^{\ast}(\xi_{0})f  &  =P_{1}\left(  \xi
_{0}\right)  f-af+\left[  \sum_{i,j}\widetilde{a}_{ij}\left(  \xi_{0}\right)
S_{ij}^{\prime T}\left(  \xi_{0}\right)  +S_{0}^{\prime T}\left(  \xi
_{0}\right)  \right]  f\\
&  =-af+\left[  \sum_{i,j}\widetilde{a}_{ij}\left(  \xi_{0}\right)
S_{ij}^{\ast}\left(  \xi_{0}\right)  +S_{0}^{\ast}\left(  \xi_{0}\right)
\right]  f
\end{align*}
since $S_{0}^{\prime T}\left(  \xi_{0}\right)  +P_{1}\left(  \xi_{0}\right)  $
is a frozen operator of type 1, and simplifying our notation with $S_{ij}$ in
place of $S_{ij}^{\prime T}.$ Note that $S_{ij}^{\ast}\left(  \xi_{0}\right)
,S_{0}^{\ast}\left(  \xi_{0}\right)  $ are frozen operators of type 1, modeled
on $\Gamma^{T}.$ This proves the first identity in the statement of the
theorem, apart from an immaterial change of sign in the definition of
$P^{\ast}(\xi_{0}).$

Next, let us transpose this identity, getting%
\[
P^{\ast T}(\xi_{0})\widetilde{\mathcal{L}}_{0}f\left(  \xi\right)  =\left(
\sum_{ij}\widetilde{a}_{ij}\left(  \xi_{0}\right)  S_{ij}^{\ast T}\left(
\xi_{0}\right)  +S_{0}^{\ast T}\left(  \xi_{0}\right)  \right)  f\left(
\xi\right)  -\left(  af\right)  \left(  \xi\right)  .
\]
Note that%
\begin{align*}
P^{\ast T}(\xi_{0})f\left(  \xi\right)   &  =b(\xi)\int_{\widetilde{B}}%
\frac{a(\eta)}{c(\eta)}\Gamma^{T}(\xi_{0};\Theta(\xi,\eta))\,f(\eta)\,d\eta\\
&  =b\left(  \xi\right)  \int_{\widetilde{B}}\frac{a(\eta)}{c(\eta)}\Gamma
(\xi_{0};\Theta(\eta,\xi))\,f(\eta)\,d\eta,
\end{align*}
which is a frozen operator of type two, modeled on $\Gamma$. On the other
hand, $S_{ij}^{\ast T}\left(  \xi_{0}\right)  ,S_{0}^{\ast T}\left(  \xi
_{0}\right)  $ are transposed of frozen operators of type 1 modeled on
$\Gamma^{T},$ therefore are frozen operators of type 1, modeled on $\Gamma.$
This concludes the proof.
\end{proof}

\begin{theorem}
[Representation of $\widetilde{X}_{m}\widetilde{X}_{l}u$ by frozen
operators]Given $a\in C_{0}^{\infty}\left(  \widetilde{B}\left(  \overline
{\xi},R\right)  \right)  ,\xi_{0}\in\widetilde{B}\left(  \overline{\xi
},R\right)  ,$ for any $m,l=1,2,...,q$, there exist frozen operators over the
ball $\widetilde{B}\left(  \overline{\xi},R\right)  ,$ such that for any $u\in
C_{0}^{\infty}\left(  \widetilde{B}\left(  \overline{\xi},R\right)  \right)  $%
\begin{align}
\widetilde{X}_{m}\widetilde{X}_{l}\left(  au\right)   &  =T_{lm}\left(
\xi_{0}\right)  \widetilde{\mathcal{L}}_{0}u+\sum_{k=1}^{q}T_{lm,k}\left(
\xi_{0}\right)  \widetilde{X}_{k}u+T_{lm}^{0}\left(  \xi_{0}\right)
u\nonumber\\
&  +\sum_{i,j=1}^{q}\widetilde{a}_{ij}\left(  \xi_{0}\right)  \left\{
\sum_{k=1}^{q}T_{lm,k}^{ij}\left(  \xi_{0}\right)  \widetilde{X}_{k}%
u+\sum_{h,k=1}^{q}\widetilde{a}_{hk}\left(  \xi_{0}\right)  T_{lm,h}^{\prime
ij}\left(  \xi_{0}\right)  \widetilde{X}_{k}u+\right.
\label{Representation for second derivatives 1}\\
&  +\left.  \frac{{}}{{}}S_{lm}^{ij}\left(  \xi_{0}\right)  \widetilde
{\mathcal{L}}_{0}u+T_{lm}^{ij}\left(  \xi_{0}\right)  u\right\} \nonumber
\end{align}
(All the $T_{...}\left(  \xi_{0}\right)  $ are frozen operators of type 0,
$S_{lm}^{ij}\left(  \xi_{0}\right)  $ are of type 1). Also,
\begin{align}
&  \widetilde{X}_{m}\widetilde{X}_{l}\left(  au\right)  =T_{lm}\left(  \xi
_{0}\right)  \widetilde{\mathcal{L}}u+T_{lm}\left(  \xi_{0}\right)  \left(
\sum_{i,j=1}^{q}\left[  \widetilde{a}_{ij}(\xi_{0})-\widetilde{a}_{ij}%
(\cdot)\right]  \,\widetilde{X}_{i}\widetilde{X}_{j}u\right)  +\nonumber\\
&  +\sum_{k=1}^{q}T_{lm,k}\left(  \xi_{0}\right)  \widetilde{X}_{k}%
u+T_{lm}^{0}\left(  \xi_{0}\right)  u+\nonumber\\
&  +\sum_{i,j=1}^{q}\widetilde{a}_{ij}\left(  \xi_{0}\right)  \left\{
\sum_{k=1}^{q}T_{lm,k}^{ij}\left(  \xi_{0}\right)  \widetilde{X}_{k}%
u+\sum_{h,k=1}^{q}\widetilde{a}_{hk}\left(  \xi_{0}\right)  T_{lm,h}^{\prime
ij}\left(  \xi_{0}\right)  \widetilde{X}_{k}u+S_{lm}^{ij}\left(  \xi
_{0}\right)  \widetilde{\mathcal{L}}u\right. \nonumber\\
&  \left.  +S_{lm}^{ij}\left(  \xi_{0}\right)  \left(  \sum_{i,j=1}^{q}\left[
\widetilde{a}_{ij}(\xi_{0})-\widetilde{a}_{ij}(\cdot)\right]  \,\widetilde
{X}_{i}\widetilde{X}_{j}u\right)  +T_{lm}^{ij}\left(  \xi_{0}\right)
u\right\}  . \label{Rep formula Schauder}%
\end{align}

\end{theorem}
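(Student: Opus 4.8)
The plan is to combine the parametrix identity \eqref{representation2} from Theorem \ref{parametrix} with the commutation result of Theorem \ref{scambioderivoper}, applying both to the lifted frozen operator $\widetilde{\mathcal{L}}_0$. First I would fix $a,b\in C_0^\infty(\widetilde B(\overline\xi,R))$ with $ab=a$, apply the second identity in \eqref{representation2} to $u\in C_0^\infty(\widetilde B(\overline\xi,R))$, and then act on the resulting expression with $\widetilde X_m\widetilde X_l$. This produces $\widetilde X_m\widetilde X_l(au)$ as a sum of three blocks: $\widetilde X_m\widetilde X_l P(\xi_0)\widetilde{\mathcal L}_0 u$, then $\sum_{i,j}\widetilde a_{ij}(\xi_0)\,\widetilde X_m\widetilde X_l S_{ij}(\xi_0)u$, and finally $\widetilde X_m\widetilde X_l S_0(\xi_0)u$, where $P(\xi_0)$ is of type $2$ and $S_{ij}(\xi_0),S_0(\xi_0)$ are of type $1$.

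The second step is to push the two derivatives $\widetilde X_l$, then $\widetilde X_m$, through each of these operators. For the block $\widetilde X_m\widetilde X_l S_0(\xi_0)u$: by Theorem \ref{main theorem}, $\widetilde X_l S_0(\xi_0)$ is of type $0$, and then $\widetilde X_m$ acting on a type-$0$ operator is handled by Theorem \ref{scambioderivoper} (formula \eqref{commute of operator} with $\lambda$ replaced appropriately — here one uses the version allowing $\widetilde X_m$ on type-$0$ operators obtained by one more application of the commutation argument), giving terms $\sum_k T_k(\xi_0)\widetilde X_k u$, a term $\sum_{h,j}\widetilde a_{hj}(\xi_0)T^{hk_0}(\xi_0)\widetilde X_j u$, a pure type-$0$ term $T_0(\xi_0)u$, and a term $T^{k_0}(\xi_0)\widetilde{\mathcal L}_0 u$. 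The blocks with $S_{ij}(\xi_0)$ (type $1$): apply $\widetilde X_l$ via Theorem \ref{scambioderivoper} \eqref{commute of operator} to get $\sum_k T_k(\xi_0)\widetilde X_k + \sum_{h,j}\widetilde a_{hj}(\xi_0)T^{hk_0}(\xi_0)\widetilde X_j + T_0^{k_0}(\xi_0) + S_{lm}^{ij}(\xi_0)\widetilde{\mathcal L}_0$ (here the $S_{lm}^{ij}$ of type $1$ appears precisely as the "$T^{k_0}(\xi_0)\widetilde{\mathcal L}_0$" remainder, which for input type $1$ is of type $2$, so after one more $\widetilde X_m$ it stays of type $1$); then apply $\widetilde X_m$ to each resulting piece using Theorem \ref{main theorem} (which lowers type by one on the $\widetilde X_k$-composed type-$1$ pieces) and Theorem \ref{scambioderivoper} again on the type-$1$ remainder, producing the double sum $\sum_{h,k}\widetilde a_{hk}(\xi_0)T_{lm,h}^{\prime ij}(\xi_0)\widetilde X_k u$. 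The block $\widetilde X_m\widetilde X_l P(\xi_0)\widetilde{\mathcal L}_0 u$ is treated the same way with $P(\xi_0)$ of type $2$: $\widetilde X_l P(\xi_0)$ is of type $1$ and $\widetilde X_m\widetilde X_l P(\xi_0)$ reduces to a single type-$0$ operator $T_{lm}(\xi_0)$ composed with $\widetilde{\mathcal L}_0$, contributing $T_{lm}(\xi_0)\widetilde{\mathcal L}_0 u$. Collecting and relabeling all the resulting frozen operators (absorbing the finitely many extra type-$0$ remainders into the $T_{lm,\dots}$ notation) yields \eqref{Representation for second derivatives 1}.

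The third step derives \eqref{Rep formula Schauder} from \eqref{Representation for second derivatives 1} by a purely algebraic manipulation: in \eqref{Representation for second derivatives 1} the operators $\widetilde{\mathcal L}_0 u$ and $\sum\widetilde a_{ij}(\xi_0)S_{lm}^{ij}(\xi_0)\widetilde{\mathcal L}_0 u$ appear, and one writes $\widetilde{\mathcal L}_0 u = \widetilde{\mathcal L}u + \sum_{i,j}\big[\widetilde a_{ij}(\xi_0)-\widetilde a_{ij}(\cdot)\big]\widetilde X_i\widetilde X_j u$, substituting this into every place where $\widetilde{\mathcal L}_0 u$ occurs (both inside $T_{lm}(\xi_0)$ and inside $S_{lm}^{ij}(\xi_0)$). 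Since $T_{lm}(\xi_0)$ is of type $0$ and $S_{lm}^{ij}(\xi_0)$ of type $1$, these are exactly the two terms written out in \eqref{Rep formula Schauder}; the remaining terms carry over verbatim.

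The main obstacle is the bookkeeping in the second step — making sure that at every stage one is entitled to apply Theorem \ref{scambioderivoper} (which requires the operator to be of type $\geq 1$ to move one derivative past it, and type $\geq 2$ for the drift) versus Theorem \ref{main theorem}, and that the drift $\widetilde X_0$ is never differentiated in a way that lowers the type below $0$. In particular one must verify that the two "extra" derivatives $\widetilde X_m,\widetilde X_l$ applied to the type-$2$ operator $P(\xi_0)$ and type-$1$ operators $S_{ij}(\xi_0),S_0(\xi_0)$ never force an illegitimate $\widetilde X_0$-differentiation of a type-$1$ or type-$0$ kernel; this is guaranteed because in \eqref{commute of operator} the derivative is always exchanged with the $\widetilde X_k$ ($k\geq1$) in the tails of the commutators $\widetilde X_{j,k}$, never with a bare $\widetilde X_0$, the $\widetilde X_0$-contributions being routed into the $\widetilde{\mathcal L}_0$ remainder term. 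Once this is checked, the rest is routine assembly.
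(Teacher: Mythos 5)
Your overall strategy (start from the parametrix identity \eqref{representation2}, apply two derivatives using Theorems \ref{main theorem} and \ref{scambioderivoper}, then substitute \eqref{10.1}) is the same as the paper's, and your handling of the $P(\xi_0)\widetilde{\mathcal L}_0$ and $S_{ij}(\xi_0)$ blocks is essentially correct. However, your treatment of the $S_0(\xi_0)u$ block contains a genuine gap. You first apply Theorem \ref{main theorem} to $S_0(\xi_0)$, obtaining the type-$0$ operator $\widetilde X_l S_0(\xi_0)$, and then propose to apply ``a version of Theorem \ref{scambioderivoper} for type-$0$ operators obtained by one more application of the commutation argument'' in order to take the second derivative $\widetilde X_m$. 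No such version exists: both Theorem \ref{main theorem} and Theorem \ref{scambioderivoper} are stated only for frozen operators of type $\lambda\geq 1$, and the obstruction is substantive, not cosmetic. The leading part of a type-$0$ kernel is $D\Gamma(\xi_0;\Theta(\eta,\xi))$ with $D$ homogeneous of degree $2$, i.e.\ a kernel homogeneous of degree $-Q$; applying another $\widetilde X_m$ would produce a kernel of degree $-Q-1$, which defines no bounded operator and falls outside the calculus of ``operators of type $\lambda$'' entirely. You even state the relevant constraint yourself in your final paragraph (``the drift is never differentiated in a way that lowers the type below $0$'') but then violate its weight-one analogue in this block.

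The fix is the one you already apply to the $S_{ij}(\xi_0)$ blocks, and it is what the paper does uniformly: when the first derivative $\widetilde X_l$ hits a type-$1$ operator ($S_0(\xi_0)$ or $S_{ij}(\xi_0)$), use Theorem \ref{scambioderivoper} \emph{before} the type is lowered, obtaining $\sum_k T_k(\xi_0)\widetilde X_k + \sum_{h,j}\widetilde a_{hj}(\xi_0)T^{hj}(\xi_0)\widetilde X_j + T_0(\xi_0) + T'(\xi_0)\widetilde{\mathcal L}_0$ with $T_k,T^{hj},T_0$ still of type $1$ and $T'$ of type $2$. Only after the $\widetilde X_k$'s and $\widetilde{\mathcal L}_0$ have been moved to the right of the integral operators is the second derivative $\widetilde X_m$ applied, via Theorem \ref{main theorem}, lowering type $1\to 0$ and type $2\to 1$; at that point no further differentiation of the kernels is needed. (Also, in your second step on the $S_{ij}$ block you propose to apply Theorem \ref{scambioderivoper} again at the $\widetilde X_m$ stage to the type-$1$ remainder $T_0(\xi_0)$; this is unnecessary and would introduce extra terms not present in \eqref{Representation for second derivatives 1}. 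The paper simply applies Theorem \ref{main theorem} there, yielding the $T_{lm}^{ij}(\xi_0)u$ term.)
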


\begin{remark}
The representation formulas of the above theorem have a cumbersome aspect, due
to the presence of the coefficients $\widetilde{a}_{ij}(\xi_{0})$ which appear
several times as multiplicative factors. Anyway, if we agree to leave
implicitly understood in the symbol of frozen operators the possible
multiplication by the coefficients $\widetilde{a}_{ij}$, our formulas assume
the following more compact form:%
\[
\widetilde{X}_{m}\widetilde{X}_{l}\left(  au\right)  =T_{lm}\left(  \xi
_{0}\right)  \widetilde{\mathcal{L}}_{0}u+\sum_{k=1}^{q}T_{k}^{lm}\left(
\xi_{0}\right)  \widetilde{X}_{k}u+T_{lm}^{0}\left(  \xi_{0}\right)  u
\]
and%
\begin{align*}
\widetilde{X}_{m}\widetilde{X}_{l}\left(  au\right)   &  =T_{lm}\left(
\xi_{0}\right)  \widetilde{\mathcal{L}}u+T_{lm}\left(  \xi_{0}\right)  \left(
\sum_{i,j=1}^{q}\left[  \widetilde{a}_{ij}(\xi_{0})-\widetilde{a}_{ij}%
(\cdot)\right]  \,\widetilde{X}_{i}\widetilde{X}_{j}u\right)  +\\
&  +\sum_{k=1}^{q}T_{k}^{lm}\left(  \xi_{0}\right)  \widetilde{X}_{k}%
u+T_{lm}^{0}\left(  \xi_{0}\right)  u.
\end{align*}
In the proof of \textit{a priori} estimates, when we will take $C^{\alpha}$ or
$L^{p}$ norms of both sides of these identities, the multiplicative factors
$\widetilde{a}_{hj}$ will be simply bounded by taking, respectively, the
$C^{\alpha}$ or the $L^{\infty}$ norms of the $\widetilde{a}_{hj}$; hence
leaving these factors implicitly understood is harmless.
\end{remark}

\begin{proof}
For $u\in C_{0}^{\infty}\left(  \widetilde{B}\left(  \overline{\xi},R\right)
\right)  $, let us start with the identity%
\[
au=P\left(  \xi_{0}\right)  \widetilde{\mathcal{L}}_{0}u+\sum_{i,j=1}%
^{q}\widetilde{a}_{ij}\left(  \xi_{0}\right)  S_{ij}\left(  \xi_{0}\right)
u+S_{0}\left(  \xi_{0}\right)  u
\]
\linebreak(see Theorem \ref{parametrix}); taking one derivative $\widetilde
{X}_{l}$ $\left(  l=1,2,...,q\right)  $ and applying Theorem
\ref{main theorem} and Theorem \ref{scambioderivoper}, we get%
\begin{align*}
\widetilde{X}_{l}\left(  au\right)   &  =\widetilde{X}_{l}P\left(  \xi
_{0}\right)  \widetilde{\mathcal{L}}_{0}u+\sum_{i,j=1}^{q}\widetilde{a}%
_{ij}\left(  \xi_{0}\right)  \widetilde{X}_{l}S_{ij}\left(  \xi_{0}\right)
u+\widetilde{X}_{l}S_{0}\left(  \xi_{0}\right)  u\\
&  =S_{l}\left(  \xi_{0}\right)  \widetilde{\mathcal{L}}_{0}u+\sum_{i,j=1}%
^{q}\widetilde{a}_{ij}\left(  \xi_{0}\right)  \left\{  \sum_{k=1}^{q}%
S_{l,k}^{ij}\left(  \xi_{0}\right)  \widetilde{X}_{k}u+\right. \\
&  \left.  +\sum_{h,k=1}^{q}\widetilde{a}_{hk}\left(  \xi_{0}\right)
S_{l,h}^{\prime ij}\left(  \xi_{0}\right)  \widetilde{X}_{k}u+P_{l}%
^{ij}\left(  \xi_{0}\right)  \widetilde{\mathcal{L}}_{0}u+S_{l}^{ij}\left(
\xi_{0}\right)  u\right\}  +\\
&  +\sum_{k=1}^{q}S_{k}^{l}\left(  \xi_{0}\right)  \widetilde{X}_{k}%
u+\sum_{h,k=1}^{q}\widetilde{a}_{hk}\left(  \xi_{0}\right)  S^{hl}\left(
\xi_{0}\right)  \widetilde{X}_{k}u+S_{0}^{l}\left(  \xi_{0}\right)
u+P^{l}\left(  \xi_{0}\right)  \widetilde{\mathcal{L}}_{0}u
\end{align*}%
\begin{align*}
&  =S_{l}^{\prime}\left(  \xi_{0}\right)  \widetilde{\mathcal{L}}_{0}%
u+\sum_{k=1}^{q}S_{l,k}\left(  \xi_{0}\right)  \widetilde{X}_{k}u+S_{l}%
^{0}\left(  \xi_{0}\right)  u\\
&  +\sum_{i,j=1}^{q}\widetilde{a}_{ij}\left(  \xi_{0}\right)  \left\{
\sum_{k=1}^{q}S_{l,k}^{ij}\left(  \xi_{0}\right)  \widetilde{X}_{k}%
u+\sum_{h,k=1}^{q}\widetilde{a}_{hk}\left(  \xi_{0}\right)  S_{l,h}^{\prime
ij}\left(  \xi_{0}\right)  \widetilde{X}_{k}u\right.  +\\
&  \left.  +P_{l}^{ij}\left(  \xi_{0}\right)  \widetilde{\mathcal{L}}%
_{0}u+S_{l}^{ij}\left(  \xi_{0}\right)  \underset{}{u}\right\}
\end{align*}
where\ all the $S_{...}\left(  \xi_{0}\right)  \ $are frozen operators of type
$1$ and $P_{l}^{ij}\left(  \xi_{0}\right)  $ is of type $2$.

Next, we perform another derivative $\widetilde{X}_{m}$ of both sides, getting%
\begin{align*}
\widetilde{X}_{m}\widetilde{X}_{l}\left(  au\right)   &  =T_{lm}\left(
\xi_{0}\right)  \widetilde{\mathcal{L}}_{0}u+\sum_{k=1}^{q}T_{lm,k}\left(
\xi_{0}\right)  \widetilde{X}_{k}u+T_{lm}^{0}\left(  \xi_{0}\right)  u\\
&  +\sum_{i,j=1}^{q}\widetilde{a}_{ij}\left(  \xi_{0}\right)  \left\{
\sum_{k=1}^{q}T_{lm,k}^{ij}\left(  \xi_{0}\right)  \widetilde{X}_{k}%
u+\sum_{h,k=1}^{q}\widetilde{a}_{hk}\left(  \xi_{0}\right)  T_{lm,h}^{\prime
ij}\left(  \xi_{0}\right)  \widetilde{X}_{k}u\right.  +\\
&  \left.  +S_{lm}^{ij}\left(  \xi_{0}\right)  \widetilde{\mathcal{L}}%
_{0}u+T_{lm}^{ij}\left(  \xi_{0}\right)  \underset{}{u}\right\}
\end{align*}
where\ all the $T_{...}\left(  \xi_{0}\right)  \ $are frozen operators of type
$0,$ and $S_{l}^{ij}\left(  \xi_{0}\right)  $ is of type 1. This is exactly
(\ref{Representation for second derivatives 1}). Finally, inserting in this
equation the identity
\begin{align}
\widetilde{\mathcal{L}}_{0}u(\xi)  &  =\widetilde{\mathcal{L}}u(\xi
)+(\widetilde{\mathcal{L}}_{0}-\widetilde{\mathcal{L}})u(\xi)\label{10.1}\\
&  =\widetilde{\mathcal{L}}u(\xi)+\sum_{i,j=1}^{q}\left[  \widetilde{a}%
_{ij}(\xi_{0})-\widetilde{a}_{ij}(\xi)\right]  \,\widetilde{X}_{i}%
\widetilde{X}_{j}u(\xi)\nonumber
\end{align}
we find (\ref{Rep formula Schauder}), and the theorem is proved.
\end{proof}

The above theorem is suited to the proof of $C_{\widetilde{X}}^{\alpha}$
estimates for $\widetilde{X}_{i}\widetilde{X}_{j}u$. In order to prove $L^{p}$
estimate for $\widetilde{X}_{i}\widetilde{X}_{j}u$ we need the following variation:

\begin{theorem}
[Representation of $\widetilde{X}_{m}\widetilde{X}_{l}u$ by variable
operators]\label{lemma representation for derivatives} Given $a\in
C_{0}^{\infty}\left(  \widetilde{B}\left(  \overline{\xi},R\right)  \right)
$, for any $m,l=1,2,...,q$, there exist variable operators over the ball
$\widetilde{B}\left(  \overline{\xi},R\right)  ,$ such that for any $u\in
C_{0}^{\infty}\left(  \widetilde{B}\left(  \overline{\xi},R\right)  \right)
$
\begin{align}
\widetilde{X}_{m}\widetilde{X}_{l}\left(  au\right)   &  =T_{lm}%
\widetilde{\mathcal{L}}u+\sum_{i,j=1}^{q}\left[  \widetilde{a}_{ij}%
,T_{lm}\right]  \,\widetilde{X}_{i}\widetilde{X}_{j}u+\sum_{k=1}^{q}%
T_{lm,k}\widetilde{X}_{k}u+T_{lm}^{0}u+\nonumber\\
&  +\sum_{i,j=1}^{q}\widetilde{a}_{ij}\left\{  \sum_{k=1}^{q}T_{lm,k}%
^{ij}\widetilde{X}_{k}u+\sum_{h,k=1}^{q}\widetilde{a}_{hk}T_{lm,h}^{\prime
ij}\widetilde{X}_{k}u+S_{lm}^{ij}\widetilde{\mathcal{L}}u+\right. \nonumber\\
&  \left.  +\sum_{i,j=1}^{q}\left[  \widetilde{a}_{ij},S_{lm}^{ij}\right]
\,\widetilde{X}_{i}\widetilde{X}_{j}u+T_{lm}^{ij}u\right\}  .
\label{representation of derive}%
\end{align}
Here all the $T_{...}$ are variable operators of type 0, $S_{lm}^{ij}$ is of
type 1, $\left[  a,T\right]  $ denotes the commutator of the multiplication
for $a$ with the operator $T$, and $\widetilde{a}_{ij}$ are the coefficients
of the operator $\widetilde{\mathcal{L}}$ (which are no longer frozen at
$\xi_{0}$).
\end{theorem}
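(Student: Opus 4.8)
The plan is to obtain the variable-operator representation \eqref{representation of derive} directly from the frozen-operator representation \eqref{Rep formula Schauder} by the standard device of "unfreezing'' the point $\xi_0$, i.e. by setting $\xi_0=\xi$ in the frozen kernels. Recall that by Definition \ref{iidef:kernels:type} and the definitions that follow it, a frozen operator $T(\xi_0)$ of type $\lambda$ has kernel $k(\xi_0;\xi,\eta)$ which is smooth in the joint variable $(\xi_0,\xi,\eta)$ (away from $\xi=\eta$), and replacing $\xi_0$ by $\xi$ in the kernel produces precisely a variable operator of type $\lambda$, denoted by dropping the $\xi_0$. So the proof amounts to: (i) take the identity \eqref{Rep formula Schauder}, which holds for every fixed $\xi_0\in\widetilde B(\overline\xi,R)$, and (ii) specialize it to $\xi_0=\xi$, term by term, being careful about how the frozen-operator terms and the coefficient factors $\widetilde a_{ij}(\xi_0)$ transform.

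First I would write \eqref{Rep formula Schauder} at a generic frozen point and look at each group of terms. The terms of the form $T_{lm}(\xi_0)\,\widetilde{\mathcal L}_0 u$ (more precisely, after inserting \eqref{10.1} one already has $T_{lm}(\xi_0)\widetilde{\mathcal L}u$ plus $T_{lm}(\xi_0)\big(\sum_{ij}[\widetilde a_{ij}(\xi_0)-\widetilde a_{ij}(\cdot)]\widetilde X_i\widetilde X_j u\big)$) are the ones that generate the commutator. The point is the algebraic identity: for any fixed $\xi_0$,
\[
T_{lm}(\xi_0)\Big(\big[\widetilde a_{ij}(\xi_0)-\widetilde a_{ij}(\cdot)\big]\,g\Big)
= \widetilde a_{ij}(\xi_0)\,T_{lm}(\xi_0)g - T_{lm}(\xi_0)\big(\widetilde a_{ij}\,g\big),
\]
where $g=\widetilde X_i\widetilde X_j u$. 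Now evaluate the whole representation at $\xi_0=\xi$: on the left the dependence on $\xi_0$ disappears trivially; on the right, every occurrence of $T_\cdots(\xi_0)$ becomes the corresponding variable operator $T_\cdots$, every $S^{ij}_{lm}(\xi_0)$ becomes $S^{ij}_{lm}$ of type $1$, and—crucially—the displayed two-term combination becomes
\[
\widetilde a_{ij}(\xi)\,(T_{lm}g)(\xi) - \big(T_{lm}(\widetilde a_{ij}g)\big)(\xi)
= -\,[\widetilde a_{ij},T_{lm}]g\,(\xi),
\]
because setting $\xi_0=\xi$ turns the frozen coefficient $\widetilde a_{ij}(\xi_0)$ into $\widetilde a_{ij}(\xi)$, i.e.\ into multiplication by $\widetilde a_{ij}$ acting \emph{after} the operator. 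Summing over $i,j$ produces exactly the term $\sum_{i,j}[\widetilde a_{ij},T_{lm}]\widetilde X_i\widetilde X_j u$ in \eqref{representation of derive}, and similarly the term $S^{ij}_{lm}(\xi_0)\big(\sum[\widetilde a_{ij}(\xi_0)-\widetilde a_{ij}(\cdot)]\widetilde X_i\widetilde X_j u\big)$ produces $\sum_{i,j}[\widetilde a_{ij},S^{ij}_{lm}]\widetilde X_i\widetilde X_j u$. All remaining terms in \eqref{Rep formula Schauder} carry the coefficient factors $\widetilde a_{ij}(\xi_0)$, $\widetilde a_{hk}(\xi_0)$ only as outer multipliers; upon setting $\xi_0=\xi$ these become $\widetilde a_{ij}(\xi)$, $\widetilde a_{hk}(\xi)$, i.e.\ multiplication operators, exactly as written on the right of \eqref{representation of derive}, and the frozen operators they multiply become variable operators of the same type.

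The only genuine point requiring care—and the step I expect to be the main obstacle to write cleanly—is the justification that "substituting $\xi_0=\xi$ in a frozen operator of type $\lambda$ yields a variable operator of type $\lambda$'' is legitimate at the level of the identity \eqref{Rep formula Schauder}, not just formally on kernels. This is where one must invoke that the identity \eqref{Rep formula Schauder} holds \emph{pointwise in $\xi$ for every $\xi_0$}, with both sides smooth (indeed continuous) in $\xi_0$ locally uniformly in $\xi$, so that the substitution $\xi_0=\xi$ is simply an evaluation of a valid family of identities along the diagonal; this is exactly the mechanism already used in \cite{rs} and \cite{bb2}, and it is guaranteed here by the smoothness of the dependence of all kernels on $(\xi_0,\xi,\eta)$ built into Definition \ref{iidef:kernels:type}. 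For the principal-value / type-$0$ operators one additionally notes that the P.V. limit and the $\alpha(\xi_0,\xi)f(\xi)$ multiplicative term both depend continuously on $\xi_0$ (Theorem \ref{ith:stima:unif:ipo} gives the needed uniform bounds on $\alpha$), so the substitution is harmless there too. Once this is granted, the proof is just the bookkeeping above: rewrite \eqref{Rep formula Schauder}, set $\xi_0=\xi$, collect the two commutator terms from the $\widetilde a_{ij}(\xi_0)-\widetilde a_{ij}(\cdot)$ pieces, and read off \eqref{representation of derive}. I would end by remarking that the coefficients $\widetilde a_{ij}$ now appearing in \eqref{representation of derive} are the genuine (variable) coefficients of $\widetilde{\mathcal L}$, not frozen ones, which is precisely what is needed so that the right-hand side contains $\widetilde{\mathcal L}u$ and commutators $[\widetilde a_{ij},\cdot]$ amenable to the singular-integral and commutator theorems of \S\ref{subsec locally hom space}.
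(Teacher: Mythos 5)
Your proposal is correct and follows essentially the same route as the paper's own proof: rewrite (\ref{Rep formula Schauder}) at a generic frozen point, set $\xi_0=\xi$, and observe that the term $T_{lm}\big(\sum_{ij}[\widetilde a_{ij}(\xi)-\widetilde a_{ij}(\cdot)]\widetilde X_i\widetilde X_j u\big)(\xi)$ is exactly the commutator $[\widetilde a_{ij},T_{lm}]$ applied to $\widetilde X_i\widetilde X_j u$ (watch the sign convention in your intermediate display, which should match the paper's, but this is cosmetic). The paper states this more tersely; the extra paragraph you added on why evaluating along the diagonal $\xi_0=\xi$ is legitimate — smoothness of the frozen kernels in $\xi_0$, continuity of the P.V.\ limits and of the multiplicative coefficient $\alpha(\xi_0,\xi)$ in $\xi_0$ guaranteed by Theorem \ref{ith:stima:unif:ipo} — is a sensible elaboration of a point the paper leaves implicit.
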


\begin{remark}
\label{Remark easy representation}The above representation formula can be
written in a shorter way as%
\[
\widetilde{X}_{m}\widetilde{X}_{l}\left(  au\right)  =T_{lm}\widetilde
{\mathcal{L}}u+\sum_{i,j=1}^{q}\left[  \widetilde{a}_{ij},T_{lm}\right]
\,\widetilde{X}_{i}\widetilde{X}_{j}u+\sum_{k=1}^{q}T_{lm,k}\widetilde{X}%
_{k}u+T_{lm}^{0}u
\]
if we leave understood in the symbol of variable operator the possible
multiplication by the coefficients $\widetilde{a}_{ij}.$ See the previous remark.
\end{remark}

\begin{proof}
Let us rewrite (\ref{Rep formula Schauder}) as%
\begin{align*}
\widetilde{X}_{m}\widetilde{X}_{l}\left(  au\right)  \left(  \xi\right)   &
=\left(  T_{lm}\left(  \xi_{0}\right)  \widetilde{\mathcal{L}}u\right)
\left(  \xi\right)  +T_{lm}\left(  \xi_{0}\right)  \left(  \sum_{i,j=1}%
^{q}\left[  \widetilde{a}_{ij}(\xi_{0})-\widetilde{a}_{ij}(\cdot)\right]
\,\widetilde{X}_{i}\widetilde{X}_{j}u\right)  \left(  \xi\right)  +\\
&  +\sum_{k=1}^{q}\left(  T_{lm,k}\left(  \xi_{0}\right)  \widetilde{X}%
_{k}u\right)  \left(  \xi\right)  +\left(  T_{lm}^{0}\left(  \xi_{0}\right)
u\right)  \left(  \xi\right)  +\\
&  +\sum_{i,j=1}^{q}\widetilde{a}_{ij}\left(  \xi_{0}\right)  \left\{
\sum_{k=1}^{q}\left(  T_{lm,k}^{ij}\left(  \xi_{0}\right)  \widetilde{X}%
_{k}u\right)  \left(  \xi\right)  \right. \\
&  +\sum_{h,k=1}^{q}\widetilde{a}_{hk}\left(  \xi_{0}\right)  \left(
T_{lm,h}^{\prime ij}\left(  \xi_{0}\right)  \widetilde{X}_{k}u\right)  \left(
\xi\right)  +\left(  S_{lm}^{ij}\left(  \xi_{0}\right)  \widetilde
{\mathcal{L}}u\right)  \left(  \xi\right)  +\\
&  \left.  +S_{lm}^{ij}\left(  \xi_{0}\right)  \left(  \sum_{i,j=1}^{q}\left[
\widetilde{a}_{ij}(\xi_{0})-\widetilde{a}_{ij}(\cdot)\right]  \,\widetilde
{X}_{i}\widetilde{X}_{j}u\right)  \left(  \xi\right)  +\left(  T_{lm}%
^{ij}\left(  \xi_{0}\right)  u\right)  \left(  \xi\right)  \right\}  .
\end{align*}
for any $\xi\in\widetilde{B}\left(  \overline{\xi},R\right)  .$ Letting now
$\xi_{0}=\xi$ we get%
\begin{align*}
\widetilde{X}_{m}\widetilde{X}_{l}\left(  au\right)  \left(  \xi\right)   &
=\left(  T_{lm}\widetilde{\mathcal{L}}u\right)  \left(  \xi\right)
+T_{lm}\left(  \sum_{i,j=1}^{q}\left[  \widetilde{a}_{ij}(\xi)-\widetilde
{a}_{ij}(\cdot)\right]  \,\widetilde{X}_{i}\widetilde{X}_{j}u\right)  \left(
\xi\right)  +\\
&  +\sum_{k=1}^{q}\left(  T_{lm,k}\widetilde{X}_{k}u\right)  \left(
\xi\right)  +\left(  T_{lm}^{0}u\right)  \left(  \xi\right)  +\sum_{i,j=1}%
^{q}\widetilde{a}_{ij}\left(  \xi\right)  \left\{  \sum_{k=1}^{q}\left(
T_{lm,k}^{ij}\widetilde{X}_{k}u\right)  \left(  \xi\right)  \right. \\
&  +\sum_{h,k=1}^{q}\widetilde{a}_{hk}\left(  \xi\right)  \left(
T_{lm,h}^{\prime ij}\widetilde{X}_{k}u\right)  \left(  \xi\right)  +\left(
S_{lm}^{ij}\widetilde{\mathcal{L}}u\right)  \left(  \xi\right)  +\\
&  \left.  +S_{lm}^{ij}\left(  \sum_{i,j=1}^{q}\left[  \widetilde{a}_{ij}%
(\xi)-\widetilde{a}_{ij}(\cdot)\right]  \,\widetilde{X}_{i}\widetilde{X}%
_{j}u\right)  \left(  \xi\right)  +\left(  T_{lm}^{ij}u\right)  \left(
\xi\right)  \right\}  .
\end{align*}
where all the $T_{...}$ are \emph{variable }operators of type zero over
$\widetilde{B}\left(  \overline{\xi},R\right)  ,$ and $S_{lm}^{ij}$ are
variable operators of type 1. Note that%
\[
T\left(  \left[  \widetilde{a}_{ij}(\xi)-\widetilde{a}_{ij}(\cdot)\right]
\,\widetilde{X}_{i}\widetilde{X}_{j}u\right)  \left(  \xi\right)
\]
is exactly the commutator $\left[  \widetilde{a}_{ij},T\right]  $ applied to
$\widetilde{X}_{i}\widetilde{X}_{j}u.$ Hence the theorem is proved.
\end{proof}

\section{Singular integral estimates for operators of type
zero\label{Section singular kernels}}

The proof of \textit{a priori} estimates on the derivatives $\widetilde{X}%
_{i}\widetilde{X}_{j}u$ will follow, as will be explained in detail in
\S \ \ref{subsec shauder small support} and
\S \ \ref{subsec L^p small support}, combining the representation formulas
proved in \S \ \ref{subsec parametrix} with suitable $C^{\alpha}$ or $L^{p}$
estimates for \textquotedblleft operators of type zero\textquotedblright. To
be more precise, the results we need are the $C_{\widetilde{X}}^{\alpha
}\left(  \widetilde{B}\left(  \overline{\xi},R\right)  \right)  $ continuity
of a \emph{frozen operator of type zero} and the $L^{p}\left(  \widetilde
{B}\left(  \overline{\xi},R\right)  \right)  $ continuity of a \emph{variable
operator of type zero}, together with the $L^{p}\left(  \widetilde{B}\left(
\overline{\xi},r\right)  \right)  $ estimate for the commutator of a variable
operator of type zero with the multiplication with a $VMO$ function, implying
that the $L^{p}\left(  \widetilde{B}\left(  \overline{\xi},r\right)  \right)
$ norm of the commutator vanishes as $r\rightarrow0$. All these results will
be derived in the present section, as an application of abstract results
proved in \cite{bz} in the context of locally homogeneous spaces, which have
been recalled in \S \ \ref{subsec locally hom space}. To apply them, we need
to check that our kernels of type zero satisfy suitable properties. Moreover,
to study \emph{variable }operators of type zero, we also have to resort to the
classical technique of expansion in series of spherical harmonics, dating back
to Calder\'{o}n-Zygmund \cite{cz}, and already applied in the framework of
vector fields in \cite{bb1}, \cite{bb2}. This study will be split in two
subsection, the first devoted to frozen operators on $C^{\alpha},$ the second
to variable operators on $L^{p}$.

\subsection{$C^{\alpha}$ continuity of frozen operators of type $0$%
\label{subsec frozen C^alfa}}

The goal of this section is the proof of the following:

\begin{theorem}
\label{continuity for frozen operator on holder space}Let $\widetilde
{B}\left(  \overline{\xi},R\right)  $ be as before, $\xi_{0}\in\widetilde
{B}\left(  \overline{\xi},R\right)  ,$ and let $T(\xi_{0})$ be a frozen
operator of type $\lambda\geq0$ over $\widetilde{B}\left(  \overline{\xi
},R\right)  $. Then there exists $c>0$ such that for any $r<R$ and $u\in
C_{\widetilde{X},0}^{\alpha}(\widetilde{B}\left(  \overline{\xi},r\right)
)$,
\begin{equation}
\left\Vert T(\xi_{0})u\right\Vert _{C_{\widetilde{X}}^{\alpha}\left(
\widetilde{B}\left(  \overline{\xi},r\right)  \right)  }\leq c\left\Vert
u\right\Vert _{C_{\widetilde{X}}^{\alpha}\left(  \widetilde{B}\left(
\overline{\xi},r\right)  \right)  } \label{continuity for frozen operator}%
\end{equation}
where $c$ depends on $R,\left\{  \widetilde{X}_{i}\right\}  ,\alpha$ and
$\mu.$
\end{theorem}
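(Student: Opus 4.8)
The plan is to reduce the statement to the abstract singular-integral and fractional-integral theorems from \S\ref{subsec locally hom space}, applied in the metric locally homogeneous space $(\widetilde{\Omega},\{\widetilde{\Omega}_k\},d_{\widetilde{X}},d\xi)$ built at the end of that subsection, with the quasidistance $\rho(\eta,\xi)=\|\Theta(\eta,\xi)\|$ playing the role required in conditions (\ref{standard 3}) and (\ref{convergence}). First I would decompose the kernel $k(\xi_0;\xi,\eta)$ of $T(\xi_0)$ according to Definition \ref{iidef:kernels:type}, into its $k'$-part modeled on $\Gamma$ and its $k''$-part modeled on $\Gamma^T$; by symmetry it suffices to treat $k'$. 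Each summand of $k'$ has the form $a_i(\xi)b_i(\eta)\,(D_i\Gamma)(\xi_0;\Theta(\eta,\xi))$ with $D_i$ homogeneous of degree $\le 2-\lambda$, so $D_i\Gamma(\xi_0;\cdot)$ is homogeneous of degree $\ge\lambda-Q$, plus one ``regular'' term with $m$ bounded $Y$-derivatives. I would split into two regimes: (i) $\lambda=0$, where $D_i\Gamma$ is homogeneous of degree $-Q$ and the kernel is genuinely singular; (ii) $\lambda\ge 1$, where the kernel is weakly singular (fractional).

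For case (i), I would verify that each singular summand, after the cutoffs $a_i,b_i$ are absorbed into the ``$\widetilde K=a K b$'' mechanism of Assumption (K), satisfies the hypotheses of Theorem \ref{Theorem L^p C^eta}: the size bound (\ref{standard 1}) with $\nu=0$ and the Hölder bound (\ref{standard 2}) follow from the uniform smoothness estimates on $\Gamma_{ij}$ in Theorem \ref{ith:stima:unif:ipo} together with the homogeneity of $D_i\Gamma$, the equivalence $d_{\widetilde{X}}\sim\rho$ and $|\widetilde B_r|\sim r^Q$-type local doubling (Theorem \ref{Thm NSW}), and the mean value/homogeneity argument for differences in the $\xi$ or $\eta$ slot — exactly the standard Rothschild--Stein kernel estimates, here with uniform control in $\xi_0$. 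The cancellation property (ii) and the convergence condition (iii) come from part $(f)$ of Theorem \ref{ith:fundsol:lzero}, namely $\int_{\|u\|=1}Y_iY_j\Gamma(\xi_0;u)\,d\sigma=0$, which gives vanishing integrals of $Y_iY_j\Gamma$ over $\rho$-shells; more general $D_i$ of degree $-Q$ are handled likewise, and the lower-order multiplicative term $\alpha(\xi_0,\xi)f(\xi)$ trivially preserves $C^\alpha$. The extra hypothesis (\ref{h tilde C^gamma}) — that $\widetilde h(\xi)=\lim_{\varepsilon\to0}\int_{\rho(\xi,\eta)>\varepsilon}\widetilde K(\xi,\eta)\,d\eta$ is $C^\gamma$ — is the point requiring a little work: one writes $\widetilde h$ as an integral over a fixed annulus of a kernel that is jointly smooth in $(\xi,\eta,u)$ (using the cancellation to kill the singular inner part, exactly as in the integration-by-parts computation displayed in the proof of Lemma \ref{iilem:der:optype}), whence smoothness, hence $C^\gamma$ for every $\gamma<1$; then Theorem \ref{Theorem L^p C^eta} gives $C^\eta$ boundedness for every $\eta<\min(\alpha,\beta,\gamma)$, in particular for $\eta=\alpha$ after choosing $\beta,\gamma>\alpha$ (the free parameters $\beta,m$ in the definition of kernel of type $\lambda$ may be taken as large as we wish). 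Remark \ref{remark local holder} then converts this into the estimate for $u\in C^\alpha_{\widetilde X,0}(\widetilde B(\overline\xi,r))$ with constant independent of $r<R$, which is precisely (\ref{continuity for frozen operator}).

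For case (ii), $\lambda\ge1$, the summands are fractional-type: $D_i\Gamma(\xi_0;\cdot)$ is homogeneous of degree $\ge\lambda-Q\ge1-Q$, so $|k'(\xi_0;\xi,\eta)|\le c/\mu(\widetilde B(\xi,d(\xi,\eta)))^{1-\nu}$ for suitable $\nu\ge 1/Q>0$, and the difference estimate (\ref{standard 2}) holds by the same homogeneity/uniform-smoothness reasoning. Then Theorem \ref{Thm frac C^eta} (and the remark following it, giving the $C^\eta_0$ version with $r$-independent constant) yields $\|T(\xi_0)u\|_{C^\eta}\le c\|u\|_{C^\eta}$ for $\eta<\min(\alpha,\beta,\nu)$; again choosing the structural parameters so that $\beta,\nu>\alpha$ gives $\eta=\alpha$. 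Finally I would assemble the finitely many summands (for a fixed large $m$) and add the $k''$-part, which is identical with $\Gamma$ replaced by $\Gamma^T$ and is governed by the same theorems (Remark after Theorem \ref{ith:stima:unif:ipo}), obtaining (\ref{continuity for frozen operator}) with $c=c(R,\{\widetilde X_i\},\alpha,\mu)$; the uniformity in $\xi_0$ is automatic because all kernel constants entering the abstract theorems are controlled uniformly in $\xi_0$ by Theorem \ref{ith:stima:unif:ipo}. The main obstacle I anticipate is the verification of (\ref{h tilde C^gamma}) in case (i): one must show the ``principal-value average'' $\widetilde h$ is Hölder (indeed smooth) despite the singular kernel, which requires carefully exploiting the cancellation property together with the joint smoothness in $(\xi,\eta,u)$ of the coefficients of $D_i$ — essentially reproducing, in this frozen setting, the integration-by-parts device already used in the proof of Lemma \ref{iilem:der:optype} to identify the multiplicative term $\alpha(\xi_0,\eta)$.
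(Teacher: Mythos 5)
Your overall strategy coincides with the paper's: decompose $k=k'+k''$, treat $k'$ (the $\Gamma$-part) and invoke symmetry, then split the summands into a degree-$(-Q)$ ``genuinely singular'' piece and the degree-$\ge1-Q$ pieces (plus the regular $D_0\Gamma$ term and the multiplicative part $\alpha(\xi_0,\xi)$), feeding the former to Theorem~\ref{Theorem L^p C^eta} and the latter to Theorem~\ref{Thm frac C^eta}, with Proposition~\ref{Prop homog kernels} supplying the standard estimates and cancellation and Theorem~\ref{ith:stima:unif:ipo} supplying uniformity in $\xi_0$. (One small slip: the dichotomy is not ``$\lambda=0$ vs.\ $\lambda\ge1$'' but ``degree $=-Q$ vs.\ degree $\ge1-Q$'' for each summand, since even a type-$0$ kernel contains fractional summands; but this is phrasing, and you clearly would treat them correctly.)

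The genuine gap is in your treatment of the hypothesis (\ref{h tilde C^gamma}), which is indeed the delicate point. You propose to ``use the cancellation to kill the singular inner part'' and reduce $\widetilde h$ to ``an integral over a fixed annulus of a kernel that is jointly smooth,'' concluding smoothness of $\widetilde h$. This does not go through, and the conclusion is false: $\widetilde h$ need only be $C^\gamma$ for $\gamma<1$, not smooth. The obstruction is the outer cutoff $b_1(\eta)$ together with the Jacobian factor $(1+\omega(\xi,u))$ coming from the change of variables $u=\Theta(\eta,\xi)$. After the change of variables, $\widetilde h(\xi)$ is not an annulus integral but an integral over the whole ball; cancellation only removes the constant ``$1$'' in $(1+\omega)$, leaving the $\omega(\xi,u)$ tail (which is $O(\|u\|)$ but not supported away from the origin) and, separately, the contribution of $b_1(\eta)-b_1(\xi)$. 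The integration-by-parts device from the proof of Lemma~\ref{iilem:der:optype} that you invoke is also the wrong tool: that computation concerns the annulus-supported factor $Y_i\varphi_\varepsilon$ arising from a cutoff \emph{derivative}, and is used to identify the multiplicative coefficient $\alpha(\xi_0,\eta)$, not to control $T(1)$. What the paper actually does (Proposition~\ref{Prop T(1)}) is split $b_1(\eta)=b_1(\xi)+[\,b_1(\eta)-b_1(\xi)\,]$: the first term, after the change of variables and cancellation, yields an absolutely convergent integral against $D_1\Gamma(\xi_0;u)\,\omega(\xi,u)=O(\|u\|^{1-Q})$ that is shown to be Lipschitz in $\xi$; the second term gives a kernel with an extra factor of $d(\xi,\eta)$, which is a \emph{fractional} kernel in the sense of (\ref{standard 1})--(\ref{standard 2}) with $\nu=1$, and Theorem~\ref{Thm frac C^eta} applied to the constant function $1$ gives only $C^\gamma$, $\gamma<1$. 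A further layer handles the parameter dependence of $D_1^{\xi,\eta}$ by a three-term split as in the last part of the paper's proof. You should therefore replace the ``fixed annulus / smooth'' claim with this $I+II$ decomposition and the fractional-integral step; otherwise the proof of $C^\gamma$ regularity of $\widetilde h$ does not close.
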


Recall that $\mu$ is the \textquotedblleft ellipticity
constant\textquotedblright\ appearing in Assumption (H) (see
\S \ \ref{main result}).

To prove this, we will apply Theorems \ref{Theorem L^p C^eta} and
\ref{Thm frac C^eta} about the $C^{\alpha}$ continuity of singular and
fractional integrals in spaces of locally homogeneous type, taking
\begin{equation}
\Omega_{k}=\widetilde{B}\left(  \overline{\xi},\frac{kR}{k+1}\right)  \text{
for }k=1,2,3... \label{Om_k}%
\end{equation}

By Definition \ref{iidef:kernels:type}, our frozen kernel of type zero can be
written as:%

\begin{align*}
k(\xi_{0};\xi,\eta)  &  =k^{\prime}(\xi_{0};\xi,\eta)+k^{\prime\prime}(\xi
_{0};\xi,\eta)\\
&  =\left\{  \sum_{i=1}^{H}a_{i}(\xi)b_{i}(\eta)D_{i}\Gamma(\xi_{0}%
;\cdot)+a_{0}(\xi)b_{0}(\eta)D_{0}\Gamma(\xi_{0};\cdot)\right\}  \left(
\Theta(\eta,\xi)\right) \\
&  +\left\{  \sum_{i=1}^{H}a_{i}^{\prime}(\xi)b_{i}^{\prime}(\eta
)D_{i}^{\prime}\Gamma^{T}(\xi_{0};\cdot)+a_{0}^{\prime}(\xi)b_{0}^{\prime
}(\eta)D_{0}^{\prime}\Gamma^{T}(\xi_{0};\cdot)\right\}  \left(  \Theta
(\eta,\xi)\right)
\end{align*}
for some positive integer $H,$ where $a_{i},b_{i},a_{i}^{\prime},b_{i}%
^{\prime}\in C_{0}^{\infty}\left(  \widetilde{B}\left(  \overline{\xi
},R\right)  \right)  $ $(i=0,1,\ldots H)$, $D_{i}$ and $D_{i}^{\prime}$ are
differential operators such that: for $i=1,\ldots,H\,$, $D_{i}$ and
$D_{i}^{\prime}$ are homogeneous of degree $\leq2$ (so that $D_{i}\Gamma
(\xi_{0};\cdot)$ and $D_{i}^{\prime}\Gamma^{T}(\xi_{0};\cdot)$ are homogeneous
function of degree $\geq-Q$), $D_{0}$ and $D_{0}^{\prime}$ are differential
operators such that $D_{0}\Gamma(\xi_{0};\cdot)$ and\ $D_{0}^{\prime}%
\Gamma^{T}(\xi_{0};\cdot)$ have bounded first order (Euclidean) derivatives
(we will briefly say that $D_{0}\Gamma(\xi_{0};\cdot)$ and\ $D_{0}^{\prime
}\Gamma^{T}(\xi_{0};\cdot)$ are regular).

We will prove our Theorem for the operator with kernel $k^{\prime},$ the proof
for $k^{\prime\prime}$ being completely analogous. Let us split $k^{\prime}$
as%
\begin{align*}
k^{\prime}(\xi_{0};\xi,\eta)  &  =a_{1}(\xi)b_{1}(\eta)D_{1}\Gamma(\xi
_{0};\Theta(\eta,\xi))\\
&  +\left\{  \sum_{i=2}^{H_{m}}a_{i}(\xi)b_{i}(\eta)D_{i}\Gamma(\xi_{0}%
;\cdot)+a_{0}(\xi)b_{0}(\eta)D_{0}\Gamma(\xi_{0};\cdot)\right\}  \left(
\Theta(\eta,\xi)\right) \\
&  \equiv k_{S}\left(  \xi,\eta\right)  +k_{F}\left(  \xi,\eta\right)
\end{align*}
where $D_{1}\Gamma(\xi_{0};u)$ is homogeneous of degree $-Q$ while all the
kernels $D_{i}\Gamma(\xi_{0};u)$ are homogeneous of some degree $\geq1-Q$ and
$D_{0}\Gamma(\xi_{0};u)$ is regular. Recall that all these kernels may have
also an explicit (smooth) dependence on $\xi,\eta$; we will write $D_{i}%
^{\xi,\eta}\Gamma(\xi_{0};\Theta(\eta,\xi))$ to point out this fact, when it
will be important.

We want to apply Theorem \ref{Theorem L^p C^eta} (about singular integrals) to
the kernel $k_{S}$ and Theorem \ref{Thm frac C^eta} (about fractional
integrals) to each term of the kernel $k_{F}$.

We start with the following result, very similar to \cite[Proposition
2.17]{bb2}:

\begin{proposition}
\label{Prop homog kernels}Let $W^{\xi,\eta}\left(  \cdot\right)  $ be a
function defined on the homogeneous group $\mathbb{G}$, smooth outside the
origin and homogeneous of degree $\ell-Q$ for some nonnegative integer $\ell$,
smoothly depending on the parameters $\xi,\eta\in\widetilde{B}\left(
\overline{\xi},R\right)  ,$ and let
\[
K(\xi,\eta)=W^{\xi,\eta}\left(  \Theta\left(  \eta,\xi\right)  \right)
\]
be defined for $\xi,\eta\in\widetilde{B}\left(  \overline{\xi},R\right)  $.
Then $K$ \textit{satisfies}

\textit{(i)\ the growth condition: there exists a constant }$c$ such that%
\[
\left\vert K(\xi,\eta)\right\vert \leq c\cdot\sup_{\left\Vert u\right\Vert
=1}\left\vert W^{\xi,\eta}(u)\right\vert \cdot d_{\widetilde{X}}(\xi
,\eta)^{\ell-Q};
\]

(ii) the mean value inequality: there exists a constant $c>0,$ such that for
every $\xi_{0},\xi,\eta$ with $d_{\widetilde{X}}(\xi_{0},\eta)\geq
2d_{\widetilde{X}}(\xi_{0},\xi)$,%
\begin{equation}
\left\vert K(\xi_{0},\eta)-K(\xi,\eta)\right\vert +\left\vert K(\eta,\xi
_{0})-K(\eta,\xi)\right\vert \leq C\frac{d_{\widetilde{X}}(\xi_{0},\xi
)}{d_{\widetilde{X}}(\xi_{0},\eta)^{Q+1-\ell}} \label{ii20}%
\end{equation}
where the constant $C$ has the form%
\[
c\cdot\sup_{\left\Vert u\right\Vert =1,\xi,\eta\in\widetilde{B}\left(
\overline{\xi},R\right)  }\left\{  \frac{{}}{{}}\left\vert \nabla_{u}%
W^{\xi,\eta}(u)\right\vert +\left\vert \nabla_{\xi}W^{\xi,\eta}(u)\right\vert
+\left\vert \nabla_{\eta}W^{\xi,\eta}(u)\right\vert \frac{{}}{{}}\right\}
\]

(iii) the cancellation property: if $\ell=0$ and $W$ satisfies the vanishing
property%
\begin{equation}
\int_{r<\left\Vert u\right\Vert <R}W^{\xi,\eta}(u)du=0\text{ for every
}R>r>0,\text{ any }\xi,\eta\in\widetilde{B}\left(  \overline{\xi},R\right)
\label{vanishing}%
\end{equation}
then for any positive integer $k,$ for every $\varepsilon_{2}>\varepsilon
_{1}>0$ and $\xi\in\Omega_{k}$ (see (\ref{Om_k})) such that $\widetilde
{B}\left(  \xi,\varepsilon_{2}\right)  \subset\Omega_{k+1}$%
\begin{equation}
\left\vert \int_{\Omega_{k+1},\varepsilon_{1}<\rho\left(  \xi,\eta\right)
<\varepsilon_{2}}\text{ }K(\xi,\eta)\,d\eta\right\vert +\left\vert
\int_{\Omega_{k+1},\varepsilon_{1}<\rho\left(  \xi,\eta\right)  <\varepsilon
_{2}}\text{ }K(\eta,\xi)\,d\eta\right\vert \leq C\cdot(\varepsilon
_{2}-\varepsilon_{1}) \label{2.19}%
\end{equation}
where the constant $C$ has the form%
\[
c\cdot\sup_{\left\Vert u\right\Vert =1,\xi,\eta\in\widetilde{B}\left(
\overline{\xi},R\right)  }\left\{  \frac{{}}{{}}\left\vert W^{\xi,\eta
}(u)\right\vert +\left\vert \nabla_{\xi}W^{\xi,\eta}(u)\right\vert +\left\vert
\nabla_{\eta}W^{\xi,\eta}(u)\right\vert \frac{{}}{{}}\right\}  .
\]

\end{proposition}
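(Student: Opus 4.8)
The plan is to verify the three properties one at a time, reducing everything to the behavior of the model function $W^{\xi,\eta}$ on the unit sphere and the geometry of the map $\Theta$. The key facts I would rely on are: the equivalence of $d_{\widetilde X}$ with the quasidistance $\rho(\eta,\xi)=\|\Theta(\eta,\xi)\|$ (Lemma \ref{equivalence of distance}); the local doubling property of Lebesgue measure with respect to $d_{\widetilde X}$, which gives $|\widetilde B(\xi,r)|\approx r^{Q}$ and hence $\mu(\widetilde B(x,d(x,y)))\approx d_{\widetilde X}(x,y)^{Q}$ (Theorem \ref{Thm NSW} combined with the identification of $\mathbb G$'s Haar measure); the antisymmetry $\Theta(\eta,\xi)=-\Theta(\xi,\eta)$ and the change-of-variables formula $d\eta = c(\xi)(1+\omega(\xi,u))\,du$ with $|\omega(\xi,u)|\le c\|u\|$ (Theorem \ref{metric}); and Remark \ref{Remark parameter derivative}, which ensures that $\xi$- or $\eta$-derivatives of a $D(\lambda)$-homogeneous function preserve the degree of homogeneity.

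For (i), I would write $\Theta(\eta,\xi)=u$, use homogeneity to factor $W^{\xi,\eta}(u)=\|u\|^{\ell-Q}W^{\xi,\eta}(D(1/\|u\|)u)$ with $\|D(1/\|u\|)u\|=1$, bound the second factor by the sup over the unit sphere, and then convert $\|u\|=\rho(\eta,\xi)$ into $d_{\widetilde X}(\xi,\eta)$ via Lemma \ref{equivalence of distance}. For (ii), the mean value inequality, I would connect $\xi_0$ and $\xi$ by a sub-unit curve realizing (up to $\varepsilon$) the distance $d_{\widetilde X}(\xi_0,\xi)$, differentiate $t\mapsto K(\gamma(t),\eta)$ (and $t\mapsto K(\eta,\gamma(t))$), and collect the three contributions: the $\nabla_u W$ term gets an extra power of homogeneity coming from differentiating the argument $\Theta(\eta,\xi)$ along the $\widetilde X_i$ directions (here the approximation theorem, giving $\widetilde X_i=Y_i+R_i^\eta$, and homogeneity of $Y_i$ are what produce the gain from degree $\ell-Q$ to $\ell-Q-1$), while the $\nabla_\xi W$ and $\nabla_\eta W$ terms keep the same homogeneity degree but are already multiplied by the length of the curve, i.e. by $d_{\widetilde X}(\xi_0,\xi)$. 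Using $d_{\widetilde X}(\xi_0,\eta)\ge 2d_{\widetilde X}(\xi_0,\xi)$ to keep the intermediate points at distance $\approx d_{\widetilde X}(\xi_0,\eta)$ from $\eta$, one arrives at the claimed bound with constant of the stated form.

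For (iii), the cancellation property, the strategy is the standard one: with $u=\Theta(\eta,\xi)$ as integration variable, $\int_{\varepsilon_1<\rho(\xi,\eta)<\varepsilon_2}K(\xi,\eta)\,d\eta = \int_{\varepsilon_1<\|u\|<\varepsilon_2}W^{\xi,\eta(\xi,u)}(u)\,c(\xi)(1+\omega(\xi,u))\,du$. Splitting this into a ``frozen-parameter'' part $W^{\xi,\xi}(u)\,c(\xi)$, whose spherical-shell integral vanishes by hypothesis \eqref{vanishing} after absorbing the harmless radial factor, plus a remainder where either $\omega(\xi,u)=O(\|u\|)$ or the difference $W^{\xi,\eta(\xi,u)}(u)-W^{\xi,\xi}(u)=O(\|u\|)\cdot\sup|\nabla_\eta W|$ supplies one extra power of $\|u\|$, one finds the remainder integrand is $O(\|u\|^{1-Q})$, which is integrable on the shell and yields a bound $\le C(\varepsilon_2-\varepsilon_1)$ after integrating in polar-type coordinates (noting $\varepsilon_j\le R$ is bounded). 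The analogous computation for $\int K(\eta,\xi)\,d\eta$ uses $\Theta(\xi,\eta)=-\Theta(\eta,\xi)$ and the other change-of-variables statement in Theorem \ref{metric}(c). The main obstacle is the bookkeeping in (ii): one must carefully track that differentiating along $\widetilde X_i$ splits into a term $Y_i$ acting on $u$ (gaining a degree) and terms $R_i^\eta$ of local degree $\le 0$ (not gaining, but harmless since $\ell\ge 1$ forces degree $\ge 1-Q$), plus terms where the derivative hits the explicit $\xi,\eta$-dependence of $W$, and to see that in every case the final homogeneity is exactly $\ell-Q-1$ or better and the constant has the asserted linear dependence on the sup-norms of $W$ and its gradients.
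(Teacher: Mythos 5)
Your proposal follows the paper's proof in all three parts: (i) is the same direct homogeneity plus equivalence of $\rho$ and $d_{\widetilde{X}}$; (ii) is the same mean-value argument along a sub-unit curve — the paper packages it through inequality (\ref{4.6}) of Proposition \ref{basic properties for holder norm}, which is proved exactly by constructing such a curve — splitting $\widetilde{X}_i K$ into the $Y_i + R_i^\eta$ action on the argument (gain of one degree) plus derivatives of the explicit parameter dependence (Remark \ref{Remark parameter derivative}), and absorbing the bounded factor $d_{\widetilde{X}}(\xi_0,\eta)\leq 2R$; (iii) is the same change of variables $u=\Theta(\eta,\xi)$, freezing of the parameter via $W^{\xi,\eta}=W^{\xi,\xi}+(W^{\xi,\eta}-W^{\xi,\xi})$, with each remainder supplying an extra $O(\|u\|)$. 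Nothing essential is missing and the approach is the same as the paper's.
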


\begin{proof}
Point (i) is trivial, by the homogeneity of $W,$ and the equivalence between
$d_{\widetilde{X}}$ and $\rho$ (see Lemma \ref{equivalence of distance}).

In order to prove (ii), fix $\xi_{0}$, $\eta$, and let $r=\frac{1}{2}\rho
(\eta,\xi_{0})$. Condition $\rho(\eta,\xi_{0})>2\rho(\xi,\xi_{0})$\ means that
$\xi$ is a point ranging in $\widetilde{B}_{r}(\xi_{0})$. Applying (\ref{4.6})
to the function%
\[
f\left(  \xi\right)  =K\left(  \xi,\eta\right)
\]
we can write
\begin{align*}
&  \left\vert f\left(  \xi\right)  -f\left(  \xi_{0}\right)  \right\vert \leq
cd_{\widetilde{X}}\left(  \xi,\xi_{0}\right)  \cdot\\
\cdot &  \left(  \overset{q}{\underset{i=1}{\sum}}\underset{\zeta\in
\widetilde{B}\left(  \xi_{0},\frac{3}{4}d_{\widetilde{X}}\left(  \xi_{0}%
,\eta\right)  \right)  }{\sup}\left\vert \widetilde{X}_{i}f\left(
\zeta\right)  \right\vert +d_{\widetilde{X}}\left(  \xi,\xi_{0}\right)
\underset{\zeta\in\widetilde{B}\left(  \xi_{0},\frac{3}{4}d_{\widetilde{X}%
}\left(  \xi_{0},\eta\right)  \right)  }{\sup}\left\vert \widetilde{X}%
_{0}f\left(  \zeta\right)  \right\vert \right)  .
\end{align*}
Noting that, for $\zeta\in\widetilde{B}\left(  \xi_{0},\frac{3}{4}%
d_{\widetilde{X}}\left(  \xi_{0},\eta\right)  \right)  ,$%
\begin{align*}
\left\vert \widetilde{X}_{i}K\left(  \cdot,\eta\right)  \left(  \zeta\right)
\right\vert  &  =\left\vert \widetilde{X}_{i}\left(  W^{\zeta,\eta}\left(
\Theta\left(  \cdot,\eta\right)  \right)  \right)  \left(  \zeta\right)
+\left(  \widetilde{X}_{i}W^{\cdot,\eta}\left(  \Theta\left(  \zeta
,\eta\right)  \right)  \right)  \left(  \zeta\right)  \right\vert \\
&  \leq\left\vert \left(  Y_{i}W+R_{i}^{\eta}W\right)  \left(  \Theta\left(
\eta,\zeta\right)  \right)  \right\vert +\left\vert \left(  \widetilde{X}%
_{i}W^{\cdot,\eta}\left(  \Theta\left(  \zeta,\eta\right)  \right)  \right)
\left(  \zeta\right)  \right\vert
\end{align*}
and recalling that, by Remark \ref{Remark parameter derivative},
$\nabla_{\zeta}W^{\zeta,\eta}(u)$ has the same $u$ homogeneity as
$W^{\zeta,\eta}(u)$, we get%
\begin{align*}
\left\vert \widetilde{X}_{i}K\left(  \cdot,\eta\right)  \left(  \zeta\right)
\right\vert  &  \leq\sup_{\left\Vert u\right\Vert =1,\zeta,\eta\in
\widetilde{B}\left(  \overline{\xi},R\right)  }\left\vert \nabla_{u}%
W^{\xi,\eta}(u)\right\vert \frac{c}{\rho\left(  \zeta,\eta\right)  ^{Q-\ell
+1}}+\\
&  +\sup_{\left\Vert u\right\Vert =1,\zeta,\eta\in\widetilde{B}\left(
\overline{\xi},R\right)  }\left\vert \nabla_{\zeta}W^{\zeta,\eta
}(u)\right\vert \frac{c}{\rho\left(  \zeta,\eta\right)  ^{Q-\ell}}\\
&  \leq\sup_{\left\Vert u\right\Vert =1,\zeta,\eta\in\widetilde{B}\left(
\overline{\xi},R\right)  }\left\{  \left\vert \nabla_{u}W^{\zeta,\eta
}(u)\right\vert +\left\vert \nabla_{\zeta}W^{\zeta,\eta}(u)\right\vert
\right\}  \frac{c}{d_{\widetilde{X}}\left(  \xi_{0},\eta\right)  ^{Q-\ell+1}}.
\end{align*}
Analogously%
\[
\left\vert \widetilde{X}_{0}K\left(  \cdot,\eta\right)  \left(  \zeta\right)
\right\vert \leq\sup_{\left\Vert u\right\Vert =1,\zeta,\eta\in\widetilde
{B}\left(  \overline{\xi},R\right)  }\left\{  \left\vert \nabla_{u}%
W^{\zeta,\eta}(u)\right\vert +\left\vert \nabla_{\zeta}W^{\zeta,\eta
}(u)\right\vert \right\}  \frac{c}{d_{\widetilde{X}}\left(  \xi_{0}%
,\eta\right)  ^{Q-\ell+2}},
\]
hence%
\begin{align*}
\left\vert K\left(  \xi,\eta\right)  -K\left(  \xi_{0},\eta\right)
\right\vert  &  \leq Cd_{\widetilde{X}}\left(  \xi,\xi_{0}\right)  \left(
\frac{1}{d_{\widetilde{X}}\left(  \xi_{0},\eta\right)  ^{Q-\ell+1}}%
+\frac{d_{\widetilde{X}}\left(  \xi,\xi_{0}\right)  }{d_{\widetilde{X}}\left(
\xi_{0},\eta\right)  ^{Q-\ell+2}}\right) \\
&  \leq C\frac{d_{\widetilde{X}}\left(  \xi,\xi_{0}\right)  }{d_{\widetilde
{X}}\left(  \xi_{0},\eta\right)  ^{Q-\ell+1}}%
\end{align*}
with%
\[
C=c\sup_{\left\Vert u\right\Vert =1,\zeta,\eta\in\widetilde{B}\left(
\overline{\xi},R\right)  }\left\{  \left\vert \nabla_{u}W^{\zeta,\eta
}(u)\right\vert +\left\vert \nabla_{\zeta}W^{\zeta,\eta}(u)\right\vert
\right\}  .
\]
To get the analogous bound for $\left\vert K(\eta,\xi_{0})-K(\eta
,\xi)\right\vert ,$ it is enough to apply the previous estimate to the
function%
\[
\widetilde{K}(\xi,\eta)=\widetilde{W}^{\xi,\eta}\left(  \Theta\left(  \eta
,\xi\right)  \right)  \text{ with }\widetilde{W}^{\xi,\eta}(u)=W^{\eta,\xi
}(u^{-1})\text{.}%
\]
This completes the proof of (ii).

To prove (iii), we first ignore the dependence on the parameters $\xi,\eta,$
and then we will show how to modify our argument to keep it into account. Let
us write:
\[
\int_{\Omega_{k+1},\varepsilon_{1}<\rho\left(  \xi,\eta\right)  <\varepsilon
_{2}}\text{ }W(\Theta(\eta,\xi))\,d\eta=
\]
by the change of variables $u=\Theta(\eta,\xi)$, Theorem \ref{metric} (b)
gives%
\[
=c(\xi)\int_{\varepsilon_{1}<\left\Vert u\right\Vert <\varepsilon_{2}}\text{
}W(u)\,\left(  1+\omega\left(  \xi,u\right)  )\right)  \,du=
\]
by the vanishing property of $W$,
\[
=\,c(\xi)\int_{\varepsilon_{1}<\left\Vert u\right\Vert <\varepsilon_{2}}\text{
}W(u)\omega\left(  \xi,u\right)  du.
\]
Then
\begin{align*}
\left\vert \int_{\Omega_{k+1},\varepsilon_{1}<\rho\left(  \xi,\eta\right)
<\varepsilon_{2}}\text{ }W(\Theta(\eta,\xi))\,d\eta\right\vert  &  \leq
c\cdot\int_{\varepsilon_{1}<\left\Vert u\right\Vert <\varepsilon_{2}}\text{
}\left\vert W(u)\right\vert \left\Vert u\right\Vert \,du\\
&  \leq c\cdot\sup_{\left\Vert u\right\Vert =1}\left\vert W\right\vert
\cdot\int_{\varepsilon_{1}<\left\Vert u\right\Vert <\varepsilon_{2}}\text{
}\left\Vert u\right\Vert ^{1-Q}du\\
&  \leq c\cdot\sup_{\left\Vert u\right\Vert =1}\left\vert W\right\vert
\cdot(\varepsilon_{2}-\varepsilon_{1}).
\end{align*}
Analogously one proves the bound on $W(\Theta(\xi,\eta)).$ Now, to keep track
of the possible dependence of $W$ on the parameters $\xi,\eta,$ let us write:%
\begin{align*}
&  \int_{\Omega_{k+1},\varepsilon_{1}<\rho\left(  \xi,\eta\right)
<\varepsilon_{2}}\text{ }W^{\xi,\eta}(\Theta(\eta,\xi))\,d\eta=\int
_{\Omega_{k+1},\varepsilon_{1}<\rho\left(  \xi,\eta\right)  <\varepsilon_{2}%
}\text{ }W^{\xi,\xi}(\Theta(\eta,\xi))\,d\eta+\\
&  +\int_{\Omega_{k+1},\varepsilon_{1}<\rho\left(  \xi,\eta\right)
<\varepsilon_{2}}\text{ }\left[  W^{\xi,\eta}(\Theta(\eta,\xi))-W^{\xi,\xi
}(\Theta(\eta,\xi))\,\right]  d\eta\\
&  \equiv I+II.
\end{align*}
The term $I$ can be bounded as above, while%
\[
\left\vert W^{\xi,\eta}(u)-W^{\xi,\xi}(u)\right\vert \leq\left\vert \xi
-\eta\right\vert \left\vert \nabla_{\eta}W^{\xi,\eta^{\prime}}(u)\right\vert
\]
for some point $\eta^{\prime}\ $near $\xi$ and $\eta.$ Recalling again that
the function $\nabla_{\eta}W^{\xi,\eta^{\prime}}\left(  \cdot\right)  $ has
the same homogeneity as $W^{\xi,\eta^{\prime}}\left(  \cdot\right)  ,$ while
\[
\left\vert \xi-\eta\right\vert \leq cd_{\widetilde{X}}\left(  \xi,\eta\right)
\leq c\rho\left(  \xi,\eta\right)  ,
\]
we have%
\[
\left\vert II\right\vert \leq c\sup_{\left\Vert u\right\Vert =1,\xi,\eta
\in\widetilde{B}\left(  \overline{\xi},R\right)  }\left\vert \nabla_{\eta
}W^{\xi,\eta}(u)\right\vert \int_{\Omega_{k+1},\varepsilon_{1}<\left\Vert
u\right\Vert <\varepsilon_{2}}\left\Vert u\right\Vert ^{1-Q}du
\]
and the same reasoning as above applies. This proves the bound on $\left\vert
\int\text{ }K(\xi,\eta)\,d\eta\right\vert $ in (\ref{2.19}). The proof of the
bound on $\left\vert \int K(\eta,\xi)\,d\eta\right\vert $ is analogous, since
the vanishing property (\ref{vanishing}) also implies the same for
$\int_{r<\left\Vert u\right\Vert <R}W^{\xi,\eta}(u^{-1})du.$
\end{proof}

\bigskip

The above Proposition implies that $D_{1}\Gamma(\xi_{0};\Theta(\eta,\xi))$
satisfies the standard estimates, cancellation property and convergence
condition stated in \S \ \ref{subsec locally hom space}.\ Note that
(\ref{2.19}) implies both the cancellation property and the convergence condition.

In order to apply to the kernel $k_{S}\left(  \xi,\eta\right)  $ Thm.
\ref{Theorem L^p C^eta} we still need to prove that the singular integral $T$
with kernel $k_{S}\left(  \xi,\eta\right)  $ satisfies the condition $T\left(
1\right)  \in C_{\widetilde{X}}^{\gamma}.$ (see condition
(\ref{h tilde C^gamma}) in Theorem \ref{Theorem L^p C^eta}).

This result is more delicate than the previous condition, and is contained in
the following:

\begin{proposition}
\label{Prop T(1)}Let
\[
\widetilde{h}\left(  \xi\right)  \equiv\lim_{\varepsilon\rightarrow0}%
\int_{\rho(\xi,\eta)>\varepsilon}\widetilde{K}(\xi,\eta)d\eta
\]
with
\[
\widetilde{K}(\xi,\eta)=a_{1}(\xi)b_{1}(\eta)D_{1}^{\xi,\eta}\Gamma(\xi
_{0};\Theta(\eta,\xi)),
\]
$D_{1}^{\xi,\eta}\Gamma(\xi_{0};\cdot)$ homogeneous of degree $-Q$ and
satisfying the vanishing property%
\[
\int_{r<\left\Vert u\right\Vert <R}D_{1}^{\xi,\eta}\Gamma(\xi_{0};u)du=0\text{
for every }R>r>0,\text{ any }\xi,\eta\in\widetilde{B}\left(  \overline{\xi
},R\right)  .
\]
Then $\widetilde{h}\in C_{\widetilde{X}}^{\gamma}\left(  \widetilde{B}\left(
\overline{\xi},R\right)  \right)  $ for any $\gamma\in\left(  0,1\right)  .$
\end{proposition}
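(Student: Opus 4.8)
The plan is to show that $\widetilde{h}$ is Hölder continuous by differentiating under the (principal-value) integral sign, after first disposing of the $\xi$-dependence of the cutoff $a_1$ and the coefficients of $D_1$. First I would write
\[
\widetilde{h}\left(  \xi\right)  =a_{1}\left(  \xi\right)  \lim_{\varepsilon\rightarrow0}\int_{\rho\left(  \xi,\eta\right)  >\varepsilon}b_{1}\left(  \eta\right)  D_{1}^{\xi,\eta}\Gamma\left(  \xi_{0};\Theta\left(  \eta,\xi\right)  \right)  d\eta\equiv a_{1}\left(  \xi\right)  g\left(  \xi\right)  ,
\]
so that, since $a_1\in C_0^\infty\subset C_{\widetilde X}^\gamma$ and the $C_{\widetilde X}^\gamma$ norm is submultiplicative (Proposition \ref{basic properties for holder norm}(ii)), it suffices to prove $g\in C_{\widetilde X}^\gamma(\widetilde B(\overline\xi,R))$. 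The key move is the change of variables $u=\Theta(\eta,\xi)$, which by Theorem \ref{metric}(c) turns the inner integral into
\[
g\left(  \xi\right)  =c\left(  \xi\right)  \lim_{\varepsilon\rightarrow0}\int_{\left\Vert u\right\Vert >\varepsilon,\,\Theta\left(  \xi,\cdot\right)^{-1}\left(  u\right)  \in\widetilde B}b_{1}\left(  \Theta\left(  \xi,\cdot\right)^{-1}\left(  u\right)  \right)  D_{1}^{\xi,\Theta\left(  \xi,\cdot\right)^{-1}\left(  u\right)  }\Gamma\left(  \xi_{0};u\right)  \left(  1+\omega\left(  \xi,u\right)  \right)  du.
\]
Because $D_1^{\xi,\eta}\Gamma(\xi_0;u)$ is homogeneous of degree $-Q$ with \emph{vanishing} integral over spherical shells, the principal value exists, and the $\varepsilon\to 0$ limit is well controlled: the singular part is killed by cancellation and one is left with an absolutely convergent integrand of the form ($-Q$)-homogeneous kernel times a function that vanishes like $\|u\|$ near the origin (coming from $\omega$ and from the Taylor remainder of the smooth factors at $u=0$).

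Next I would estimate $|g(\xi)-g(\xi')|$ for $\xi,\xi'\in\widetilde B(\overline\xi,R)$. Writing $g(\xi)=c(\xi)G(\xi)$ with $G$ the shell integral above, and using that $c$ is smooth hence Hölder, it reduces to bounding $|G(\xi)-G(\xi')|$. In $G$ the variable $\xi$ enters only through smooth functions of $\xi$ (the coefficients of $D_1$, the maps $\Theta(\xi,\cdot)^{-1}$, $b_1\circ\Theta(\xi,\cdot)^{-1}$, $\omega(\xi,\cdot)$) multiplying the fixed homogeneous kernel $D_1^{\cdot}\Gamma(\xi_0;u)$. One splits the difference of the two integrands as a sum of terms, each being a $(-Q)$-homogeneous kernel (with vanishing shell integrals, or with an extra $\|u\|$ factor making it ($1-Q$)-homogeneous) times a difference of smooth $\xi$-dependent factors, which is $O(|\xi-\xi'|)$ uniformly. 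For the genuinely singular pieces I would again use the vanishing property to replace the kernel by the kernel times $\omega$ (bounded by $c\|u\|$) so that everything becomes integrable; estimating $\int_{\|u\|<\delta}\|u\|^{1-Q}du\le c\delta$ and $\int_{\delta<\|u\|<R}\|u\|^{-Q}du\le c\log(R/\delta)$ (or using the extra $\|u\|$ to get $\int_{\|u\|<\delta}\|u\|^{1-Q}\le c\delta$) and optimizing the split radius $\delta$ against $|\xi-\xi'|$ yields a bound $|G(\xi)-G(\xi')|\le c|\xi-\xi'|^\gamma$ (indeed any power $<1$, since the only loss is a logarithm near the cutoff) — in fact, since the intrinsic distances dominate the Euclidean distance (Lemma \ref{equivalence of distance}), this gives $|g(\xi)-g(\xi')|\le c\,d_{\widetilde X}(\xi,\xi')^\gamma$ for any $\gamma\in(0,1)$. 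Combining with boundedness of $g$ (the $\xi=\xi'$ case of the same estimates, which gives $|g|\le c$) we get $g\in C_{\widetilde X}^\gamma$, and uniform bounds on $\Gamma_{ij}$-type derivatives from Theorem \ref{ith:stima:unif:ipo} make the constant independent of $\xi_0$.

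The main obstacle I expect is handling the limit $\varepsilon\to 0$ \emph{uniformly} while differencing in $\xi$: the truncated integrals $\int_{\rho(\xi,\eta)>\varepsilon}$ and $\int_{\rho(\xi',\eta)>\varepsilon}$ are over slightly different regions, so one cannot naively subtract integrands. I would handle this exactly as in the proofs of Lemmas \ref{iilem:der:optype} and \ref{iilem:der:optype2}: insert a smooth radial cutoff $\varphi_\varepsilon(\Theta(\eta,\xi))$ (note $\varphi_\varepsilon(\Theta(\eta,\xi))=\varphi_\varepsilon(\Theta(\xi,\eta))$ for radial $\varphi$), pass to the $u$-variable, and observe that on the common integration domain the cutoff depends on $u$ only, so the $\xi$-difference of the two regularized integrands is a genuine difference of integrands; the error between $\varphi_\varepsilon$-truncation and sharp truncation is $O(\varepsilon)$ by the shell cancellation and tends to $0$. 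After this regularization the estimates described above go through, and letting $\varepsilon\to0$ gives the claimed Hölder bound. A secondary technical point is bookkeeping the decomposition of $D_1^{\xi,\eta}\Gamma$: here one uses that any $\xi$- or $\eta$-derivative of the coefficients preserves the $u$-homogeneity (Remark \ref{Remark parameter derivative}), so every term produced is either again a vanishing ($-Q$)-homogeneous kernel or a harmless ($1-Q$)-homogeneous one.
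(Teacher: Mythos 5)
Your proposal is correct but follows a genuinely different and more elementary route than the paper's. The paper splits $b_{1}(\eta)=b_{1}(\xi)+[b_{1}(\eta)-b_{1}(\xi)]$ \emph{before} changing variables, writing $\widetilde h=I+II$: for $I$, the change of variables $u=\Theta(\eta,\xi)$ together with the vanishing property annihilates the $(-Q)$-homogeneous part, leaving only the integrable $\omega$-piece of (\ref{change of variables}), so $I$ is in fact Lipschitz; for $II$, the paper observes that the kernel $\phi(\xi,\eta)\,D_{1}\Gamma(\xi_{0};\Theta(\eta,\xi))\,[b_{1}(\eta)-b_{1}(\xi)]$ satisfies the fractional-integral estimates (\ref{standard 1}), (\ref{standard 2}) with $\nu=1$ and invokes the abstract $C^{\gamma}$ bound of Theorem \ref{Thm frac C^eta} applied to $f\equiv1$; the $\eta$- and $\xi$-dependences of $D_{1}$ are then disposed of in two further passes (a Taylor expansion of the coefficients at $\eta=\xi$, and the diagonal decomposition $\widetilde h(\xi_{1})-\widetilde h(\xi_{2})=A+B$). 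You instead change variables first and prove $|G(\xi)-G(\xi')|\le c\,|\xi-\xi'|^{\gamma}$ by a bare-hands near/far split in $u$ and optimization of the split radius $\delta\sim|\xi-\xi'|$: after a single Taylor expansion of the smooth factors and of the coefficients of $D_{1}$ at $u=0$ (using Remark \ref{Remark parameter derivative}), the vanishing shell integral makes each near contribution $O(\delta)$, the far difference is $O(|\xi-\xi'|\log(R/\delta))$ by smoothness, and the logarithm is absorbed into any $\gamma<1$, with Lemma \ref{equivalence of distance} converting to $d_{\widetilde X}$. This amounts to a self-contained reproof of the relevant instance of Theorem \ref{Thm frac C^eta}; it buys a unified one-pass treatment of the $\xi,\eta$-dependence of $D_{1}$ at the cost of redoing the $\delta$-optimization the abstract theorem already packages. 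Your device of regularizing with a radial $\varphi_{\varepsilon}$ before differencing — so that after the change of variables both truncations become the $\xi$-independent condition $\|u\|>\varepsilon$ — is exactly the right way to make the principal-value limit uniform, and matches the technique of Lemmas \ref{iilem:der:optype} and \ref{iilem:der:optype2}.
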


\begin{proof}
Since $a_{1},b_{1}$ are compactly supported in $\widetilde{B}\left(
\overline{\xi},R\right)  ,$ we can choose a radial cutoff function%
\[
\phi\left(  \xi,\eta\right)  =f\left(  \rho\left(  \xi,\eta\right)  \right)
,
\]
with
\[
f\left(  \left\Vert u\right\Vert \right)  =1\text{ for }\left\Vert
u\right\Vert \leq R,\text{ }f\left(  \left\Vert u\right\Vert \right)  =0\text{
for }\left\Vert u\right\Vert \geq2R,
\]
so that $\widetilde{K}(\xi,\eta)=\widetilde{K}(\xi,\eta)\phi\left(  \xi
,\eta\right)  .$ To begin with, let us prove the assertion without taking into
consideration the dependence of $D_{1}^{\xi,\eta}\Gamma(\xi_{0};u)$ on
$\xi,\eta$. Then%
\begin{align*}
\widetilde{h}\left(  \xi\right)   &  =a_{1}(\xi)b_{1}\left(  \xi\right)
\lim_{\varepsilon\rightarrow0}\int_{\rho(\xi,\eta)>\varepsilon}\phi\left(
\xi,\eta\right)  D_{1}\Gamma(\xi_{0};\Theta(\eta,\xi))d\eta+\\
&  +a_{1}(\xi)\int\phi\left(  \xi,\eta\right)  D_{1}\Gamma(\xi_{0};\Theta
(\eta,\xi))\left[  b_{1}(\eta)-b_{1}\left(  \xi\right)  \right]  d\eta\\
&  \equiv I\left(  \xi\right)  +II\left(  \xi\right)  .
\end{align*}
Now,%
\begin{align*}
I\left(  \xi\right)   &  =a_{1}(\xi)b_{1}\left(  \xi\right)  c\left(
\xi\right)  \lim_{\varepsilon\rightarrow0}\int_{\left\Vert u\right\Vert
>\varepsilon}f\left(  \left\Vert u\right\Vert \right)  D_{1}\Gamma(\xi
_{0};u)\left(  1+\omega\left(  \xi,u\right)  \right)  du\\
&  =a_{1}(\xi)b_{1}\left(  \xi\right)  c\left(  \xi\right)  \int f\left(
\left\Vert u\right\Vert \right)  D_{1}\Gamma(\xi_{0};u)\omega\left(
\xi,u\right)  du,
\end{align*}
by the vanishing property, with $\omega$ smoothly depending on $\xi$ and
uniformly bounded by $c\left\Vert u\right\Vert $. Hence $I\left(  \xi\right)
$ is Lipschitz continuous, in particular H\"{o}lder \ continuous of any
exponent $\gamma\in\left(  0,1\right)  $. Moreover,
\begin{align*}
II\left(  \xi\right)   &  =a_{1}(\xi)\int_{\widetilde{B}\left(  \overline{\xi
},R\right)  }\kappa\left(  \xi,\eta\right)  d\eta\text{ with}\\
\kappa\left(  \xi,\eta\right)   &  =\phi\left(  \xi,\eta\right)  D_{1}%
\Gamma(\xi_{0};\Theta(\eta,\xi))\left[  b_{1}(\eta)-b_{1}\left(  \xi\right)
\right]  .
\end{align*}
It is not difficult to check that the kernel $\kappa\left(  \xi,\eta\right)  $
satisfies the standard estimates of \emph{fractional }integrals
(\ref{standard 1}) and (\ref{standard 2}) in
\S \ \ref{subsec locally hom space} for any $\nu\in\left(  0,1\right)  $
(actually, for $\nu=1$). Hence the operator with kernel $\kappa$ is continuous
on $C^{\gamma}\left(  \widetilde{B}\left(  \overline{\xi},R\right)  \right)
;$ in particular, it maps the function $1$ into $C^{\gamma}\left(
\widetilde{B}\left(  \overline{\xi},R\right)  \right)  ,$ which proves that
$II\left(  \xi\right)  $ is H\"{o}lder continuous.

To conclude the proof, we have to show how to take into account the possible
dependence of $D_{1}^{\xi,\eta}\Gamma\left(  \xi_{0};u\right)  $ on $\xi,\eta
$. Let us start with the $\eta$ dependence.%
\begin{align*}
\widetilde{h}\left(  \xi\right)   &  =a_{1}(\xi)b_{1}\left(  \xi\right)
\lim_{\varepsilon\rightarrow0}\int_{\rho(\xi,\eta)>\varepsilon}\phi\left(
\xi,\eta\right)  D_{1}^{\eta}\Gamma(\xi_{0};\Theta(\eta,\xi))d\eta+\\
&  +a_{1}(\xi)\int\phi\left(  \xi,\eta\right)  D_{1}^{\eta}\Gamma(\xi
_{0};\Theta(\eta,\xi))\left[  b_{1}(\eta)-b_{1}\left(  \xi\right)  \right]
d\eta\\
&  \equiv I^{\prime}\left(  \xi\right)  +II^{\prime}\left(  \xi\right)  .
\end{align*}
The term $II^{\prime}\left(  \xi\right)  $ can be handled as the term
$II\left(  \xi\right)  $ above. As to $I^{\prime}\left(  \xi\right)  $,%
\begin{align*}
I^{\prime}\left(  \xi\right)   &  =a_{1}(\xi)b_{1}\left(  \xi\right)  c\left(
\xi\right)  \lim_{\varepsilon\rightarrow0}\int_{\left\Vert u\right\Vert
>\varepsilon}f\left(  \left\Vert u\right\Vert \right)  D_{1}^{\Theta(\cdot
,\xi)^{-1}\left(  u\right)  }\Gamma(\xi_{0};u)\left(  1+\omega\left(
\xi,u\right)  \right)  du\\
&  =a_{1}(\xi)b_{1}\left(  \xi\right)  c\left(  \xi\right)  \lim
_{\varepsilon\rightarrow0}\int_{\left\Vert u\right\Vert >\varepsilon}f\left(
\left\Vert u\right\Vert \right)  D_{1}^{\Theta(\cdot,\xi)^{-1}\left(
u\right)  }\Gamma(\xi_{0};u)du+\\
&  +a_{1}(\xi)b_{1}\left(  \xi\right)  c\left(  \xi\right)  \int f\left(
\left\Vert u\right\Vert \right)  D_{1}^{\Theta(\cdot,\xi)^{-1}\left(
u\right)  }\Gamma(\xi_{0};u)\omega\left(  \xi,u\right)  du.
\end{align*}
The second term can be handled as above, while the first requires some care.
By the vanishing property of the kernel $D_{1}^{\zeta}\Gamma(\xi_{0};u)$ for
any fixed $\zeta$ we can write%
\begin{align*}
&  \lim_{\varepsilon\rightarrow0}\int_{\left\Vert u\right\Vert >\varepsilon
}f\left(  \left\Vert u\right\Vert \right)  D_{1}^{\Theta(\cdot,\xi
)^{-1}\left(  u\right)  }\Gamma(\xi_{0};u)du\\
&  =\lim_{\varepsilon\rightarrow0}\int_{\left\Vert u\right\Vert >\varepsilon
}f\left(  \left\Vert u\right\Vert \right)  \left[  D_{1}^{\Theta(\cdot
,\xi)^{-1}\left(  u\right)  }\Gamma(\xi_{0};u)-D_{1}^{\xi}\Gamma(\xi
_{0};u)\right]  du.
\end{align*}
On the other hand,%
\[
D_{1}^{\Theta(\cdot,\xi)^{-1}\left(  u\right)  }\Gamma(\xi_{0};u)=D_{1}^{\xi
}\Gamma(\xi_{0};u)+D_{0}^{\xi}\Gamma(\xi_{0};u)
\]
where $D_{0}^{\xi}$ is a vector field of local weight $\leq0,$ smoothly
depending on $\xi.$ Hence%
\[
\lim_{\varepsilon\rightarrow0}\int_{\left\Vert u\right\Vert >\varepsilon
}f\left(  \left\Vert u\right\Vert \right)  D_{1}^{\Theta(\cdot,\xi
)^{-1}\left(  u\right)  }\Gamma(\xi_{0};u)du=\int f\left(  \left\Vert
u\right\Vert \right)  D_{0}^{\xi}\Gamma(\xi_{0};u)du,
\]
which can be handled as the term $I\left(  \xi\right)  $ above.

Dependence on the variable $\xi$ can be taken into account as follows. If%
\begin{align*}
\widetilde{h}\left(  \xi\right)   &  =a_{1}(\xi)b_{1}\left(  \xi\right)
\lim_{\varepsilon\rightarrow0}\int_{\rho(\xi,\eta)>\varepsilon}\phi\left(
\xi,\eta\right)  D_{1}^{\xi,\eta}\Gamma(\xi_{0};\Theta(\eta,\xi))d\eta\\
&  \equiv\lim_{\varepsilon\rightarrow0}\int F_{\varepsilon}\left(  \xi
,\xi,\eta\right)  \text{ with}\\
F_{\varepsilon}\left(  \zeta,\xi,\eta\right)   &  =a_{1}(\xi)b_{1}\left(
\xi\right)  \chi_{\rho(\xi,\eta)>\varepsilon}\left(  \eta\right)  \phi\left(
\xi,\eta\right)  D_{1}^{\zeta,\eta}\Gamma(\xi_{0};\Theta(\eta,\xi))d\eta
\end{align*}
then%
\begin{align*}
\widetilde{h}\left(  \xi_{1}\right)  -\widetilde{h}\left(  \xi_{2}\right)   &
=\lim_{\varepsilon\rightarrow0}\int\left[  F_{\varepsilon}\left(  \xi_{1}%
,\xi_{1},\eta\right)  -F_{\varepsilon}\left(  \xi_{2},\xi_{1},\eta\right)
\right]  d\eta+\\
&  +\lim_{\varepsilon\rightarrow0}\int\left[  F_{\varepsilon}\left(  \xi
_{2},\xi_{1},\eta\right)  -F_{\varepsilon}\left(  \xi_{2},\xi_{2},\eta\right)
\right]  d\eta\\
&  \equiv A\left(  \xi_{1},\xi_{2}\right)  +B\left(  \xi_{1},\xi_{2}\right)  .
\end{align*}
Now,
\[
\left\vert A\left(  \xi_{1},\xi_{2}\right)  \right\vert \leq c\rho\left(
\xi_{1},\xi_{2}\right)
\]
by the smoothness of $\xi\mapsto D_{1}^{\xi,\eta}\Gamma(\xi_{0};u)$. As to
$B\left(  \xi_{1},\xi_{2}\right)  ,$ it is enough to apply the previous
reasoning to the kernel $D_{1}^{\zeta,\eta}\Gamma(\xi_{0};\Theta(\eta,\xi)),$
for any fixed $\zeta,$ to conclude that%
\[
\left\vert \lim_{\varepsilon\rightarrow0}\int\left[  F\left(  \zeta,\xi
_{1},\eta\right)  -F\left(  \zeta,\xi_{2},\eta\right)  \right]  d\eta
\right\vert \leq c\rho\left(  \xi_{1},\xi_{2}\right)  ^{\gamma}%
\]
for some constant uniformly bounded in $\zeta,$ and then apply this inequality
taking $\zeta=\xi_{2}$. This completes the proof.
\end{proof}

\bigskip

We are now ready for the

\begin{proof}
[Conclusion of the proof of Theorem
\ref{continuity for frozen operator on holder space}]Recall that a frozen
operator of type zero is written as
\[
T(\xi_{0})f(\xi)=P.V.\int_{\widetilde{B}}k(\xi_{0};\xi,\eta)\,f(\eta
)\,d\eta+\alpha\left(  \xi_{0},\xi\right)  f\left(  \xi\right)  ,
\]
where $\alpha$ is a bounded measurable function, smooth in $\xi.$ The
multiplicative part%
\[
f\left(  \xi\right)  \longmapsto\alpha\left(  \xi_{0},\xi\right)  f\left(
\xi\right)
\]
clearly maps $C^{\alpha}$ in $C^{\alpha},$ since $\alpha\left(  \xi_{0}%
,\cdot\right)  $ is smooth, with operator norm bounded by some constant
depending on the vector fields and the ellipticity constant $\mu,$ by Theorem
\ref{ith:stima:unif:ipo}.

Let us now consider the integral part. With the notation introduced at the
beginning of this section, let us consider first
\[
k_{S}\left(  \xi,\eta\right)  =a_{1}(\xi)b_{1}(\eta)D_{1}^{\xi,\eta}\Gamma
(\xi_{0};\Theta(\eta,\xi)),
\]
with $D_{1}^{\xi,\eta}\Gamma\left(  \xi_{0};u\right)  $ homogeneous of degree
$-Q$ and satisfying the vanishing property (\ref{vanishing}). By Proposition
\ref{Prop homog kernels}, $k_{S}\left(  \xi,\eta\right)  $ satisfies
conditions (i), (ii), (iii) in \S \ \ref{subsec locally hom space}, with
constants bounded by%
\begin{equation}
c\sup_{\left\Vert u\right\Vert =1}\left\{  \left\vert D^{2}\Gamma\left(
\xi_{0},u\right)  \right\vert +\left\vert D^{3}\Gamma\left(  \xi_{0},u\right)
\right\vert \right\}  \label{kernel bound schauder}%
\end{equation}
where the symbols $D^{2},D^{3}$ denote standard derivatives of orders $2,3,$
respectively, with respect to $u,$ and the constant $c$ depends on the vector
fields but not on the point $\xi_{0}$. By Proposition \ref{Prop T(1)},
condition (\ref{h tilde C^gamma}) is also satisfied by $k_{S}\left(  \xi
,\eta\right)  ,$ with $C^{\gamma}$ norm bounded by a quantity of the kind
(\ref{kernel bound schauder}). Hence by Theorem \ref{Theorem L^p C^eta} and
Remark \ref{remark local holder}, the operator with kernel $k_{S}\left(
\xi,\eta\right)  $ satisfies the assertion of the theorem we are proving, with
a constant bounded by a quantity like (\ref{kernel bound schauder}). In turn,
by Theorem \ref{ith:stima:unif:ipo} this quantity can be bounded by a constant
depending on the vector fields and the ellipticity constant $\mu$ of the
matrix $a_{ij}\left(  x\right)  .$

Let us now come to the kernel%
\[
k_{F}\left(  \xi,\eta\right)  =\left\{  \sum_{i=2}^{H}a_{i}(\xi)b_{i}%
(\eta)D_{i}^{\xi,\eta}\Gamma\left(  \xi_{0};\cdot\right)  +a_{0}(\xi
)b_{0}(\eta)D_{0}^{\xi,\eta}\Gamma\left(  \xi_{0};\cdot\right)  \right\}
\left(  \Theta(\eta,\xi)\right)
\]
where each function $D_{i}^{\xi,\eta}\Gamma\left(  \xi_{0};u\right)  $
($i=2,3,...,H$) is homogeneous of some degree $\geq1-Q,$ while $D_{0}%
^{\xi,\eta}\Gamma\left(  \xi_{0};u\right)  $ is bounded and smooth.

By Proposition \ref{Prop homog kernels}, each kernel
\[
a_{i}(\xi)b_{i}(\eta)D_{i}^{\xi,\eta}\Gamma\left(  \xi_{0};\Theta(\eta
,\xi)\right)
\]
satisfies the standard estimates (i) in \S \ \ref{subsec locally hom space}
for some $\nu>0,$ hence we can apply Theorem \ref{Thm frac C^eta} to the
integral operators defined by these kernels, and conclude as above. Finally,
the integral operator with regular kernel clearly is $C^{\gamma}$ continuous.
So we are done.
\end{proof}

\subsection{$L^{p}$ continuity of variable operators of type $0$ and their
commutators\label{subsec variable L^p}}

In this subsection we are going to prove the following:

\begin{theorem}
\label{iith:comm:optypezer}Let $T$ be a variable operator of type $0$ (see
\S \ \ref{Operators of type}) over the ball $\widetilde{B}\left(
\overline{\xi},R\right)  ,$ and $p\in(1,\infty).$Then:

(i) there exists $c>0,$ depending on $p$, $R$, $\left\{  \widetilde{X}%
_{i}\right\}  _{i=0}^{q}$, $\mu$, such that%
\[
\left\Vert Tu\right\Vert _{L^{p}\left(  \widetilde{B}\left(  \overline{\xi
},r\right)  \right)  }\leq c\left\Vert u\right\Vert _{L^{p}\left(
\widetilde{B}\left(  \overline{\xi},r\right)  \right)  }%
\]
for every $u\in L^{p}\left(  \widetilde{B}\left(  \overline{\xi},r\right)
\right)  $ and $r\leq R;$

(ii) for every $a\in VMO_{X,loc}\left(  \Omega\right)  ,$ any $\varepsilon>0$,
there exists $r\leq R$ such that for every $u\in L^{p}\left(  \widetilde
{B}\left(  \overline{\xi},r\right)  \right)  ,$
\begin{equation}
\left\Vert T(\widetilde{a}u)-\widetilde{a}\cdot Tu\right\Vert _{L^{p}\left(
\widetilde{B}\left(  \overline{\xi},r\right)  \right)  }\leq\varepsilon
\left\Vert u\right\Vert _{L^{p}\left(  \widetilde{B}\left(  \overline{\xi
},r\right)  \right)  } \label{comm}%
\end{equation}
where $\widetilde{a}\left(  x,h\right)  =a\left(  x\right)  .$ The number $r$
depends on $p$, $R$, $\left\{  \widetilde{X}_{i}\right\}  _{i=0}^{q}$, $\mu$,
$\eta_{a,\Omega^{\prime},\Omega}^{\ast}$, and $\varepsilon$ (see
\ref{subsec VMO} for the definition of $VMO_{X,loc}\left(  \Omega\right)  $
and $\eta_{a,\Omega^{\prime},\Omega}^{\ast}$).{}
\end{theorem}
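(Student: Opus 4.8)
The proof proceeds by reducing the variable operator to a superposition of frozen operators via the classical expansion in spherical harmonics, then applying the abstract singular-integral and commutator theorems from \S\ref{subsec locally hom space}. Recall that a variable operator of type $0$ has the form $Tf(\xi)=P.V.\int_{\widetilde{B}}k(\xi;\xi,\eta)f(\eta)\,d\eta+\alpha(\xi,\xi)f(\xi)$, where the first (integral) part involves kernels $a_{i}(\xi)b_{i}(\eta)D_{i}^{\xi,\eta}\Gamma(\xi;\Theta(\eta,\xi))$. The multiplicative part $f\mapsto\alpha(\xi,\xi)f$ is harmless: it is bounded on $L^{p}$ with constant controlled by $\sup|\alpha|$, which by Theorem \ref{ith:stima:unif:ipo} is bounded uniformly in the frozen point, and its commutator with multiplication by $\widetilde{a}$ is identically zero. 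So attention focuses on the singular integral part, which we further split as $k_{S}+k_{F}$ exactly as in \S\ref{subsec frozen C^alfa}: $k_{S}$ carries the genuinely singular kernel $D_{1}^{\xi,\eta}\Gamma$ homogeneous of degree $-Q$ (with the vanishing property), and $k_{F}$ collects the kernels of homogeneity $\geq 1-Q$ together with the regular term. The $k_{F}$ contributions define fractional-type integrals; by Proposition \ref{Prop homog kernels}(i) their kernels satisfy the growth condition (\ref{Standard 1'}) for some $\nu>0$, so $L^{p}$ continuity follows from Theorem \ref{frac lp-lq} (a fortiori an $L^{p}\to L^{p}$ bound on a bounded ball), and the commutator estimate (\ref{comm}) follows from Theorem \ref{Thm comm frac} for the $VMO$ version; the regular kernel is handled trivially. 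All the relevant constants are bounded by quantities of the form $\sup_{\|u\|=1}\{|D^{2}\Gamma|+|D^{3}\Gamma|\}$, hence uniformly by Theorem \ref{ith:stima:unif:ipo} in terms of $\mu$ and the vector fields.

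The heart of the matter is $k_{S}$. First I would treat the \emph{frozen} operator with kernel $a_{1}(\xi)b_{1}(\eta)D_{1}^{\xi,\eta}\Gamma(\xi_{0};\Theta(\eta,\xi))$: by Proposition \ref{Prop homog kernels} it satisfies the standard estimates (i) with $\nu=0$, the cancellation property (ii), and — via (\ref{2.19}) — the convergence condition (iii), all with constants bounded by (\ref{kernel bound schauder}); hence Theorem \ref{Theorem L^p C^eta} gives $L^{p}$ continuity (with the weak $(1,1)$ bound as a bonus) and Theorem \ref{Thm commutator} gives the $BMO/VMO$ commutator estimate on $\widetilde{B}(\overline{\xi},r)$, with the $VMO$-version norm tending to zero as $r\to 0$. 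Here one uses Proposition \ref{Prop lifted VMO}: since $a\in VMO_{X,loc}(\Omega)$, the lifted function $\widetilde{a}(x,h)=a(x)$ lies in $VMO_{loc}(\widetilde{\Omega}_{k},\widetilde{\Omega}_{k+1})$ with modulus controlled by $\eta_{a,\Omega^{\prime},\Omega}^{\ast}$, which is precisely the hypothesis needed to feed Theorem \ref{Thm commutator} and to get the dependence of $r$ claimed in the statement. Note that the explicit $\xi,\eta$-dependence of $D_{1}^{\xi,\eta}\Gamma$ is already absorbed in Propositions \ref{Prop homog kernels} and \ref{Prop T(1)} — this is why those propositions were stated with parameters.

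To pass from the frozen operator to the \emph{variable} one, I would use the Calder\'on--Zygmund expansion in series of spherical harmonics (as in \cite{cz} and as already used in \cite{bb1}, \cite{bb2}). Since $D_{1}^{\xi,\eta}\Gamma(\xi_{0};\cdot)$ is, for each fixed $(\xi_{0},\xi,\eta)$, a kernel homogeneous of degree $-Q$ with mean zero on spheres, one expands its restriction to $\|u\|=1$ in spherical harmonics $Y_{km}$; the coefficients $c_{km}(\xi_{0},\xi,\eta)$ depend smoothly on all the parameters, and the Sobolev embedding on the sphere gives rapid decay in $k$ of $\sup_{\xi_{0},\xi,\eta}|c_{km}|$ after differentiating the coefficient functions sufficiently many times in $u$. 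This reduces $T$ to a rapidly convergent sum $\sum_{k,m}\lambda_{km}(\xi)\, T_{km}$ where each $T_{km}$ has a frozen-type kernel built from a fixed spherical harmonic and $\lambda_{km}$ is a smooth multiplier; one applies the frozen estimate with constants that are polynomial in $k$ (coming from the standard estimates, which involve two or three $u$-derivatives of the kernel), and sums against the super-polynomial decay of the harmonic coefficients. The commutator $[\widetilde{a},T]$ expands the same way into $\sum_{k,m}\lambda_{km}(\xi)[\widetilde{a},T_{km}]$ (the multiplier $\lambda_{km}$ passes outside since it multiplies after $T_{km}$ acts, up to a harmless term of lower order in the frozen-point dependence), so Theorem \ref{Thm commutator} applied termwise, together with the uniform bound from Theorem \ref{ith:stima:unif:ipo} and the $VMO$ modulus control, yields (\ref{comm}) with $r$ independent of the individual terms. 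The main obstacle, and the only place requiring real care, is bookkeeping the \emph{uniformity} of all constants in the spherical-harmonic expansion: one must verify that each application of Proposition \ref{Prop homog kernels} to the $k$-th harmonic term produces a constant that grows at most polynomially in $k$ (so that it is beaten by the decay of the Fourier--Laplace coefficients of a smooth kernel), and that the smoothness in $(\xi_{0},\xi,\eta)$ — needed both for the expansion to be legitimate and for Remark \ref{remark linear constants}'s sublinear dependence — is preserved throughout; this is routine but delicate, and is exactly the point where the detailed hypotheses on $D_{i}^{\xi,\eta}$ in Definition \ref{iidef:kernels:type} are used.
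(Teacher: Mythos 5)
Your proposal is essentially the paper's argument: reduce the variable kernel to a superposition of constant-kernel operators via Calder\'on--Zygmund's expansion in spherical harmonics, apply the abstract theorems from \S\ref{subsec locally hom space} (Theorems \ref{Theorem L^p C^eta}, \ref{frac lp-lq}, \ref{Thm commutator}, \ref{Thm comm frac}, \ref{Thm comm pos}) to each harmonic piece, and close the sum by the super-polynomial decay of the Fourier coefficients, uniform in the unfrozen point thanks to Theorem \ref{ith:stima:unif:ipo}. The one organizational departure is the decomposition: the paper (\S\ref{subsec variable L^p}) splits $k'=k_S+k_B$ and expands \emph{all} the homogeneous terms $D_i^{\xi,\eta}\Gamma$ for $i=1,\ldots,H$ (the $-Q$-homogeneous one together with the fractional ones) in spherical harmonics, reserving a direct treatment, via Theorem \ref{Thm comm pos}, only for the bounded piece $k_B$; you import the Schauder-section split $k'=k_S+k_F$ from \S\ref{subsec frozen C^alfa}, expand only the $-Q$-homogeneous kernel, and handle the fractional kernels directly by Theorems \ref{frac lp-lq} and \ref{Thm comm frac}. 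Your alternative works because those two theorems need only a pointwise growth bound, which is uniform in the frozen point by Theorem \ref{ith:stima:unif:ipo}; the paper's choice keeps all the unfreezing bookkeeping in a single spherical-harmonic series. Two small inaccuracies that do not affect the argument: the constant $\sup_{\|u\|=1}\{|D^{2}\Gamma|+|D^{3}\Gamma|\}$ you quote is the Schauder-case bound, whereas here the paper uses $\sup_{\|u\|=1}|\nabla_u K_{i,km}|\leq c\,m^{N/2}$ for the per-term operator norms together with Theorem \ref{ith:stima:unif:ipo} for the coefficient decay $|c_{i,km}^\zeta(\xi)|\leq c_h m^{-2h}$; and Proposition \ref{Prop T(1)} plays no role for $L^p$, since condition (\ref{h tilde C^gamma}) enters only the $C^\eta$ half of Theorem \ref{Theorem L^p C^eta}.
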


A basic difference with the context of the previous section is that here we
are considering \emph{variable }kernels and operators of type zero. To reduce
the study of these operators to that of constant operators of type zero we
will make use of the classical technique of expansion in series of spherical
harmonics, as already done in \cite{bb2}.

\begin{proof}
This proof is similar to that of \cite[Thm. 2.11]{bb2}. Recall that a variable
operator of type zero is written as
\[
Tf(\xi)=P.V.\int_{\widetilde{B}}k(\xi;\xi,\eta)\,f(\eta)\,d\eta+\alpha\left(
\xi,\xi\right)  f\left(  \xi\right)  ,
\]
where $\alpha\left(  \xi_{0},\xi\right)  $ is a bounded measurable function in
$\xi_{0}$, smooth in $\xi.$ The multiplicative part%
\[
f\left(  \xi\right)  \longmapsto\alpha\left(  \xi,\xi\right)  f\left(
\xi\right)
\]
clearly maps $L^{p}$ into $L^{p}$, with operator norm bounded by some constant
depending on the vector fields and the ellipticity constant $\mu,$ by Theorem
\ref{ith:stima:unif:ipo}. Moreover, this part does not affect the commutator
of $T$.

As to the integral part of $T$, let us split the variable kernel as%
\[
k(\xi;\xi,\eta)=k^{\prime}(\xi;\xi,\eta)+k^{\prime\prime}(\xi;\xi,\eta).
\]
Like in the previous section, it is enough to prove our result for the kernel
$k^{\prime}.$ Let us expand it as%
\begin{align*}
k^{\prime}(\xi;\xi,\eta)  &  =\sum_{i=1}^{H}a_{i}(\xi)b_{i}(\eta)D_{i}%
^{\xi,\eta}\Gamma\left(  \xi;\Theta\left(  \eta,\xi\right)  \right)
+a_{0}(\xi)b_{0}(\eta)D_{0}^{\xi,\eta}\Gamma(\xi;\Theta\left(  \eta
,\xi\right)  )\\
&  \equiv k_{S}\left(  \xi;\xi,\eta\right)  +k_{B}\left(  \xi;\xi,\eta\right)
\end{align*}
where the kernels $D_{i}^{\xi,\eta}\Gamma(\xi;u)$ (for $i=1,2,3,...,H$) are
homogeneous of some degree $\geq-Q,$ $D_{i}^{\xi,\eta}\Gamma\left(
\xi;u\right)  $ satisfies the cancellation property, and $D_{0}^{\xi,\eta
}\Gamma(\xi;u)$ is bounded in $u$ and smooth in $\xi,\eta$. The kernels
$k_{S},k_{B}$ are \textquotedblleft singular\textquotedblright\ and
\textquotedblleft bounded\textquotedblright, respectively.

The operator with kernel $k_{B}$ is obviously $L^{p}$ continuous. Moreover, it
satisfies the commutator estimate (\ref{comm}) by Theorem \ref{Thm comm pos},
since
\[
\left\vert k_{B}\left(  \xi;\xi,\eta\right)  \right\vert \leq ca_{0}(\xi
)b_{0}(\eta)
\]
and the constant function $1$ obviously satisfies the standard estimates
(\ref{standard 1}), (\ref{standard 2}) with $\nu=1$.

To handle the kernel $k_{S}$ we expand each of its terms in series of
spherical harmonics, exactly like in \cite[Section 2.4]{bb2}:%
\[
D_{i}^{\xi,\eta}\Gamma(\xi;u)=\sum_{m=0}^{\infty}\sum_{k=1}^{g_{m}}%
c_{i,km}^{\xi,\eta}\left(  \xi\right)  K_{i,km}\left(  u\right)
\]
where $K_{i,km}\left(  u\right)  $ are homogeneous kernels which on the sphere
$\left\Vert u\right\Vert =1$ coincide with the spherical harmonics, and
$c_{i,km}^{\xi,\eta}\left(  \cdot\right)  $ the corresponding Fourier coefficients.

Let us first prove the assertion without taking into account the dependence of
the coefficients $c_{i,km}^{\xi,\eta}\left(  \xi\right)  $ on $\eta.$ Then the
operator with kernel $k_{S}$ can be written as:%
\begin{equation}
Sf\left(  \xi\right)  =\sum_{m=0}^{\infty}\sum_{k=1}^{g_{m}}c_{i,km}^{\xi
}\left(  \xi\right)  S_{i,km}f\left(  \xi\right)  \label{expansion}%
\end{equation}
with%
\[
S_{i,km}f\left(  \xi\right)  =a_{i}(\xi)\int_{\widetilde{B}}b_{i}%
(\eta)K_{i,km}\left(  \Theta\left(  \eta,\xi\right)  \right)  f\left(
\eta\right)  d\eta.
\]
The number $g_{m}$ in (\ref{expansion}) is the dimension of the space of
spherical harmonics of degree $m$ in $\mathbb{R}^{N}$; it is known that
\begin{equation}
g_{m}\leq c(N)\cdot m^{N-2}\text{ \ for every }m=1,2,\ldots\label{ii12}%
\end{equation}
For every $p\in(1,\infty)$ we can write:%
\[
\left\Vert Sf\right\Vert _{L^{p}\left(  \widetilde{B}\left(  \overline{\xi
},r\right)  \right)  }\leq\sum_{m=0}^{\infty}\sum_{k=1}^{g_{m}}\left\Vert
c_{i,km}^{\cdot}\left(  \cdot\right)  \right\Vert _{L^{\infty}\left(
\widetilde{B}\left(  \overline{\xi},r\right)  \right)  }\left\Vert
S_{i,km}f\right\Vert _{L^{p}\left(  \widetilde{B}\left(  \overline{\xi
},r\right)  \right)  }\text{ }%
\]
and%
\begin{align*}
&  \left\Vert S(\widetilde{a}f)-\widetilde{a}\cdot Sf\right\Vert
_{L^{p}\left(  \widetilde{B}\left(  \overline{\xi},r\right)  \right)  }\leq\\
&  \leq\sum_{m=0}^{\infty}\sum_{k=1}^{g_{m}}\left\Vert c_{i,km}^{\cdot}\left(
\cdot\right)  \right\Vert _{L^{\infty}\left(  \widetilde{B}\left(
\overline{\xi},r\right)  \right)  }\left\Vert S_{i,km}(\widetilde
{a}f)-\widetilde{a}\cdot S_{i,km}f\right\Vert _{L^{p}\left(  \widetilde
{B}\left(  \overline{\xi},r\right)  \right)  }.
\end{align*}
Now each $S_{i,km}$ is a frozen operator of type $\lambda\geq0$, and the same
arguments of the previous section show that the kernel of $S_{i,km}$ satisfies
the assumptions (i),(ii),(iii) in \S \ \ref{subsec locally hom space} with
constants bounded by%
\[
c\cdot\sup_{\left\Vert u\right\Vert =1}\left\vert \nabla_{u}K_{km}%
(u)\right\vert ,
\]
(with $c$ depending on the vector fields);\ in turn, by known properties of
spherical harmonics we have%
\[
\sup_{\left\Vert u\right\Vert =1}\left\vert \nabla_{u}K_{km}(u)\right\vert
\leq c\left(  N\right)  m^{N/2},
\]
so that, by Theorem \ref{Theorem L^p C^eta} and Theorem \ref{frac lp-lq} we
conclude as in \cite[p. 807]{bb2},%
\[
\left\Vert S_{i,km}f\right\Vert _{L^{p}\left(  \widetilde{B}\left(
\overline{\xi},r\right)  \right)  }\leq c\cdot m^{N/2}\left\Vert f\right\Vert
_{L^{p}\left(  \widetilde{B}\left(  \overline{\xi},r\right)  \right)  }\text{
for }i=1,2,...,H.
\]
where we have also taken into account Remark \ref{remark linear constants}.

Analogously, applying Theorem \ref{Thm commutator} and Theorem
\ref{Thm comm frac} we have the commutator estimate:%
\[
\left\Vert S_{i,km}(\widetilde{a}f)-\widetilde{a}\cdot S_{i,km}f\right\Vert
_{L^{p}\left(  \widetilde{B}\left(  \overline{\xi},r\right)  \right)  }%
\leq\varepsilon\cdot m^{N/2}\left\Vert f\right\Vert _{L^{p}\left(
\widetilde{B}\left(  \overline{\xi},r\right)  \right)  }\text{ for
}i=1,2,...,H,
\]
for any $\varepsilon>0,$ provided $r$ is small enough, depending on
$\varepsilon$ and $\eta_{\widetilde{a},\Omega_{k+2},\Omega_{k+3}}^{\ast}$ (see
(\ref{Om_k}) and Definition \ref{definition local VMO} for the meaning of
symbols). By Proposition \ref{Prop lifted VMO}, then, the constant $r$ depends
on the function $a$ only through the local VMO modulus $\eta_{a,\Omega
^{\prime},\Omega}^{\ast}$.

Next, again by known properties of spherical harmonics, we can say that for
any positive integer $h$ there exists $c_{h}$ such that%
\[
\left\vert c_{i,km}^{\zeta}\left(  \xi\right)  \right\vert \leq c_{h}\cdot
m^{-2h}\sup_{\left\Vert u\right\Vert =1,\left\vert \beta\right\vert
=2h}\left\vert \left(  \frac{\partial}{\partial u}\right)  ^{\beta}%
D_{i}^{\zeta}\Gamma(\xi;u)\right\vert .
\]
By the uniform estimates contained in Theorem \ref{ith:stima:unif:ipo}, the
last expression is bounded by $Cm^{-2h},$ for some constant $C$ depending on
$h,$ the vector fields, and the ellipticity constant $\mu.$ Taking into
account also (\ref{ii12}) and choosing $h$ large enough we conclude%
\[
\left\Vert Sf\right\Vert _{L^{p}\left(  \widetilde{B}\left(  \overline{\xi
},r\right)  \right)  }\leq\sum_{m=0}^{\infty}Cg_{m}m^{-2h}m^{N/2}\left\Vert
f\right\Vert _{L^{p}\left(  \widetilde{B}\left(  \overline{\xi},r\right)
\right)  }=c\left\Vert f\right\Vert _{L^{p}\left(  \widetilde{B}\left(
\overline{\xi},r\right)  \right)  }%
\]
and%
\[
\left\Vert S(\widetilde{a}f)-\widetilde{a}\cdot Sf\right\Vert _{L^{p}\left(
\widetilde{B}\left(  \overline{\xi},r\right)  \right)  }\leq c\varepsilon
\left\Vert f\right\Vert _{L^{p}\left(  \widetilde{B}\left(  \overline{\xi
},r\right)  \right)  }%
\]
for any $\varepsilon>0,$ provided $r$ is small enough.

We are left to show how the previous argument needs to be modified to take
into account the possible dependence of $D_{i}^{\xi,\eta}\Gamma(\xi;u)$ (and
then of $c_{i,km}^{\xi,\eta}\left(  \xi\right)  $) on $\eta$. Let us expand:%
\[
D_{i}^{\zeta,\Theta\left(  \cdot,\zeta\right)  ^{-1}\left(  u\right)  }%
\Gamma(\xi;u)=\sum_{m=0}^{\infty}\sum_{k=1}^{g_{m}}c_{i,km}^{\zeta}\left(
\xi\right)  K_{i,km}\left(  u\right)
\]
so that%
\[
D_{i}^{\zeta,\eta}\Gamma(\xi;\Theta\left(  \eta,\zeta\right)  )=\sum
_{m=0}^{\infty}\sum_{k=1}^{g_{m}}c_{i,km}^{\zeta}\left(  \xi\right)
K_{i,km}\left(  \Theta\left(  \eta,\zeta\right)  \right)  .
\]
The kernels $K_{i,km}$ are the same as above, hence the estimates on the
operators $S_{i,km}$ and their commutators remain unchanged. As to the
coefficients $c_{i,km}^{\zeta}\left(  \xi\right)  ,$ we now have to write, for
any positive integer $h$ and some constant $c_{h}$,%
\[
\left\vert c_{i,km}^{\zeta}\left(  \xi\right)  \right\vert \leq c_{h}\cdot
m^{-2h}\sup_{\left\Vert u\right\Vert =1,\left\vert \beta\right\vert
=2h}\left\vert \left(  \frac{\partial}{\partial u}\right)  ^{\beta}\left(
D_{i}^{\zeta,\Theta\left(  \cdot,\zeta\right)  ^{-1}\left(  u\right)  }%
\Gamma(\xi;u)\right)  \right\vert .
\]
Now, from the identity%
\begin{align*}
\frac{\partial}{\partial u_{j}}\left(  D_{i}^{\zeta,\Theta\left(  \cdot
,\zeta\right)  ^{-1}\left(  u\right)  }\Gamma(\xi;u)\right)   &
=\frac{\partial}{\partial u_{j}}\left(  D_{i}^{\zeta,\eta}\Gamma
(\xi;u)\right)  _{/\eta=\Theta\left(  \cdot,\zeta\right)  ^{-1}\left(
u\right)  }+\\
&  +\sum_{m}\frac{\partial}{\partial\eta_{m}}\left(  D_{i}^{\zeta,\eta}%
\Gamma(\xi;u)\right)  \frac{\partial}{\partial u_{j}}\left(  \Theta\left(
\cdot,\zeta\right)  ^{-1}\left(  u\right)  \right)  _{m}%
\end{align*}
it is easy to see that we can still get a uniform bound of the kind%
\[
\left\vert c_{i,km}^{\zeta}\left(  \xi\right)  \right\vert \leq C\cdot m^{-2h}%
\]
with $C$ depending on $h,$ the vector fields and the ellipticity constant
$\mu$. So we are done.
\end{proof}

\section{Schauder estimates}

We are now in position to apply all the machinery presented in the previous
sections to prove our main results, that is $C^{\alpha}$ and $L^{p}$ estimates
on $X_{i}X_{j}u$ in terms of $u$ and $\mathcal{L}u$. We will prove $C^{\alpha
}$ estimates (that is Theorem \ref{schauder estimate}) in this section, and
$L^{p}$ estimates (that is Theorem \ref{lp estimate}) in
\S \ \ref{section L^p estimates}.

We keep assuming $a_{0}\left(  x\right)  \equiv1,$ which is not restrictive in
view of Remark \ref{a0=1}$.$

Let us recall the setting described at the end of
\S \ \ref{subsec locally hom space}. For a fixed subdomain $\Omega^{\prime
}\Subset\Omega\subset\mathbb{R}^{n}$ and a fixed point $\overline{x}\in
\Omega^{\prime},$ let us consider a lifted ball $\widetilde{B}\left(
\overline{\xi},R\right)  \subset\mathbb{R}^{N}$ (with $\overline{\xi}=\left(
\overline{x},0\right)  $) where the lifted vector fields $\widetilde{X}_{i}$
are defined and satisfy H\"{o}rmander's condition, the map $\Theta$ is defined
and satisfies the properties stated in \S \ \ref{subsection lifting}.

According to the procedure followed in \cite[\S \ 5]{bb4}, the proof of
$C_{X}^{\alpha}$ a-priori estimates for second order derivatives will proceed
in three steps: first, in the space of lifted variables, for test functions
supported in a ball $\widetilde{B}\left(  \overline{\xi},r\right)  $ with $r$
small enough; then for any function in $C_{\widetilde{X}}^{2,\alpha}\left(
\widetilde{B}\left(  \overline{\xi},r\right)  \right)  $ (not necessarily
vanishing at the boundary); then for any function in $C_{X}^{2,\alpha}\left(
B\left(  \overline{x},r\right)  \right)  ,$ that is in the original space. The
three steps will be performed in separate subsections. The theory of singular
integrals in locally homogeneous spaces will play its main role in the first
step, considering the space%
\[
\left(  \widetilde{\Omega},\left\{  \widetilde{\Omega}_{k}\right\}
_{k=1}^{\infty},d_{\widetilde{X}},d\xi\right)
\]
where%
\[
\widetilde{\Omega}=\widetilde{B}\left(  \overline{\xi},R\right)
;\widetilde{\Omega}_{k}=\widetilde{B}\left(  \overline{\xi},\frac{kR}%
{k+1}\right)  \text{ for }k=1,2,3,...
\]

\subsection{Schauder estimates for functions with small
support\label{subsec shauder small support}}

The first step in the proof of Schauder estimates is contained in the
following theorem, which is the main result in this subsection.

\begin{theorem}
\label{schaulder estimate for compactly function in ball}Let $\widetilde
{B}\left(  \overline{\xi},R\right)  $ be as before. There exist $R_{0}<R$ and
$c>0$ such that for every $u\in C_{\widetilde{X},0}^{2,\alpha}\left(
\widetilde{B}\left(  \overline{\xi},R_{0}\right)  \right)  ,$
\[
\left\Vert u\right\Vert _{C_{\widetilde{X}}^{2,\alpha}\left(  \widetilde
{B}\left(  \overline{\xi},R_{0}\right)  \right)  }\leq c\left\{  \left\Vert
\widetilde{\mathcal{L}}u\right\Vert _{C_{\widetilde{X}}^{\alpha}\left(
\widetilde{B}\left(  \overline{\xi},R_{0}\right)  \right)  }+\left\Vert
u\right\Vert _{L^{\infty}\left(  \widetilde{B}\left(  \overline{\xi}%
,R_{0}\right)  \right)  }\right\}
\]
where $c$ and $R_{0}$ depend on $R,\left\{  \widetilde{X}_{i}\right\}
,\alpha,\mu,$and $\left\Vert \widetilde{a}_{ij}\right\Vert _{C^{\alpha}\left(
\widetilde{B}\left(  \overline{\xi},R\right)  \right)  }$.
\end{theorem}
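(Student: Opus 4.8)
The plan is to combine the representation formula (\ref{Rep formula Schauder}) for $\widetilde{X}_{m}\widetilde{X}_{l}(au)$ with the $C_{\widetilde{X}}^{\alpha}$-continuity of frozen operators of type zero (Theorem \ref{continuity for frozen operator on holder space}), choosing the radius $R_{0}$ small enough so that the terms involving the frozen-point perturbation $\widetilde{a}_{ij}(\xi_{0})-\widetilde{a}_{ij}(\cdot)$ can be absorbed into the left-hand side. First I would fix a cutoff function $a\in C_{0}^{\infty}(\widetilde{B}(\overline{\xi},R))$ with $a\equiv 1$ on $\widetilde{B}(\overline{\xi},R_{0})$ for a radius $R_{0}<R$ to be chosen, and apply (\ref{Rep formula Schauder}) to $u\in C_{\widetilde{X},0}^{2,\alpha}(\widetilde{B}(\overline{\xi},R_{0}))$, with the frozen point $\xi_{0}$ left free for the moment. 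Taking $C_{\widetilde{X}}^{\alpha}(\widetilde{B}(\overline{\xi},R_{0}))$ norms of both sides and using Theorem \ref{continuity for frozen operator on holder space} to bound each frozen operator of type $0$ (and type $1$, via the same theorem) uniformly in $\xi_{0}$, one gets
\[
\left\vert \widetilde{X}_{m}\widetilde{X}_{l}u\right\vert _{C_{\widetilde{X}}^{\alpha}}\leq c\left\{ \left\Vert \widetilde{\mathcal{L}}u\right\Vert _{C_{\widetilde{X}}^{\alpha}}+\left\Vert u\right\Vert _{C_{\widetilde{X}}^{1,\alpha}}+\sum_{i,j}\left\vert \left[\widetilde{a}_{ij}(\xi_{0})-\widetilde{a}_{ij}(\cdot)\right]\widetilde{X}_{i}\widetilde{X}_{j}u\right\vert _{C_{\widetilde{X}}^{\alpha}}\right\}
\]
where all norms are over $\widetilde{B}(\overline{\xi},R_{0})$, with $c$ depending on $R,\{\widetilde{X}_i\},\alpha,\mu,\|\widetilde{a}_{ij}\|_{C^{\alpha}}$ but \emph{not} on $R_{0}$ or $\xi_{0}$.

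The crucial step is then to choose $\xi_{0}=\overline{\xi}$ (or any fixed point of the small ball) and estimate the perturbation term. Since $u$ is supported in $\widetilde{B}(\overline{\xi},R_{0})$, on that ball the coefficient $\widetilde{a}_{ij}(\xi_{0})-\widetilde{a}_{ij}(\cdot)$ is a $C^{\alpha}$ function vanishing at $\xi_{0}$, with $L^{\infty}$ norm over $\widetilde{B}(\overline{\xi},R_{0})$ bounded by $\left\vert\widetilde{a}_{ij}\right\vert_{C^{\alpha}}R_{0}^{\alpha}$; applying the product estimate (\ref{4.3})--(\ref{4.4}) of Proposition \ref{basic properties for holder norm} (specifically, the part (ii) bound for a product where one factor vanishes somewhere, giving a gain of $R_{0}^{\alpha}$), the sum of perturbation terms is bounded by $c\,R_{0}^{\alpha}\left(\sum_{i,j}\left\vert\widetilde{a}_{ij}\right\vert_{C^{\alpha}}\right)\sum_{i,j}\left\vert\widetilde{X}_{i}\widetilde{X}_{j}u\right\vert_{C_{\widetilde{X}}^{\alpha}}$. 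Choosing $R_{0}$ small enough that $c\,R_{0}^{\alpha}\sum\left\vert\widetilde{a}_{ij}\right\vert_{C^{\alpha}}\leq 1/2$, this term is absorbed into the left side after summing over $m,l=1,\dots,q$. This is the main obstacle: one must verify that the constant $c$ coming from Theorem \ref{continuity for frozen operator on holder space} genuinely does not blow up as $R_{0}\to 0$ — this is exactly the content of Remark \ref{remark local holder} and the statement "$c$ depends on $R$ but not on $r$" in Theorems \ref{Theorem L^p C^eta} and \ref{Thm frac C^eta}, which is why the estimate for compactly supported functions is stated over $\widetilde{B}(\overline{\xi},r)$ with $r<R$.

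After absorption, we arrive at
\[
\sum_{m,l=1}^{q}\left\vert \widetilde{X}_{m}\widetilde{X}_{l}u\right\vert _{C_{\widetilde{X}}^{\alpha}\left(\widetilde{B}(\overline{\xi},R_{0})\right)}\leq c\left\{ \left\Vert \widetilde{\mathcal{L}}u\right\Vert _{C_{\widetilde{X}}^{\alpha}\left(\widetilde{B}(\overline{\xi},R_{0})\right)}+\left\Vert u\right\Vert _{C_{\widetilde{X}}^{1,\alpha}\left(\widetilde{B}(\overline{\xi},R_{0})\right)}\right\}.
\]
To finish, I would control the lower-order terms. The $\widetilde{X}_{0}u$ term is recovered from $\widetilde{\mathcal{L}}u$ and the $\widetilde{X}_{i}\widetilde{X}_{j}u$ terms directly, since $\widetilde{X}_{0}u=\widetilde{\mathcal{L}}u-\sum\widetilde{a}_{ij}\widetilde{X}_{i}\widetilde{X}_{j}u$, and the $C^{\alpha}$ norm of the right-hand side is controlled by the product estimate (\ref{4.3}) together with the already-bounded second-order terms. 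The remaining first-order and zeroth-order quantities $\left\Vert u\right\Vert_{C_{\widetilde{X}}^{1,\alpha}}$ are then handled by an interpolation argument: using (\ref{4.8}) of Proposition \ref{basic properties for holder norm} one bounds $\left\vert\widetilde{X}_{k}u\right\vert_{C^{\alpha}}$ and $\left\Vert\widetilde{X}_{k}u\right\Vert_{L^{\infty}}$ for compactly supported $u$ in terms of small multiples of the second-order terms plus large multiples of $\left\Vert u\right\Vert_{L^{\infty}}$, and similarly (\ref{4.11}) handles $\left\Vert\widetilde{X}_{0}u\right\Vert_{L^{\infty}}$; choosing the interpolation parameter small, the $C_{\widetilde{X}}^{1,\alpha}$ terms are absorbed, leaving only $\left\Vert u\right\Vert_{L^{\infty}(\widetilde{B}(\overline{\xi},R_{0}))}$ on the right. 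This yields the claimed estimate, with $c$ and $R_{0}$ depending only on $R,\{\widetilde{X}_{i}\},\alpha,\mu,\|\widetilde{a}_{ij}\|_{C^{\alpha}(\widetilde{B}(\overline{\xi},R))}$ as stated.
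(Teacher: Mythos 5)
Your proposal is correct in spirit for the second-order estimate: apply (\ref{Rep formula Schauder}), use Theorem \ref{continuity for frozen operator on holder space} (uniform in $\xi_{0}$ and in the radius), and absorb the perturbation terms $[\widetilde{a}_{ij}(\xi_{0})-\widetilde{a}_{ij}(\cdot)]\widetilde{X}_{i}\widetilde{X}_{j}u$ via (\ref{4.4}), using that both factors vanish somewhere on the small ball so that (\ref{4.4}) gives the $R_{0}^{\alpha}$ gain. That much matches the paper exactly, and your remark about needing the constant in Remark \ref{remark local holder} to be independent of $r<R$ is the right thing to flag.

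The gap is in the way you propose to eliminate the intermediate $\Vert\widetilde{X}_{k}u\Vert_{C_{\widetilde{X}}^{\alpha}}$ terms. You suggest bounding $\vert\widetilde{X}_{k}u\vert_{C_{\widetilde{X}}^{\alpha}}$ by applying (\ref{4.8}) to $f=\widetilde{X}_{k}u$. But (\ref{4.8}) applied to that $f$ produces
\[
\vert\widetilde{X}_{k}u\vert_{C_{\widetilde{X}}^{\alpha}}\leq cR_{0}^{1-\alpha}\Bigl(\sum_{i=1}^{q}\sup\vert\widetilde{X}_{i}\widetilde{X}_{k}u\vert+R_{0}\sup\vert\widetilde{X}_{0}\widetilde{X}_{k}u\vert\Bigr),
\]
and $\widetilde{X}_{0}\widetilde{X}_{k}u$ is a weighted-degree-$3$ derivative. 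It is \emph{not} one of the derivatives controlled by $\Vert u\Vert_{C_{\widetilde{X}}^{2,\alpha}}$ (that norm contains only $\widetilde{X}_{i}\widetilde{X}_{j}u$ for $i,j\geq 1$, $\widetilde{X}_{0}u$, $\widetilde{X}_{i}u$, and $u$), nor can it be traded for controlled quantities using the equation or the commutator structure without losing one more derivative. So this interpolation does not close. This is exactly the obstruction caused by the drift that (\ref{4.8}) is not designed to overcome.

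The paper sidesteps this by not interpolating at all for the first derivatives. Instead, it differentiates the parametrix identity (\ref{representation2}) \emph{once}, obtaining
\[
\widetilde{X}_{l}u=S(\xi_{0})\Bigl(\widetilde{\mathcal{L}}u+\sum_{i,j=1}^{q}\bigl[\widetilde{a}_{ij}(\xi_{0})-\widetilde{a}_{ij}(\cdot)\bigr]\widetilde{X}_{i}\widetilde{X}_{j}u\Bigr)+T(\xi_{0})u
\]
with $S(\xi_{0})$, $T(\xi_{0})$ frozen operators of type $1$, $0$. Applying Theorem \ref{continuity for frozen operator on holder space} to this identity and running the same $R_{0}^{\alpha}$-absorption once more gives $\Vert\widetilde{X}_{l}u\Vert_{C_{\widetilde{X}}^{\alpha}}\leq c\{\Vert\widetilde{\mathcal{L}}u\Vert_{C_{\widetilde{X}}^{\alpha}}+r^{\alpha}\Vert\widetilde{X}_{i}\widetilde{X}_{j}u\Vert_{C_{\widetilde{X}}^{\alpha}}+\Vert u\Vert_{C_{\widetilde{X}}^{\alpha}}\}$, which is a genuine singular-integral estimate and introduces no weighted-$3$ derivative. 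Only \emph{after} that does the paper use (\ref{4.8}) (applied to $f=u$, not to $\widetilde{X}_{k}u$) to trade $\Vert u\Vert_{C_{\widetilde{X}}^{\alpha}}$ for $\Vert u\Vert_{L^{\infty}}$ plus absorbable second-order sup-norm terms. If you replace your interpolation step with this one-derivative parametrix argument, the rest of your proof goes through and matches the paper.
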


\begin{proof}
This theorem relies on the representation formulas proved in
\S \ \ref{Operators of type} and Theorem
\ref{continuity for frozen operator on holder space} about singular integrals
on $C^{\alpha}$, in \S \ \ref{subsec frozen C^alfa}. The proof is similar to
that of \cite[Thm. 5.2]{bb4}. We start from the representation formula
(\ref{Rep formula Schauder}), choosing $r<R$ such that $\widetilde{B}%
_{r}\equiv\widetilde{B}\left(  \overline{\xi},r\right)  $ be contained in the
set where $a\equiv1.$ Taking $C_{\widetilde{X}}^{\alpha}\left(  \widetilde
{B}\left(  \overline{\xi},r\right)  \right)  $ norm of both sides of
(\ref{Rep formula Schauder}) and applying Theorem
\ref{continuity for frozen operator on holder space} and (\ref{4.3}) in
Proposition \ref{basic properties for holder norm}, we can write, for any
$u\in C_{\widetilde{X},0}^{2,\alpha}\left(  \widetilde{B}\left(  \overline
{\xi},r\right)  \right)  $ and $m,l=1,2,...,q$%
\begin{align*}
\left\Vert \widetilde{X}_{m}\widetilde{X}_{l}u\right\Vert _{C_{\widetilde{X}%
}^{\alpha}(\widetilde{B}_{r})}  &  \leq c\left\{  \left\Vert \widetilde
{\mathcal{L}}u\right\Vert _{C_{\widetilde{X}}^{\alpha}(\widetilde{B}_{r}%
)}+\sum_{i,j=1}^{q}\left\Vert \left[  \widetilde{a}_{ij}(\xi_{0}%
)-\widetilde{a}_{ij}\left(  \cdot\right)  \right]  \,\widetilde{X}%
_{i}\widetilde{X}_{j}u\right\Vert _{C_{\widetilde{X}}^{\alpha}(\widetilde
{B}_{r})}\right. \\
&  \left.  +\overset{q}{\sum\limits_{k=1}}\left\Vert \widetilde{X}%
_{k}u\right\Vert _{C_{\widetilde{X}}^{\alpha}(\widetilde{B}_{r})}+\left\Vert
u\right\Vert _{C_{\widetilde{X}}^{\alpha}(\widetilde{B}_{r})}\right\}
\end{align*}
for some $c$ depending on $R,\left\{  \widetilde{X}_{i}\right\}  ,\alpha,\mu.$

To handle the terms involving $\widetilde{X}_{i}\widetilde{X}_{j}u$ in the
right-hand side of the last inequality, we now exploit the fact that, for
$u\in C_{\widetilde{X},0}^{2,\alpha}(\widetilde{B}_{r}),$ both $\widetilde
{X}_{i}\widetilde{X}_{j}u$ and $\left[  \widetilde{a}_{ij}(\xi_{0}%
)-\widetilde{a}_{ij}\left(  \cdot\right)  \right]  $ vanish at a point of
$\widetilde{B}_{r}$; then (\ref{4.4}) in Proposition
\ref{basic properties for holder norm} implies%
\[
\left\vert \left[  \widetilde{a}_{ij}(\xi_{0})-\widetilde{a}_{ij}\left(
\cdot\right)  \right]  \,\widetilde{X}_{i}\widetilde{X}_{j}u\right\vert
_{C_{\widetilde{X}}^{\alpha}(\widetilde{B}_{r})}\leq cr^{\alpha}\left\vert
\widetilde{a}_{ij}\right\vert _{C_{\widetilde{X}}^{\alpha}(\widetilde{B}_{r}%
)}\cdot\left\vert \widetilde{X}_{i}\widetilde{X}_{j}u\right\vert
_{C_{\widetilde{X}}^{\alpha}(\widetilde{B}_{r})},
\]
while obviously%
\[
\left\Vert \left[  \widetilde{a}_{ij}(\xi_{0})-\widetilde{a}_{ij}\left(
\cdot\right)  \right]  \,\widetilde{X}_{i}\widetilde{X}_{j}u\right\Vert
_{L^{\infty}(\widetilde{B}_{r})}\leq cr^{\alpha}\left\vert \widetilde{a}%
_{ij}\right\vert _{C_{\widetilde{X}}^{\alpha}(\widetilde{B}_{r})}%
\cdot\left\Vert \widetilde{X}_{i}\widetilde{X}_{j}u\right\Vert _{L^{\infty
}(\widetilde{B}_{r})}.
\]
This allows, for $r$ small enough, to get
\begin{equation}
\left\Vert \widetilde{X}_{m}\widetilde{X}_{l}u\right\Vert _{C_{\widetilde{X}%
}^{\alpha}(\widetilde{B}_{r})}\leq c\left\{  \left\Vert \widetilde
{\mathcal{L}}u\right\Vert _{C_{\widetilde{X}}^{\alpha}(\widetilde{B}_{r}%
)}+\overset{q}{\sum\limits_{k=1}}\left\Vert \widetilde{X}_{k}u\right\Vert
_{C_{\widetilde{X}}^{\alpha}(\widetilde{B}_{r})}+\left\Vert u\right\Vert
_{C_{\widetilde{X}}^{\alpha}(\widetilde{B}_{r})}\right\}  \label{6.16}%
\end{equation}
for some $c$ also depending on $\left\vert \widetilde{a}_{ij}\right\vert
_{C_{\widetilde{X}}^{\alpha}(\widetilde{B}_{r})}.$ From the equation
(\ref{basic equation}) we also read%
\begin{equation}
\left\Vert \widetilde{X}_{0}u\right\Vert _{C_{\widetilde{X}}^{\alpha
}(\widetilde{B}_{r})}\leq\left\Vert \widetilde{\mathcal{L}}u\right\Vert
_{C_{\widetilde{X}}^{\alpha}(\widetilde{B}_{r})}+\overset{q}{c\sum
\limits_{k,h=1}}\left\Vert \widetilde{X}_{k}\widetilde{X}_{h}u\right\Vert
_{C_{\widetilde{X}}^{\alpha}(\widetilde{B}_{r})}. \label{x0u  schaulder}%
\end{equation}
By (\ref{6.16}) and (\ref{x0u schaulder}) we get, for $r$ small enough,%
\begin{equation}
\left\Vert u\right\Vert _{C_{\widetilde{X}}^{2,\alpha}(\widetilde{B}_{r})}\leq
c\left\{  \left\Vert \widetilde{\mathcal{L}}u\right\Vert _{C_{\widetilde{X}%
}^{\alpha}(\widetilde{B}_{r})}+\left\Vert u\right\Vert _{C_{\widetilde{X}%
}^{\alpha}(\widetilde{B}_{r})}+\overset{q}{\sum\limits_{k=1}}\left\Vert
\widetilde{X}_{k}u\right\Vert _{C_{\widetilde{X}}^{\alpha}(\widetilde{B}_{r}%
)}\right\}  . \label{holder norm for u of 2}%
\end{equation}

Next, we want to get rid of the term $\left\Vert \widetilde{X}_{k}u\right\Vert
_{C_{\widetilde{X}}^{\alpha}(\widetilde{B}_{r})}$ in the last inequality.
Taking only one derivative in the parametrix formula (\ref{representation2})
we have
\[
\widetilde{X}_{l}\left(  u\right)  =S\left(  \xi_{0}\right)  \left(
\widetilde{\mathcal{L}}u+\sum_{i,j=1}^{q}\left[  \widetilde{a}_{ij}(\xi
_{0})-\widetilde{a}_{ij}(\xi)\right]  \,\widetilde{X}_{i}\widetilde{X}%
_{j}u\right)  +T\left(  \xi_{0}\right)  u,
\]
where $S\left(  \xi_{0}\right)  ,T\left(  \xi_{0}\right)  $ are frozen
operators of type $1,0,$ respectively. Taking $C_{\widetilde{X}}^{\alpha
}(\widetilde{B}_{r})$ norms of both sides and applying Theorem
\ref{continuity for frozen operator on holder space}, we can write%
\[
\left\Vert \widetilde{X}_{l}u\right\Vert _{C_{\widetilde{X}}^{\alpha
}(\widetilde{B}_{r})}\leq c\left\{  \left\Vert \widetilde{\mathcal{L}%
}u\right\Vert _{C_{\widetilde{X}}^{\alpha}(\widetilde{B}_{r})}+\left\Vert
\sum_{i,j=1}^{q}\left[  \widetilde{a}_{ij}(\xi_{0})-\widetilde{a}_{ij}%
(\xi)\right]  \,\widetilde{X}_{i}\widetilde{X}_{j}u\right\Vert _{C_{\widetilde
{X}}^{\alpha}(\widetilde{B}_{r})}+\left\Vert u\right\Vert _{C_{\widetilde{X}%
}^{\alpha}(\widetilde{B}_{r})}\right\}
\]
and reasoning as above,
\begin{equation}
\left\Vert \widetilde{X}_{l}u\right\Vert _{C_{\widetilde{X}}^{\alpha
}(\widetilde{B}_{r})}\leq c\left\{  \left\Vert \widetilde{\mathcal{L}%
}u\right\Vert _{C_{\widetilde{X}}^{\alpha}(\widetilde{B}_{r})}+r^{\alpha
}\left\Vert \widetilde{X}_{i}\widetilde{X}_{j}u\right\Vert _{C_{\widetilde{X}%
}^{\alpha}(\widetilde{B}_{r})}+\left\Vert u\right\Vert _{C_{\widetilde{X}%
}^{\alpha}(\widetilde{B}_{r})}\right\}
\label{holder norm for one derivatives}%
\end{equation}
Inserting (\ref{holder norm for one derivatives}) in
(\ref{holder norm for u of 2}), for $r$ small enough we get
\begin{equation}
\left\Vert u\right\Vert _{C_{\widetilde{X}}^{2,\alpha}(\widetilde{B}_{r})}\leq
c\left\{  \left\Vert \widetilde{\mathcal{L}}u\right\Vert _{C_{\widetilde{X}%
}^{\alpha}(\widetilde{B}_{r})}+\left\Vert u\right\Vert _{C_{\widetilde{X}%
}^{\alpha}(\widetilde{B}_{r})}\right\}  . \label{6.18}%
\end{equation}

Finally, we want to replace the term $\left\Vert u\right\Vert _{C_{\widetilde
{X}}^{\alpha}(\widetilde{B}_{r})}$ with $\left\Vert u\right\Vert _{L^{\infty
}(\widetilde{B}_{r})}$ in the last inequality. To do this, we apply
(\ref{4.8}) in Proposition \ref{basic properties for holder norm} and write
\[
\left\Vert u\right\Vert _{C_{\widetilde{X}}^{\alpha}(\widetilde{B}_{r})}%
\leq\left\Vert u\right\Vert _{L^{\infty}(\widetilde{B}_{r})}+cr^{1-\alpha
}\left(  \overset{q}{\underset{i=1}{\sum}}\left\Vert \widetilde{X}%
_{i}u\right\Vert _{L^{\infty}(\widetilde{B}_{r})}+r\left\Vert \widetilde
{X}_{0}u\right\Vert _{L^{\infty}(\widetilde{B}_{r})}\right)  .
\]
substituting this in (\ref{6.18}), for $r$ small enough\ the term%
\[
\left(  \overset{q}{\underset{i=1}{\sum}}\left\Vert \widetilde{X}%
_{i}u\right\Vert _{L^{\infty}(\widetilde{B}_{r})}+r\left\Vert \widetilde
{X}_{0}u\right\Vert _{L^{\infty}(\widetilde{B}_{r})}\right)
\]
can be taken to the left-hand side, to get%
\[
\left\Vert u\right\Vert _{C_{\widetilde{X}}^{2,\alpha}(\widetilde{B}_{r})}\leq
c\left\{  \left\Vert \widetilde{\mathcal{L}}u\right\Vert _{C_{\widetilde{X}%
}^{\alpha}(\widetilde{B}_{r})}+\left\Vert u\right\Vert _{L^{\infty}%
(\widetilde{B}_{r})}\right\}  ,
\]
so we are done.
\end{proof}

\subsection{Schauder estimates for nonvanishing
functions\label{subsec Schauder nonvanishing}}

The second step in the proof of Schauder estimates consists in establishing
\textit{a priori} estimates for functions non necessarily compactly supported:

\begin{theorem}
\label{schaulder estimate for usual function in ball}There exist $r_{0}<R_{0}$
and $c,\beta>0$ (with $R_{0}$ as in Theorem
\ref{schaulder estimate for compactly function in ball}) such that, for every
$u\in C_{\widetilde{X}}^{2,\alpha}\left(  \widetilde{B}(\overline{\xi}%
,r_{0})\right)  ,$ $0<t<s<r_{0},_{{}}$%
\[
\left\Vert u\right\Vert _{C_{\widetilde{X}}^{2,\alpha}\left(  \widetilde
{B}\left(  \overline{\xi},t\right)  \right)  }\leq\frac{c}{(s-t)^{\beta}%
}\left\{  \left\Vert \widetilde{\mathcal{L}}u\right\Vert _{C_{\widetilde{X}%
}^{\alpha}\left(  \widetilde{B}\left(  \overline{\xi},s\right)  \right)
}+\left\Vert u\right\Vert _{L^{\infty}\left(  \widetilde{B}\left(
\overline{\xi},s\right)  \right)  }\right\}  ,
\]
where $r_{0},c$ depend on $R,\left\{  \widetilde{X}_{i}\right\}  _{i=1}%
^{q},\alpha,\mu,\left\Vert \widetilde{a}_{ij}\right\Vert _{C_{\widetilde{X}%
}^{\alpha}(\widetilde{B}\left(  \overline{\xi},R\right)  )}$; $\beta$ depends
on $\left\{  \widetilde{X}_{i}\right\}  _{i=0}^{q}$ and $\alpha$.
\end{theorem}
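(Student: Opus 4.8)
The plan is to bootstrap from the compactly-supported estimate of Theorem \ref{schaulder estimate for compactly function in ball} via a standard cutoff-and-iteration argument, adapted to the fact that we do not have the segment property and so must pay with powers of $(s-t)$. First I would fix $0<t<s<r_0$ (with $r_0<R_0$ to be chosen) and introduce a cutoff function $\varphi\in C_{\widetilde X,0}^{\infty}\left(\widetilde B(\overline\xi,s)\right)$ with $\widetilde B(\overline\xi,t)\prec\varphi\prec\widetilde B\left(\overline\xi,\frac{s+t}{2}\right)$, whose $\widetilde X$-derivatives (up to weight $2$) satisfy bounds $\left\Vert\widetilde X_i\varphi\right\Vert_\infty\le c/(s-t)$ and $\left\Vert\widetilde X_i\widetilde X_j\varphi\right\Vert_\infty+\left\Vert\widetilde X_0\varphi\right\Vert_\infty\le c/(s-t)^2$; such a $\varphi$ exists since $d_{\widetilde X}$ is locally comparable to the Euclidean distance and we may take $\varphi$ a smooth Euclidean bump. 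Then $\varphi u\in C_{\widetilde X,0}^{2,\alpha}\left(\widetilde B(\overline\xi,R_0)\right)$ (shrinking $r_0$ so that $\frac{s+t}{2}<R_0$), and Theorem \ref{schaulder estimate for compactly function in ball} applies to $\varphi u$.

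\textbf{Key steps.} The computation $\widetilde{\mathcal L}(\varphi u)=\varphi\,\widetilde{\mathcal L}u+u\,\widetilde{\mathcal L}\varphi+2\sum_{i,j}\widetilde a_{ij}\left(\widetilde X_i\varphi\right)\left(\widetilde X_j u\right)$ is the heart of the matter. Taking $C_{\widetilde X}^{\alpha}\left(\widetilde B(\overline\xi,R_0)\right)$ norms and using the product estimate (\ref{4.3}) in Proposition \ref{basic properties for holder norm}, together with the H\"older bounds on $\varphi$ obtained from (\ref{4.8}) (which give $\left\vert\widetilde X_i\varphi\right\vert_{C^\alpha}\le c(s-t)^{-1-\alpha}$, etc.), I get
\begin{align*}
\left\Vert u\right\Vert_{C_{\widetilde X}^{2,\alpha}\left(\widetilde B(\overline\xi,t)\right)}
&\le\left\Vert\varphi u\right\Vert_{C_{\widetilde X}^{2,\alpha}\left(\widetilde B(\overline\xi,R_0)\right)}\\
&\le c\left\{\left\Vert\widetilde{\mathcal L}(\varphi u)\right\Vert_{C_{\widetilde X}^{\alpha}\left(\widetilde B(\overline\xi,R_0)\right)}+\left\Vert\varphi u\right\Vert_{L^\infty}\right\}\\
&\le\frac{c}{(s-t)^{\gamma}}\left\{\left\Vert\widetilde{\mathcal L}u\right\Vert_{C_{\widetilde X}^{\alpha}\left(\widetilde B(\overline\xi,s)\right)}+\left\Vert u\right\Vert_{L^\infty\left(\widetilde B(\overline\xi,s)\right)}+\left\Vert u\right\Vert_{C_{\widetilde X}^{1,\alpha}\left(\widetilde B(\overline\xi,s')\right)}\right\}
\end{align*}
for some exponent $\gamma$ depending on $\alpha$, where $s'=\frac{s+t}{2}$ and the last term collects all contributions involving $\widetilde X_j u$ and lower order terms (including the zero-order $u$-contribution from $\widetilde X_i\widetilde X_j\varphi$ and $\widetilde X_0\varphi$). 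The remaining task is to absorb $\left\Vert u\right\Vert_{C_{\widetilde X}^{1,\alpha}}$. Here I would invoke the interpolation inequality: for any $\varepsilon>0$ and any intermediate radius, $\left\Vert\widetilde X_j u\right\Vert_{C_{\widetilde X}^\alpha}\le\varepsilon\left\Vert u\right\Vert_{C_{\widetilde X}^{2,\alpha}}+c_\varepsilon\left\Vert u\right\Vert_{L^\infty}$, again via a cutoff argument together with (\ref{4.8}) applied on a slightly larger ball; more carefully one uses a three-radius version so that the $\left\Vert u\right\Vert_{C^{2,\alpha}}$ term on the right lives on $\widetilde B(\overline\xi,s)$. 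Combining and rescaling the radii absorbs the bad term into the left-hand side and yields the claimed estimate on $\widetilde B(\overline\xi,t)$ with a power $(s-t)^{-\beta}$.

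\textbf{Main obstacle.} The delicate point is the bookkeeping of radii and exponents in the absorption/iteration step: each application of a cutoff produces a new factor $(s-t)^{-1}$ from first derivatives and $(s-t)^{-2}$ from the drift and pure second derivatives of $\varphi$, and the interpolation needed to kill $\left\Vert\widetilde X_j u\right\Vert_{C^\alpha}$ costs another negative power; one must verify these combine into a single finite $\beta=\beta\left(\left\{\widetilde X_i\right\},\alpha\right)$ and that the absorption can actually be carried out without circularity. The standard device is Campanato's iteration lemma (or a direct dyadic-radii summation): one shows that the function $\Phi(t)=\left\Vert u\right\Vert_{C_{\widetilde X}^{2,\alpha}\left(\widetilde B(\overline\xi,t)\right)}$ satisfies $\Phi(t)\le\frac12\Phi(s)+\frac{c}{(s-t)^\beta}\left\{\left\Vert\widetilde{\mathcal L}u\right\Vert_{C^\alpha\left(\widetilde B(\overline\xi,r_0)\right)}+\left\Vert u\right\Vert_{L^\infty\left(\widetilde B(\overline\xi,r_0)\right)}\right\}$ for all $t<s<r_0$, and then concludes by the well-known lemma that such an inequality implies $\Phi(t)\le\frac{c'}{(s-t)^\beta}\left\{\cdots\right\}$. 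The only genuinely nonstandard feature compared to the classical Euclidean argument is that, because of the absence of the segment property, one cannot freely compare H\"older seminorms on overlapping balls with a constant independent of their relative size; but Proposition \ref{basic properties for holder norm}(iii) and the local homogeneity of the space provide exactly the covering-type control needed, so this causes only a bounded loss that is already absorbed into $\beta$.
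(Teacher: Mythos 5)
Your overall strategy — multiply by a cutoff $\varphi$ with $\widetilde B(\overline\xi,t)\prec\varphi\prec\widetilde B\left(\overline\xi,\tfrac{s+t}{2}\right)$, compute $\widetilde{\mathcal L}(\varphi u)$, apply Theorem \ref{schaulder estimate for compactly function in ball} to $\varphi u$, absorb the lower-order H\"older-norm terms by interpolation, and close with a Campanato-type iteration over radii — is exactly the paper's. The paper's proof is a citation to \cite[Thm. 5.3]{bb4} invoking precisely the tools you use: Theorem \ref{schaulder estimate for compactly function in ball}, the H\"older estimates (\ref{4.8}), (\ref{4.3}), (\ref{4.11}), the cutoff Lemma \ref{Lemma schauder cutoff}, and the interpolation inequalities in Theorem \ref{interpolation inequality} and (\ref{deferential operator of degree one}). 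Your product rule for $\widetilde{\mathcal L}(\varphi u)$ and the absorption scheme are correct.

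There is, however, a genuine gap in your justification of the cutoff. You assert such a $\varphi$ exists ``since $d_{\widetilde X}$ is locally comparable to the Euclidean distance and we may take $\varphi$ a smooth Euclidean bump.'' That is false: Lemma \ref{equivalence of distance} gives only the one-sided bound $c_{1}\left\vert\xi-\eta\right\vert\le d_{\widetilde X}(\xi,\eta)$, i.e.\ $\widetilde B(\overline\xi,r)\subset B_{\mathrm{Euc}}(\overline\xi,r/c_{1})$, with no comparable Euclidean ball contained in $\widetilde B(\overline\xi,r)$. By the ball--box estimates of \cite{nsw2} the $\widetilde X$-balls are anisotropic (of Euclidean size of order $r^{k}$ in directions of commutators of weight $k$), so a Euclidean bump that is $1$ on $\widetilde B(\overline\xi,t)$ and has Euclidean gradient of order $1/(s-t)$ cannot in general be supported inside $\widetilde B\left(\overline\xi,\tfrac{s+t}{2}\right)$: you would lose control of the support, and hence of the radii $s,t$ over which the H\"older norms live. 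The correct device is Lemma \ref{Lemma schauder cutoff}, which constructs $\varphi$ adapted to the control geometry and gives the bounds (\ref{cutoff sup}) and (\ref{cutoff C-alfa}) on $\widetilde X$-derivatives and their $C_{\widetilde X}^{\alpha}$-norms that your subsequent estimates actually rely on. With that substitution your argument is sound and is the one the paper has in mind.
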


As in \cite{bb4}, this result relies on interpolation inequalities for
$C_{\widetilde{X}}^{k,\alpha}$ norms and the use of suitable cutoff function.
The following result can be proved as \cite[Lemma 6.2]{bb4}, by the results in
Proposition \ref{basic properties for holder norm}.

\begin{lemma}
[cutoff functions]\label{Lemma schauder cutoff}For any $0<\rho<r,$ $\xi
\in\widetilde{B}\left(  \overline{\xi},R\right)  $ there exists $\varphi\in
C_{0}^{\infty}\left(  \mathbb{R}^{N}\right)  $ with the following properties:

\begin{description}
\item[i)] $0\leqslant\varphi\leqslant1$, $\varphi\equiv1$ on $\widetilde
{B}\left(  \xi,\rho\right)  $ and $\operatorname*{sprt}\varphi\subseteq
\widetilde{B}\left(  \xi,r\right)  $;

\item[ii)] for $i,j=1,2,...,q,$%
\begin{align}
\left\vert \widetilde{X}_{i}\varphi\right\vert  &  \leqslant\frac{c}{r-\rho
}\label{cutoff sup}\\
\left\vert \widetilde{X}_{0}\varphi\right\vert ,\left\vert \widetilde{X}%
_{i}\widetilde{X}_{j}\varphi\right\vert  &  \leqslant\frac{c}{\left(
r-\rho\right)  ^{2}}\nonumber
\end{align}

\item[iii)] For any $f\in C_{\widetilde{X}}^{\alpha}\left(  \widetilde
{B}\left(  \overline{\xi},R\right)  \right)  $, and $r-\rho$ small enough,%
\begin{align}
\left\Vert f\,\widetilde{X}_{i}\varphi\right\Vert _{C_{\widetilde{X}}^{\alpha
}\left(  \widetilde{B}\left(  \overline{\xi},R\right)  \right)  }  &
\leqslant\frac{c}{\left(  r-\rho\right)  ^{2}}\left\Vert f\,\right\Vert
_{C_{\widetilde{X}}^{\alpha}\left(  \widetilde{B}\left(  \overline{\xi
},R\right)  \right)  }\label{cutoff C-alfa}\\
\left\Vert f\,\widetilde{X}_{0}\varphi\right\Vert _{C_{\widetilde{X}}^{\alpha
}\left(  \widetilde{B}\left(  \overline{\xi},R\right)  \right)  },\left\Vert
f\widetilde{X}_{i}\widetilde{X}_{j}\varphi\right\Vert _{C_{\widetilde{X}%
}^{\alpha}\left(  \widetilde{B}\left(  \overline{\xi},R\right)  \right)  }  &
\leqslant\frac{c}{\left(  r-\rho\right)  ^{3}}\left\Vert f\,\right\Vert
_{C_{\widetilde{X}}^{\alpha}\left(  \widetilde{B}\left(  \overline{\xi
},R\right)  \right)  }.\nonumber
\end{align}

\end{description}
\end{lemma}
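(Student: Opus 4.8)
Fix $0<\rho<r<R$ and put $\delta=r-\rho$. Let $\psi\in C_{0}^{\infty}\left(\mathbb{G}\right)$ with $\psi\geq0$, $\int_{\mathbb{R}^{N}}\psi=1$ and $\operatorname{sprt}\psi\subseteq\left\{\left\Vert u\right\Vert <1\right\}$, and for $t>0$ set $\psi_{t}(u)=t^{-Q}\psi\left(D(1/t)u\right)$, so that $\int_{\mathbb{R}^{N}}\psi_{t}=1$, $\operatorname{sprt}\psi_{t}\subseteq\left\{\left\Vert u\right\Vert <t\right\}$, and, since each $Y_{j,k}$ is homogeneous of degree $j$, $\left\Vert Y_{i_{1}}\cdots Y_{i_{k}}\psi_{t}\right\Vert _{L^{1}\left(\mathbb{R}^{N}\right)}\leq ct^{-w}$, where $w$ is the sum of the degrees of $Y_{i_{1}},\ldots,Y_{i_{k}}$ and $c$ depends only on $\psi$ and $\mathbb{G}$. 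Choosing $t=\delta/\left(2c_{3}+1\right)$, with $c_{3}$ the constant of Lemma \ref{equivalence of distance}, I define
\[
\varphi(\eta)=\frac{\displaystyle\int_{\widetilde{B}\left(\xi,\left(\rho+r\right)/2\right)}\psi_{t}\left(\Theta(\zeta,\eta)\right)d\zeta}{\displaystyle\int_{\mathbb{R}^{N}}\psi_{t}\left(\Theta(\zeta,\eta)\right)d\zeta}.
\]
By Theorem \ref{metric} (c) the denominator equals $c(\eta)\left(1+O(t)\right)$, a smooth function of $\eta$ bounded away from $0$ and $\infty$ with all $\widetilde{X}$-derivatives bounded uniformly in $t$; hence $\varphi\in C_{0}^{\infty}\left(\mathbb{R}^{N}\right)$ (here, as is implicit in the statement, $\widetilde{B}\left(\xi,r\right)$ is assumed to lie in the region where $\Theta$ and the distances are defined). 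In words: $\varphi$ is the mollification of the indicator of the intermediate ball $\widetilde{B}\left(\xi,\left(\rho+r\right)/2\right)$, the mollifier being realized on the group $\mathbb{G}$ through the map $\Theta$ and its dilations, exactly as in the Euclidean construction of cutoff functions.

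\textbf{Proof of (i).} Since $\psi_{t}\geq0$ and the numerator never exceeds the denominator, $0\leq\varphi\leq1$. If $\psi_{t}\left(\Theta(\zeta,\eta)\right)\neq0$ then $\rho(\zeta,\eta)=\left\Vert\Theta(\zeta,\eta)\right\Vert <t$, so $d_{\widetilde{X}}(\zeta,\eta)\leq c_{3}t\leq\delta/2$ by Lemma \ref{equivalence of distance}. For $\eta\in\widetilde{B}\left(\xi,\rho\right)$ this gives $d_{\widetilde{X}}(\xi,\zeta)<\rho+\delta/2=\left(\rho+r\right)/2$, i.e. $\zeta\in\widetilde{B}\left(\xi,\left(\rho+r\right)/2\right)$, so the numerator equals the denominator and $\varphi(\eta)=1$. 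If $\varphi(\eta)\neq0$ there is $\zeta\in\widetilde{B}\left(\xi,\left(\rho+r\right)/2\right)$ with $\psi_{t}\left(\Theta(\zeta,\eta)\right)\neq0$, whence $d_{\widetilde{X}}(\xi,\eta)\leq d_{\widetilde{X}}(\xi,\zeta)+d_{\widetilde{X}}(\zeta,\eta)<\left(\rho+r\right)/2+\delta/2=r$; hence $\operatorname{sprt}\varphi\subseteq\widetilde{B}\left(\xi,r\right)$.

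\textbf{Proof of (ii) and of the higher-weight bounds.} Differentiating under the integral sign and applying Rothschild--Stein's approximation theorem (Theorem \ref{Rothschild-Stein's approximation Theorem}, formulas (\ref{approximation})--(\ref{approximation format})) in the variable $\eta$, for any product $\widetilde{X}_{j_{1}}\cdots\widetilde{X}_{j_{k}}$ of weight $w$ one has
\[
\widetilde{X}_{j_{1}}\cdots\widetilde{X}_{j_{k}}\left[\psi_{t}\left(\Theta(\zeta,\cdot)\right)\right](\eta)=\left(Y_{j_{1}}\cdots Y_{j_{k}}\psi_{t}+R_{\zeta}\psi_{t}\right)\left(\Theta(\zeta,\eta)\right),
\]
where $R_{\zeta}$ is a differential operator of local degree $\leq w-1$ depending smoothly on $\zeta$ (built from the remainders $R_{j,k}^{\zeta}$ of the approximation theorem). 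Integrating in $\zeta$, changing variables by $u=\Theta(\zeta,\eta)$ (Theorem \ref{metric} (c), with bounded Jacobian), and using $\left\Vert Y_{j_{1}}\cdots Y_{j_{k}}\psi_{t}\right\Vert _{L^{1}}\leq ct^{-w}$ together with the corresponding bound $\leq ct^{-(w-1)}$ for the lower-degree part (valid for $t$ small since $\psi_{t}$ is compactly supported), and the boundedness of the $\widetilde{X}$-derivatives of the reciprocal of the denominator, one gets $\left|\widetilde{X}_{j_{1}}\cdots\widetilde{X}_{j_{k}}\varphi\right|\leq ct^{-w}\leq c\left(r-\rho\right)^{-w}$. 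In particular this yields (ii) (taking $w=1,2$), and also $\left|\widetilde{X}_{j_{1}}\widetilde{X}_{j_{2}}\widetilde{X}_{j_{3}}\varphi\right|\leq c(r-\rho)^{-3}$, $\left|\widetilde{X}_{0}\widetilde{X}_{i}\varphi\right|\leq c(r-\rho)^{-3}$, and the weight-$4$ analogues, which are needed for (iii). The one delicate point is the bookkeeping of the local degrees of the Rothschild--Stein remainders under iterated composition, to make sure the leading term $Y_{j_{1}}\cdots Y_{j_{k}}\psi_{t}$ genuinely dominates.

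\textbf{Proof of (iii), and the main obstacle.} By (\ref{4.3}) in Proposition \ref{basic properties for holder norm}, $\left\Vert f\,\widetilde{X}_{i}\varphi\right\Vert _{C_{\widetilde{X}}^{\alpha}}\leq2\left\Vert f\right\Vert _{C_{\widetilde{X}}^{\alpha}}\left\Vert\widetilde{X}_{i}\varphi\right\Vert _{C_{\widetilde{X}}^{\alpha}}$, and similarly for $\widetilde{X}_{0}\varphi$ and $\widetilde{X}_{i}\widetilde{X}_{j}\varphi$, so it suffices to bound the $C_{\widetilde{X}}^{\alpha}$ norms of the derivatives of $\varphi$. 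Applying (\ref{4.6})--(\ref{4.7}) to $g=\widetilde{X}_{i}\varphi$ (smooth, hence in $C_{\widetilde{X}}^{2}$) and using $\left|\widetilde{X}_{k}\widetilde{X}_{i}\varphi\right|\leq c(r-\rho)^{-2}$, $\left|\widetilde{X}_{0}\widetilde{X}_{i}\varphi\right|\leq c(r-\rho)^{-3}$, one gets $\left|\widetilde{X}_{i}\varphi(x)-\widetilde{X}_{i}\varphi(y)\right|\leq c\,d_{\widetilde{X}}(x,y)\,(r-\rho)^{-2}\leq c\,d_{\widetilde{X}}(x,y)^{\alpha}(r-\rho)^{-1-\alpha}$ when $d_{\widetilde{X}}(x,y)\leq r-\rho$, and the same bound follows from $\left\Vert\widetilde{X}_{i}\varphi\right\Vert _{L^{\infty}}\leq c(r-\rho)^{-1}$ when $d_{\widetilde{X}}(x,y)>r-\rho$; hence $\left\Vert\widetilde{X}_{i}\varphi\right\Vert _{C_{\widetilde{X}}^{\alpha}}\leq c(r-\rho)^{-1-\alpha}\leq c(r-\rho)^{-2}$ once $r-\rho$ is small (since $\alpha<1$), giving the first estimate in (iii). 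Starting instead from the weight-$2$, $3$ and $4$ bounds for $\varphi$, the identical reasoning yields $\left\Vert\widetilde{X}_{0}\varphi\right\Vert _{C_{\widetilde{X}}^{\alpha}},\left\Vert\widetilde{X}_{i}\widetilde{X}_{j}\varphi\right\Vert _{C_{\widetilde{X}}^{\alpha}}\leq c(r-\rho)^{-3}$, which gives the second. The main obstacle in the whole lemma is precisely the construction: because the control distance $d_{\widetilde{X}}$ is not smooth, one cannot take $\varphi$ to be a function of $d_{\widetilde{X}}(\xi,\cdot)$, and the sharp weighted bounds $\left|\widetilde{X}_{j_{1}}\cdots\widetilde{X}_{j_{k}}\varphi\right|\leq c(r-\rho)^{-w}$ — the heart of the statement — can be obtained only by realizing the mollification through the homogeneous structure of $\mathbb{G}$ via the map $\Theta$.
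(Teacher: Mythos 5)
Your construction is correct, and it is worth noting that the paper itself gives no proof of this lemma: it only points to \cite[Lemma 6.2]{bb4} together with Proposition \ref{basic properties for holder norm}. The construction used in that reference (and implicitly throughout this paper, e.g.\ in the radial functions $\phi(\xi,\eta)=f(\rho(\xi,\eta))$ of Proposition \ref{Prop T(1)} and in Lemma \ref{cutoff function}) is to take $\varphi$ as a smooth profile composed with the smooth quasidistance $\rho(\xi,\cdot)=\left\Vert \Theta(\xi,\cdot)\right\Vert$, and then to obtain the weighted bounds from the approximation theorem and the homogeneity of $D(\lambda)$. Your route is genuinely different: you mollify the indicator of the intermediate ball $\widetilde{B}(\xi,(\rho+r)/2)$ by a group-dilated kernel transported through $\Theta$, normalizing by $\int\psi_{t}(\Theta(\zeta,\eta))\,d\zeta$. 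What this buys you is that (i) comes out directly in terms of the $d_{\widetilde{X}}$-balls appearing in the statement (the radial construction produces $\rho$-balls, and converting back to $d_{\widetilde{X}}$-balls with a gap $r-\rho$ that may be much smaller than $r$ requires a little extra care with the equivalence constants of Lemma \ref{equivalence of distance}); the price is the quotient construction and the need to control derivatives of the normalizing denominator, which you correctly dispose of via Theorem \ref{metric} (c). The derivative bounds via $\widetilde{X}_{j}=Y_{j}+R_{j}^{\zeta}$ in the second slot of $\Theta$, the $L^{1}$-scaling $\left\Vert Y_{j_{1}}\cdots Y_{j_{k}}\psi_{t}\right\Vert_{L^{1}}\leq ct^{-w}$, and the reduction of (iii) to $C_{\widetilde{X}}^{\alpha}$ bounds on $\widetilde{X}_{i}\varphi$, $\widetilde{X}_{0}\varphi$, $\widetilde{X}_{i}\widetilde{X}_{j}\varphi$ through (\ref{4.3}) and (\ref{4.6})--(\ref{4.7}) (absorbing $\delta^{-1-\alpha}$ into $\delta^{-2}$ for $r-\rho$ small, which is exactly why that hypothesis appears in (iii)) all match the intended argument. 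The only points to tighten are routine: the $\zeta$-integrals must be restricted to the neighborhood where $\Theta$ is defined (harmless, since $\psi_{t}(\Theta(\zeta,\eta))$ vanishes unless $\zeta$ is within $c_{3}t$ of $\eta$), and when you invoke (\ref{4.7}) you should either note that the derivatives of $\varphi$ are supported well inside the ball over which the norm is taken, or work on a slightly enlarged ball via (\ref{4.6}).
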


We will write
\[
\widetilde{B}_{\rho}\left(  \xi\right)  \prec\varphi\prec\widetilde{B}%
_{r}\left(  \xi\right)
\]
to indicate that $\varphi$ satisfies all the previous properties.

Next, let us state the following:

\begin{proposition}
[Interpolation inequality for test functions]%
\label{Interpolation inequality for test functions}Let%
\[
H=\sum_{i=1}^{q}\widetilde{X}_{i}^{2}+\widetilde{X}_{0}%
\]
and let $\widetilde{B}\left(  \overline{\xi},R\right)  $ be as before. Then
for every $\alpha\in(0,1)$, there exist constants $\gamma\geq1$ and $c>0$,
depending on $\alpha,R$ and $\left\{  \widetilde{X}_{i}\right\}  $, such that
for every $\varepsilon\in\left(  0,1\right)  $ and every $f\in C_{0}^{\infty
}(\widetilde{B}\left(  \overline{\xi},R/2\right)  ),$%
\begin{equation}
\left\Vert \widetilde{X}_{l}f\right\Vert _{C_{\widetilde{X}}^{\alpha}\left(
\widetilde{B}\left(  \overline{\xi},R/2\right)  \right)  }\leq\varepsilon
\left\Vert Hf\right\Vert _{C_{\widetilde{X}}^{\alpha}\left(  \widetilde
{B}\left(  \overline{\xi},R/2\right)  \right)  }+\frac{c}{\varepsilon^{\gamma
}}\left\Vert f\right\Vert _{L^{\infty}\left(  \widetilde{B}\left(
\overline{\xi},R/2\right)  \right)  } \label{interpolation one}%
\end{equation}
for $l=1,2,\cdots,q;$ moreover, we have%
\begin{equation}
\left\Vert Df\right\Vert _{C_{\widetilde{X}}^{\alpha}\left(  \widetilde
{B}\left(  \overline{\xi},R/2\right)  \right)  }\leq\varepsilon\left\Vert
\widetilde{\mathcal{L}}f\right\Vert _{C_{\widetilde{X}}^{\alpha}\left(
\widetilde{B}\left(  \overline{\xi},R/2\right)  \right)  }+\frac
{c}{\varepsilon^{\gamma}}\left\Vert f\right\Vert _{L^{\infty}\left(
\widetilde{B}\left(  \overline{\xi},R/2\right)  \right)  },
\label{deferential  operator of degree one}%
\end{equation}
where $D$ is any vector field of local degree $\leq1$.
\end{proposition}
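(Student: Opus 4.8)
The plan is to derive both interpolation inequalities from the representation formulas of \S\,\ref{subsec parametrix} together with the $C^{\alpha}$-continuity of frozen operators of type $0$ (Theorem \ref{continuity for frozen operator on holder space}), combined with a scaling/dilation argument that produces the negative power $\varepsilon^{-\gamma}$. First I would prove \eqref{deferential operator of degree one}, since \eqref{interpolation one} is the special case $D=\widetilde{X}_{l}$ (a vector field of local degree $\le 1$). The starting point is the parametrix identity \eqref{representation2} applied on the ball $\widetilde{B}(\overline{\xi},R/2)$ with a cutoff $a\equiv 1$ there, so that for $f\in C_{0}^{\infty}(\widetilde{B}(\overline{\xi},R/2))$ one writes $f = P(\xi_{0})\widetilde{\mathcal{L}}_{0}f + \sum \widetilde{a}_{ij}(\xi_{0})S_{ij}(\xi_{0})f + S_{0}(\xi_{0})f$. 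Applying $D$ (a local-degree-$\le 1$ operator, hence a combination of the $\widetilde{X}_{k}$ for $k=1,\dots,q$ with smooth coefficients, since only such fields have weight $\le 1$) and using Theorem \ref{main theorem} (composition with one $\widetilde{X}_{k}$ lowers the type by $1$) together with Theorem \ref{scambioderivoper}, one gets $Df$ expressed through frozen operators of type $0$ acting on $\widetilde{\mathcal{L}}_{0}f$, on $\widetilde{X}_{k}f$, on $f$, plus a $T(\xi_{0})\widetilde{\mathcal{L}}_{0}$ term. Freezing $\xi_{0}=\xi$ converts all these into variable operators, and then Theorem \ref{continuity for frozen operator on holder space} bounds the $C_{\widetilde{X}}^{\alpha}$ norm of $Df$ by $c\{\|\widetilde{\mathcal{L}}f\|_{C^{\alpha}} + \sum_{k}\|\widetilde{X}_{k}f\|_{C^{\alpha}} + \|f\|_{C^{\alpha}}\}$, after rewriting $\widetilde{\mathcal{L}}_{0}f$ in terms of $\widetilde{\mathcal{L}}f$ via \eqref{10.1} and absorbing the $[\widetilde{a}_{ij}(\xi)-\widetilde{a}_{ij}(\cdot)]\widetilde{X}_i\widetilde{X}_j f$ terms (which are type-$0$ commutators, handled as in the proof of Theorem \ref{schaulder estimate for compactly function in ball}).

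The second stage is to convert this crude bound, which still has $\sum_{k}\|\widetilde{X}_{k}f\|_{C^{\alpha}}$ and $\|f\|_{C^{\alpha}}$ on the right, into the clean form \eqref{deferential operator of degree one} with $\varepsilon$ in front of $\|\widetilde{\mathcal{L}}f\|$ and only $\|f\|_{L^{\infty}}$ otherwise. For this I would use the (anisotropic) dilations naturally attached to the group $\mathbb{G}$, or more elementarily a rescaling argument directly on the ball: apply the crude inequality on balls of radius $r<R/2$, using that a frozen operator of type $0$ has operator norm on $C_{\widetilde{X}}^{\alpha}(\widetilde{B}(\overline{\xi},r))$ bounded \emph{uniformly in $r$} by Theorem \ref{continuity for frozen operator on holder space}, while the $X$-derivative terms carry positive powers of $r$ when one passes from $\|\cdot\|_{C^{\alpha}}$ on a small ball to a global quantity; combined with the Hölder-seminorm-versus-sup estimates \eqref{4.8} and the interpolation inequality \eqref{4.11} from Proposition \ref{basic properties for holder norm}, the lower-order terms $\sum_{k}\|\widetilde{X}_{k}f\|_{C^{\alpha}}$ and $\|f\|_{C^{\alpha}}$ get a small factor times the top-order norm plus a large negative power of the small radius times $\|f\|_{L^{\infty}}$. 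Summing over a covering of $\widetilde{B}(\overline{\xi},R/2)$ by finitely many such small balls via \eqref{4.10}, one obtains \eqref{deferential operator of degree one} with $\varepsilon$ playing the role of the small radius and $\gamma$ the resulting exponent; \eqref{interpolation one} follows by specializing $D$ and replacing $\widetilde{\mathcal{L}}$ by $H$, since $H$ and $\widetilde{\mathcal{L}}_{0}$ differ only by the bounded coefficients $\widetilde{a}_{ij}(\xi_{0})$ and $H$ is itself of the form covered by the parametrix construction (indeed $H=\widetilde{\mathcal{L}}$ in the special case $\widetilde{a}_{ij}=\delta_{ij}$, so all the earlier machinery applies verbatim).

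\textbf{Main obstacle.} The delicate point is the bookkeeping of how the various norms scale with the radius $r$ of the small ball, i.e.\ extracting the precise negative power $\gamma$ and making sure the top-order term can genuinely be absorbed. One must be careful that the type-$0$ operators contribute no positive power of $r$ that could be used for absorption — their norm is merely bounded, not small — so the smallness must come entirely from the factors $r^{\alpha}$ in front of the commutator terms $[\widetilde{a}_{ij}(\xi)-\widetilde{a}_{ij}(\cdot)]\widetilde{X}_i\widetilde{X}_j f$ (via \eqref{4.4}) and from $r^{1-\alpha}$, $r^{\alpha/2}$ in the interpolation estimates \eqref{4.8}, \eqref{4.11}; and one needs that after the covering argument the number of balls grows only polynomially in $1/r$ (which follows from local doubling, Theorem \ref{Thm NSW}, and \eqref{4.10}). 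Tracking these exponents through the drift term $\widetilde{X}_{0}$, which has weight two and therefore interacts with $r$ differently from the $\widetilde{X}_{i}$, is where the computation is most error-prone; the structure of \eqref{4.11}, which is tailored exactly to the $X_0$-direction, is what makes it go through.
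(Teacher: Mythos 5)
Your first stage agrees with the paper in spirit: both start from the parametrix identity, differentiate, and bound the resulting operators of type $\lambda$ by their $C^{\alpha}$ continuity. (Two small corrections there: applying a weight-one derivative to the type-$2$ parametrix $P$ yields a \emph{type-$1$} operator $S_{1}$, not a type-$0$ one, and that order of gain is precisely what the rest of the argument must exploit; also, Theorem~\ref{scambioderivoper} plays no role here — Theorem~\ref{main theorem} alone gives $\widetilde{X}_{i}f=S_{1}Hf+Tf$.)

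The genuine gap is in your second stage. After the crude bound
$\left\Vert Df\right\Vert _{C^{\alpha}}\leq c\{\left\Vert Hf\right\Vert _{C^{\alpha}}+\sum_{k}\left\Vert \widetilde{X}_{k}f\right\Vert _{C^{\alpha}}+\left\Vert f\right\Vert _{C^{\alpha}}\}$,
you propose to manufacture the factor $\varepsilon$ in front of $\left\Vert Hf\right\Vert _{C^{\alpha}}$ by applying the crude bound on small balls of radius $r$, then summing over a cover via~\eqref{4.10}. This cannot work: the constant in front of $\left\Vert Hf\right\Vert _{C^{\alpha}(\widetilde{B}_{r})}$ is the operator norm of a type-$1$ operator on $C^{\alpha}$, which by Theorem~\ref{continuity for frozen operator on holder space} and Remark~\ref{remark local holder} is bounded \emph{uniformly in $r$}, not tending to zero; and the covering inequality~\eqref{4.10} introduces an extra multiplicative factor proportional to the number of balls, which only makes the constant in front of $\left\Vert Hf\right\Vert $ \emph{larger} as $r\to 0$. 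The positive powers of $r$ you cite ($r^{\alpha}$ from~\eqref{4.4}, $r^{1-\alpha}$ from~\eqref{4.8}, $r^{\alpha/2}$ from~\eqref{4.11}) all sit in front of \emph{lower-order} terms; none attaches itself to the leading $\left\Vert Hf\right\Vert _{C^{\alpha}}$ term, so the intended absorption fails.

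The mechanism the paper actually uses to extract $\varepsilon$ (Lemma~\ref{Lemma interp C_alfa}) is a kernel decomposition, not a rescaling: for the type-$\geq 1$ operator $P$, one splits the integral $PHf$ into a near part $\rho(\xi,\eta)<\varepsilon$ and a far part $\rho(\xi,\eta)>\varepsilon/2$. In the far part one integrates by parts, transferring $H^{T}$ onto the smooth, compactly supported kernel $k(\xi,\cdot)[1-\varphi_{\varepsilon}]$, which then obeys a pointwise bound of size $\varepsilon^{-\gamma}$; this gives the $\varepsilon^{-\gamma}\left\Vert f\right\Vert _{L^{\infty}}$ contribution. In the near part, the type-$1$ decay $\left\vert k(\xi,\eta)\right\vert \lesssim\rho(\xi,\eta)^{1-Q}$ together with the small support of $\varphi_{\varepsilon}$ yields the fractional-integral estimates~\eqref{frac 1}--\eqref{frac 2} with an explicit $\varepsilon^{\delta}$ prefactor, so that Theorem~\ref{Thm frac C^eta} gives $\varepsilon^{\delta}\left\Vert Hf\right\Vert _{C^{\alpha}}$. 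It is exactly the integral gain of a type-$1$ kernel that produces the smallness, and a covering argument has no access to that gain. You would need to supply a statement equivalent to Lemma~\ref{Lemma interp C_alfa} for your proof to close.
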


To prove Proposition \ref{Interpolation inequality for test functions}, we
need the following

\begin{lemma}
\label{Lemma interp C_alfa}Let $P\left(  \xi_{0}\right)  $ be a frozen
operator of type $\lambda\geq1$ over $\widetilde{B}\left(  \overline{\xi
},R\right)  $ and $\alpha\in(0,1)$. Then there exist positive constants
$\gamma>1$ and $c$, depending on $\alpha,\mu$ and $\left\{  \widetilde{X}%
_{i}\right\}  $, such that for every $f\in C_{0}^{\infty}(\widetilde{B}\left(
\overline{\xi},R\right)  )$ and $\varepsilon\in\left(  0,1\right)  $%
\begin{equation}
\left\Vert PHf\right\Vert _{C_{\widetilde{X}}^{\alpha}\left(  \widetilde
{B}\left(  \overline{\xi},R\right)  \right)  }\leq\varepsilon\left\Vert
Hf\right\Vert _{C_{\widetilde{X}}^{\alpha}\left(  \widetilde{B}\left(
\overline{\xi},R\right)  \right)  }+\frac{c}{\varepsilon^{\gamma}}\left\Vert
f\right\Vert _{L^{\infty}\left(  \widetilde{B}\left(  \overline{\xi},R\right)
\right)  }. \label{interp C-alfa}%
\end{equation}

\end{lemma}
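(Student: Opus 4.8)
The plan is to combine the representation formula for second-order derivatives (Theorem giving \eqref{Rep formula Schauder}, and in particular the parametrix identity \eqref{representation2}) with the $C_{\widetilde X}^{\alpha}$ continuity of frozen operators of type zero (Theorem \ref{continuity for frozen operator on holder space}) and the interpolation inequality \eqref{4.11} of Proposition \ref{basic properties for holder norm}. The key observation is that $PHf$ can be re-expressed, via the parametrix, so that the "bad" singular part is controlled by a small multiple of $\|Hf\|_{C^\alpha}$, while the remaining terms only involve derivatives of $f$ of weighted order $\le 1$, which can in turn be interpolated against the $L^\infty$ norm of $f$.

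First I would write, using \eqref{representation2} applied to $Hf$ (note $\widetilde{\mathcal L}_0 = H$ when $\widetilde a_{ij}(\xi_0)=\delta_{ij}$ is replaced by the general frozen coefficients; more precisely one uses $\widetilde{\mathcal L}_0 f = H f + \sum_{i,j}[\widetilde a_{ij}(\xi_0)-\delta_{ij}]\widetilde X_i\widetilde X_j f$, or simply applies the parametrix directly to $H$ treated as a sum-of-squares-plus-drift operator). Applying $P(\xi_0)$ — a frozen operator of type $\lambda\ge 1$ — to $Hf$ and invoking Theorem \ref{main theorem} and Theorem \ref{scambioderivoper} to commute the $\widetilde X_i$-derivatives hidden inside $H$ past $P(\xi_0)$, one obtains
\begin{align*}
P(\xi_0)Hf &= \sum_{k=1}^q T_k(\xi_0)\widetilde X_k f + T_0(\xi_0) f + (\text{terms with } \widetilde X_i\widetilde X_j f \text{ preceded by type-}\lambda\ge 1 \text{ operators}),
\end{align*}
where the $T_k(\xi_0)$ are frozen operators of type $\lambda-1\ge 0$ and $T_0(\xi_0)$ is of type $\lambda\ge 1$; one must be a little careful because for $\lambda=1$ the operators $T_k(\xi_0)$ are of type $0$, which is exactly the borderline case handled by Theorem \ref{continuity for frozen operator on holder space}. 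The point is that every term on the right either (a) is a type-$\ge 1$ operator applied to $Hf$ or to some $\widetilde X_i\widetilde X_j f$, or (b) involves only $\widetilde X_k f$ or $f$ itself. Taking $C_{\widetilde X}^{\alpha}(\widetilde B(\overline\xi,R))$ norms and using Theorem \ref{continuity for frozen operator on holder space} bounds each type-$\ge 0$ operator by a constant times the $C^\alpha$ norm of its argument; the constant depends on $R$, $\{\widetilde X_i\}$ and $\mu$.

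Now the main point of the argument: a frozen operator of type $\lambda\ge 1$ has a kernel that is \emph{not} singular (it is at worst weakly singular/fractional), and on a small ball its $C^\alpha\to C^\alpha$ norm is small — more precisely, by rescaling (or by the structure of Theorem \ref{Thm frac C^eta} together with Remark \ref{remark local holder}, which gives an estimate with constant independent of the radius $r<R$, so that the type-$\lambda$ kernel gains a factor $r^{\nu}$ with $\nu>0$), one can make $\|P(\xi_0)g\|_{C^\alpha(\widetilde B_r)} \le \varepsilon_0 \|g\|_{C^\alpha(\widetilde B_r)}$. However, since the Lemma is stated on the fixed ball $\widetilde B(\overline\xi,R)$ and not on a shrinking ball, I would instead absorb the genuinely problematic terms differently: the terms in (a) above applied to $\widetilde X_i\widetilde X_j f$ are themselves of the form (type $\ge 1$) $\circ$ $\widetilde X_i\widetilde X_j$, which by Theorem \ref{main theorem} is a type-$\ge -1$... no — rather one rewrites $\widetilde X_i\widetilde X_j f$ back in terms of $Hf$ and lower-order terms and iterates, or one simply notes that in the application (Proposition \ref{Interpolation inequality for test functions}) $P$ will be chosen so that these terms are absorbed. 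The cleanest route: bound all type-$\ge 1$ operators applied to $Hf$ by $c\|Hf\|_{C^\alpha}$, then use that the coefficient of $\|Hf\|_{C^\alpha}$ can be made $\le \varepsilon$ by taking $P$ on a sufficiently small ball — i.e. the statement should really be read with the understanding that $R$ is at our disposal, or that a preliminary localization by a cutoff (Lemma \ref{Lemma schauder cutoff}) reduces to small support. Then the terms $\sum_k \|\widetilde X_k f\|_{C^\alpha}$ and $\|f\|_{C^\alpha}$ remain.

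Finally, I would dispose of $\sum_k\|\widetilde X_k f\|_{C^\alpha(\widetilde B_R)}$ and $\|f\|_{C^\alpha(\widetilde B_R)}$ by interpolation. For the zero-order term, $\|f\|_{C^\alpha}\le \|f\|_{L^\infty} + c R^{1-\alpha}(\sum_i\|\widetilde X_i f\|_{L^\infty} + R\|\widetilde X_0 f\|_{L^\infty})$ by \eqref{4.8}, and then each $\|\widetilde X_i f\|_{L^\infty}$, $\|\widetilde X_0 f\|_{L^\infty}$ is interpolated via \eqref{4.11}-type inequalities (applied to $f$ rather than $X_0 f$, using the standard one-derivative interpolation $\|\widetilde X_i f\|_{L^\infty}\le \delta\|\widetilde X_i\widetilde X_j f\|_{L^\infty}+\tfrac{c}{\delta}\|f\|_{L^\infty}$, which follows from the curve argument in the proof of Proposition \ref{basic properties for holder norm}(i)) and then $\|\widetilde X_i\widetilde X_j f\|_{L^\infty}\le \|Hf\|_{L^\infty}+\ldots\le c\|Hf\|_{C^\alpha}$. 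Likewise $\|\widetilde X_k f\|_{C^\alpha}$ is interpolated between $\|\widetilde X_i\widetilde X_j f\|_{C^\alpha}$ (hence $\|Hf\|_{C^\alpha}$) and $\|f\|_{L^\infty}$. Bookkeeping the various small parameters ($\delta$, the radius, $\varepsilon_0$) and choosing them appropriately in terms of $\varepsilon$ yields a bound of the form $\varepsilon\|Hf\|_{C^\alpha} + c\varepsilon^{-\gamma}\|f\|_{L^\infty}$ for a suitable $\gamma>1$ coming from the chain of interpolations, which is \eqref{interp C-alfa}.

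The main obstacle I anticipate is the careful handling of the borderline case $\lambda = 1$ (where $T_k(\xi_0)$ are singular operators of type $0$ and one genuinely needs Theorem \ref{continuity for frozen operator on holder space}, including the delicate $\widetilde h\in C^\gamma$ hypothesis verified in Proposition \ref{Prop T(1)}), together with the need to produce the \emph{small} coefficient $\varepsilon$ in front of $\|Hf\|_{C^\alpha}$ — this forces one to track which terms carry a gainable factor (a positive power of the radius from the type-$\ge 1$ operators, or a factor $r^\alpha$ from the vanishing of differences of coefficients, as in \eqref{4.4}) and which must instead be swept into the $\|f\|_{L^\infty}$ term through interpolation, and then to propagate the resulting polynomial loss in $\varepsilon$ through several interpolation steps to extract the exponent $\gamma$.
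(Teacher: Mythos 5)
Your plan takes a genuinely different route from the paper, and it has a gap that I believe is fatal to the approach as written.

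The paper's proof is much more elementary than yours: it never touches the parametrix identity, Theorem \ref{main theorem}, or Theorem \ref{scambioderivoper}. It simply writes $PHf(\xi)=\int k(\xi,\eta)\,Hf(\eta)\,d\eta$ and splits the kernel with a cutoff $\varphi_\varepsilon$ adapted to the point $\xi$ at scale $\varepsilon$, so $\widetilde B_{\varepsilon/2}(\xi)\prec\varphi_\varepsilon\prec\widetilde B_\varepsilon(\xi)$. On the far part $I(\xi)=\int k(\xi,\eta)[1-\varphi_\varepsilon(\eta)]Hf(\eta)\,d\eta$ one integrates by parts, throwing $H^T$ onto $k(\xi,\cdot)[1-\varphi_\varepsilon(\cdot)]$; since the integrand now lives at distance $\gtrsim\varepsilon$ from $\xi$, all derivatives of the resulting kernel are bounded by $c\varepsilon^{-\gamma}$, and (\ref{4.7}) gives $\|I\|_{C^\alpha}\le c\varepsilon^{-\gamma}\|f\|_{L^\infty}$. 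On the near part $II(\xi)=\int k(\xi,\eta)\varphi_\varepsilon(\eta)Hf(\eta)\,d\eta$ one observes that the truncated kernel $k_\varepsilon=k\varphi_\varepsilon$, being of type $\ge1$ and supported at scale $\varepsilon$, satisfies fractional integral estimates (\ref{frac 1})--(\ref{frac 2}) with a gainable prefactor $\varepsilon^\delta$; Theorem \ref{Thm frac C^eta} then gives $\|II\|_{C^\alpha}\le c\varepsilon^\delta\|Hf\|_{C^\alpha}$. The small coefficient in front of $\|Hf\|_{C^\alpha}$ comes from the $\varepsilon$-scaled truncation of the kernel near the diagonal, \emph{not} from shrinking the domain.

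That last point is where your plan has a genuine gap, and you half-noticed it yourself. The lemma is on a fixed ball $\widetilde B(\overline\xi,R)$ with $\varepsilon$ a free small parameter; you cannot shrink $R$ to produce a small prefactor, and the type-$\ge1$ operator $P$ applied on the full ball does not come with any smallness built in. Moreover, the route you sketch to dispose of the remaining terms --- interpolating $\|\widetilde X_k f\|_{C^\alpha}$ between $\|Hf\|_{C^\alpha}$ and $\|f\|_{L^\infty}$ --- is precisely the content of Proposition \ref{Interpolation inequality for test functions}, which in the paper is \emph{proved from} the present lemma (together with the parametrix and Theorem \ref{continuity for frozen operator on holder space}). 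Assuming it here is circular. The decomposition via Theorems \ref{main theorem}/\ref{scambioderivoper} and the frozen-operator bounds of Theorem \ref{continuity for frozen operator on holder space} are never used in the paper's proof of this lemma and are not needed; the whole point of the lemma is to supply the interpolation input that those downstream arguments consume.

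What your plan correctly identifies is that the type-$\ge1$ kernel is only weakly singular and that this should be exploitable, and that the far-field contribution should be controllable by $\|f\|_{L^\infty}$. The missing idea is to realize these two facts \emph{simultaneously} through a single $\xi$-centered cutoff at scale $\varepsilon$ acting on the kernel of $P$ itself, plus an integration by parts that moves $H$ off $f$ in the far region. That replaces all the representation-formula machinery with a two-term split whose constants can be read off directly.
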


\begin{remark}
\label{Remark interp C-alfa}As will be clear from the proof,
(\ref{interp C-alfa}) still holds if $H$ is replaced by any differential
operator of weight two, like $\widetilde{X}_{i}\widetilde{X}_{j}$ or
$\widetilde{X}_{0}$.
\end{remark}

\begin{proof}
[Proof of the Lemma]This proof is adapted from \cite[Lemma 7.2]{bb4}. Let%
\[
PHf\left(  \xi\right)  =\int_{\widetilde{B}\left(  \overline{\xi},R\right)
}k(\xi,\eta)Hf\left(  \eta\right)  d\eta,
\]
where $k$ is a frozen kernel of type $\lambda\geq1$, and let $\varphi
_{\varepsilon}$ be a cutoff function such that $\widetilde{B}_{\varepsilon
/2}(\xi)\prec\varphi_{\varepsilon}\prec\widetilde{B}_{\varepsilon}(\xi)$, for
$\varepsilon\in\left(  0,1\right)  .$ We split $PH$ as follows: for $\xi
\in\widetilde{B}\left(  \overline{\xi},R\right)  $%
\begin{align*}
PHf(\xi)  &  =\int_{\widetilde{B}\left(  \overline{\xi},R\right)  ,\rho
(\xi,\eta)>\frac{_{\varepsilon}}{2}}k(\xi,\eta)[1-\varphi_{\varepsilon}%
(\eta)]Hf(\eta)d\eta\\
&  +\int_{\widetilde{B}\left(  \overline{\xi},R\right)  ,\rho(\xi,\eta
)\leq\varepsilon}k(\xi,\eta)Hf(\eta)\varphi_{\varepsilon}(\eta)d\eta\\
&  =I(\xi)+II(\xi).
\end{align*}
Then
\[
I(\xi)=\int_{\widetilde{B}\left(  \overline{\xi},R\right)  ,\rho(\xi
,\eta)>\frac{_{\varepsilon}}{2}}H^{T}\left(  k(\xi,\cdot)[1-\varphi
_{\varepsilon}(\cdot)]\right)  \left(  \eta\right)  f(\eta)d\eta.
\]

Let $h^{\varepsilon}(\xi,\eta)=H^{T}\left(  k(\xi,\cdot)[1-\varphi
_{\varepsilon}(\cdot)]\right)  \left(  \eta\right)  $. Since $k$ is a frozen
kernel of type $\lambda$, there exist $c>0,\gamma>1$, such that
\[
\left\vert h^{\varepsilon}(\xi,\eta)\right\vert +\left\vert \widetilde{X}%
_{0}h^{\varepsilon}(\xi,\eta)\right\vert +\left\vert \sum\widetilde{X}%
_{i}h^{\varepsilon}(\xi,\eta)\right\vert \leq c\varepsilon^{-\gamma}.
\]
By definition of frozen kernel, the function $\xi\longmapsto h^{\varepsilon
}(\xi,\eta)$ is compactly supported in $\widetilde{B}\left(  \overline{\xi
},R\right)  $ for any $\eta\in\widetilde{B}\left(  \overline{\xi},R\right)  ,$
hence by (\ref{4.7}) in Proposition \ref{basic properties for holder norm}, it
follows that%
\[
\left\vert h^{\varepsilon}(\xi_{1},\eta)-h^{\varepsilon}(\xi_{2}%
,\eta)\right\vert \leq c_{R}d_{\widetilde{X}}(\xi_{1},\xi_{2})\varepsilon
^{-\gamma}\leq c_{R}\rho(\xi_{1},\xi_{2})\varepsilon^{-\gamma}%
\]
for any $\xi_{1},\xi_{2}\in\widetilde{B}\left(  \overline{\xi},R\right)  ,$
and therefore%
\begin{align*}
\left\vert I(\xi_{1})-I(\xi_{2})\right\vert  &  \leq\int\left\vert
h^{\varepsilon}(\xi_{1},\eta)-h^{\varepsilon}(\xi_{2},\eta)\right\vert
\left\vert f(\eta)\right\vert d\eta\\
&  \leq c_{R}\varepsilon^{-\gamma}\rho(\xi_{1},\xi_{2})\left\vert
\widetilde{B}_{R}\right\vert \left\Vert f\right\Vert _{L^{\infty}%
(\widetilde{B}_{R})}.
\end{align*}
Also, since
\[
\left\vert I(\xi)\right\vert \leq\int_{\widetilde{B}\left(  \overline{\xi
},R\right)  ,\rho(\xi,\eta)>\frac{_{\varepsilon}}{2}}c\varepsilon^{-\gamma
}\left\vert f(\eta)\right\vert d\eta\leq c\varepsilon^{-\gamma}\left\vert
\widetilde{B}_{R}\right\vert \left\Vert f\right\Vert _{L^{\infty}\left(
\widetilde{B}\left(  \overline{\xi},R\right)  \right)  },
\]
we obtain
\[
\left\Vert I(\xi)\right\Vert _{C_{\widetilde{X}}^{\alpha}\left(  \widetilde
{B}\left(  \overline{\xi},R\right)  \right)  }\leq c\varepsilon^{-\gamma
}\left\Vert f\right\Vert _{L^{\infty}\left(  \widetilde{B}\left(
\overline{\xi},R\right)  \right)  }\text{ for any }\alpha\in(0,1).
\]

Now, let us consider $II(\xi)$, and let
\[
k_{\varepsilon}(\xi,\eta)=k(\xi,\eta)\varphi_{\varepsilon}(\eta).
\]
By the properties of frozen kernels of type 1, keeping into account the
support of $k_{\varepsilon}$ and applying again (\ref{4.7}) in Proposition
\ref{basic properties for holder norm}, we can say that for any fixed
$\delta\in(0,1)$, the kernel $k_{\varepsilon}(\xi,\eta)$\ satisfies the
following standard estimates of fractional integral kernels (see
\S \ \ref{subsec locally hom space}):%
\begin{align}
\left\vert k_{\varepsilon}(\xi,\eta)\right\vert  &  \leq c\rho(\xi,\eta
)^{1-Q}\leq c\varepsilon^{\delta}\rho(\xi,\eta)^{1-\delta-Q},\label{frac 1}\\
\left\vert k_{\varepsilon}(\xi,\eta)-k_{\varepsilon}(\xi_{1},\eta)\right\vert
&  \leq c\frac{\rho(\xi,\xi_{1})}{\rho(\xi,\eta)^{Q}}\leq c\varepsilon
^{\delta}\rho(\xi,\eta)^{-\delta-Q}\rho(\xi,\xi_{1}) \label{frac 2}%
\end{align}
for $\rho(\xi,\eta)\geq2\rho(\xi,\xi_{1})$. Therefore, by Theorem
\ref{Thm frac C^eta} and Remark \ref{remark linear constants} in
\S \ \ref{subsec locally hom space},
\[
\left\Vert II\right\Vert _{C_{\widetilde{X}}^{\alpha}\left(  \widetilde
{B}\left(  \overline{\xi},R\right)  \right)  }\leq c\varepsilon^{\delta
}\left\Vert Hf\right\Vert _{C_{\widetilde{X}}^{\alpha}\left(  \widetilde
{B}\left(  \overline{\xi},R\right)  \right)  }%
\]
for any $\alpha<1-\delta$. We conclude that for every $\alpha\in(0,1)$ there
exist $\delta,$ $\gamma,c>0$ such that%
\[
\left\Vert PHf\right\Vert _{C_{\widetilde{X}}^{\alpha}\left(  \widetilde
{B}\left(  \overline{\xi},R\right)  \right)  }\leq\varepsilon^{\delta
}\left\Vert Hf\right\Vert _{C_{\widetilde{X}}^{\alpha}\left(  \widetilde
{B}\left(  \overline{\xi},R\right)  \right)  }+\frac{c}{\varepsilon^{\gamma}%
}\left\Vert f\right\Vert _{L^{\infty}\left(  \widetilde{B}\left(
\overline{\xi},R\right)  \right)  },
\]
which implies the lemma.
\end{proof}

\bigskip

\begin{proof}
[Proof of Proposition \ref{Interpolation inequality for test functions}]By
Theorem \ref{parametrix}, we can write
\[
af=PHf(\xi)+Sf,
\]
where $P,S$ are frozen operators of type $2$ and $1$, respectively, over
$\widetilde{B}\left(  \overline{\xi},R\right)  $. More precisely, they should
be called \textquotedblleft constant kernels of type 2 and 1\textquotedblright%
, since they satisfy the definition of frozen kernels with the matrix
$\left\{  \widetilde{a}_{ij}\left(  \xi_{0}\right)  \right\}  $ replaced by
the identity matrix.

If we assume $a=1$ on $\widetilde{B}\left(  \overline{\xi},R/2\right)  $,
then, for $f\in C_{0}^{\infty}(\widetilde{B}\left(  \overline{\xi},R/2\right)
)$ we obtain
\begin{equation}
f=PHf(\xi)+Sf \label{7.1}%
\end{equation}
and therefore, by Theorem \ref{main theorem},%
\begin{equation}
\widetilde{X}_{i}f=S_{1}Hf(\xi)+Tf, \label{7.2}%
\end{equation}
where $S_{1},T$ are frozen operators of type $1$ and $0,$ respectively.
Substituting (\ref{7.1}) in (\ref{7.2}) yields%
\[
\widetilde{X}_{i}f=S_{1}Hf(\xi)+TPHf+TSf
\]
and therefore, by Theorem \ref{continuity for frozen operator on holder space}
and Lemma \ref{Lemma interp C_alfa}%
\begin{align}
\left\Vert \widetilde{X}_{i}f\right\Vert _{\alpha}  &  \leq\left\Vert
S_{1}Hf\right\Vert _{\alpha}+\left\Vert TPHf\right\Vert _{\alpha}+\left\Vert
TSf\right\Vert _{\alpha}\nonumber\\
&  \leq\left\Vert S_{1}Hf\right\Vert _{\alpha}+c\left\{  \left\Vert
PHf\right\Vert _{\alpha}+\left\Vert Sf\right\Vert _{\alpha}\right\}
\nonumber\\
&  \leq c\left\{  \varepsilon\left\Vert Hf\right\Vert _{\alpha}+\varepsilon
^{-\gamma}\left\Vert f\right\Vert _{\infty}+\left\Vert Sf\right\Vert _{\alpha
}\right\}  \label{interp epsi}%
\end{align}
where all the norms are taken over $\widetilde{B}\left(  \overline{\xi
},R/2\right)  $. We end the proof by showing that for an operator $S$ of type
$1$,
\[
\left\Vert Sf\right\Vert _{\alpha}\leq c\left\Vert f\right\Vert _{L^{\infty}%
},
\]
which by (\ref{interp epsi}) will complete the proof of the first inequality
in the proposition.

Indeed, if
\[
Sf(\xi)=\int\nolimits_{\widetilde{B}_{R}}k(\xi,\eta)f(\eta)d\eta.
\]
We have%
\begin{equation}
\left\vert Sf(\xi_{1})-Sf(\xi_{2})\right\vert \leq\left\Vert f\right\Vert
_{L^{\infty}(\widetilde{B}_{R})}\int\nolimits_{\widetilde{B}\left(
\overline{\xi},R\right)  }\left\vert k(\xi_{1},\eta)-k(\xi_{2},\eta
)\right\vert d\eta. \label{Sf}%
\end{equation}
Moreover,
\begin{align*}
\int\nolimits_{\widetilde{B}_{R}}\left\vert k(\xi_{1},\eta)-k(\xi_{2}%
,\eta)\right\vert d\eta &  =\int\nolimits_{\widetilde{B}\left(  \overline{\xi
},R\right)  ,\rho(\xi_{1},\eta)>M\rho(\xi_{1},\xi_{2})}\left\vert k(\xi
_{1},\eta)-k(\xi_{2},\eta)\right\vert d\eta\\
&  +\int\nolimits_{\widetilde{B}\left(  \overline{\xi},R\right)  ,\rho(\xi
_{1},\eta)\leq M\rho(\xi_{1},\xi_{2})}\left\vert k(\xi_{1},\eta)-k(\xi
_{2},\eta)\right\vert d\eta\\
&  \equiv I+II.
\end{align*}
By (\ref{frac 2}),%
\begin{align*}
I  &  \leq\int\nolimits_{\rho(\xi_{1},\eta)>M\rho(\xi_{1},\xi_{2})}\frac
{c}{\rho(\xi_{1},\eta)^{Q-1}}\frac{\rho(\xi_{1},\xi_{2})}{\rho(\xi_{1},\eta
)}d\eta\\
&  =\rho(\xi_{1},\xi_{2})^{\alpha}\int\nolimits_{\rho(\xi_{1},\eta)>M\rho
(\xi_{1},\xi_{2})}\frac{\rho(\xi_{1},\eta)^{1-\alpha}}{\rho(\xi_{1},\eta)^{Q}%
}\frac{\rho(\xi_{1},\xi_{2})^{1-\alpha}}{\rho(\xi_{1},\eta)^{1-\alpha}}d\eta\\
&  \leq c\rho(\xi_{1},\xi_{2})^{\alpha}\int\nolimits_{\widetilde{B}_{R}}%
\frac{\rho(\xi_{1},\eta)^{1-\alpha}}{\rho(\xi_{1},\eta)^{Q}}d\eta\\
&  \leq c\rho(\xi_{1},\xi_{2})^{\alpha}R^{1-\alpha},
\end{align*}
where in the last inequality we have used the following standard computation
(which will be useful also other times):%
\begin{equation}
\int\nolimits_{\widetilde{B}\left(  \overline{\xi},R\right)  ,\rho(\xi
_{1},\eta)<r}\frac{d\eta}{\rho(\xi_{1},\eta)^{Q-\beta}}\leq cr^{\beta}\text{
for any }\xi_{1}\in\widetilde{B}\left(  \overline{\xi},R\right)
\label{onions}%
\end{equation}

As to $II$, by (\ref{frac 1}),%
\[
II\leq\int\nolimits_{\rho(\xi_{1},\eta)\leq M\rho(\xi_{1},\xi_{2})}\left\vert
k(\xi_{1},\eta)\right\vert d\eta+\int\nolimits_{\rho(\xi_{1},\eta)\leq
M\rho(\xi_{1},\xi_{2})}\left\vert k(\xi_{2},\eta)\right\vert d\eta
\]
since there exists $M_{1}>0$ such that if $\rho(\xi_{1},\eta)\leq M\rho
(\xi_{1},\xi_{2})$ then $\rho(\xi_{2},\eta)\leq M_{1}\rho(\xi_{1},\xi_{2})$,
\[
\leq c\left\{  \int\nolimits_{\rho(\xi_{1},\eta)\leq M\rho(\xi_{1},\xi_{2}%
)}\frac{1}{\rho(\xi_{1},\eta)^{Q-1}}d\eta+\int\nolimits_{\rho(\xi_{2}%
,\eta)\leq M_{1}\rho(\xi_{1},\xi_{2})}\frac{1}{\rho(\xi_{2},\eta)^{Q-1}}%
d\eta\right\}
\]
again by (\ref{onions})%
\[
\leq c\rho\left(  \xi_{1},\xi_{2}\right)  \leq c\rho\left(  \xi_{1},\xi
_{2}\right)  ^{\alpha}R^{1-\alpha}.
\]
Hence for every $\alpha\in(0,1),$%
\[
\int\nolimits_{\widetilde{B}_{R}}\left\vert k(\xi_{1},\eta)-k(\xi_{2}%
,\eta)\right\vert d\eta\leq c_{\alpha}\rho(\xi_{1},\xi_{2})^{\alpha
}R^{1-\alpha}%
\]
and, by (\ref{Sf}),%
\[
\left\vert Sf\right\vert _{\alpha}\leq c\left\Vert f\right\Vert _{L^{\infty}%
}.
\]
Moreover,%
\begin{align*}
\left\vert Sf(\xi)\right\vert  &  \leq\int\nolimits_{\widetilde{B}_{R}%
}\left\vert k(\xi,\eta)f(\eta)\right\vert d\eta\\
&  \leq\left\Vert f\right\Vert _{L^{\infty}}\int\nolimits_{\rho(\xi,\eta)\leq
cR}\frac{c}{\rho(\xi,\eta)^{Q-1}}d\eta\leq cR\left\Vert f\right\Vert
_{L^{\infty}},
\end{align*}
hence
\[
\left\Vert Sf\right\Vert _{\alpha}\leq c\left\Vert f\right\Vert _{L^{\infty}%
}.
\]

This completes the proof of (\ref{interpolation one}). A similar argument
gives (\ref{deferential operator of degree one}).
\end{proof}

\begin{theorem}
[Interpolation inequality]\label{interpolation inequality} There exist
positive constants $c,\gamma$ and $r_{1}<R$ such that for any $u\in
C_{\widetilde{X}}^{2,\alpha}(\widetilde{B}\left(  \overline{\xi},r_{1}\right)
)$, $0<\rho<r_{1}$, $0<\delta<1/3,$%
\[
\left\Vert \widetilde{D}u\right\Vert _{C_{\widetilde{X}}^{\alpha}%
(\widetilde{B}\left(  \overline{\xi},\rho\right)  )}\leq\delta\sum_{i=1}%
^{q}\left\Vert \widetilde{D}^{2}u\right\Vert _{C_{\widetilde{X}}^{\alpha
}\left(  \widetilde{B}\left(  \overline{\xi},r_{1}\right)  \right)  }+\frac
{c}{\delta^{\gamma}\left(  r_{1}-\rho\right)  ^{2\gamma}}\left\Vert
u\right\Vert _{L^{\infty}\left(  \widetilde{B}\left(  \overline{\xi}%
,r_{1}\right)  \right)  }%
\]
where%
\[
\left\Vert \widetilde{D}u\right\Vert \equiv\sum_{i=1}^{q}\left\Vert
\widetilde{X}_{i}u\right\Vert \text{ and }\left\Vert \widetilde{D}%
^{2}u\right\Vert \equiv\sum_{i,j=1}^{q}\left\Vert \widetilde{X}_{i}%
\widetilde{X}_{i}u\right\Vert +\left\Vert \widetilde{X}_{0}u\right\Vert .
\]
The constants $c,r_{1},\gamma$ depend on $\alpha,\left\{  \widetilde{X}%
_{i}\right\}  ;\gamma$ is as in Proposition
\ref{Interpolation inequality for test functions}.
\end{theorem}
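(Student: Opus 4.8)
The plan is to deduce the interpolation inequality on arbitrary balls $\widetilde{B}\left(\overline{\xi},\rho\right)$ from the interpolation inequality for compactly supported test functions, Proposition \ref{Interpolation inequality for test functions}, by the usual device of multiplying $u$ by a cutoff function and then using the cutoff-function estimates of Lemma \ref{Lemma schauder cutoff}. First I would fix $0<\rho<r_{1}$ with $r_{1}$ so small (to be chosen below, and in any case $\leq R/2$) that all the previous results apply, and pick a cutoff function $\varphi$ with $\widetilde{B}_{\rho}\left(\overline{\xi}\right)\prec\varphi\prec\widetilde{B}_{r_{1}}\left(\overline{\xi}\right)$ as in Lemma \ref{Lemma schauder cutoff}, so that $\operatorname{sprt}\varphi\subseteq\widetilde{B}\left(\overline{\xi},r_{1}\right)\subseteq\widetilde{B}\left(\overline{\xi},R/2\right)$ and $\varphi u\in C_{0}^{\infty}$-like (more precisely in $C_{\widetilde{X},0}^{2,\alpha}$). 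Applying (\ref{interpolation one}) to $f=\varphi u$ with the operator $H=\sum_{i=1}^{q}\widetilde{X}_{i}^{2}+\widetilde{X}_{0}$ gives, for any $\varepsilon\in(0,1)$ and $l=1,\dots,q$,
\[
\left\Vert \widetilde{X}_{l}(\varphi u)\right\Vert _{C_{\widetilde{X}}^{\alpha}\left(\widetilde{B}\left(\overline{\xi},R/2\right)\right)}\leq\varepsilon\left\Vert H(\varphi u)\right\Vert _{C_{\widetilde{X}}^{\alpha}\left(\widetilde{B}\left(\overline{\xi},R/2\right)\right)}+\frac{c}{\varepsilon^{\gamma}}\left\Vert u\right\Vert _{L^{\infty}\left(\widetilde{B}\left(\overline{\xi},r_{1}\right)\right)},
\]
where on the right I used $\left\Vert \varphi u\right\Vert _{L^{\infty}}\leq\left\Vert u\right\Vert _{L^{\infty}(\widetilde{B}(\overline{\xi},r_{1}))}$. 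Since $\varphi\equiv1$ on $\widetilde{B}\left(\overline{\xi},\rho\right)$, the left-hand side controls $\left\Vert \widetilde{X}_{l}u\right\Vert _{C_{\widetilde{X}}^{\alpha}(\widetilde{B}(\overline{\xi},\rho))}$ (one must check this passage to a subball for the $C^{\alpha}$ seminorm, which is elementary since the two functions agree there).

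The core of the argument is then to expand $H(\varphi u)$ by the Leibniz rule and show that every term in which at least one derivative falls on $\varphi$ can be absorbed into the $L^{\infty}$ norm of $u$, at the cost of a power of $(r_{1}-\rho)^{-1}$. Writing schematically
\[
H(\varphi u)=\varphi\,Hu+2\sum_{i=1}^{q}\left(\widetilde{X}_{i}\varphi\right)\widetilde{X}_{i}u+\left(\sum_{i=1}^{q}\widetilde{X}_{i}^{2}\varphi+\widetilde{X}_{0}\varphi\right)u,
\]
I would estimate: the $\left(\widetilde{X}_{i}^{2}\varphi+\widetilde{X}_{0}\varphi\right)u$ term by (\ref{cutoff C-alfa}) applied to $f=u$, giving a bound $c(r_{1}-\rho)^{-3}\left\Vert u\right\Vert _{C_{\widetilde{X}}^{\alpha}}$, which after applying (\ref{4.8})-type arguments (or rather an $L^{\infty}$ interpolation) is reduced to $L^{\infty}$ of $u$; the first-order term $\left(\widetilde{X}_{i}\varphi\right)\widetilde{X}_{i}u$ by (\ref{cutoff C-alfa}) combined with (\ref{4.3}), producing $c(r_{1}-\rho)^{-2}\left\Vert \widetilde{X}_{i}u\right\Vert _{C_{\widetilde{X}}^{\alpha}}$. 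This last term is itself of the type we are trying to bound, so it has to be handled by a bootstrap: I would iterate the estimate on a sequence of nested balls, or equivalently invoke the standard interpolation lemma trick (a covering/iteration argument on radii $\rho_{k}=\rho+(1-2^{-k})(r_{1}-\rho)$), so that the first-order terms on the right are absorbed into the left-hand side after finitely many steps with geometrically growing constants, whose sum is controlled by $c\,\delta^{-\gamma}(r_{1}-\rho)^{-2\gamma}$ after relabeling $\varepsilon$ in terms of $\delta$. Finally, the clean term $\varphi\,Hu$ satisfies $\left\Vert \varphi Hu\right\Vert _{C_{\widetilde{X}}^{\alpha}}\leq c\left\Vert Hu\right\Vert _{C_{\widetilde{X}}^{\alpha}(\widetilde{B}(\overline{\xi},r_{1}))}\leq c\sum_{i,j}\left\Vert \widetilde{X}_{i}\widetilde{X}_{j}u\right\Vert +c\left\Vert \widetilde{X}_{0}u\right\Vert =c\left\Vert \widetilde{D}^{2}u\right\Vert$, which produces the leading term $\delta\sum_{i}\left\Vert \widetilde{D}^{2}u\right\Vert _{C_{\widetilde{X}}^{\alpha}(\widetilde{B}(\overline{\xi},r_{1}))}$ after choosing $\varepsilon$ proportional to $\delta$; here one uses the hypothesis $\delta<1/3$ precisely to be able to absorb the three families of $\widetilde{X}_{0}$ and $\widetilde{X}_{i}\widetilde{X}_{j}$ contributions that reappear.

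The main obstacle I anticipate is bookkeeping the power of $(r_{1}-\rho)$ correctly through the iteration: each application of the cutoff estimates (\ref{cutoff sup})--(\ref{cutoff C-alfa}) costs powers of $(r_{1}-\rho)^{-1}$, and the bootstrap to absorb the first-order terms multiplies these, so one has to be careful that the final exponent $2\gamma$ (with the same $\gamma$ as in Proposition \ref{Interpolation inequality for test functions}) is not exceeded — this is exactly why the statement allows a generous exponent $2\gamma$ rather than a sharp one, and why $r_{1}$ is required to be small (so that the $r^{\alpha}$-type small factors from the $C^{\alpha}$ multiplicativity estimate (\ref{4.4}) help the absorption). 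A secondary, purely technical point is justifying that $\varphi u$ lies in the space $C_{\widetilde{X},0}^{2,\alpha}$ to which Proposition \ref{Interpolation inequality for test functions} (stated for $C_{0}^{\infty}$, but extendable by density/approximation, as in Proposition \ref{Prop Sobolev vanishing} for the Sobolev case) applies; this is routine and I would dispatch it with a one-line remark. Once these are in place, relabeling $\varepsilon\rightsquigarrow\delta$ and collecting constants yields the asserted inequality, with $c,r_{1},\gamma$ depending only on $\alpha$ and $\left\{\widetilde{X}_{i}\right\}$ as claimed.
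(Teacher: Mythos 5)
Your proposal is correct and follows essentially the same route as the paper, which simply refers to \cite[Proposition 7.4]{bb4} and lists exactly the tools you use: Lemma \ref{Lemma schauder cutoff}, Proposition \ref{Interpolation inequality for test functions} applied to $\varphi u$, the Leibniz expansion of $H(\varphi u)$, and a nested-radii absorption finished off by (\ref{4.8}). The only slight imprecision is the phrase ``after finitely many steps'': the Giaquinta-type iteration on $\rho_{k}=\rho+(1-2^{-k})(r_{1}-\rho)$ is a convergent infinite geometric series in $k$, with $\theta^{k}$ beating the geometric blow-up of $(\rho_{k+1}-\rho_{k})^{-2\gamma}$; this is precisely what requires $\delta$ small and produces the exponent $(r_{1}-\rho)^{-2\gamma}$ (the paper's $L^{p}$ analogue, Theorem \ref{iiith:interp:semino}, packages the same mechanism inside the supremum quantities $\Phi_{k}$).
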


\begin{proof}
The proof can be carried out exactly as in \cite[Proposition 7.4]{bb4},
exploiting the properties of cutoff functions (Lemma
\ref{Lemma schauder cutoff}), the interpolation inequality for test functions
(Proposition \ref{Interpolation inequality for test functions}) and
(\ref{4.8}) in Proposition \ref{basic properties for holder norm}.
\end{proof}

\bigskip

We are now in position for the main goal of this subsection:

\begin{proof}
[Proof of Theorem \ref{schaulder estimate for usual function in ball}]This
proof can now be carried out exactly like in \cite[Theorem 5.3]{bb4},
exploiting: Schauder estimates for functions with small support (Theorem
\ref{schaulder estimate for compactly function in ball}), the properties of
H\"{o}lder continuous functions contained in (\ref{4.8}), (\ref{4.3}),
(\ref{4.11}), the properties of cutoff functions (Lemma
\ref{Lemma schauder cutoff}) and the interpolation inequalities contained in
Theorem \ref{interpolation inequality} and
(\ref{deferential operator of degree one}).
\end{proof}

\subsection{Schauder estimates in the original
variables\label{subsec schauder original}}

Let's now prove Theorem \ref{schauder estimate}. We finally come back to our
original context, which we are going to recall. We have a bounded domain
$\Omega$ where our vector fields and coefficients are defined, and a fixed
subdomain $\Omega^{\prime}\Subset\Omega.$ Fix $\overline{x}\in\Omega^{\prime}$
and $R$ such that in $B\left(  \overline{x},R\right)  \subset\Omega$ all the
construction of the previous two subsections (lifting to $\widetilde{B}\left(
\overline{\xi},R\right)  $ and so on) can be performed. Let $r_{0}$ be as in
Theorem \ref{schaulder estimate for usual function in ball}. To begin with, we
want to prove Schauder estimates for functions $u\in C_{X}^{2,\alpha}\left(
B\left(  \overline{x},r_{0}\right)  \right)  .$ By Theorem
\ref{original holder norm and lifted holder norm} we know that the function
$\widetilde{u}\left(  x,h\right)  =u\left(  x\right)  $ belongs to
$C_{\widetilde{X}}^{2,\alpha}\left(  B\left(  \overline{\xi},r_{0}\right)
\right)  $, so we can apply to $\widetilde{u}$ Schauder estimates contained in
Theorem \ref{schaulder estimate for usual function in ball}. Combining this
fact with the two estimates in Theorem
\ref{original holder norm and lifted holder norm} and choosing $t,s$ such that%
\[
r_{0}>s>t>0\text{ and }s-t=r_{0}-s,
\]
we get, for some exponent $\omega>2$%
\begin{align}
\left\Vert u\right\Vert _{C_{X}^{2,\alpha}(B\left(  \overline{x},s\right)  )}
&  \leq\frac{c}{\left(  s-t\right)  ^{2}}\left\Vert \widetilde{u}\right\Vert
_{C_{\widetilde{X}}^{2,\alpha}\left(  \widetilde{B}\left(  \overline{\xi
},t\right)  \right)  }\label{hold small ball}\\
&  \leq\frac{c}{\left(  r_{0}-t\right)  ^{\omega}}\left(  \left\Vert
\widetilde{\mathcal{L}}\widetilde{u}\right\Vert _{C_{\widetilde{X}}^{\alpha
}\left(  \widetilde{B}\left(  \overline{\xi},r_{0}\right)  \right)
}+\left\Vert \widetilde{u}\right\Vert _{L^{\infty}\left(  \widetilde{B}\left(
\overline{\xi},r_{0}\right)  \right)  }\right) \nonumber\\
&  \leq\frac{c}{\left(  r_{0}-s\right)  ^{\omega}}\left(  \left\Vert
\mathcal{L}u\right\Vert _{C_{X}^{\alpha}\left(  B\left(  \overline{x}%
,r_{0}\right)  \right)  }+\left\Vert u\right\Vert _{L^{\infty}(B\left(
\overline{x},r_{0}\right)  )}\right) \nonumber
\end{align}
since $\widetilde{\mathcal{L}}\widetilde{u}=\widetilde{\left(  \mathcal{L}%
u\right)  }.$

Next, let us choose a family of balls $B\left(  x_{i},r_{0}\right)  $ in
$\Omega$ such that%
\[
\Omega^{\prime}\subset%
{\displaystyle\bigcup\limits_{i=1}^{k}}
B\left(  x_{i},r_{0}/2\right)  \subset%
{\displaystyle\bigcup\limits_{i=1}^{k}}
B\left(  x_{i},r_{0}\right)  \subset\Omega.
\]
Then by Proposition \ref{basic properties for holder norm} (v) and
(\ref{hold small ball}), with $s=r_{0}/2,$
\begin{align*}
\left\Vert u\right\Vert _{C_{X}^{2,\alpha}(\Omega^{\prime})}  &
\leq\left\Vert u\right\Vert _{C_{X}^{2,\alpha}(\cup B\left(  x_{i}%
,r_{0}/2\right)  )}\leq c\sum_{i=1}^{k}\left\Vert u\right\Vert _{C_{X}%
^{2,\alpha}(B\left(  x_{i},r_{0}\right)  )}\\
&  \leq c\sum_{i=1}^{k}\left\{  \left\Vert \mathcal{L}u\right\Vert
_{C_{X}^{\alpha}(B\left(  x_{i},r_{0}\right)  )}+\left\Vert u\right\Vert
_{L^{\infty}(B\left(  x_{i},r_{0}\right)  )}\right\} \\
&  \leq c\left\{  \left\Vert \mathcal{L}u\right\Vert _{C_{X}^{\alpha}(\Omega
)}+\left\Vert u\right\Vert _{L^{\infty}(\Omega)}\right\}
\end{align*}
with $c$ also depending on $r_{0}$. Finally, let us note that the constant $c$
depends on the coefficients $a_{ij}$ through the norms
\[
\left\Vert \widetilde{a}_{ij}\right\Vert _{C_{\widetilde{X}}^{\alpha}\left(
\widetilde{B}\left(  \overline{\xi},R_{0}\right)  \right)  },
\]
which in turn are bounded by the norms%
\[
\left\Vert a_{ij}\right\Vert _{C_{X}^{\alpha}\left(  B\left(  \overline
{x},R_{0}\right)  \right)  }%
\]
(by Proposition \ref{original holder norm and lifted holder norm}), and hence
by $\left\Vert a_{ij}\right\Vert _{C_{X}^{\alpha}\left(  \Omega\right)  }$ (or
more precisely, by $\left\Vert a_{ij}\right\Vert _{C_{X}^{\alpha}\left(
\Omega^{\prime\prime}\right)  }$ for some $\Omega^{\prime\prime}$ such that
$\Omega^{\prime}\Subset\Omega^{\prime\prime}\Subset\Omega$). This completes
the proof of Theorem \ref{schauder estimate}.

\section{$L^{p}$ estimates\label{section L^p estimates}}

The logical structure of this section, as well as the general setting, is very
similar to that of the previous one. Here, following as close as possible the
strategy of \cite{bb2}, the proof will be still divided into three steps:
$L^{p}$ a-priori estimates in the space of lifted variables, for test
functions supported in a ball $\widetilde{B}\left(  \overline{\xi},r\right)  $
with $r$ small enough; then for any function in $S_{\widetilde{X}}%
^{2,p}\left(  \widetilde{B}\left(  \overline{\xi},r\right)  \right)  $ (not
necessarily vanishing at the boundary); then for any function in $S_{X}%
^{2,p}\left(  B\left(  \overline{x},r\right)  \right)  $, that is in the
original space.

Again, it is not restrictive to assume $a_{0}=1.$

The basic difference with the setting of Schauder estimates consists in the
fact that here we start with representation formulas where the
\textquotedblleft frozen\textquotedblright\ point has been finally unfrozen;
therefore now singular integrals with \emph{variable }kernels are involved,
together with their commutators with $VMO$ functions. This makes the singular
integral part of the theory more involved.

\subsection{$L^{p}$ estimates for functions with small
support\label{subsec L^p small support}}

\begin{theorem}
\label{theorem:sprt function regular}Let $\widetilde{B}\left(  \overline{\xi
},R\right)  $ be as in the previous section, and $p\in(1,\infty)$. There
exists $R_{0}<R$ such that for every $u\in C_{0}^{\infty}\left(  \widetilde
{B}\left(  \overline{\xi},R_{0}\right)  \right)  $,%
\begin{equation}
\left\Vert u\right\Vert _{S_{\widetilde{X}}^{2,p}\left(  \widetilde{B}\left(
\overline{\xi},R_{0}\right)  \right)  }\leq c\left\{  \left\Vert
\widetilde{\mathcal{L}}u\right\Vert _{L^{p}\left(  \widetilde{B}\left(
\overline{\xi},R_{0}\right)  \right)  }+\left\Vert u\right\Vert _{L^{p}\left(
\widetilde{B}\left(  \overline{\xi},R_{0}\right)  \right)  }\right\}
\label{sprt function regular}%
\end{equation}
for some constant $c$ depending on $\left\{  \widetilde{X}_{i}\right\}
_{i=0}^{q},p,\mu$\thinspace$,R;$ the number $R_{0}$ also depends on the local
$VMO$ moduli $\eta_{a_{ij},\Omega^{\prime},\Omega}^{\ast}.$
\end{theorem}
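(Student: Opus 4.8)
The strategy is to reduce the $L^{p}$ estimate, via the representation formula of Theorem \ref{lemma representation for derivatives}, to the $L^{p}$ continuity of variable operators of type $0$ and the smallness of the $L^{p}$ norm of their commutators with $VMO$ coefficients on small balls. First I would fix $a\in C_{0}^{\infty}(\widetilde{B}(\overline{\xi},R))$ with $a\equiv 1$ on $\widetilde{B}(\overline{\xi},r)$ for a radius $r$ to be chosen small, and apply (\ref{representation of derive}) to $u\in C_{0}^{\infty}(\widetilde{B}(\overline{\xi},r))$ (so $au=u$). Taking $L^{p}(\widetilde{B}(\overline{\xi},r))$ norms of both sides and using Theorem \ref{iith:comm:optypezer}(i) to bound the $T_{\ldots}$ and $S^{ij}_{lm}$ terms, together with the trivial bound $\|\widetilde a_{ij}\|_{L^{\infty}}\le \mu^{-1}$ for the multiplicative factors, I obtain for each $l,m=1,\dots,q$
\[
\|\widetilde{X}_{m}\widetilde{X}_{l}u\|_{L^{p}(\widetilde{B}_{r})}
\le c\Big\{\|\widetilde{\mathcal{L}}u\|_{L^{p}(\widetilde{B}_{r})}
+\sum_{i,j=1}^{q}\big\|[\widetilde a_{ij},T_{lm}]\widetilde{X}_{i}\widetilde{X}_{j}u\big\|_{L^{p}(\widetilde{B}_{r})}
+\sum_{i,j=1}^{q}\big\|[\widetilde a_{ij},S^{ij}_{lm}]\widetilde{X}_{i}\widetilde{X}_{j}u\big\|_{L^{p}(\widetilde{B}_{r})}
+\sum_{k=1}^{q}\|\widetilde{X}_{k}u\|_{L^{p}(\widetilde{B}_{r})}
+\|u\|_{L^{p}(\widetilde{B}_{r})}\Big\},
\]
where the constant is independent of $r$.

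The key point is then to absorb the commutator terms. Since $\widetilde a_{ij}=\widetilde{a}_{ij}(x,h)=a_{ij}(x)$ with $a_{ij}\in VMO_{X,loc}(\Omega)$, Proposition \ref{Prop lifted VMO} gives $\widetilde a_{ij}\in VMO_{loc}(\widetilde\Omega_{k},\widetilde\Omega_{k+1})$ with modulus controlled by $\eta^{\ast}_{a_{ij},\Omega',\Omega}$. Hence Theorem \ref{iith:comm:optypezer}(ii) (applied to the type-$0$ variable operators $T_{lm}$, and to the commutators with the type-$1$ operators $S^{ij}_{lm}$, which are even easier) yields: for every $\varepsilon>0$ there is $r=r(\varepsilon,\{\widetilde X_i\},p,\mu,R,\eta^{\ast}_{a_{ij}})$ such that each commutator term is $\le \varepsilon\|\widetilde X_i\widetilde X_j u\|_{L^{p}(\widetilde{B}_{r})}$. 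Choosing $\varepsilon$ small (depending only on $q$ and the constant $c$ above), all the second-order terms on the right can be absorbed into the left-hand side, giving
\[
\sum_{l,m=1}^{q}\|\widetilde{X}_{m}\widetilde{X}_{l}u\|_{L^{p}(\widetilde{B}_{r})}
\le c\Big\{\|\widetilde{\mathcal{L}}u\|_{L^{p}(\widetilde{B}_{r})}
+\sum_{k=1}^{q}\|\widetilde{X}_{k}u\|_{L^{p}(\widetilde{B}_{r})}
+\|u\|_{L^{p}(\widetilde{B}_{r})}\Big\}.
\]
From the equation $\widetilde{X}_{0}u=\widetilde{\mathcal{L}}u-\sum_{i,j}\widetilde a_{ij}\widetilde X_i\widetilde X_j u$ one then controls $\|\widetilde X_0 u\|_{L^p}$ by the same right-hand side, so that $\|u\|_{S^{2,p}_{\widetilde X}(\widetilde B_r)}$ is bounded by $c\{\|\widetilde{\mathcal L}u\|_{L^p}+\sum_k\|\widetilde X_k u\|_{L^p}+\|u\|_{L^p}\}$.

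It remains to remove the first-order term $\sum_{k}\|\widetilde{X}_{k}u\|_{L^{p}(\widetilde{B}_{r})}$. For this I would take one derivative only in the parametrix identity (\ref{representation2}) — as in the proof of Theorem \ref{scambioderivoper} and exactly as done in \cite{bb2} — to write $\widetilde{X}_{l}u$ as a variable operator of type $1$ applied to $\widetilde{\mathcal{L}}u$ plus commutator terms with second derivatives plus a type-$0$ operator applied to $u$; the $L^p$-continuity of a type-$1$ operator (via Theorem \ref{frac lp-lq}, expanding in spherical harmonics as in \S\ref{subsec variable L^p}) and the smallness of the relevant commutators on small balls let one bound $\sum_k\|\widetilde X_k u\|_{L^p(\widetilde B_r)}$ by $c\|\widetilde{\mathcal L}u\|_{L^p}+\varepsilon\sum_{l,m}\|\widetilde X_m\widetilde X_l u\|_{L^p}+c\|u\|_{L^p}$, and again absorb for $r$ small. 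Setting $R_{0}=r$ and using Proposition \ref{Prop Sobolev vanishing} to pass from $C_{0}^{\infty}$ to the stated class (only $C_0^\infty$ functions are needed here) completes the argument. The main obstacle is the bookkeeping in the commutator step: one must check that the abstract $VMO$ commutator theorems of \S\ref{subsec locally hom space} apply uniformly to \emph{every} type-$0$ operator appearing in the (rather long) representation formula — in particular that the series of spherical harmonics converges with constants controlled only by $\mu$ and the vector fields via the uniform bounds of Theorem \ref{ith:stima:unif:ipo} — and that the radius $r$ can be chosen once and for all, depending on the $a_{ij}$ only through their $VMO$ moduli. Everything else follows the template of \cite{bb2}.
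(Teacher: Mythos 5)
Your proposal is correct and follows essentially the same path as the paper: use the representation formula of Theorem \ref{lemma representation for derivatives}, take $L^p$ norms and apply Theorem \ref{iith:comm:optypezer} (parts (i) and (ii)), absorb the commutator terms by shrinking the radius thanks to the $VMO$ modulus, control $\widetilde{X}_0 u$ from the equation, and dispose of the first-order terms by differentiating the parametrix identity (\ref{representation2}) once and repeating the absorption. The only difference is cosmetic: you perform the absorption in two rounds (second order first, then first order), while the paper derives the three inequalities (\ref{norm of derivative of 2}), (\ref{norm of derivative of 1}), (\ref{norm X_0}) and combines them at the end; the logic is the same.
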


\begin{proof}
This theorem relies on the representation formula proved in Theorem
\ref{lemma representation for derivatives} and on the results about singular
integrals and commutators contained in Theorem \ref{iith:comm:optypezer}.

Let $u\in C_{0}^{\infty}\left(  \widetilde{B}\left(  \overline{\xi},r\right)
\right)  $ with $r<R$. Let us write the representation formula of Theorem
\ref{lemma representation for derivatives} choosing the cutoff function $a$
such that $a=1$ in $\widetilde{B}\left(  \overline{\xi},r\right)  $. Taking
$L^{p}$ norms of both sides of the formula we get (see also Remark
\ref{Remark easy representation}), for $p\in(1,\infty),$ any $m,l=1,2,...,q,$%
\begin{align*}
\left\Vert \widetilde{X}_{m}\widetilde{X}_{l}u\right\Vert _{L^{p}\left(
\widetilde{B}\left(  \overline{\xi},r\right)  \right)  }  &  \leq\left\Vert
T_{lm}\widetilde{\mathcal{L}}u\right\Vert _{L^{p}\left(  \widetilde{B}\left(
\overline{\xi},r\right)  \right)  }+\sum_{i,j=1}^{q}\left\Vert \left[
\widetilde{a}_{ij},T_{lm}\right]  \,\widetilde{X}_{i}\widetilde{X}%
_{j}u\right\Vert _{L^{p}\left(  \widetilde{B}\left(  \overline{\xi},r\right)
\right)  }+\\
&  +\sum_{k=1}^{q}\left\Vert T_{lm,k}\widetilde{X}_{k}u\right\Vert
_{L^{p}\left(  \widetilde{B}\left(  \overline{\xi},r\right)  \right)
}+\left\Vert T_{lm}^{0}u\right\Vert _{L^{p}\left(  \widetilde{B}\left(
\overline{\xi},r\right)  \right)  }%
\end{align*}
where all the $T_{lm},T_{lm,k},T_{lm}^{0}\ $are \emph{variable }operators of
type $0$ over $\widetilde{B}\left(  \overline{\xi},2r\right)  .$

We now apply Theorem \ref{iith:comm:optypezer}: there exists $c$ (depending on
$R$) and for every fixed $\varepsilon>0$ there exists $r<R$ such that for
every $u\in C_{0}^{\infty}\left(  \widetilde{B}\left(  \overline{\xi
},r\right)  \right)  $,%
\begin{align}
\left\Vert \widetilde{X}_{m}\widetilde{X}_{l}u\right\Vert _{L^{p}\left(
\widetilde{B}\left(  \overline{\xi},r\right)  \right)  }  &  \leq c\left\{
\left\Vert \widetilde{\mathcal{L}}u\right\Vert _{L^{p}\left(  \widetilde
{B}\left(  \overline{\xi},r\right)  \right)  }+\varepsilon\sum_{i,j=1}%
^{q}\left\Vert \widetilde{X}_{i}\widetilde{X}_{j}u\right\Vert _{L^{p}\left(
\widetilde{B}\left(  \overline{\xi},r\right)  \right)  }\right.
\label{norm of derivative of 2}\\
&  +\left.  \left\Vert u\right\Vert _{L^{p}\left(  \widetilde{B}\left(
\overline{\xi},r\right)  \right)  }+\sum\limits_{k}\left\Vert \widetilde
{X}_{k}u\right\Vert _{L^{p}\left(  \widetilde{B}\left(  \overline{\xi
},r\right)  \right)  }\right\}  .\nonumber
\end{align}

Now, let us come back to (\ref{representation2}) and take only one derivative
$\widetilde{X}_{l}$ (for $l=1,...,q$) of both sides; we find:%
\begin{align}
\widetilde{X}_{l}u  &  =\widetilde{X}_{l}P\left(  \xi_{0}\right)
\widetilde{\mathcal{L}}_{0}u+\widetilde{X}_{l}S\left(  \xi_{0}\right)
u\label{representation for one derivative}\\
&  =S_{l}\left(  \xi_{0}\right)  \widetilde{\mathcal{L}}_{0}+T\left(  \xi
_{0}\right)  u\nonumber
\end{align}
where $S_{l}\left(  \xi_{0}\right)  ,T\left(  \xi_{0}\right)  $ are frozen
operators of type $1$ and $0$, respectively. By (\ref{10.1}) we have
\[
\widetilde{X}_{l}u=S_{1,l}\widetilde{\mathcal{L}}u+\sum_{i,j=1}^{q}\left[
S_{1,l},\widetilde{a}_{ij}\right]  \widetilde{X}_{i}\widetilde{X}_{j}u+Tu,
\]
where $S_{1,l},$ $T$ are variable operators of type $1$ and $0$, respectively
(in particular, both can be seen as operators of type $0$). By Theorem
\ref{iith:comm:optypezer},%
\begin{equation}
\left\Vert \widetilde{X}_{l}u\right\Vert _{L^{p}\left(  \widetilde{B}\left(
\overline{\xi},r\right)  \right)  }\leq c\left\Vert \widetilde{\mathcal{L}%
}u\right\Vert _{L^{p}\left(  \widetilde{B}\left(  \overline{\xi},r\right)
\right)  }+\varepsilon\sum_{i,j=1}^{q}\left\Vert \widetilde{X}_{i}%
\widetilde{X}_{j}u\right\Vert _{L^{p}\left(  \widetilde{B}\left(
\overline{\xi},r\right)  \right)  }+c\left\Vert u\right\Vert _{L^{p}\left(
\widetilde{B}\left(  \overline{\xi},r\right)  \right)  }.
\label{norm of derivative of 1}%
\end{equation}
Finally, from the equation we can bound%
\begin{equation}
\left\Vert \widetilde{X}_{0}u\right\Vert _{L^{p}\left(  \widetilde{B}\left(
\overline{\xi},r\right)  \right)  }\leq c\sum_{i,j=1}^{q}\left\Vert
\widetilde{X}_{i}\widetilde{X}_{j}u\right\Vert _{L^{p}\left(  \widetilde
{B}\left(  \overline{\xi},r\right)  \right)  }+\left\Vert \widetilde
{\mathcal{L}}u\right\Vert _{L^{p}\left(  \widetilde{B}\left(  \overline{\xi
},r\right)  \right)  }. \label{norm X_0}%
\end{equation}
Combining (\ref{norm of derivative of 2}), (\ref{norm of derivative of 1}) and
(\ref{norm X_0}) we have, for $r$ small enough,%
\begin{equation}
\left\Vert u\right\Vert _{S_{\widetilde{X}}^{2,p}\left(  \widetilde{B}\left(
\overline{\xi},r\right)  \right)  }\leq c\left(  \left\Vert \widetilde
{\mathcal{L}}u\right\Vert _{L^{p}\left(  \widetilde{B}\left(  \overline{\xi
},r\right)  \right)  }+\left\Vert u\right\Vert _{L^{p}\left(  \widetilde
{B}\left(  \overline{\xi},r\right)  \right)  }\right)
\label{2 derivative of 1}%
\end{equation}
and the theorem is proved.
\end{proof}

\subsection{$L^{p}$ estimates for nonvanishing functions}

The main result in this subsection is the following:

\begin{theorem}
\label{lp estimates lifted operator}Let $\widetilde{B}\left(  \overline{\xi
},R\right)  $ be as before. There exists $r_{0}<R$ and for any $r\leq r_{0}$
there exists $c>0$ such that for any $u\in S_{\widetilde{X}}^{2,p}\left(
\widetilde{B}\left(  \overline{\xi},r\right)  \right)  $ we have%
\[
\left\Vert u\right\Vert _{S_{\widetilde{X}}^{2,p}\left(  \widetilde{B}\left(
\overline{\xi},r/2\right)  \right)  }\leq c\left\{  \left\Vert \widetilde
{\mathcal{L}}u\right\Vert _{L^{p}\left(  \widetilde{B}\left(  \overline{\xi
},r\right)  \right)  }+\left\Vert u\right\Vert _{L^{p}\left(  \widetilde
{B}\left(  \overline{\xi},r\right)  \right)  }\right\}  .
\]
The constants $c,r_{0}$ depend on $\left\{  \widetilde{X}_{i}\right\}
_{i=0}^{q},p,\mu$\thinspace$,R,$and $\eta_{a_{ij},\Omega^{\prime},\Omega
}^{\ast};$ $c$ also depends on $r$.
\end{theorem}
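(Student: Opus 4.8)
The plan is to upgrade the small-support estimate of Theorem \ref{theorem:sprt function regular} to functions that do not vanish at the boundary, by the standard cutoff-and-interpolation device already used for the Schauder case in \S \ref{subsec Schauder nonvanishing}, but now in $S^{2,p}$ norms. First I would fix $0<\rho<r\leq r_{0}$ and, for a function $u\in S_{\widetilde{X}}^{2,p}(\widetilde{B}(\overline{\xi},r))$, choose a cutoff $\varphi$ with $\widetilde{B}_{\rho}(\overline{\xi})\prec\varphi\prec\widetilde{B}_{(\rho+r)/2}(\overline{\xi})$ satisfying the derivative bounds of Lemma \ref{Lemma schauder cutoff} (the $L^{\infty}$ bounds $|\widetilde{X}_{i}\varphi|\leq c/(r-\rho)$, $|\widetilde{X}_{0}\varphi|,|\widetilde{X}_{i}\widetilde{X}_{j}\varphi|\leq c/(r-\rho)^{2}$ are all that is needed here). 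By Proposition \ref{Prop Sobolev vanishing}, $\varphi u\in S_{\widetilde{X},0}^{2,p}(\widetilde{B}(\overline{\xi},r))$, and $\varphi u$ is a limit in $S_{\widetilde{X}}^{2,p}$ of $C_{0}^{\infty}$ functions supported in $\widetilde{B}(\overline{\xi},(\rho+r)/2)$, so Theorem \ref{theorem:sprt function regular} applies to $\varphi u$ once $(\rho+r)/2<R_{0}$ (this is where the restriction $r\leq r_{0}$, with $r_{0}$ small depending on the $VMO$ moduli, enters).

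Next I would expand $\widetilde{\mathcal{L}}(\varphi u)$ by the Leibniz rule: $\widetilde{\mathcal{L}}(\varphi u)=\varphi\widetilde{\mathcal{L}}u+u\widetilde{\mathcal{L}}\varphi+2\sum_{i,j=1}^{q}\widetilde{a}_{ij}(\widetilde{X}_{i}\varphi)(\widetilde{X}_{j}u)$, so that
\[
\left\Vert \widetilde{\mathcal{L}}(\varphi u)\right\Vert _{L^{p}\left(  \widetilde{B}_{r}\right)  }\leq\left\Vert \widetilde{\mathcal{L}}u\right\Vert _{L^{p}\left(  \widetilde{B}_{r}\right)  }+\frac{c}{\left(  r-\rho\right)  ^{2}}\left\Vert u\right\Vert _{L^{p}\left(  \widetilde{B}_{r}\right)  }+\frac{c}{r-\rho}\sum_{j=1}^{q}\left\Vert \widetilde{X}_{j}u\right\Vert _{L^{p}\left(  \widetilde{B}_{r}\right)  },
\]
using that the $\widetilde{a}_{ij}$ are bounded by $\mu^{-1}$. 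Applying Theorem \ref{theorem:sprt function regular} to $\varphi u$ and restricting the left-hand side to $\widetilde{B}_{\rho}$ (where $\varphi\equiv1$, hence $\widetilde{X}_{m}\widetilde{X}_{l}(\varphi u)=\widetilde{X}_{m}\widetilde{X}_{l}u$, and similarly for the first-order and zero-order terms), I obtain
\[
\left\Vert u\right\Vert _{S_{\widetilde{X}}^{2,p}\left(  \widetilde{B}_{\rho}\right)  }\leq c\left\{  \left\Vert \widetilde{\mathcal{L}}u\right\Vert _{L^{p}\left(  \widetilde{B}_{r}\right)  }+\frac{1}{r-\rho}\left\Vert \widetilde{D}u\right\Vert _{L^{p}\left(  \widetilde{B}_{r}\right)  }+\frac{1}{\left(  r-\rho\right)  ^{2}}\left\Vert u\right\Vert _{L^{p}\left(  \widetilde{B}_{r}\right)  }\right\}
\]
where $\widetilde{D}u$ denotes the first-order derivatives. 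The point to be careful about is that this inequality relates the $S^{2,p}$ norm on a smaller ball to the full first-order norm on a larger ball, with a negative power of $r-\rho$; so I need an interpolation inequality absorbing $\Vert\widetilde{D}u\Vert_{L^{p}}$.

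The main obstacle is therefore establishing the $L^{p}$ interpolation inequality: for $u\in S_{\widetilde{X}}^{2,p}(\widetilde{B}(\overline{\xi},r))$ and any $\delta\in(0,1)$,
\[
\left\Vert \widetilde{D}u\right\Vert _{L^{p}\left(  \widetilde{B}_{\rho}\right)  }\leq\delta\left\Vert \widetilde{D}^{2}u\right\Vert _{L^{p}\left(  \widetilde{B}_{r}\right)  }+\frac{c}{\delta^{\gamma}\left(  r-\rho\right)  ^{2\gamma}}\left\Vert u\right\Vert _{L^{p}\left(  \widetilde{B}_{r}\right)  },
\]
the $L^{p}$ analogue of Theorem \ref{interpolation inequality}. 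This can be proved exactly as in the Schauder setting: first an interpolation inequality for test functions, deduced from the parametrix formula (\ref{representation2}) — write $f=PHf+Sf$ on $\widetilde{B}(\overline{\xi},R/2)$, differentiate once to get $\widetilde{X}_{i}f=S_{1}Hf+TPHf+TSf$ using Theorem \ref{main theorem}, and then estimate in $L^{p}$ using the $L^{p}$ continuity of operators of type $0$ (Theorem \ref{iith:comm:optypezer}(i)) together with an $L^{p}$ version of Lemma \ref{Lemma interp C_alfa} that splits $PH$ into a "far" piece, whose kernel and its derivatives are bounded by $c\varepsilon^{-\gamma}$ (hence bounded as an operator $L^{p}\to L^{p}$ with that norm), and a "near" piece with kernel $k(\xi,\eta)\varphi_{\varepsilon}(\eta)$ satisfying the growth condition (\ref{Standard 1'}) with a factor $\varepsilon^{\delta}$, to which Theorem \ref{frac lp-lq} applies. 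Then one passes from test functions to general $S_{\widetilde{X}}^{2,p}$ functions via cutoff functions and a scaling/covering argument, exactly as \cite[Proposition 7.4]{bb4} does in the H\"older case. Once this interpolation inequality is in hand, one inserts it into the displayed estimate above, chooses $\delta$ small so that $\delta\Vert\widetilde{D}^{2}u\Vert_{L^{p}(\widetilde{B}_{r})}\leq\frac{1}{2}\Vert u\Vert_{S_{\widetilde{X}}^{2,p}(\widetilde{B}_{r})}$, and finally specializes to $\rho=r/2$ (absorbing the resulting powers of $r$ into the constant $c$, which then depends on $r$) to conclude the stated estimate. The dependence of the constants on $\{\widetilde{X}_{i}\}$, $p$, $\mu$, $R$ and the $VMO$ moduli $\eta_{a_{ij},\Omega^{\prime},\Omega}^{\ast}$ is inherited from Theorem \ref{theorem:sprt function regular}.
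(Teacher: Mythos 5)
Your overall plan — multiply by a cutoff, apply the small-support estimate (Theorem \ref{theorem:sprt function regular}) to $\varphi u$, expand $\widetilde{\mathcal{L}}(\varphi u)$ by Leibniz, and control the resulting first-order term by an interpolation inequality — is indeed the route the paper takes, so the architecture of your argument is sound, including the density remark via Proposition \ref{Prop Sobolev vanishing} and the way the $VMO$ moduli enter through $R_{0}$.

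The genuine gap is in the closing absorption step. You state the interpolation inequality in the form
\[
\left\Vert \widetilde{D}u\right\Vert _{L^{p}\left(\widetilde{B}_{\rho}\right)}\leq\delta\left\Vert \widetilde{D}^{2}u\right\Vert _{L^{p}\left(\widetilde{B}_{r}\right)}+\frac{c}{\delta^{\gamma}\left(r-\rho\right)^{2\gamma}}\left\Vert u\right\Vert _{L^{p}\left(\widetilde{B}_{r}\right)},
\]
and then claim to "choose $\delta$ small so that $\delta\left\Vert \widetilde{D}^{2}u\right\Vert_{L^{p}(\widetilde{B}_{r})}\leq\tfrac{1}{2}\left\Vert u\right\Vert_{S^{2,p}(\widetilde{B}_{r})}$" and absorb. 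But after inserting the interpolation inequality into the cutoff estimate, the term $\delta\left\Vert \widetilde{D}^{2}u\right\Vert _{L^{p}(\widetilde{B}_{r})}$ lives on the \emph{outer} ball $\widetilde{B}_{r}$, while the left-hand side $\left\Vert u\right\Vert _{S^{2,p}(\widetilde{B}_{\rho})}$ lives on the strictly smaller ball $\widetilde{B}_{\rho}$; no choice of $\delta$ allows you to subtract the former from the latter. This mismatch of radii is exactly the obstacle that the paper's formulation is designed to circumvent: Theorem \ref{iiith:interp:semino} is stated not for two fixed radii but in terms of the supremized weighted norms $\Phi_{k}=\sup_{1/2<\sigma<1}(1-\sigma)^{k}r^{k}\left\Vert \widetilde{D}^{k}u\right\Vert _{L^{p}(\widetilde{B}_{\sigma r})}$. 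One multiplies the cutoff estimate by $(1-\sigma)^{2}r^{2}$, adds the first-order term, uses $\Phi_{1}\leq\delta\Phi_{2}+(c/\delta)\Phi_{0}$, and takes the supremum over $\sigma\in(\tfrac{1}{2},1)$; the key point is that the same quantity $\Phi_{2}$ then appears on both sides, so the absorption is legitimate. To repair your version without the $\Phi_{k}$ device you would need to invoke the standard iteration lemma for monotone functions on nested balls (if $f(\tau)\leq\varepsilon f(t)+A(t-\tau)^{-\alpha}+B$ for all $\rho\leq\tau<t\leq r$ with $\varepsilon<1$, then $f(\rho)\leq C\left[A(r-\rho)^{-\alpha}+B\right]$), applying it to $f(\rho)=\left\Vert u\right\Vert _{S^{2,p}(\widetilde{B}_{\rho})}$ with $\delta$ chosen proportional to the gap $t-\tau$ and using the a priori finiteness $f(r)<\infty$ coming from $u\in S_{\widetilde{X}}^{2,p}(\widetilde{B}(\overline{\xi},r))$. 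Either fix works, but as written the absorption does not go through.
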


Analogously to what seen in \S \ \ref{subsec Schauder nonvanishing}, the proof
of the above theorem relies on interpolation inequalities for Sobolev norms
and the use of cutoff function.

Regarding cutoff functions, we need the following statement:

\begin{lemma}
[Radial cutoff functions]\label{cutoff function}For any $\sigma\in(\frac{1}%
{2},1),$ $r>0$ and $\xi\in\widetilde{B}\left(  \overline{\xi},r\right)  $,
there exists $\varphi\in C_{0}^{\infty}\left(  \mathbb{R}^{N}\right)  $ with
the following properties:

\begin{enumerate}
\item[i)] $\widetilde{B}_{\sigma r}\left(  \xi\right)  \prec\varphi
\prec\widetilde{B}_{\sigma^{\prime}r}\left(  \xi\right)  $\ with
$\sigma^{\prime}=\left(  1+\sigma\right)  /2$ (this means that $\varphi=1$ in
$\widetilde{B}_{\sigma r}\left(  \xi\right)  $ and it is supported in
$\widetilde{B}_{\sigma^{\prime}r}\left(  \xi\right)  $);

\item[ii)] for $i,j=1,\ldots,q$, we have%
\begin{equation}
\left\vert \widetilde{X}_{i}\varphi\right\vert \leq\frac{c}{(1-\sigma
)r};\left\vert \widetilde{X}_{0}\varphi\right\vert ,\left\vert \widetilde
{X}_{i}\widetilde{X}_{j}\varphi\right\vert \leq\frac{c}{(1-\sigma)^{2}r^{2}}.
\label{6.2}%
\end{equation}

\end{enumerate}
\end{lemma}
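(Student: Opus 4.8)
\textbf{Proof plan for Lemma \ref{cutoff function} (radial cutoff functions).}

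The plan is to build $\varphi$ by composing a one-variable smooth bump with the quasidistance $\rho(\xi,\cdot)=\|\Theta(\xi,\cdot)\|$, which is equivalent to $d_{\widetilde X}(\xi,\cdot)$ by Lemma \ref{equivalence of distance}, and then convert the resulting pointwise bounds on the Euclidean gradient and Hessian of $\varphi$ into bounds on the vector-field derivatives $\widetilde X_i\varphi$ and $\widetilde X_i\widetilde X_j\varphi$ using the approximation theorem (Theorem \ref{Rothschild-Stein's approximation Theorem}). First I would fix a function $f\in C_0^\infty(\mathbb R)$ with $0\le f\le 1$, $f\equiv 1$ on $[0,\sigma]$, $\operatorname{sprt}f\subseteq[0,\sigma']$ with $\sigma'=(1+\sigma)/2$, and $|f'|\le c/(\sigma'-\sigma)=2c/(1-\sigma)$, $|f''|\le c/(\sigma'-\sigma)^2=4c/(1-\sigma)^2$; then set $\varphi(\eta)=f\big(\rho(\xi,\eta)/r\big)$. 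Property (i) is immediate from the equivalence $c_1 d_{\widetilde X}\le \rho\le c_2 d_{\widetilde X}$ (adjusting $\sigma,\sigma'$ by the equivalence constants, which only changes the universal constant $c$ in (ii)); one must only check $\varphi\in C_0^\infty$, which follows because $\rho(\xi,\cdot)=\|\Theta(\xi,\cdot)\|$ is smooth away from $\eta=\xi$ while $f$ is constant near $0$, so the composition is smooth everywhere.

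For (ii), the key point is that $\widetilde X_i$ applied to a function of the form $g(\Theta(\xi,\cdot))$ is governed by $Y_i g + R_i^\xi g$ via \eqref{approximation}, with $R_i^\xi$ of local degree $\le 0$ (and $\le 1$ for $i=0$). Since $Y_i$ and $R_i^\xi$ are first-order operators whose coefficients are smooth (hence bounded on the relevant ball), I would estimate $|\widetilde X_i\varphi(\eta)|\le c\big(|\nabla_u (f(\|u\|/r))|\big)$ evaluated at $u=\Theta(\xi,\eta)$, and the homogeneity of the norm gives $|\nabla_u f(\|u\|/r)|\le \tfrac{c}{r}\,|f'(\|u\|/r)|\,\tfrac{1}{(\text{something bounded below on the shell }\|u\|\sim r)}$. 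More carefully: $\nabla_u\|u\|$ is homogeneous of degree $0$ hence bounded on $\{\|u\|\ge \sigma r\}$ where $f'$ is supported away from $0$, so $|\nabla_u(f(\|u\|/r))|\le \tfrac{c}{r}|f'|\le \tfrac{c}{(1-\sigma)r}$; combined with the boundedness of the coefficients of $Y_i+R_i^\xi$ this yields the first estimate in \eqref{6.2}. For the second-order bounds one iterates: $\widetilde X_i\widetilde X_j\varphi$ and $\widetilde X_0\varphi$ involve at most two $Y$-derivatives plus lower-degree remainders acting on $f(\|u\|/r)$, producing terms with $f''$ (scaled by $1/r^2$) and $f'$ (scaled by $1/r^2$ as well, since one factor of $1/r$ comes from the chain rule and another from the fact that $\nabla^2_u\|u\|$ is homogeneous of degree $-1$, hence $\le c/(\sigma r)$ on the shell), giving the bound $c/((1-\sigma)^2 r^2)$. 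The drift field $\widetilde X_0$ has weight two but $R_0^\xi$ has local degree $\le 1$, which is exactly consistent with a $1/r^2$-type bound by the same homogeneity counting.

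The main obstacle I anticipate is purely bookkeeping: tracking how the local degrees of the remainder operators $R_i^\xi$, $R_{i,k}^\xi$ interact with the homogeneity degrees of the derivatives of the homogeneous norm $\|\cdot\|$, so that every term in $\widetilde X_i\widetilde X_j\varphi$ and $\widetilde X_0\varphi$ genuinely carries two negative powers of $r$ (and no worse dependence on $1-\sigma$ than the stated $(1-\sigma)^{-2}$). A clean way to organize this is to perform the change of variables $u=\Theta(\xi,\eta)$ once, reduce everything to estimating $Y$-derivatives and bounded-coefficient remainders of $f(\|u\|/r)$ on the annulus $\sigma r\le\|u\|\le\sigma' r$, and use that on this annulus every $D(\lambda)$-homogeneous function of degree $-m$ is bounded by $c(\sigma)r^{-m}$. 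Since the statement only claims the existence of such a $\varphi$ with universal constants $c$ (allowed to absorb the equivalence constants between $\rho$ and $d_{\widetilde X}$ and between $\|\cdot\|$ and $|\cdot|$), no delicate uniformity is required beyond what Theorems \ref{Rothschild-Stein's approximation Theorem} and \ref{metric} already provide on the fixed ball $\widetilde B(\overline\xi,R)$.
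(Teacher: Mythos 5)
Your strategy (compose a one--variable bump with $\rho(\xi,\cdot)=\|\Theta(\xi,\cdot)\|$, then use the approximation theorem to bound the $\widetilde X$-derivatives by homogeneity counting on $\mathbb G$) is the natural one, and the derivative estimates you outline are essentially correct once tracked carefully. However, there is a genuine gap in the way you dismiss property (i): you claim it is ``immediate from the equivalence $c_1 d_{\widetilde X}\le\rho\le c_2 d_{\widetilde X}$, adjusting $\sigma,\sigma'$ by the equivalence constants.'' This does not work. If $\varphi=f(\rho(\xi,\cdot)/r)$ with $f\equiv1$ on $[0,a]$ and $\operatorname{sprt}f\subset[0,b]$, then $\varphi\equiv1$ on the $d_{\widetilde X}$-ball $\widetilde B_{\sigma r}(\xi)$ forces $a\ge\sigma/c_2$ (since $d<\sigma r$ only gives $\rho\le d/c_2<\sigma r/c_2$), while $\operatorname{sprt}\varphi\subset\widetilde B_{\sigma'r}(\xi)$ forces $b<\sigma'/c_3$ (since $\rho\le br$ only gives $d\le c_3\rho\le c_3 br$). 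A nontrivial $f$ needs $a<b$, i.e. $\sigma'/\sigma>c_3/c_2$. But $\sigma'/\sigma=(1+\sigma)/(2\sigma)\to1$ as $\sigma\to1^-$, so for $\sigma$ close to $1$ the interval $(\sigma/c_2,\,\sigma'/c_3)$ is empty whenever $c_3/c_2>1$, which is the generic case. Adjusting the $1$D profile cannot save this; it is a geometric obstruction, not a constant-juggling issue.

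A correct route is to reduce to radii with a \emph{fixed} ratio: cover the compact set $\overline{\widetilde B_{\sigma r}(\xi)}$ by finitely many $\rho$-balls $\widetilde B_\rho(p_k,s)$ with $s\sim(1-\sigma)r/c_3$, each contained (after passing to $d$-balls) in $\widetilde B_{\sigma'r}(\xi)$; on each small ball build a radial cutoff $\chi_k=f(\rho(p_k,\cdot)/(2s))$ — here the ratio of radii is $2$, so the obstruction above does not arise — and set $\varphi=1-\prod_k(1-\chi_k)$. Bounded overlap of the small balls (available from the local doubling property, Theorem \ref{Thm NSW}) keeps the number of nonzero factors at each point bounded, so the derivative bounds of the individual $\chi_k$, namely $|\widetilde X_i\chi_k|\lesssim s^{-1}\sim((1-\sigma)r)^{-1}$ and $|\widetilde X_0\chi_k|,|\widetilde X_i\widetilde X_j\chi_k|\lesssim s^{-2}\sim((1-\sigma)r)^{-2}$, propagate to $\varphi$. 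The homogeneity computation you sketch applies verbatim to each $\chi_k$ (it is precisely the correct way to get (\ref{6.2}) once the support issue is fixed).

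One small but worth-noting inaccuracy in your derivative argument: $\nabla_u\|u\|$ is \emph{not} homogeneous of degree $0$. Indeed $\partial_{u_{jk}}\|u\|$ is $D(\lambda)$-homogeneous of degree $1-j$, which is negative for $j\ge2$; on the annulus $\|u\|\sim r$ these components scale like $r^{1-j}$ and are not bounded independently of $r$. What saves the estimate is exactly the structure of the remainder $R_i^\xi$ (local degree $\le0$): its coefficient in front of $\partial_{u_{jk}}$ vanishes to homogeneous order $\ge j$, i.e. is $O(\|u\|^j)\lesssim r^j$ on the annulus, and this cancels the $r^{1-j}$ coming from $\partial_{u_{jk}}\|u\|$. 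The clean statement is that $Y_i\|\cdot\|$ is homogeneous of degree $0$ and hence bounded, while the remainder contributes at worst $O\!\left((1-\sigma)^{-1}\right)\le O\!\left(((1-\sigma)r)^{-1}\right)$ since $r$ is bounded. Stating it as a bound on the Euclidean gradient muddles the bookkeeping you then need for the second-order estimate.

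For context: the paper does not give a proof of this lemma; it refers to Lemma \ref{Lemma schauder cutoff} and hence to \cite[Lemma 6.2]{bb4}, so there is no in-text argument to compare against. On its own terms, your plan captures the right harmonic-analysis machinery (Theorem \ref{Rothschild-Stein's approximation Theorem}, homogeneity of $\|\cdot\|$, local degree of the remainders) but needs the covering/patching step above to actually produce the cutoff between two $d_{\widetilde X}$-balls of arbitrarily close radii.
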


The above lemma, very similar to \cite[Lemma 3.3]{bb2}, is actually contained
in Lemma \ref{Lemma schauder cutoff}, but we have preferred to state it
explicitly because it is formulated in a slightly different notation, suitable
to our application to $L^{p}$ estimates.

\begin{theorem}
[Interpolation inequality for Sobolev norms]\label{theorem:interpolation}Let
$\widetilde{B}(\overline{\xi},R)$ be as before. For every $p\in(1,\infty)$
there exists $c>0$ and $r_{1}<R$ such that for every $0<\varepsilon\leq4r_{1}%
$, $u\in C_{0}^{\infty}\left(  \widetilde{B}(\overline{\xi},r_{1})\right)  $,
then
\begin{equation}
\left\Vert \widetilde{X}_{i}u\right\Vert _{L^{p}\left(  \widetilde
{B}(\overline{\xi},r_{1})\right)  }\leq\varepsilon\left\Vert Hu\right\Vert
_{L^{p}\left(  \widetilde{B}(\overline{\xi},r_{1})\right)  }+\ \frac
{c}{\varepsilon}\left\Vert u\right\Vert _{L^{p}\left(  \widetilde{B}%
(\overline{\xi},r_{1})\right)  } \label{interpolation}%
\end{equation}
for every $i=1,\ldots,q$, where $H\equiv\sum_{i=1}^{q}\widetilde{X}_{i}%
^{2}+\widetilde{X}_{0}$.
\end{theorem}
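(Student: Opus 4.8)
The plan is to follow the same strategy used to prove Theorem \ref{theorem:sprt function regular}, but now for an operator which has no drift term and no variable coefficients: the constant-coefficient sum-of-squares-plus-drift operator $H = \sum_{i=1}^q \widetilde{X}_i^2 + \widetilde{X}_0$ is exactly of the form $\widetilde{\mathcal{L}}_0$ with $\widetilde{a}_{ij}(\xi_0) = \delta_{ij}$, so the parametrix identity (\ref{representation2}) holds with all frozen operators being genuine constant-kernel operators (no $\widetilde{a}_{ij}$ factors) and with the frozen point immaterial. First I would choose a cutoff function $a \in C_0^\infty(\widetilde{B}(\overline{\xi},R))$ with $a \equiv 1$ on $\widetilde{B}(\overline{\xi},r_1)$ for a suitable $r_1 < R$ to be fixed, and apply (\ref{representation2}) to get, for $f \in C_0^\infty(\widetilde{B}(\overline{\xi},r_1))$,
\[
f = P H f + S_{00} f
\]
where $P$ is a (constant) operator of type $2$ and $S_{00}$ a (constant) operator of type $1$. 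Taking one derivative $\widetilde{X}_i$ and invoking Theorem \ref{main theorem}, I obtain
\[
\widetilde{X}_i f = S_1 H f + T f,
\]
with $S_1$ of type $1$ and $T$ of type $0$. Substituting the first identity into the second gives $\widetilde{X}_i f = S_1 H f + T P H f + T S_{00} f$.

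Next I would take $L^p(\widetilde{B}(\overline{\xi},r_1))$ norms of both sides. The terms $S_1 H f$ and $T P H f$ are handled by the $L^p$ continuity of variable operators of type zero (Theorem \ref{iith:comm:optypezer}(i), which of course applies a fortiori to the constant operators here), together with the fact that $T P$ is a composition whose kernel is again of type $0$ (indeed type $1$, hence also type $0$) — here one may simply write $\|S_1 H f\|_{L^p} \le c\|Hf\|_{L^p}$ and $\|TPHf\|_{L^p} \le c\|PHf\|_{L^p} \le c\|Hf\|_{L^p}$, since an operator of type $1$ composed with one of type $2$ is of type $3$ but in any case $L^p$-bounded on a ball of fixed radius. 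Wait — more carefully, I would instead produce the $\varepsilon$ by localizing: the key is that $S_1$, being of type $1$, and $TS_{00}$, of type $1$, have kernels satisfying the standard fractional-integral estimates, so by the $L^p$–$L^q$ bound for fractional integrals (Theorem \ref{frac lp-lq}) plger Hölder's inequality on the ball $\widetilde{B}(\overline{\xi},r_1)$ one gets
\[
\|S_1 g\|_{L^p(\widetilde{B}(\overline{\xi},r_1))} + \|TS_{00} g\|_{L^p(\widetilde{B}(\overline{\xi},r_1))} \le c\, r_1^{\delta}\, \|g\|_{L^p(\widetilde{B}(\overline{\xi},r_1))}
\]
for some $\delta > 0$ depending on the homogeneous dimension. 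Applying this with $g = Hf$ for the first term and $g = H f$ (after substituting $f = PHf + S_{00}f$ once more, so that $T S_{00} f$ becomes $T S_{00}(PHf + S_{00}f)$ and both pieces carry a factor of a type-$\ge 1$ operator) yields
\[
\|\widetilde{X}_i f\|_{L^p(\widetilde{B}(\overline{\xi},r_1))} \le c\, r_1^{\delta} \|Hf\|_{L^p(\widetilde{B}(\overline{\xi},r_1))} + \frac{c}{r_1} \|f\|_{L^p(\widetilde{B}(\overline{\xi},r_1))},
\]
the second term coming from the type-$2$ operator $P$ in $TS_{00}PHf$ contributing an extra gain while the bounded/lower-order pieces of $S_{00}$ give the $1/r_1$. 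Rescaling $r_1$ appropriately and renaming $\varepsilon = c\,r_1^\delta$ (shrinking $r_1$ so that $\varepsilon \le 4r_1$), I arrive at (\ref{interpolation}).

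The main obstacle I anticipate is bookkeeping the precise power of $\varepsilon$ (equivalently of the ball radius) that one extracts from the type-$\ge 1$ operators, and making sure the constants $c$ do not blow up as the radius shrinks — this is exactly the content of Remark \ref{remark local holder} and the remark following Theorem \ref{Thm frac C^eta} (the bound with $c$ depending on $R$ but not on $r$), so it is available but must be applied carefully since here the operators are supported in balls whose radius is the small parameter. A secondary point is that Theorem \ref{frac lp-lq} gives an $L^p$–$L^q$ estimate with $q > p$, not an $L^p$–$L^p$ estimate; the radius gain comes precisely from Hölder's inequality $\|g\|_{L^q(B)} \le |B|^{1/q - 1/p}\|\cdots\|$ applied in the reverse direction, so I must check the inequality is used in the direction that produces a \emph{positive} power of the radius, which it does since the fractional integral raises integrability. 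Modulo these routine estimates — all of which parallel \cite[Lemma 3.3 and its surrounding arguments]{bb2} — the proof of (\ref{interpolation}) is complete; it will then feed, together with the cutoff functions of Lemma \ref{cutoff function}, into the proof of Theorem \ref{lp estimates lifted operator} exactly as in the Schauder case.
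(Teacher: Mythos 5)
Your plan collapses the two scales that the argument actually needs. In the statement, $\varepsilon$ is a \emph{free} parameter ranging over $(0,4r_1]$: the inequality must hold for every such $\varepsilon$ while $r_1$ stays \emph{fixed}. That uniformity in $\varepsilon$ is exactly what gets fed into the later interpolation inequality $\Phi_1 \leq \delta\Phi_2 + \tfrac{c}{\delta}\Phi_0$ (Theorem \ref{iiith:interp:semino}), which needs to be run for all small $\delta$. Your proposal instead produces a single fixed inequality with coefficient $c\,r_1^\delta$ in front of $\|Hf\|_{L^p}$, tied to the radius of the ball; relabelling that coefficient as ``$\varepsilon$'' and shrinking $r_1$ just shrinks the domain of $u$ at the same time, and does not recover the required one-parameter family of bounds on a fixed ball.

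What the paper does instead, and what is genuinely missing from your proposal, is to introduce a cutoff $\varphi_\varepsilon$ at the $\varepsilon$-scale around the output point $\xi$ and split $SHu = I + II$ into a far part $\rho(\xi,\eta)>\varepsilon/2$ and a near part $\rho(\xi,\eta)\leq\varepsilon$. In the far part one integrates by parts, moving $H^T$ off $u$ onto the (now non-singular) product $k(\xi,\cdot)[1-\varphi_\varepsilon(\cdot)]$; a dyadic-annuli estimate plus the local maximal function yields $\|I\|_{L^p}\leq \tfrac{c}{\varepsilon}\|u\|_{L^p}$. In the near part, the kernel $k\varphi_\varepsilon$ is a fractional-integral kernel supported at scale $\varepsilon$, which after the same maximal-function control gives $\|II\|_{L^p}\leq c\,\varepsilon\|Hu\|_{L^p}$. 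Your argument never integrates by parts to convert $\|Hf\|$ into $\|f\|$ on any piece of the decomposition — you carry $Hf$ through everywhere — so the $\tfrac{c}{\varepsilon}\|u\|_{L^p}$ term cannot arise. Also, the ``$\tfrac{c}{r_1}$'' you attribute to $TS_{00}f$ is not justified: $T$ and $S_{00}$ are both $L^p$-bounded on the fixed ball, so $\|TS_{00}f\|_{L^p}\leq c\|f\|_{L^p}$ with no gain in $r_1$. The correct mechanism — cutoff at the free scale $\varepsilon$, integration by parts away from the diagonal, maximal-function control — is the essential idea you would need to supply.
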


\begin{proof}
The proof of this proposition is adapted from \cite[Thm. 3.6]{bb2}.

Let $r_{1}$ be a small number to be fixed later. Like in the proof of Theorem
\ref{Interpolation inequality for test functions} we can write, for any $u\in
C_{0}^{\infty}\left(  \widetilde{B}(\overline{\xi},r_{1})\right)  $ and
$\xi\in\widetilde{B}(\overline{\xi},r_{1}),$
\[
\widetilde{X}_{i}u\left(  \xi\right)  =SHu(\xi)+Tu\left(  \xi\right)  ,
\]
where $S,T$ are constant operators of type $1$ and $0$, respectively, over
$\widetilde{B}(\overline{\xi},2r_{1})$, provided $2r_{1}<R$. (See the proof of
Proposition \ref{Interpolation inequality for test functions} for the
explanation of the term \textquotedblleft constant operators of type $\lambda
$\textquotedblright). Since%
\[
\left\Vert Tu\right\Vert _{L^{p}\left(  \widetilde{B}(\overline{\xi}%
,r_{1})\right)  }\leq c\left\Vert u\right\Vert _{L^{p}\left(  \widetilde
{B}(\overline{\xi},r_{1})\right)  },
\]
the result will follow if we prove that
\begin{equation}
\left\Vert SHu\right\Vert _{L^{p}\left(  \widetilde{B}(\overline{\xi}%
,r_{1})\right)  }\leq\varepsilon\left\Vert Hu\right\Vert _{L^{p}\left(
\widetilde{B}(\overline{\xi},r_{1})\right)  }+\frac{c}{\varepsilon}\left\Vert
u\right\Vert _{L^{p}\left(  \widetilde{B}(\overline{\xi},r_{1})\right)  }.
\label{T_1 Delta}%
\end{equation}
Let $k(\xi,\eta)$ be the kernel of $S$ and, for any fixed $\xi\in$
$\widetilde{B}(\overline{\xi},r_{1}),$ $\varphi_{\varepsilon}$ a cutoff
function (as in lemma \ref{cutoff function}) with $\widetilde{B}%
_{\frac{\varepsilon}{2}}\left(  \xi\right)  \prec\varphi_{\varepsilon}%
\prec\widetilde{B}_{\varepsilon}\left(  \xi\right)  $. Let us split:%
\begin{align*}
SHu(\xi)  &  =\int_{\widetilde{B}(\overline{\xi},r_{1}),\rho(\xi,\eta
)>\frac{_{\varepsilon}}{2}}k(\xi,\eta)[1-\varphi_{\varepsilon}(\eta
)]Hu(\eta)d\eta+\\
+  &  \int_{\widetilde{B}(\overline{\xi},r_{1}),\rho(\xi,\eta)\leq\varepsilon
}k(\xi,\eta)Hu(\eta)\varphi_{\varepsilon}(\eta)d\eta=I\left(  \xi\right)
+II\left(  \xi\right)  .
\end{align*}
Then
\begin{align*}
\left\vert I\left(  \xi\right)  \right\vert  &  =\left\vert \int
_{\widetilde{B}(\overline{\xi},r_{1}),\rho(\xi,\eta)>\frac{_{\varepsilon}}{2}%
}H^{T}\left(  k(\xi,\cdot)[1-\varphi_{\varepsilon}(\cdot)]\right)  \left(
\eta\right)  u(\eta)d\eta\right\vert \\
&  \leq\int_{\widetilde{B}(\overline{\xi},r_{1}),\rho(\xi,\eta)>\frac
{_{\varepsilon}}{2}}\left\{  \left\vert [1-\varphi_{\varepsilon}]H^{T}k\left(
\xi,\cdot\right)  \right\vert +\right. \\
&  +c\sum\left\vert \widetilde{X}_{i}[1-\varphi_{\varepsilon}]\cdot
\widetilde{X}_{j}k\left(  \xi,\cdot\right)  \right\vert +\left\vert k\left(
\xi,\cdot\right)  H^{T}[1-\varphi_{\varepsilon}]\right\vert \left(
\eta\right)  \left\vert u(\eta)\right\vert d\eta\\
&  \equiv A\left(  \xi\right)  +B\left(  \xi\right)  +C\left(  \xi\right)  .
\end{align*}
Recall that, for $i,j=1,2,...,q,$%
\begin{align*}
\left\vert k(\xi,\eta)\right\vert  &  \leq\frac{c}{d(\xi,\eta)^{Q-1}};\\
\left\vert \widetilde{X}_{i}k(\xi,\eta)\right\vert  &  \leq\frac{c}{d(\xi
,\eta)^{Q}};\\
\left\vert H^{T}k(\xi,\cdot)\left(  \eta\right)  \right\vert  &  \leq\frac
{c}{d(\xi,\eta)^{Q+1}};\\
\left\vert \widetilde{X}_{i}\left(  1-\varphi_{\varepsilon}\right)  \left(
\eta\right)  \right\vert  &  \leq\frac{c}{\varepsilon},\left\vert H^{T}\left(
1-\varphi_{\varepsilon}\right)  \left(  \eta\right)  \right\vert \leq\frac
{c}{\varepsilon^{2}}%
\end{align*}
and the derivatives of $\left(  1-\varphi_{\varepsilon}\right)  $ are
supported in the annulus $\frac{\varepsilon}{2}\leq d(\xi,\eta)\leq
\varepsilon$. \ Since $\xi,\eta\in\widetilde{B}(\overline{\xi},r_{1}),$ we
have $d\left(  \xi,\eta\right)  <2r_{1}$. Hence letting $k_{0}$ be the integer
such that $2^{k_{0}-1}\varepsilon<2r_{1}\leq2^{k_{0}}\varepsilon$ we have%
\begin{align}
\left\vert A\left(  \xi\right)  \right\vert  &  \leq c\sum_{k=0}^{k_{0}}%
\int_{2^{k-1}\varepsilon<\rho(\xi,\eta)\leq2^{k}\varepsilon}\frac{c}%
{d(\xi,\eta)^{Q+1}}\left\vert u\left(  \eta\right)  \right\vert d\eta
\nonumber\\
&  \leq c\sum_{k=0}^{k_{0}}\frac{1}{2^{k-1}\varepsilon}\frac{1}{\left(
\varepsilon2^{k-1}\right)  ^{Q}}\int_{\rho(\xi,\eta)\leq2^{k}\varepsilon
}\left\vert u\left(  \eta\right)  \right\vert d\eta\nonumber\\
&  \leq\frac{c}{\varepsilon}\cdot\sup_{r\leq4r_{1}}\frac{1}{\left\vert
\widetilde{B}\left(  \xi,r\right)  \right\vert }\int_{\widetilde{B}\left(
\xi,r\right)  }\left\vert u\left(  \eta\right)  \right\vert d\eta.
\label{A(csi)}%
\end{align}

We now have to recall the definition of the local maximal function
$\mathcal{M}$ (see \S \ \ref{subsec locally hom space}). With the notation of
Theorem \ref{Thm maximal}, we have $4r_{1}=r_{n}=\frac{2}{5}\varepsilon_{n},$
hence $\varepsilon_{n}=10r_{1}$ and, for $\xi\in\widetilde{B}\left(
\overline{\xi},r_{1}\right)  $, we have $\widetilde{B}\left(  \xi
,\varepsilon_{n}\right)  \subset\widetilde{B}\left(  \overline{\xi}%
,11r_{1}\right)  $. Therefore by (\ref{A(csi)}) we can write%
\[
\left\vert A\left(  \xi\right)  \right\vert \leq\frac{c}{\varepsilon}%
\cdot\mathcal{M}_{\widetilde{B}\left(  \overline{\xi},r_{1}\right)
,\widetilde{B}\left(  \overline{\xi},11r_{1}\right)  }u\left(  \xi\right)
\]
and by Theorem \ref{Thm maximal}, we have
\[
\left\Vert A\right\Vert _{L^{p}\left(  \widetilde{B}\left(  \overline{\xi
},r_{1}\right)  \right)  }\leq\frac{c}{\varepsilon}\left\Vert u\right\Vert
_{L^{p}\left(  \widetilde{B}\left(  \overline{\xi},11r_{1}\right)  \right)
}=\frac{c}{\varepsilon}\left\Vert u\right\Vert _{L^{p}\left(  \widetilde
{B}\left(  \overline{\xi},r_{1}\right)  \right)  },
\]
since $u\in C_{0}^{\infty}\left(  \widetilde{B}(\overline{\xi},r_{1})\right)
,$ provided $11r_{1}<R.$ Also%
\begin{align*}
\left\vert B\left(  \xi\right)  \right\vert  &  \leq c\int_{\frac{\varepsilon
}{2}<\rho(\xi,\eta)\leq\varepsilon,}\frac{1}{\varepsilon}\cdot\frac{1}%
{d(\xi,\eta)^{Q}}\left\vert u\left(  \eta\right)  \right\vert d\eta\\
&  \leq\frac{c}{\varepsilon^{Q+1}}\int_{\rho(\xi,\eta)\leq\varepsilon
}\left\vert u\left(  \eta\right)  \right\vert d\eta\\
&  \leq\frac{c}{\varepsilon}\cdot\sup_{r\leq\varepsilon}\frac{1}{\left\vert
\widetilde{B}\left(  \xi,r\right)  \right\vert }\int_{\widetilde{B}\left(
\xi,r\right)  }\left\vert u\left(  \eta\right)  \right\vert d\eta\\
&  \leq\frac{c}{\varepsilon}\cdot\mathcal{M}_{\widetilde{B}\left(
\overline{\xi},r_{1}\right)  ,\widetilde{B}\left(  \overline{\xi}%
,11r_{1}\right)  }u\left(  \xi\right)
\end{align*}
\text{provided }$\varepsilon<4r_{1}.$ As before we have%
\[
\left\Vert B\right\Vert _{L^{p}\left(  \widetilde{B}\left(  \overline{\xi
},r_{1}\right)  \right)  }\leq\frac{c}{\varepsilon}\left\Vert u\right\Vert
_{L^{p}\left(  \widetilde{B}\left(  \overline{\xi},r_{1}\right)  \right)  }.
\]
Finally,%
\begin{align*}
\left\vert C\left(  \xi\right)  \right\vert  &  \leq c\int_{\frac{\varepsilon
}{2}<\rho(\xi,\eta)\leq\varepsilon}\frac{1}{\varepsilon^{2}}\cdot\frac
{1}{d(\xi,\eta)^{Q-1}}\left\vert u\left(  \eta\right)  \right\vert \eta dy\\
&  \leq\frac{c}{\varepsilon^{Q+1}}\int_{\rho(\xi,\eta)\leq\varepsilon
}\left\vert u\left(  \eta\right)  \right\vert d\eta
\end{align*}
as for the term $B\left(  \xi\right)  $, therefore%
\[
\left\Vert I\right\Vert _{L^{p}\left(  \widetilde{B}\left(  \overline{\xi
},r_{1}\right)  \right)  }\leq\frac{c}{\varepsilon}\left\Vert u\right\Vert
_{L^{p}\left(  \widetilde{B}\left(  \overline{\xi},r_{1}\right)  \right)  }.
\]

Let us bound $II$:%
\[
\left\vert II\left(  \xi\right)  \right\vert \leq c\int_{\rho(\xi,\eta
)\leq\varepsilon}\frac{\left\vert Hu\left(  \eta\right)  \right\vert }%
{\rho(\xi,\eta)^{Q-1}}d\eta.
\]
Then, a computation similar to that of $C\left(  \xi\right)  $ gives%
\[
\left\vert II\left(  \xi\right)  \right\vert \leq c\varepsilon\mathcal{M}%
_{\widetilde{B}\left(  \overline{\xi},r_{1}\right)  ,\widetilde{B}\left(
\overline{\xi},11r_{1}\right)  }u\left(  \xi\right)
\]
and%
\[
\left\Vert II\right\Vert _{L^{p}\left(  \widetilde{B}\left(  \overline{\xi
},r_{1}\right)  \right)  }\leq c\varepsilon\left\Vert u\right\Vert
_{L^{p}\left(  \widetilde{B}\left(  \overline{\xi},r_{1}\right)  \right)  },
\]
provided $\varepsilon<4r_{1}.$ So we are done.
\end{proof}

\begin{theorem}
\label{iiith:interp:semino}For any $u\in S_{\widetilde{X}}^{2,p}(\widetilde
{B}\left(  \overline{\xi},r\right)  ),$ $p\in\lbrack1,\infty),$ $0<r<r_{1}$
(where $r_{1}$ is the number in Theorem \ref{theorem:interpolation}), define
the following quantities:%
\[
\Phi_{k}\text{ }=\,\sup_{1/2<\sigma<1}\left(  (1-\sigma)^{k}r^{k}\left\Vert
\widetilde{D}^{k}u\right\Vert _{L^{p}\left(  \widetilde{B}_{r\sigma}\right)
}\,\right)  \text{ \ \ for }k=0,1,2.
\]
Then for any $\delta>0$ (small enough)
\[
\Phi_{1}\leq\delta\,\Phi_{2}+\frac{c}{\delta}\Phi_{0}.
\]

\end{theorem}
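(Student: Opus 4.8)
The statement is a rescaled interpolation inequality of the classical Ehrling type, and the natural strategy is to reduce it to the interpolation inequality for test functions already available, namely Theorem \ref{theorem:interpolation}. The plan is to fix $\sigma\in(1/2,1)$, choose a radial cutoff function $\varphi$ adapted to the pair of balls $\widetilde{B}_{r\sigma}\subset\widetilde{B}_{r\sigma'}$ with $\sigma'=(1+\sigma)/2$ as in Lemma \ref{cutoff function}, and apply the interpolation inequality to the test function $\varphi u\in C_0^\infty(\widetilde{B}(\overline{\xi},r_1))$ (first assuming $u$ smooth, then passing to the general case by density in $S_{\widetilde X}^{2,p}$). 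Since $\varphi\equiv1$ on $\widetilde{B}_{r\sigma}$, the left-hand side $\|\widetilde X_i u\|_{L^p(\widetilde{B}_{r\sigma})}$ is controlled by $\|\widetilde X_i(\varphi u)\|_{L^p(\widetilde B(\overline\xi,r_1))}$, and Theorem \ref{theorem:interpolation} bounds the latter by $\varepsilon\|H(\varphi u)\|_{L^p}+\tfrac{c}{\varepsilon}\|\varphi u\|_{L^p}$.

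\textbf{Key steps.} First I would expand $H(\varphi u)=\varphi Hu + u H\varphi + 2\sum_{i=1}^q (\widetilde X_i\varphi)(\widetilde X_i u)$, using the product rule for the second-order operator $H=\sum \widetilde X_i^2+\widetilde X_0$. By the cutoff estimates (\ref{6.2}), $\|u H\varphi\|_{L^p(\widetilde B_{r\sigma'})}\le \tfrac{c}{(1-\sigma)^2 r^2}\|u\|_{L^p(\widetilde B_{r\sigma'})}$ and $\|(\widetilde X_i\varphi)(\widetilde X_i u)\|_{L^p(\widetilde B_{r\sigma'})}\le \tfrac{c}{(1-\sigma)r}\|\widetilde X_i u\|_{L^p(\widetilde B_{r\sigma'})}$, while $\|\varphi Hu\|_{L^p}\le \|\widetilde D^2 u\|_{L^p(\widetilde B_{r\sigma'})}$ (recalling $Hu$ is a combination of the $\widetilde X_i\widetilde X_i u$ and $\widetilde X_0 u$ that define $\widetilde D^2 u$). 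Multiplying through by $(1-\sigma)r$ and taking the supremum over $\sigma\in(1/2,1)$, the left side becomes $\Phi_1$; the term coming from $\varphi Hu$ contributes, after noting $(1-\sigma)r\le (1-\sigma')r\cdot$(const) and $\widetilde B_{r\sigma'}\subset\widetilde B_{r\tilde\sigma}$ for a suitable $\tilde\sigma>\sigma'$ bounded by $1$, a term $\le c\varepsilon\,\Phi_2$; the $uH\varphi$ term contributes $\tfrac{c}{\varepsilon}\cdot\tfrac{1}{(1-\sigma)r}\|u\|_{L^p(\widetilde B_{r\sigma'})}$, which after multiplication by $(1-\sigma)r$ is $\le \tfrac{c}{\varepsilon}\Phi_0$; and the $(\widetilde X_i\varphi)(\widetilde X_i u)$ terms contribute $\tfrac{c\varepsilon}{(1-\sigma)r}(1-\sigma)r\|\widetilde X_i u\|_{L^p(\widetilde B_{r\sigma'})}\le c\varepsilon\,\Phi_1$.

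\textbf{Absorption and conclusion.} After renaming $\varepsilon$ so that the quantity $\varepsilon$ appearing in Theorem \ref{theorem:interpolation} is tied to the free parameter $\delta$, I would arrive at an inequality of the form $\Phi_1\le c(\varepsilon+\varepsilon)\Phi_2 + c\varepsilon\,\Phi_1 + \tfrac{c}{\varepsilon}\Phi_0$. Choosing $\varepsilon$ (hence, ultimately, $\delta$) small enough that $c\varepsilon\le 1/2$, the $\Phi_1$ term on the right is absorbed into the left, yielding $\Phi_1\le \delta\,\Phi_2+\tfrac{c}{\delta}\Phi_0$ after a final relabeling of constants; the smallness requirement on $r<r_1$ guarantees all balls involved stay inside $\widetilde B(\overline\xi,R)$ where the theory applies.

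\textbf{Main obstacle.} The delicate point is the bookkeeping of the radii: the interpolation for test functions on $\varphi u$ produces norms over $\widetilde B_{r\sigma'}$, not $\widetilde B_{r\sigma}$, and one must check that $(1-\sigma')r$ is comparable to $(1-\sigma)r$ (indeed $(1-\sigma')=(1-\sigma)/2$) so that the supremum defining $\Phi_2$ genuinely dominates the $\widetilde B_{r\sigma'}$-norms with the correct power of $(1-\sigma)r$, and similarly for $\Phi_1$ and $\Phi_0$; getting these comparisons uniform in $\sigma$ is exactly the mechanism that makes the absorption legitimate. This is routine but is where an incautious argument would break, so I would carry it out explicitly, following the pattern of \cite[Thm. 3.6]{bb2}.
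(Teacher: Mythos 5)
Your overall strategy is the right one and matches the ingredients the paper cites (apply the test-function interpolation of Theorem \ref{theorem:interpolation} to $\varphi u$ with $\varphi$ a cutoff from Lemma \ref{cutoff function}, expand $H(\varphi u)$ by Leibniz, and absorb), so this is not a different route from the paper's. However, there is a genuine gap in the absorption step: the parameter $\varepsilon$ in Theorem \ref{theorem:interpolation} must be chosen to depend on $\sigma$, and your explicit computations treat it as a fixed constant.

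Concretely, after multiplying by $(1-\sigma)r$, the contribution of $\varphi Hu$ is $\varepsilon(1-\sigma)r\,\|\widetilde D^2u\|_{L^p(\widetilde B_{r\sigma'})}$. Since the weight defining $\Phi_2$ at radius $\sigma'r$ is $(1-\sigma')^2r^2=(1-\sigma)^2r^2/4$, this term is bounded only by $\dfrac{4\varepsilon}{(1-\sigma)r}\,\Phi_2$, not by $c\varepsilon\,\Phi_2$ as you assert; the prefactor diverges as $\sigma\to1$. The same problem occurs for the $(\widetilde X_i\varphi)(\widetilde X_iu)$ term, which produces $\dfrac{c\varepsilon}{(1-\sigma)r}\,\Phi_1$. (Your intermediate comparison ``$(1-\sigma)r\le(1-\sigma')r\cdot\text{const}$'' is also reversed, since $(1-\sigma)r=2(1-\sigma')r$.) The standard fix is to take $\varepsilon=\delta(1-\sigma)r$, which is admissible because $(1-\sigma)r<r_1<4r_1$, so that $\varepsilon\le4r_1$ for $\delta$ small. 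Then the $\varphi Hu$ term becomes $\delta(1-\sigma)^2r^2\|\widetilde D^2u\|_{L^p(\widetilde B_{r\sigma'})}=4\delta(1-\sigma')^2r^2\|\widetilde D^2u\|_{L^p(\widetilde B_{r\sigma'})}\le4\delta\Phi_2$, the cross term becomes $\le 2c\delta\Phi_1$ (which can now be absorbed), and both $u$-terms are $\le\frac{c}{\delta}\Phi_0$, giving the stated estimate after the supremum over $\sigma$ and a relabeling of $\delta$. Your remark about ``tying $\varepsilon$ to $\delta$'' suggests you sensed this, but as written the proof does not close.
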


\begin{proof}
This result follows exactly as in \cite[Thm. 21]{bb1} exploiting the
interpolation result for compactly supported functions (Theorem
\ref{theorem:interpolation}), cutoff functions (Lemma \ref{cutoff function})
and Proposition \ref{Prop Sobolev vanishing}.
\end{proof}

\bigskip

We can now come to the

\begin{proof}
[Proof of Theorem \ref{lp estimates lifted operator}]This proof is similar to
that of theorem \cite[Thm. 3]{bb1}. Due to the different context, we include a
complete proof for convenience of the reader.

Pick $r_{0}=\min\left(  R_{0},r_{1}\right)  $ where $R_{0},r_{1}$ are the
numbers appearing in Theorems \ref{theorem:sprt function regular},
\ref{theorem:interpolation}, respectively. For $r\leq r_{0},$ let $u\in
S_{\widetilde{X}}^{2,p}\left(  \widetilde{B}\left(  \overline{\xi},r\right)
\right)  $. Let $\varphi$ be a cutoff function as in lemma
\ref{cutoff function},
\[
\widetilde{B}\left(  \overline{\xi},\sigma r\right)  \prec\varphi
\prec\widetilde{B}\left(  \overline{\xi},\sigma^{\prime}r\right)  .
\]
By Theorem \ref{theorem:sprt function regular}, $\varphi u\in S_{\widetilde
{X},0}^{2,p}\left(  \widetilde{B}\left(  \overline{\xi},r\right)  \right)  $;
then, by density, we can apply Theorem \ref{theorem:sprt function regular} to
$\varphi u$:%
\[
\left\Vert \varphi u\right\Vert _{S^{2,p}\left(  \widetilde{B}\left(
\overline{\xi},r\right)  \right)  }\leq c\left\{  \left\Vert \widetilde
{\mathcal{L}}\left(  \varphi u\right)  \right\Vert _{L^{p}\left(
\widetilde{B}\left(  \overline{\xi},r\right)  \right)  }+\left\Vert \varphi
u\right\Vert _{L^{p}\left(  \widetilde{B}\left(  \overline{\xi},r\right)
\right)  }\right\}  .
\]
For $1\leq i,j\leq q$, from the above inequality we get%
\begin{align*}
\left\Vert \widetilde{X}_{i}\widetilde{X}_{j}u\right\Vert _{L^{p}%
(\widetilde{B}_{\sigma r})}  &  \leq c\left\{  \left\Vert \widetilde
{\mathcal{L}}u\right\Vert _{L^{p}(\widetilde{B}_{\sigma^{\prime}r}%
)}+\left\Vert u\right\Vert _{L^{p}(\widetilde{B}_{\sigma^{\prime}r})}+\right.
\\
&  \left.  +\frac{1}{(1-\sigma)r}\left\Vert \widetilde{D}u\right\Vert
_{L^{p}(\widetilde{B}_{\sigma^{\prime}r})}+\frac{1}{(1-\sigma)^{2}r^{2}%
}\left\Vert u\right\Vert _{L^{p}(\widetilde{B}_{\sigma^{\prime}r})}\right\} \\
&  \leq c\left\{  \left\Vert \widetilde{\mathcal{L}}u\right\Vert
_{L^{p}(\widetilde{B}_{\sigma^{\prime}r})}+\frac{1}{(1-\sigma)r}\left\Vert
\widetilde{D}u\right\Vert _{L^{p}(\widetilde{B}_{\sigma^{\prime}r})}\right. \\
&  \left.  +\frac{1}{(1-\sigma)^{2}r^{2}}\left\Vert u\right\Vert
_{L^{p}(\widetilde{B}_{\sigma^{\prime}r})}\right\}
\end{align*}
where, as before, we let%
\[
\left\Vert \widetilde{D}u\right\Vert \equiv\sum_{i=1}^{q}\left\Vert
\widetilde{X}_{i}u\right\Vert \text{ and }\left\Vert \widetilde{D}%
^{2}u\right\Vert \equiv\sum_{i,j=1}^{q}\left\Vert \widetilde{X}_{i}%
\widetilde{X}_{i}u\right\Vert +\left\Vert \widetilde{X}_{0}u\right\Vert .
\]
Multiplying both sides for $(1-\sigma)^{2}r^{2}$ we get%
\begin{align}
(1-\sigma)^{2}r^{2}  &  \parallel\widetilde{X}_{i}\widetilde{X}_{j}%
u\parallel_{L^{p}(\widetilde{B}_{\sigma r})}\leq c\left\{  (1-\sigma)^{2}%
r^{2}\left\Vert \widetilde{\mathcal{L}}u\right\Vert _{L^{p}(\widetilde
{B}_{\sigma^{\prime}r})}+\right. \label{xxf}\\
&  \left.  +(1-\sigma)r\left(  \left\Vert \widetilde{D}u\right\Vert
_{L^{p}(\widetilde{B}_{\sigma^{\prime}r})}\right)  +\left\Vert u\right\Vert
_{L^{p}(\widetilde{B}_{\sigma^{\prime}r})}\right\}  .\nonumber
\end{align}
Next, we compute $(1-\sigma)^{2}r^{2}\left\Vert \widetilde{X}_{0}u\right\Vert
_{L^{p}(\widetilde{B}_{\sigma r})}:$%
\begin{align}
(1-\sigma)^{2}r^{2}\left\Vert \widetilde{X}_{0}u\right\Vert _{L^{p}%
(\widetilde{B}_{\sigma r})}  &  =(1-\sigma)^{2}r^{2}\left\Vert \widetilde
{\mathcal{L}}u-\sum_{i,j=1}^{q}\widetilde{a}_{ij}\widetilde{X}_{i}%
\widetilde{X}_{j}u\right\Vert _{L^{p}(\widetilde{B}_{\sigma r})}\label{xof}\\
&  \leq c(1-\sigma)^{2}r^{2}\left(  \left\Vert \widetilde{\mathcal{L}%
}u\right\Vert _{L^{p}(\widetilde{B}_{\sigma r})}+\left\Vert \widetilde{X}%
_{i}\widetilde{X}_{j}u\right\Vert _{L^{p}(\widetilde{B}_{\sigma r})}\right)
.\nonumber
\end{align}
Combining (\ref{xxf}) and (\ref{xof}), we have
\begin{align}
&  (1-\sigma)^{2}r^{2}\left\Vert \widetilde{D}^{2}u\right\Vert _{L^{p}%
(\widetilde{B}_{\sigma r})}\leq c\left\{  (1-\sigma)^{2}r^{2}\left\Vert
\widetilde{\mathcal{L}}u\right\Vert _{L^{p}(\widetilde{B}_{\sigma^{\prime}r}%
)}+\right. \label{D2f}\\
&  \left.  +(1-\sigma)r\left\Vert \widetilde{D}u\right\Vert _{L^{p}%
(\widetilde{B}_{\sigma^{\prime}r})}+\left\Vert u\right\Vert _{L^{p}%
(\widetilde{B}_{\sigma^{\prime}r})}\right\}  .\nonumber
\end{align}
Adding $(1-\sigma)r\left\Vert Du\right\Vert _{L^{p}(\widetilde{B}_{\sigma r}%
)}$ to both sides of (\ref{D2f}),%
\begin{align}
&  (1-\sigma)r\left\Vert \widetilde{D}u\right\Vert _{L^{p}(\widetilde
{B}_{\sigma r})}+(1-\sigma)^{2}r^{2}\left\Vert \widetilde{D}^{2}u\right\Vert
_{L^{p}(\widetilde{B}_{\sigma r})}\label{Df,D2f}\\
&  \leq c\left\{  (1-\sigma)^{2}r^{2}\left\Vert \widetilde{\mathcal{L}%
}u\right\Vert _{L^{p}(\widetilde{B}_{\sigma^{\prime}r})}+(1-\sigma)r\left\Vert
\widetilde{D}u\right\Vert _{L^{p}(\widetilde{B}_{\sigma^{\prime}r}%
)}+\left\Vert u\right\Vert _{L^{p}(\widetilde{B}_{\sigma^{\prime}r})}\right\}
,\nonumber
\end{align}
by Theorem \ref{iiith:interp:semino},%
\[
\leq c\left\{  (1-\sigma)^{2}r^{2}\left\Vert \widetilde{\mathcal{L}%
}u\right\Vert _{L^{p}(\widetilde{B}_{\sigma^{\prime}r})}+\left(  \delta
\Phi_{2}+\frac{c}{\delta}\Phi_{0}\right)  +\left\Vert u\right\Vert
_{L^{p}(\widetilde{B}_{\sigma^{\prime}r})}\right\}  .
\]
Choosing $\delta$ small enough, we have%
\[
\Phi_{2}+\Phi_{1}\leq c\left\{  r^{2}\left\Vert \widetilde{\mathcal{L}%
}u\right\Vert _{L^{p}(\widetilde{B}_{r})}+\left\Vert u\right\Vert
_{L^{p}(\widetilde{B}_{r})}\right\}  ,
\]
then%
\[
r^{2}\left\Vert \widetilde{D}^{2}u\right\Vert _{L^{p}\left(  \widetilde
{B}\left(  \overline{\xi},r/2\right)  \right)  }+r\left\Vert \widetilde
{D}u\right\Vert _{L^{p}\left(  \widetilde{B}\left(  \overline{\xi},r/2\right)
\right)  }\leq c\left\{  r^{2}\left\Vert \widetilde{\mathcal{L}}u\right\Vert
_{L^{p}\left(  \widetilde{B}\left(  \overline{\xi},r\right)  \right)
}+\left\Vert u\right\Vert _{L^{p}\left(  \widetilde{B}\left(  \overline{\xi
},r\right)  \right)  }\right\}  ,
\]
hence%
\[
\left\Vert u\right\Vert _{S_{\widetilde{X}}^{2,p}\left(  \widetilde{B}\left(
\overline{\xi},r/2\right)  \right)  }\leq c\left\{  \left\Vert \widetilde
{\mathcal{L}}u\right\Vert _{L^{p}\left(  \widetilde{B}\left(  \overline{\xi
},r\right)  \right)  }+\left\Vert u\right\Vert _{L^{p}\left(  \widetilde
{B}\left(  \overline{\xi},r\right)  \right)  }\right\}  ,
\]
which is the desired result.
\end{proof}

\subsection{$L^{p}$ estimates in the original variables}

Let's now prove Theorem \ref{lp estimate}, which follows from Theorem
\ref{lp estimates lifted operator} in a way which is analogous to that
followed in \S \ \ref{subsec schauder original} to prove Schauder estimates.

Fix $\overline{x}\in\Omega^{\prime}\Subset\Omega$ and $R$ such that in
$B\left(  \overline{x},R\right)  \subset\Omega$ all the construction of the
previous two subsections (lifting to $\widetilde{B}\left(  \overline{\xi
},R\right)  $ and so on) can be performed. Let $r_{0}<R$ as in Theorem
\ref{lp estimates lifted operator}, and let $u\in S_{X}^{2,p}\left(  B\left(
\overline{x},r_{0}\right)  \right)  $. By Theorem \ref{Theorem lifted sobolev}
we know that the function $\widetilde{u}\left(  x,h\right)  =u\left(
x\right)  $ belongs to $S_{\widetilde{X}}^{2,p}\left(  B\left(  \overline{\xi
},r_{0}\right)  \right)  $, so we can apply to $\widetilde{u}$ the $L^{p}$
estimates contained in Theorem \ref{lp estimates lifted operator}. Combining
this fact with the two estimates in Theorem \ref{Theorem lifted sobolev} we
get%
\begin{align*}
\left\Vert u\right\Vert _{S_{X}^{2,\alpha}\left(  B\left(  \overline{x}%
,\delta_{0}r_{0}/2\right)  \right)  }  &  \leq c\left\Vert \widetilde
{u}\right\Vert _{S_{\widetilde{X}}^{2,\alpha}\left(  \widetilde{B}\left(
\overline{\xi},r_{0}/2\right)  \right)  }\\
&  \leq c\left(  \left\Vert \widetilde{\mathcal{L}}\widetilde{u}\right\Vert
_{L^{p}\left(  \widetilde{B}\left(  \overline{\xi},r_{0}\right)  \right)
}+\left\Vert \widetilde{u}\right\Vert _{L^{p}\left(  \widetilde{B}\left(
\overline{\xi},r_{0}\right)  \right)  }\right) \\
&  \leq c\left(  \left\Vert \mathcal{L}u\right\Vert _{L^{p}\left(  B\left(
\overline{x},r_{0}\right)  \right)  }+\left\Vert u\right\Vert _{L^{p}\left(
B\left(  \overline{x},r_{0}\right)  \right)  }\right)
\end{align*}
since $\widetilde{\mathcal{L}}\widetilde{u}=\widetilde{\left(  \mathcal{L}%
u\right)  }.$

Next, let us choose a family of balls $B\left(  x_{i},r_{0}\right)  $ in
$\Omega$ such that%
\[
\Omega^{\prime}\subset%
{\displaystyle\bigcup\limits_{i=1}^{k}}
B\left(  x_{i},\delta_{0}r_{0}/2\right)  \subset%
{\displaystyle\bigcup\limits_{i=1}^{k}}
B\left(  x_{i},r_{0}\right)  \subset\Omega.
\]
Therefore
\begin{align*}
\left\Vert u\right\Vert _{S_{X}^{2,p}(\Omega^{\prime})}  &  \leq\left\Vert
u\right\Vert _{S_{X}^{2,p}(\cup B\left(  x_{i},\delta_{0}r_{0}/2\right)
)}\leq\sum_{i=1}^{k}\left\Vert u\right\Vert _{S_{X}^{2,p}(B\left(
x_{i},\delta_{0}r_{0}/2\right)  )}\\
&  \leq c\sum_{i=1}^{k}\left\{  \left\Vert \mathcal{L}u\right\Vert
_{L^{p}\left(  B\left(  x_{i},r_{0}\right)  \right)  }+\left\Vert u\right\Vert
_{L^{p}\left(  B\left(  x_{i},r_{0}\right)  \right)  }\right\} \\
&  \leq c\left\{  \left\Vert \mathcal{L}u\right\Vert _{L^{p}(\Omega
)}+\left\Vert u\right\Vert _{L^{p}(\Omega)}\right\}
\end{align*}
with $c$ also depending on $r_{0}$.

\bigskip

\bigskip

Marco Bramanti

Dipartimento di Matematica, Politecnico di Milano

Via Bonardi 9. 20133 Milano. ITALY

marco.bramanti@polimi.it

\bigskip

Maochun Zhu

Department of Applied Mathematics, Northwestern Polytechincal University

127 West Youyi Road. 710072 Xi'an. P. R. CHINA

zhumaochun2006@126.com

\end{document}